
\documentclass[11pt]{amsart}
\usepackage{amsmath,amssymb}
\usepackage{graphicx}
\usepackage{amsfonts,amscd,latexsym,bbm,epsfig,epic,eepic,oldgerm,psfrag}

\newcommand{\proofend}{\hspace*{\fill} $\Box$\\}
\newcommand{\diam}{\hspace*{\fill} $\Diamond$}

\newcommand{\labell}[1] {\label{#1}}

\newcommand{\llaa}{\overset%
{\raisebox{-.05ex}[0ex][-.0ex]{\mbox{$\scriptscriptstyle\leftleftarrows$}}}a{}}

\newcommand{\raa}{\overset%
{\raisebox{-.22ex}[0ex][-.0ex]{\mbox{$\scriptscriptstyle\rightarrow$}}\mskip-0mu}a{}}
\newcommand{\laa}{\overset%
{\raisebox{-.22ex}[0ex][-.0ex]{\mbox{$\scriptscriptstyle\leftarrow$}}}a{}}
\newcommand{\rau}{\overset%
{\raisebox{-.22ex}[0ex][-.0ex]{\mbox{$\scriptscriptstyle\rightarrow$}}}u{}}
\newcommand{\lau}{\overset%
{\raisebox{-.22ex}[0ex][-.0ex]{\mbox{$\scriptscriptstyle\leftarrow$}}}u{}}

\newcommand{\HA}{{\Hat{A}}}
\newcommand{\HB}{{\Hat{B}}}
\newcommand{\HV}{{\Hat{V}}}
\newcommand{\Cr}{{{\mathcal C} \;\!\!{\it r}}}

\def\NN{\mathbbm{N}}
\def\PP{\mathbbm{P}}

\def\RR{\mathbbm{R}}

\def\ZZ{\mathbbm{Z}}

\def\ww{\boldsymbol{w}}
\def\mm{\boldsymbol{m}}
\def\s{\smallskip}
\def\ni{\noindent}
\def\b{\bigskip}
\def\m{\medskip}

\def\eps{\epsilon}

\def\gl{\lambda}

\def\gs{\sigma}

\newcommand{\1}{{{\mathchoice {\rm 1\mskip-4mu l} {\rm 1\mskip-4mu l}
{\rm 1\mskip-4.5mu l} {\rm 1\mskip-5mu l}}}}

\textwidth5.7in \textheight7.8in \voffset=-0.5in
\hoffset=-0.5in

\newlength{\facewd} \newlength{\faceht}%

\renewcommand{\Hat}{\widehat}

\newcommand{\less}{{\smallsetminus}}

\newcommand{\al}{{\alpha}}

\newcommand{\be}{{\beta}}

\newcommand{\om}{{\omega}}
\renewcommand{\eps}{{\varepsilon}}
\newcommand{\vareps}{{\epsilon}}
\newcommand{\de}{{\delta}}

\newcommand{\ga}{{\gamma}}

\newcommand{\ka}{{\kappa}}
\newcommand{\la}{{\lambda}}

\newcommand{\si}{{\sigma}}

\newcommand{\Cc}{{\mathcal C}}

\newcommand{\Ee}{{\mathcal E}}

\newcommand{\ts}{\textstyle}

\newcommand{\N}{{\mathbb N}}
\newcommand{\Q}{{\mathbb Q}}
\newcommand{\R}{{\mathbb R}}
\newcommand{\C}{{\mathbb C}}

\newcommand{\Z}{{\mathbb Z}}

\newcommand{\SSS}{{\smallskip}}
\newcommand{\QED}{{\hfill $\Box$\MS}}
\newcommand{\se} {{\stackrel{s}\hookrightarrow}}

\newtheorem{theorem}{Theorem}[subsection]
\newtheorem{thm}[theorem]{Theorem}
\newtheorem{corollary}[theorem]{Corollary}
\newtheorem{cor}[theorem]{Corollary}
\newtheorem{lemma}[theorem]{Lemma}

\newtheorem{sublemma}[theorem]{Sublemma}
\newtheorem{proposition}[theorem]{Proposition}
\newtheorem{prop}[theorem]{Proposition}

\newtheorem{defn}[theorem]{Definition}
\newtheorem{example}[theorem]{Example}
\newtheorem{remark}[theorem]{Remark}
\newtheorem{rmk}[theorem]{Remark}

\numberwithin{figure}{section}
\numberwithin{equation}{section}
\numberwithin{table}{section}

\newcommand{\MS}{{\medskip}}

\newcommand{\NI}{{\noindent}}

\begin{document}

\title{The embedding capacity of $4$-dimensional symplectic ellipsoids}
\author{Dusa McDuff} \thanks{partially supported by NSF grant DMS 0604769.}
\address{(D.~McDuff)
Department of Mathematics, 
Barnard College, Columbia University, New York, 
NY 10027-6598, USA.}
\email{dmcduff@barnard.edu}
\author{Felix Schlenk} \thanks{partially supported by SNF grant 200021-125352/1.}
\address{(F.~Schlenk) 
Institut de Math\'ematiques,
Universit\'e de Neuch\^atel, 
Rue \'Emile Argand~11, 
CP~158,
2009 Neuch\^atel,
Switzerland} 
\email{schlenk@unine.ch}
\keywords{symplectic embeddings, Fibonacci numbers}
\subjclass[2000]{53D05, 14B05, 32S05, 11A55}
\date{\today}

\begin{abstract} 
This paper calculates the function~$c(a)$ whose value at~$a$ is 
the infimum of the size of a ball that contains a symplectic image of the ellipsoid~$E(1,a)$.
(Here $a\ge 1$ is the ratio of the area of the large axis to that of the smaller axis.)  
The structure of the graph of~$c(a)$ is surprisingly rich. 
The volume constraint implies that $c(a)$ is always greater than or equal to the square root 
of~$a$, and it is not hard to see that this is equality for large~$a$.  
However, for~$a$ less than the fourth power $\tau^4$ of the golden ratio, $c(a)$ is piecewise linear, 
with graph that alternately lies on a line through the origin and is horizontal.  
We prove this by showing that there are exceptional curves in blow ups of the complex projective plane 
whose homology classes are given by the continued fraction expansions of ratios of Fibonacci numbers.
On the interval $\left[ \tau^4,7 \right]$ we find $c(a) = \frac{a+1}{3}$.
For $a \ge 7$, the function $c(a)$ coincides with the square root except on a finite number of intervals where it is again piecewise linear.
The embedding constraints coming from embedded contact homo\-lo\-gy give rise to another capacity function~$c_{ECH}$ 
which may be computed by counting lattice points in appropriate right angled triangles. 
According to Hutchings and Taubes,
the functorial properties of embedded contact homo\-lo\-gy imply that 
$c_{ECH}(a) \le c(a)$ for all~$a$.
We show here that $c_{ECH}(a) \ge c(a)$ for all~$a$. 
\end{abstract}

\maketitle

\tableofcontents
\section{Introduction}

\subsection{Statement of results}\labell{ss:state}

As has been known since the time of Gromov's Nonsqueezing Theorem, 
questions about symplectic embeddings lie at the heart of symplectic geometry.   
To date, most results have concerned the embeddings of balls or of products of balls 
since these are most amenable to analysis. 
(See Cieliebak, Hofer, Schlenk and Latschev~\cite{CHLS} for a comprehensive survey 
of embedding problems.)
However, ellipsoids are another very natural class of examples.  
As pointed out by Hofer, the simplicity of the characteristic flow on their boundary makes them a natural test case for understanding the role of variational properties in symplectic geometry. 
One would like to understand the extent to which obstructions coming from periodic orbits capture all symplectic invariants. Judging from the evidence of the current work, it seems one cannot take a naive approach. As pointed out in McDuff~\cite{M}, the Ekeland--Hofer capacities of~\cite{EH} 
(which are purely variational) do not give all obstructions. 
Instead one must use invariants coming from the Hutchings--Taubes~\cite{HT} 
embedded contact homo\-lo\-gy, 
which has an unavoidably geometric flavor.  
Indeed Taubes~\cite{T} has recently shown it equals a version of Seiberg--Witten Floer homology 
and so is a gauge theory.

In view of the work of Guth~\cite{Gu} on higher dimensional symplectic embedding questions, 
there has been renewed interest in this kind of question.
However, we restrict consideration to four dimensions 
since the methods and results in this case are very different from those in higher dimensions; 
cf.~Remark~\ref{rmk:gen}.  
For relevant background and a survey of the results of the current paper see~\cite{Mcf}.

Given a real number $a\ge 1$ denote by $E(1,a)$ the 
closed ellipsoid
$$
E(1,a) \,:=\, \Bigl\{ {x_1^2+x_2^2} + \frac {x_3^2+x_4^2}{a}
\le 1\Bigr\}\;\subset\;\R^4.
$$
This paper studies the function $c \colon [1,\infty) \to \R$ defined by
\begin{equation} \labell{eq:ca}
c(a) \,:=\, \inf \left\{ \mu : E(1,a) \,\se\, B(\mu) \right\}
\end{equation}
where $B(\mu) := \Bigl\{ \sum x_i^2 \le \mu \Bigr\}$ 
is the ball of 
radius $\sqrt \mu$,
and $A \,\se\, B$ means that $A$ embeds symplectically in~$B$.
This is one of a range of symplectic capacity 
functions defined by  Cieliebak, Hofer, Latschev and Schlenk in~\cite{CHLS}, 
and the first to be calculated. 

Since $E(1,a)$ has volume $a\pi^2/2$, we must have $c(a) \ge \sqrt a$.
Here is another elementary result.

\begin{lemma}\labell{le:1}  
The function $c$ is nondecreasing and continuous.  
Further, it has the following {\it scaling property}:
\begin{equation}\labell{eq:scal} 
\frac {c(\la a)}{\la a} \le \frac {c(a)}{a} \quad\mbox{ when }\, \la >1.
\end{equation}
\end{lemma}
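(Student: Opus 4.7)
The three assertions are quite elementary; all follow from the geometry of the inclusion $E(1,a)\subset E(1,a')$ for $a\le a'$ together with the basic symplectic scaling $\sqrt\la\cdot E(1,a)=E(\la,\la a)$ and $\sqrt\la\cdot B(\mu)=B(\la\mu)$.

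\textbf{Monotonicity.} For $a\le a'$ the defining inequality of $E(1,a)$ is more restrictive than that of $E(1,a')$, so $E(1,a)\subset E(1,a')$. Any symplectic embedding $E(1,a')\se B(\mu)$ therefore restricts to a symplectic embedding of $E(1,a)$ into $B(\mu)$, whence $c(a)\le c(a')$.

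\textbf{Scaling.} Let $\la>1$. Given a symplectic embedding $\phi\colon E(1,a)\se B(\mu)$, the map $\psi(x):=\sqrt\la\,\phi(x/\sqrt\la)$ is a symplectic embedding of the dilated ellipsoid $\sqrt\la\cdot E(1,a)=E(\la,\la a)$ into $\sqrt\la\cdot B(\mu)=B(\la\mu)$. Since $\la>1$, the inequality $y_1^2/\la+y_2^2/\la\le y_1^2+y_2^2$ shows that $E(1,\la a)\subset E(\la,\la a)$; restricting $\psi$ yields $E(1,\la a)\se B(\la\mu)$. Taking the infimum over admissible $\mu$ gives $c(\la a)\le\la c(a)$, which is the scaling property.

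\textbf{Continuity.} Rewriting the scaling inequality, for any $1\le a<a'$ we have the sandwich
$$
c(a)\;\le\; c(a')\;\le\;\frac{a'}{a}\,c(a).
$$
The left inequality is monotonicity and the right one is scaling applied with $\la=a'/a$. For $a_n\searrow a$ this forces $c(a_n)\to c(a)$, giving right-continuity; for $a_n\nearrow a$ we obtain $c(a_n)\le c(a)\le (a/a_n)c(a_n)$, and since $a/a_n\to 1$ this forces $c(a_n)\to c(a)$, giving left-continuity.

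\textbf{Main obstacle.} There is no substantial obstacle; the only point requiring a moment's care is the direction of the inclusion in the scaling argument (one must check that $\la>1$ gives $E(1,\la a)\subset E(\la,\la a)$ rather than the reverse, which is why the scaling inequality points the way it does and why the same argument does not immediately show that $c(a)/a$ is \emph{strictly} decreasing).
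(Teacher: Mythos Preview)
Your proof is correct and follows essentially the same approach as the paper: the paper's proof also uses the inclusion $E(1,\la a)\subset\sqrt{\la}\,E(1,a)$ together with the fact that dilation by $\sqrt{\la}$ carries embeddings into $B(\mu)$ to embeddings into $B(\la\mu)$. The paper simply declares monotonicity and continuity to be ``clear'' without spelling out the sandwich argument you give; your version is just a more explicit rendering of the same ideas.
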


\begin{proof}   
The first statement is clear.  
The second holds because 
$E(1,\la a) \subset \sqrt \la \,E(1,a)$  
when $\la>1$ and also $E(1,a) \,\se\, B(\mu)$ if and only if
$\sqrt \la \,E(1,a) \,\se\, \sqrt \la \,B(\mu)=B(\la \mu)$.
\end{proof}
 
The function $c(a)$ was calculated\footnote
{
The first nontrivial result here, that $c(4)=2$, was proved earlier by Opshtein in~\cite{Op}.
}
in~\cite{M} for integral~$a$ as follows:
\begin{gather}\labell{eq:int}
c(a) = \sqrt a \,\mbox{ if } a \in \N \mbox{ is }1,4 \mbox{ or } \ge 9, \\ \notag
c(2)=c(3)=c(4)=2, \;\; c(5)=c(6) = \ts{\frac 52,\;\;
c(7) = \frac 83,\;\; c(8) = \frac{17}6}.
\end{gather}
Its monotonicity and scaling property are then enough to determine
all its values for $a\le 6$:
it is constant on the intervals $[2,4]$ and $[5,6]$ and otherwise linear, 
with graph along appropriate lines through the origin. 
(See Figure~\ref{figure.stairs} and Corollaries~\ref{cor:2} and \ref{cor:1} below.)

It turns out that the two steps of $c(a)$ that we described above extend to an infinite stairs 
for $a \in [1,\tau^4]$ where $\tau^4 = \frac {7+3\sqrt5}2$ is the fourth power of the golden ratio 
$\tau := \frac{1+\sqrt 5}{2}$. 
We call this Fibonacci stairs. 
Denote by $g_n: = f_{2n-1}$, $n \ge 1$, the terms in the odd places of the
Fibonacci  sequence $f_n$.    
(For short, we call these the \lq\lq odd Fibonacci numbers".)
Thus the sequence~$g_n$ starts with 
$
1,\; 2,\; 5,\; 13,\; 34,\;\dots$.
Set $g_0=1$ and for each $n \ge 0$ define
$$
a_n \,=\, \left( \tfrac{g_{n+1}}{g_n} \right)^2
\quad \text{ and } \quad
b_n  \,=\, \tfrac{g_{n+2}}{g_n} .
$$
Then 
\begin{gather*}
a_0 = 1 \;<\; b_0 = \tfrac 21 = 2 \;<\; 
a_1 = (\tfrac 21)^2 = 4 \;<\; b_1 = \tfrac 51 = 5 \;<\; \\
a_2 = (\tfrac 52)^2 = 6\tfrac14 \;<\; b_2 = \tfrac {13}2 = 6\tfrac12 \;<\;
a_3 = \left( \tfrac{13}5 \right)^2 = 6 \tfrac{19}{25} \;<\; b_3 = \tfrac{34}5 = 6 \tfrac 45 \;<\; 
\dots.
\end{gather*}
More generally,
$$
\dots < a_n < b_n < a_{n+1} < b_{n+1} < \dots,\quad\mbox{ and }
\lim a_n = \lim b_n = \tau^4 \approx 6.854 .
$$

\begin{figure}[ht]
 \begin{center}
  \psfrag{1}{$1$}
  \psfrag{2}{$2$}
  \psfrag{4}{$4$}
  \psfrag{5}{$5$}
  \psfrag{25}{$\tfrac{25}{4}$}
  \psfrag{132}{$\tfrac{13}{2}$}
  \psfrag{t2}{$\tau^2$}
  \psfrag{t4}{$\tau^4$}
  \psfrag{a}{$a$}
  \psfrag{c}{$c(a)$}
  \psfrag{52}{$\tfrac 52$}
  \psfrag{13}{$\tfrac{13}{5}$}
 \leavevmode\epsfbox{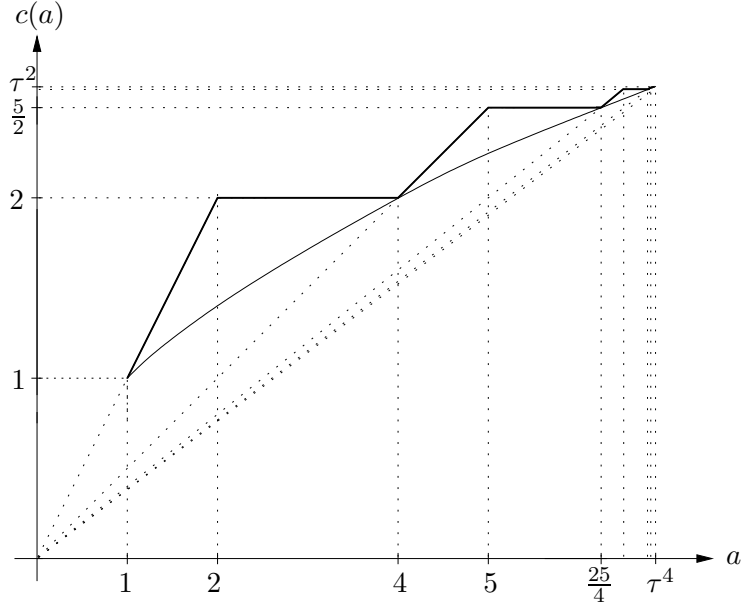}
 \end{center}
 \caption{The Fibonacci stairs: The graph of $c(a)$ on $\left[ 1,\tau^4 \right]$.}
 \label{figure.stairs}
\end{figure}
%
%

\begin{thm} \labell{thm:main}  
\begin{itemize}
\item[(i)]  
For each $n\ge 0$, $c(a) = \frac a{\sqrt{a_n}}$ for $a \in [a_n,b_n]$,
and $c$ is constant with value $\sqrt{a_{n+1}}$ on the interval $[b_n,a_{n+1}]$.
\s
\item[(ii)]
$c(a)=\frac {a+1}3$ on $[\tau^4,7]$.

\s
\item[(iii)] 
There are a finite number of closed disjoint intervals $I_j \subset [7, 8\frac1{36}]$ 
such that $c(a) = \sqrt a$ for all $a >7$, $a \notin I_j$.  
Moreover, $c$ is piecewise linear in each $I_j$,
with one non-smooth point in the interior of~$I_j$. 

\s
\item[(iv)]  
$c(a) = \sqrt a$ for $a \ge 8\frac 1{36}$.
\end{itemize}
\end{thm}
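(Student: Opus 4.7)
The plan is to convert the embedding question into one about exceptional classes in blow-ups of $\CP^2$. By the ball-packing equivalence developed by McDuff--Polterovich and Li--Liu, an embedding $E(1,a) \se B(\mu)$ with $a$ rational is equivalent to the existence of a symplectic form on $\CP^2$ blown up at $N$ points with weights read off from the continued fraction expansion of $a$; this in turn reduces to a countable family of linear inequalities on $\mu$, one for each exceptional homology class $E$ (i.e., $E\cdot E=-1$ and $K\cdot E=-1$). The volume bound $c(a)\ge\sqrt{a}$ always holds, and the task is to identify, in each range of $a$, the exceptional class giving a sharper bound and to match it with an explicit embedding from above.

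For part~(i), I would first pin down the corner values $c(a_n)=\sqrt{a_n}$ and $c(b_n)=\sqrt{a_{n+1}}$, after which Lemma~\ref{le:1} forces the rest of the graph. The bound $c(a_n)\le\sqrt{a_n}$ comes from explicit embeddings built from the Fibonacci-structured continued fraction of $a_n=(g_{n+1}/g_n)^2$; this matches the volume lower bound. The upper bound $c(b_n)\le\sqrt{a_{n+1}}$ is then free from the scaling relation~\eqref{eq:scal} applied at the pair $(a_n,b_n)$, using $b_n/\sqrt{a_n}=g_{n+2}/g_{n+1}=\sqrt{a_{n+1}}$. The crux is the lower bound $c(b_n)\ge\sqrt{a_{n+1}}$: here I would construct, by induction on $n$, an exceptional class in a blow-up of $\CP^2$ whose degree and weight vector are prescribed by the continued fraction of $b_n=g_{n+2}/g_n$, exploiting the Fibonacci identity $g_n g_{n+2}-g_{n+1}^2=1$ to verify the conditions $E\cdot E=-1$ and $K\cdot E=-1$ as well as the sharpness of the resulting linear constraint on $\mu$ at $a=b_n$. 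Once the corners are fixed, one observes that $c(a_n)/a_n=c(b_n)/b_n=1/\sqrt{a_n}$; since \eqref{eq:scal} says $a\mapsto c(a)/a$ is nonincreasing, this ratio must be constantly $1/\sqrt{a_n}$ on $[a_n,b_n]$, yielding $c(a)=a/\sqrt{a_n}$, and monotonicity then forces the constant value $\sqrt{a_{n+1}}$ on $[b_n,a_{n+1}]$.

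For parts~(ii)--(iv) the same machinery applies with different exceptional classes. Part~(ii) should follow from a single exceptional class of degree $3$ giving $c(a)\ge (a+1)/3$ on $[\tau^4,7]$, matched by a near-volume embedding. For (iii) and (iv), I would enumerate those exceptional classes that still strictly obstruct the volume bound for $a>7$: the scaling relation implies that each such class contributes a bounded interval of obstruction, and a careful continued-fraction estimate on the weights of exceptional classes should show that only finitely many classes do so, all lying in $[7,\,8\tfrac{1}{36}]$. Each yields an interval $I_j$ on which $c$ is piecewise linear with one non-smooth interior point (where the class's obstruction meets the volume curve), and outside these intervals $c(a)=\sqrt{a}$.

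The principal obstacle is the construction of these exceptional classes, especially the Fibonacci family in (i), and the exhaustive enumeration in (iii). The Fibonacci induction requires carefully tracking how the continued fraction of $g_{n+2}/g_n$ grows and verifying the $(-1)$-class conditions at each step, while the enumeration beyond $\tau^4$ needs a finiteness result forcing that almost all exceptional classes yield constraints dominated by the volume bound---delicate because there are infinitely many exceptional classes in total.
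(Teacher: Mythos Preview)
Your overall framework is right, and the deduction of the Fibonacci stairs from the corner values $c(a_n)=\sqrt{a_n}$ and $c(b_n)=\sqrt{a_{n+1}}$ via scaling and monotonicity is exactly what the paper does. But there is a genuine gap in how you obtain the upper bound $c(a_n)\le\sqrt{a_n}$.

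You write that this ``comes from explicit embeddings built from the Fibonacci-structured continued fraction of $a_n$''. In the ball-packing framework you invoke, an upper bound $c(a_n)\le\sqrt{a_n}$ amounts to showing that \emph{every} exceptional class $(d;\mm)\in\Ee$ satisfies $\mm\cdot\ww(a_n)\le d\sqrt{a_n}$; there are infinitely many such classes, and no direct geometric construction is available. The paper's key idea is to build a \emph{second} family of exceptional classes $E(a_n)=(g_ng_{n+1};W'(a_n))$, where $W'(a_n)$ is $g_n^2\,\ww(a_n)$ with one extra $1$ appended. This class is not itself obstructive at $a_n$ (one has $\mu(E(a_n))(a_n)=\sqrt{a_n}$ on the nose), but positivity of intersections forces every other $(d;\mm)\in\Ee$ to satisfy
\[
d\,g_ng_{n+1}\ \ge\ \mm\cdot W'(a_n)\ \ge\ g_n^2\,\mm\cdot\ww(a_n),
\]
which rearranges to $\mu(d;\mm)(a_n)\le\sqrt{a_n}$. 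So you need \emph{two} Fibonacci families, $E(b_n)$ and $E(a_n)$, and verifying $E(a_n)\in\Ee$ via Cremona reduction is in fact substantially harder than for $E(b_n)$, since the weight expansion of $a_n=(g_{n+1}/g_n)^2$ involves quadratic rather than linear Fibonacci expressions.

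Part~(ii) has the same kind of gap. The lower bound from $(3;2,1^{\times6})$ is immediate, but ``matched by a near-volume embedding'' does not establish $c(a)\le\frac{a+1}{3}$: near $\tau^4$ there are \emph{infinitely} many obstructive classes, and one must show that none exceeds $\frac{a+1}{3}$. The paper again constructs an auxiliary infinite family $E(b_k(i))$ of nearly-perfect classes (the ``ghost stairs'') and uses positivity of intersections with them to cap all other constraints at a discrete set of test points. Finally, a minor correction for~(iii): the single non-smooth point of $c$ in each $I_j$ is not where the obstruction meets $\sqrt{a}$ (those are the endpoints of $I_j$) but the unique interior point $a_0$ with $\ell(a_0)=\ell(\mm)$, where $\mu(d;\mm)$ itself changes slope.
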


The argument proving part~(i) hinges on the existence of an unexpected relation 
between the function~$c(a)$ and the Fibonacci numbers.
We shall see 
that this relation persists for $a$ just larger than $\tau^4$, 
and so we 
also
deal with the interval $[\tau^4,7]$ by largely arithmetic means.  
However, the analysis of $c(a)$ for $a>\tau^4$ gets easier the larger~$a$ is.
As we show in Corollary~\ref{cor:2}, it is almost trivial
to see that $c(a)=\sqrt a$ when $a\ge 9$, 
and it is not much harder to see that $c(a)=\sqrt a$ when $a\ge 8 \frac 1{36}$. 
The method used also shows that there are finitely many obstructions when $a\ge 7$.  
The full analysis of $c(a)$ on $[7,8\frac 1{36}]$ takes more effort.
The intervals~$I_j$ contain rational numbers with small denominators; 
for example the three longest contain $7, 7\frac{1}2$, and $8$, cf.~Figure~\ref{figure.78}. 

\begin{figure}[ht]
 \begin{center}
  \psfrag{t4}{$\tau^4$}
  \psfrag{a}{$a$}
  \psfrag{c}{$c(a)$}
  \psfrag{7}{$7$}
  \psfrag{79}{$7\tfrac 19$}
  \psfrag{74}{$7\tfrac 14$}
  \psfrag{73}{$7\tfrac 13$}
  \psfrag{72}{$7\tfrac 12$}
  \psfrag{8}{$8$}
  \psfrag{81}{$8\tfrac1{36}$}
  \psfrag{t2}{$\tau^2$}
  \psfrag{83}{$\frac 83$}
  \psfrag{11}{$\frac{11}4$}
  \psfrag{17}{$\frac{17}6$}  
 \leavevmode\epsfbox{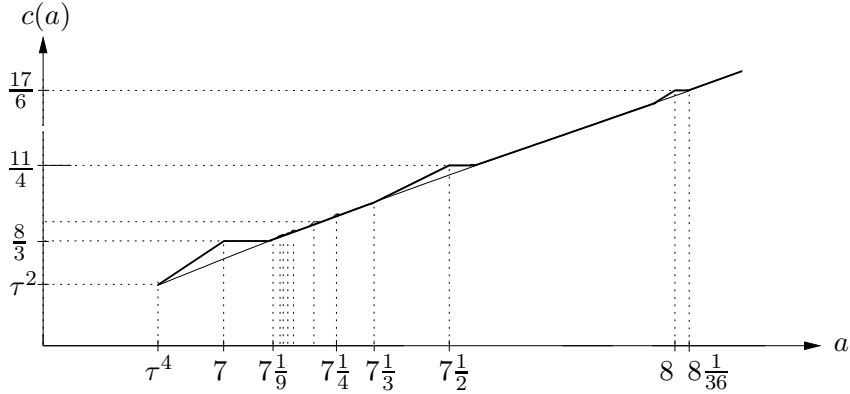}
 \end{center}
 \caption{The graph of $c(a)$ on $\left[ \tau^4, 8 \frac 1{36} \right]$.}
 \label{figure.78}
\end{figure}
%
%

\ni
We refer to Theorem~\ref{thm:78} for a full description of~$c(a)$ on the interval $[7,8\frac 1{36}]$.  One point to note here is that although all the 
flatter portions of the  graph of $c$ are  horizontal when $a<\tau^4$, this is not  true when  $a\in [7,8]$; for example the two parts of the graph of~$c$ centered at
$a=7\frac 18$ both have positive slope.

\b
\NI {\bf Connection with counting lattice  points}

As we explain in~\S\ref{ss:out} below, the obstructions to embeddings $E(1,a) \,\se\, B(\mu)$ 
that we consider come from exceptional spheres in blow ups of $\C P^2$. 
Hofer\footnote
{Private communication.} 
suggested that one should also be able to obtain a complete set of obstructions from the
embedded contact homology theory recently developed by Hutchings and Taubes~\cite{HT}.  
The embedded contact homology $ECH_*\bigl(E(a,b)\bigr)$ of a $4$-dimensional ellipsoid has one generator in each even degree with action of the form $ma+nb$; $m,n\ge 0$.  
Since the action is a nondecreasing function of degree, the actions of the generators 
arranged in the order of increasing degree form the sequence $N(a,b)$ obtained by arranging 
all numbers of the form $ma+nb$; $m,n \ge 0$, in nondecreasing order (with multiplicities).
We will say that $N(a,b) \preccurlyeq N(a',b')$ 
if each term in $N(a,b)$ is no greater than the corresponding term in $N(a',b')$.  
It is likely (though not yet fully proven) that $ECH_*$ is functorial, 
so that there is an embedding $E(a,b) \,\se\, E(a',b')$ only if there is an injective map 
$ECH_* \bigl(E(a,b)\bigr) \to ECH_* \bigl(E(a',b')\bigr)$
that increases action.  
If this were true, then the sequence $N(a,b)$ would be a monotone invariant of $E(a,b)$. 
Thus, Hofer's suggestion is that 
\begin{equation}
E(a,b) \,\se\, E(a',b') 
\;\Longleftrightarrow\;
N(a,b) \preccurlyeq N(a',b') .
\tag{C}
\end{equation}

\NI
One might be able to prove this directly by showing that the embedded curves
that provide the morphism $ECH_* \bigl(E(a,b)\bigr) \to ECH_* \bigl(E(a',b')\bigr)$ 
correspond precisely to the exceptional spheres that give our obstructions.  
We will take a more indirect approach.

For $a \ge 1$ define
$$
c_{ECH}(a) \,:=\, \inf \left\{ \mu >0 \mid N(1,a) \preccurlyeq N(\mu,\mu) \right\} .
$$
Then Conjecture~(C) 
for the case that the target ellipsoid is a ball
becomes
\begin{equation} \label{e:cc}
c_{ECH}(a) \,=\, c(a) \quad \text{ for all }\, a \ge 1.
\end{equation}
The expected functorial properties of embedded contact homology should 
imply that $c_{ECH}(a) \le c(a)$ for all~$a$; 
see Hutchings--Taubes~\cite{HT2}, 
as well as Remark~\ref{rmk:hid}~(ii).
In this paper we prove the converse.

\begin{thm}\labell{thm:ECH} 
$c_{ECH}(a) \ge c(a)$ for all $a \ge 1$.
\end{thm}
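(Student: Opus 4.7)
The plan is to exploit the characterization of $c(a)$ by exceptional classes in blow-ups of $\CP^2$ and translate each such obstruction into a failure of the lattice-point inequality that defines $c_{ECH}$. First I recast $N(1,a) \preccurlyeq N(\mu,\mu)$ in lattice-point form: since $N(\mu,\mu)$ takes the value $d\mu$ with multiplicity $d+1$ for each $d \geq 0$, the termwise comparison is equivalent to
\begin{equation*}
L_d(a,\mu) \,:=\, \#\bigl\{(m,n) \in \Z_{\geq 0}^2 \,:\, m+na \leq d\mu\bigr\} \,\geq\, \binom{d+2}{2} \quad \text{for every } d \geq 0,
\end{equation*}
so it suffices to exhibit, for each $\mu < c(a)$, some $d$ with $L_d(a,\mu) < \binom{d+2}{2}$. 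When $\mu < \sqrt{a}$, the standard lattice estimate $L_d(a,\mu) \leq (d\mu)^2/(2a) + O(d)$ shows $L_d(a,\mu) < d^2/2 + O(d) < \binom{d+2}{2}$ for $d$ large, hence $c_{ECH}(a) \geq \sqrt{a}$ unconditionally.

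When $c(a) > \sqrt{a}$, the proofs of \cite{M} and Theorem~\ref{thm:main} show the obstruction comes from an exceptional class $E = dL - \sum_i m_i E_i$ in some blow-up $\CP^2 \# M \ov{\CP}^2$, with $\sum m_i = 3d-1$ and $\sum m_i^2 = d^2+1$, giving the constraint $d\mu \geq \sum_i m_i w_i(a)$ where $(w_i)$ is the weight expansion of $a$, padded by zeros to length $M$. These defining identities of exceptional classes immediately yield the arithmetic identity
\begin{equation*}
\sum_i \binom{m_i+1}{2} \,=\, \tfrac12\bigl(\textstyle\sum m_i^2 + \sum m_i\bigr) \,=\, \tfrac12(d^2+3d) \,=\, \binom{d+2}{2} - 1,
\end{equation*}
which matches the gap we need. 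The goal becomes the per-obstruction bound
\begin{equation*}
L_d(a,\mu) \,\leq\, \sum_i \binom{m_i+1}{2} \quad \text{whenever} \quad d\mu < \sum_i m_i w_i(a) ,
\end{equation*}
which combined with the reformulation above gives $\mu < c_{ECH}(a)$ as desired.

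The main obstacle is this per-obstruction bound. My approach is to decompose the triangle $\{m+na \leq d\mu\}$ into wedges indexed by the squares of the staircase tiling of $[0,1] \times [0,a]$ that geometrically realizes the weight expansion of $a$; the wedge attached to weight $w_i$ should contain at most $\binom{m_i+1}{2}$ lattice points under the hypothesis $d\mu < \sum_i m_i w_i$. The natural implementation is by induction on the length of the continued fraction expansion of $a$, matching the recursive Euclidean/Cremona structure of both the exceptional class $E$ and the weight tiling: each induction step peels off the largest square from the tiling and applies a Cremona transformation to $E$, with trivial base case when $E$ reduces to $L - E_1 - E_2$. The delicate part is checking that the floor functions implicit in the lattice count interact with the continued-fraction recursion in a compatible way, producing the loss of exactly one lattice point per obstruction as predicted by the arithmetic identity above.
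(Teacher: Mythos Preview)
Your reformulation of $c_{ECH}$ via lattice counts and the arithmetic identity $\sum_i\binom{m_i+1}{2}=\binom{d+2}{2}-1$ are correct and set up the problem nicely. The per-obstruction bound you isolate,
\[
L_d(a,\mu)\le\sum_i\binom{m_i+1}{2}\quad\text{whenever}\quad d\mu<\mm\cdot\ww(a),
\]
is indeed the heart of the matter, and numerical checks suggest it holds. But your proposed proof has a genuine gap: the two recursions you want to ``match'' are unrelated. Peeling the largest square from the staircase tiling of $[0,1]\times[0,a]$ replaces $a$ by $a-1$ and drops the first weight $w_1=1$; a Cremona move on $E$ replaces $(d;m_1,m_2,m_3,\dots)$ by $(2d-m_1-m_2-m_3;\,d-m_2-m_3,\,d-m_1-m_3,\,d-m_1-m_2,\,m_4,\dots)$, mixing three entries and changing $d$. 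There is no correspondence between these operations, and once you Cremona-reduce $E$ down to $(1;1,1)$ you have lost all connection to the original $a$ and $\mu$, so the ``trivial base case'' is not a base case for anything. If there is an inductive proof of your bound, it should run purely on the Euclidean recursion for $a$, treating the integers $m_i$ as inert data---but that is not what you sketched, and the floor-function bookkeeping you allude to is exactly the substance that would need to be supplied.

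The paper's proof takes a completely different route: it \emph{assumes} the full computation of $c(a)$ from Theorems~\ref{thm:main} and~\ref{thm:78}, which reduces the problem to a finite explicit list of obstruction classes (the Fibonacci classes $E(b_n)$ on $[1,\tau^4]$, handled via scaling in Corollary~\ref{cor:ECH2}, plus nine classes on $[\tau^4,8\frac1{36}]$ listed in Table~\eqref{t:0}). For each such class, with centre $a=p/q$ and last entry $m$, Proposition~\ref{prop:mainc} identifies the linear pieces $\tfrac1d(A+Bz)$ and $\tfrac1d(A'+B'z)$ of $\mu(d;\mm)$ and shows $(A,B)$, $(A',B')$ are the extreme integer points on the slant edge of a triangle $T^a_{A,B}$; one then checks numerically, via Lemma~\ref{le:ECH1}, that this triangle has exactly $\tfrac12(d+1)(d+2)+m$ lattice points. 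So the paper trades your hoped-for uniform lemma for a finite verification leaning on the prior classification---less elegant, but complete.
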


Thus, in the end, we should have $c_{ECH}(a) = c(a)$.

\begin{rmk}\labell{rmk:gen}\rm  
(i) The methods  used to analyze the embedding of a $4$-dimensional ellipsoid 
into a ball work equally well when one considers embeddings from one ellipsoid to another.  
In other words, Theorem~\ref{thm:wgt} below has an analog that is applicable to this setting; 
see~\cite[Theorem~1.5]{M}. 
One can also use much the same method to analyze embeddings of an ellipsoid into $S^2 \times S^2$;
see~\cite{DoM}.
 
\MS 
(ii) One might  wonder if these results can be extended to higher dimensions.  
For example, in dimension~$6$ is there is a symplectic embedding 
$E(a,b,c) \,\se\, E(a',b',c')$ if and only if $N(a,b,c) \preccurlyeq N(a',b',c')$?
Guth's construction in~\cite{Gu} of an embedding $E(1,R,R) \,\se\, E(2,10,2R^2)$ for {\it all}~$R>1$ 
shows that the answer is no. It is not at present clear what the correct condition should be 
in this case.
  (See Hind--Kerman \cite{HK} for a more precise version of Guth's result.)
Note that embedded contact homology is a specifically $4$-dimensional theory, 
as are the results stated in Theorem~\ref{thm:wgt} and 
Proposition~\ref{prop:eek} below 
on which our calculation of $c$~is based.

\MS 
(iii) 
There are two early papers by Biran with constructions that are
somewhat similar to ours. In~\cite{B-99} he uses an iterated ball packing
construction to obtain information on the K\"ahler cone of blow ups of~$\C P^2$.  
Continued fractions are relevant here, but Biran does not use them in the way we do.
The survey article~\cite{B-01} mentions how an understanding of embeddings of ellipsoids 
might help calculate this K\"ahler cone, and hence suggests a potential application of our work.  
However, for this one would need to understand which embeddings of ellipsoids give rise to
K\"ahler forms, a question that we do not consider.
\diam
\end{rmk}

\subsection{Method of proof}\labell{ss:meth}

The first author showed in~\cite{M} that if 
$a \ge 1$ is rational there is a finite sequence 
$\ww(a) := (w_1,\dots,w_M)$ of rational numbers such that the ellipsoid $E(1,a)$ 
embeds symplectically in the ball $B(\mu)$ exactly if the corresponding collection 
$\sqcup_i B(w_i)$ of~$M$ disjoint balls embeds symplectically in~$B(\mu)$.   
This ball embedding problem was reduced in 
McDuff--Polterovich~\cite{MP} to the question of understanding the symplectic cone of the $M$-fold 
blow up of $\C P^2$. 
After further work by Biran~\cite {B} and McDuff~\cite{Mdef}, 
the structure of this cone was finally elucidated in Li--Liu~\cite{LL} and Li--Li~\cite{LL2}. 

The key to understanding this cone is the following set~$\Ee_M$. 

\begin{defn}\labell{def:ee} 
Denote by $X_M$ the $M$-fold blow up of $\C P^2$
with any symplectic structure~$\om_M$ obtained by blow-up from the standard structure on~$\C P^2$. 
Let $L, E_1,\dots,E_M \in H_2(X_M)$ be the homology classes of the line and the $M$~exceptional divisors.  
We define~$\Ee_M$ to be the set 
consisting of $(0;-1,0,\dots,0)$ and of
all tuples $(d;\mm)$ of nonnegative integers
$(d;m_1,\dots,m_M)$ with $m_1 \ge \dots \ge m_M$ and such that
the class 
$
E_{(d;\mm)} \,:=\, dL - \sum_i m_i E_i
$
is represented in $(X_M,\om_M)$ by a symplectically embedded sphere of self-intersection~$-1$. 
If there is no danger of confusion, we will write $\Ee$ instead of~$\Ee_M$. 
Clearly, $\Ee_M \subset \Ee_{M'}$ whenever $M \le M'$.
\end{defn}

Since these classes $E_{(d;\mm)}, (d;\mm) \in \Ee_M,$ have nontrivial Gromov invariant, they have symplectically embedded representatives for all choices of the blow-up form~$\om_M$.  
Therefore the above definition does not depend on this choice. 

Denote by $-K := 3L-\sum E_i$ the standard anti-canonical divisor in~$X_M$, 
and consider the corresponding symplectic cone~$\Cc_K$, 
consisting of all classes on~$X_M$ that may be represented by a symplectic form with first Chern class Poincar\'e dual to~$-K$.  Then Li--Li show in~\cite{LL2} that
$$
\Cc_K \,=\, \left\{ \al\in H^2(X_M) \mid \al^2>0,\;\; \al(E)>0 \;\mbox{ for all } E \in \Ee_M \right\}.
$$
Proposition~\ref{prop:eek} below gives necessary and sufficient conditions for an element $(d;\mm)$ 
to belong to~$\Ee_M$. Before discussing this, we explain the relevance of~$\Ee_M$ to our problem. 
The following result is proved in~\cite{M}. We will denote by 
$\ell, e_i\in H^2(X_M)$ the Poincar\'e duals to $L, E_i$ and by
$\mm\cdot \ww = \sum_{i=1}^M m_i \;\!w_i$ the Euclidean scalar product in~$\RR^M$.

\begin{thm}\labell{thm:wgt}  
For each rational $a \ge 1$ there is a finite weight expansion $\ww(a) = (w_1,\dots,w_M)$ 
such that $E(1,a)$ embeds symplectically in the interior of $B^4(\mu)$ 
if and only if 
$\mu\ell  -\sum w_i e_i\in \Cc_K$.  
Moreover $w_i \le 1$ for all $i$ and $\sum_i w_i^2 = a$.
\end{thm}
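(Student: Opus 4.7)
The plan is to split the theorem into three independent parts: (a) combinatorially define the weight expansion $\ww(a)$ and verify $\sum w_i^2 = a$ together with $w_i \le 1$; (b) establish a geometric equivalence between embeddings of the ellipsoid $E(1,a)$ into a ball and packings of that ball by disjoint balls of sizes $w_i$; and (c) apply the Li--Li description of $\Cc_K$ together with the McDuff--Polterovich translation of ball packings into the language of the symplectic cone. For part (a), I would write $a = p/q$ in lowest terms with $p = kq + r$ and define $\ww(a)$ recursively as $k$ entries equal to $1$ followed by a rescaled copy $(r/q)\cdot \ww(q/r)$ of the weight expansion of $q/r$ (with the recursion terminating when $r=0$). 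Both $\sum w_i^2 = a$ and $w_i \le 1$ then fall out by a short induction on the number of Euclidean steps, using $k + (r/q)^2 \cdot (q/r) = a$ and the inductive bound $w_i' \le 1$.

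For part (b), the content is the equivalence
\[
E(1,a) \,\se\, \intt B(\mu)
\;\Longleftrightarrow\;
\bigsqcup_{i=1}^{M} B(w_i) \,\se\, \intt B(\mu) .
\]
For $(\Leftarrow)$ I would give an explicit construction: viewing $E(1,a)$ as a toric domain whose moment image is a right triangle with legs $1$ and $a$, one can exhibit $E(1,a)$ as a symplectically glued union of $M$ balls of sizes $w_i$ arranged along this triangle using Traynor-style symplectic folding dictated by the continued fraction. For $(\Rightarrow)$ I would argue by blow-up: the complement of an embedded $E(1,a)$ inside $B(\mu)$ contains a chain of symplectic exceptional spheres whose self-intersections are the negated partial quotients of the continued fraction of $a$, and blowing these spheres down in succession replaces the ellipsoid image by the disjoint balls $B(w_i)$ inside $B(\mu)$.

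For part (c), once the packing problem is in hand, I would invoke the correspondence of McDuff--Polterovich between the existence of a symplectic ball packing $\sqcup_i B(w_i) \se \intt B(\mu)$ and the existence of a symplectic form on $X_M$ in the cohomology class $\mu \ell - \sum_i w_i e_i$ whose first Chern class is Poincar\'e dual to $-K$; together with the Li--Li theorem quoted in the excerpt, this says that the packing exists iff $\mu \ell - \sum_i w_i e_i \in \Cc_K$, completing the chain of equivalences.

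The main obstacle is part (b). The $(\Leftarrow)$ direction requires a nontrivial symplectic construction whose combinatorial success depends delicately on the continued-fraction structure, and in particular on showing that successive folds fit together without loss of volume. The $(\Rightarrow)$ direction is harder still: one needs a pseudoholomorphic-curve argument to produce the chain of exceptional spheres in the ellipsoid complement, combined with Lalonde--McDuff-style inflation to carry out the successive symplectic blow-downs while keeping the target ball $B(\mu)$ in place.
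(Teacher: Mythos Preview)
The paper does not prove this theorem; it quotes it from~\cite{M} (and the auxiliary claims $w_i\le 1$, $\sum w_i^2=a$ are Lemma~\ref{le:ww}).  Your three-part scheme (a)+(b)+(c) is the right architecture, and parts~(a) and~(c) are accurate: (a) is exactly the continued-fraction induction behind Lemma~\ref{le:ww}, and (c) is precisely the McDuff--Polterovich/Li--Li input the paper invokes just before the statement.

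The difficulty is that in part~(b) you have the two directions reversed.  Exhibiting $E(1,a)$ as a full-measure union of disjoint open balls $B(w_i)$ over the squares of the weight diagram immediately yields
\[
E(1,a)\,\se\,\intt B(\mu)\ \Longrightarrow\ \bigsqcup_i B(w_i)\,\se\,\intt B(\mu)
\]
by restriction; no folding is needed, and this is the \emph{easy} direction.  It does not give the converse: an arbitrary disjoint packing $\sqcup_i B(w_i)\se\intt B(\mu)$ places the balls in an uncontrolled configuration, and nothing in your sketch explains how to reassemble them into an embedded ellipsoid.  Likewise, your ``$(\Rightarrow)$'' paragraph starts from an embedded $E(1,a)$ and hence again addresses the easy direction; moreover, the complement $B(\mu)\smallsetminus E(1,a)$ is an open subset of $\R^4$ and contains no closed symplectic spheres, so the chain of exceptional curves you invoke does not live where you put it.

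In~\cite{M} the hard direction (balls $\Rightarrow$ ellipsoid) is not done by a direct construction at all.  One compactifies $B(\mu)$ to $\C P^2$ and shows that an embedding $E(1,a)\se\intt B(\mu)$ exists iff a certain iterated toric blow-up of $\C P^2$ along the ellipsoid boundary carries a symplectic form with prescribed areas on a chain of exceptional spheres; the combinatorics of this Hirzebruch--Jung chain is the continued fraction of~$a$ (cf.\ Appendix~\ref{app:wt}).  That blow-up is then identified birationally with the ordinary $M$-fold blow-up $X_M$, so the ellipsoid condition becomes $\mu\ell-\sum w_i e_i\in\Cc_K$, which by McDuff--Polterovich is exactly the ball-packing condition.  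The inflation and $J$-holomorphic curve input you mention is used here, on the ellipsoid side, to pass between the existence of the symplectic form on the resolution and the existence of the embedding---not to manufacture balls in the complement.  In short: route the hard implication through the cone, as in your part~(c), rather than trying to prove~(b) as a self-contained geometric equivalence.
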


\begin{cor}\labell{cor:wgt}
If the rational number $a\ge 1$ has weight expansion $\ww(a) = \ww=(w_i)$, then
$$
c(a) \,=\, \sup \Bigl(\sqrt a,\;\mu(d;\mm)(a) \mid (d;\mm)\in \Ee\Bigr),
$$
where
$\mu(d;\mm)(a) := \frac{\mm \cdot \ww(a)}{d}$.
\end{cor}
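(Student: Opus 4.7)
The plan is to translate Theorem~\ref{thm:wgt} into an explicit numerical criterion by feeding it into the Li--Li description of $\Cc_K$ recalled above. Let $M$ denote the length of the weight expansion $\ww = (w_1,\ldots,w_M)$ and set
$$
\alpha(\mu) \,:=\, \mu\,\ell \,-\, \sum_{i=1}^{M} w_i\,e_i \,\in\, H^2(X_M).
$$
By Theorem~\ref{thm:wgt}, $E(1,a)$ embeds symplectically into the interior of $B^4(\mu)$ if and only if $\alpha(\mu) \in \Cc_K$; and by Li--Li this is equivalent to the two positivity conditions $\alpha(\mu)^2 > 0$ and $\alpha(\mu)\bigl(E_{(d;\mm)}\bigr) > 0$ for every $(d;\mm) \in \Ee_M$.

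Next I would unwind each of these pairings. Using Poincar\'e duality and the standard intersection numbers $L^2 = 1$, $L\cdot E_i = 0$, $E_i \cdot E_j = -\delta_{ij}$ on $X_M$, together with the identity $\sum_i w_i^2 = a$ from Theorem~\ref{thm:wgt}, a one-line computation gives
$$
\alpha(\mu)^2 \,=\, \mu^2 - a,
\qquad
\alpha(\mu)\bigl(E_{(d;\mm)}\bigr) \,=\, \mu\,d - \mm\cdot\ww.
$$
Hence the Li--Li conditions are respectively $\mu > \sqrt a$ and $\mu > \mu(d;\mm)(a)$. The distinguished element $(0;-1,0,\ldots,0) \in \Ee$ reproduces only the automatic constraint $w_1 > 0$.

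Taking the infimum over the $\mu$ satisfying all of these strict inequalities then yields
$$
\inf\bigl\{\mu : \alpha(\mu) \in \Cc_K\bigr\} \,=\, \sup\bigl(\sqrt a,\; \mu(d;\mm)(a) \mid (d;\mm)\in\Ee\bigr),
$$
where $\Ee$ here is the shorthand for $\Ee_M$ introduced after Definition~\ref{def:ee}. To identify the left-hand side with $c(a)$ I would invoke the continuity of $c$ from Lemma~\ref{le:1}, which bridges the mismatch between embeddings into the open ball used in Theorem~\ref{thm:wgt} and embeddings into the closed ball used in the definition~\eqref{eq:ca} of $c$.

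The corollary is essentially a bookkeeping reformulation of Theorem~\ref{thm:wgt} via the Li--Li description of $\Cc_K$, so I do not anticipate a substantive obstacle. The only mildly delicate point is the continuity step that converts between strict and non-strict embedding inequalities; all of the nontrivial content has already been packaged into Theorem~\ref{thm:wgt} and the Li--Li theorem.
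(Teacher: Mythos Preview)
Your proposal is correct and follows essentially the same route as the paper: translate Theorem~\ref{thm:wgt} via the Li--Li description of $\Cc_K$ into the two conditions $\mu^2 > \ww\cdot\ww = a$ and $d\mu > \mm\cdot\ww$ for all $(d;\mm)\in\Ee_M$, then take the infimum. You are in fact more careful than the paper in two places --- you explicitly note that the exceptional element $(0;-1,0,\dots,0)$ contributes only the trivial constraint $w_1>0$, and you flag the open-versus-closed ball discrepancy and resolve it (continuity suffices, or one can argue directly that the two infima coincide).
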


\begin{proof}  
The above description of $\Cc_K$ shows that $E(1,a)$ embeds into the interior of $B^4(\mu)$
if and only if
the tuple $(\mu,\ww)$ satisfies the conditions
\begin{itemize}
\item[(i)]   $\mu^2> \ww\cdot\ww =: \sum w_i^2$,

\SSS
\item[(ii)]  $d\mu> \mm\cdot\ww =: \sum m_iw_i$\; for all $(d;\mm) \in \Ee_M$.
\end{itemize}
The corollary now follows because 
$\ww \cdot \ww = a$.
\end{proof}
 
Biran showed in \cite{B} that  
$\mu(d;\mm) (k) \le \sqrt k$ for all $(d;\mm) \in \Ee$ 
for all integers $k\ge 9$.  
His argument extends to all $a\ge 9$ 
and shows:
 
\begin{cor}\labell{cor:2}  
$c(a)=\sqrt{a}$ when $a\ge 9$.
\end{cor}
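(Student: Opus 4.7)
First, I would reduce to rational~$a$ by continuity: Lemma~\ref{le:1} together with continuity of $\sqrt{\,\cdot\,}$ makes it enough to verify $c(a)=\sqrt a$ on a dense set, and Corollary~\ref{cor:wgt} supplies a workable formula exactly for rational~$a$. The overall plan is then to show that every obstruction $\mu(d;\mm)(a)=\mm\cdot\ww/d$ is strictly smaller than~$3$, and to observe that $3\le\sqrt a$ whenever $a\ge 9$, so the $\sqrt a$ term wins the supremum in Corollary~\ref{cor:wgt}.

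To bound $\mu(d;\mm)(a)$ for a typical $(d;\mm)\in\Ee$ with $d\ge 1$, I would feed in two ingredients. From Theorem~\ref{thm:wgt} every weight satisfies $w_i\le 1$. On the exceptional-class side, since $E_{(d;\mm)}=dL-\sum_i m_i E_i$ is represented by an embedded symplectic sphere of self-intersection~$-1$, the adjunction formula applied to $-K=3L-\sum E_i$ yields $-K\cdot E_{(d;\mm)}=1$, i.e.\
\[
\sum_{i=1}^M m_i \,=\, 3d-1 .
\]
Combining these with $m_i\ge 0$,
\[
\mm\cdot\ww \,=\, \sum_i m_i w_i \,\le\, \sum_i m_i \,=\, 3d-1,
\]
so $\mu(d;\mm)(a)\le 3-\tfrac 1d < 3 \le \sqrt a$. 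The remaining element $(0;-1,0,\dots,0)\in\Ee$ gives $\mm\cdot\ww=-w_1<0$ and contributes no upper bound at all. Taking the supremum in Corollary~\ref{cor:wgt} therefore gives $c(a)=\sqrt a$ for every rational $a\ge 9$, and continuity closes the remaining case.

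There is no real obstacle in this plan; the only insight needed is to juxtapose the weight bound with the adjunction identity $\sum m_i=3d-1$. The threshold $a=9$ is natural because the estimate is \emph{sharp} there: one has $\sqrt a=3$ and classes with large~$d$ push $\tfrac{3d-1}{d}$ arbitrarily close to~$3$. For $a<9$ this near-equality becomes a genuine obstruction, and choosing exceptional classes carefully (via continued-fraction expansions of Fibonacci ratios) produces the Fibonacci stairs of Theorem~\ref{thm:main}. So while the corollary itself is short, it pinpoints exactly the region above which~$c$ is essentially forced to equal~$\sqrt a$.
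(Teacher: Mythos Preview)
Your proof is correct and follows essentially the same route as the paper's: reduce to rational $a$ by continuity, use the Chern-class identity $\sum m_i=3d-1$ together with $w_i\le 1$ to bound $\mu(d;\mm)(a)<3\le\sqrt a$, and invoke Corollary~\ref{cor:wgt}. Your explicit treatment of the exceptional element $(0;-1,0,\dots,0)$ and the remark on why the threshold is $a=9$ are nice additions but not structurally different.
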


\begin{proof} 
Since $c$ is continuous by Lemma~\ref{le:1}, it suffices to check this for rational $a$. 
Fix $(d;\mm) \in \Ee$.
The corresponding symplectically embedded $(-1)$-sphere~$E$ has $c_1(E) = 1$,
and so $3d-1=\sum_i m_i$.
Therefore, $\sum_i m_i\;\! w_i \le \sum_i m_i = 3d-1$.
For $a \ge 9$ we thus find
$$
\mu(d;\mm)(a) \,:=\, \tfrac{\mm \cdot \ww}d \,<\, 3 \,\le\, \sqrt a .
$$
Now use Corollary~\ref{cor:wgt}.
\end{proof}

In view of Corollary~\ref{cor:wgt}, 
our task is two-fold; first to understand the weight expansions and then to understand 
the restrictions placed on embeddings by the elements of~$\Ee_M$.
The description that we now give for $\ww(a)$ is convenient for calculations but is somewhat different from that in~\cite{M}. 
The equivalence of the two definitions is established in Corollary~\ref{cor:x}.

\begin{defn}\labell{def:wa}
Let $a=p/q \in \Q$ written in lowest terms.
The {\bf weight expansion} $\ww := (w_i) := (w_1,\dots, w_M)$ of $a\ge 1$
is defined recursively as follows:

\MS
$\bullet$   
$w_1 = 1, $ and $ w_n \ge w_{n+1}>0$ for all $n$;

\SSS 
$\bullet$  
if $ w_i>w_{i+1} = \dots = w_{n}$ (where we set $w_0 := a$), then
$$
w_{n+1} = 
\left\{\begin{array} {ll}
w_{n} &\mbox{if }\;  w_{i+1} + \dots + w_{n+1} = (n-i+1)w_{i+1} \le w_i\\
w_i - (n-i) w_{i+1} & \mbox{otherwise;} 
\end{array}\right.
$$

$\bullet$ 
the sequence stops at $w_n$ if the above formula gives $w_{n+1}=0$.

\SSS \NI
The number~$M$ of entries in $\ww(a)$
is called the {\bf length} $\ell(a)$ of~$a$.
\end{defn}

\NI
For example, $a=25/9$ has weight expansion 
$\ww(a) = (1,1,\frac 79,\frac 29,\frac 29,\frac 29,\frac 19,\frac19)$, which we will abbreviate as
$(1^{\times2}, \frac 79, \frac 29\,\!^{\times3},\frac 19\,\!^{\times2})$. 

We may also think of this expansion $(w_i)$ as consisting of $N+1$ blocks of
length $\ell_s$ of the (decreasing) numbers $x_s$ where $x_0=1$; viz:
\begin{eqnarray}\labell{eq:xs}
\ww(a) &:=& \bigl(\underbrace{1,\dots,1}_{\ell_0}, \,
\underbrace{x_1,\dots,x_1}_{\ell_1}, \,
\dots, \, \underbrace{x_N,\dots,x_N}_{\ell_N} \bigr) \\ \notag
&=& \bigl( 1^{\times \ell_0},\, x_1^{\times \ell_1}, \, \dots, \, x_N^{\times \ell_N} \bigr) .
\end{eqnarray}
Then $x_1 = a-\ell_0 < 1$, $x_2 = 1-\ell_1 x_1 < x_1$, and so on.
In this form the sequence can be generated as follows.  
If $a=\frac pq$, first draw a rectangle of length $p$ and height $q$, 
then mark off as many (say~$\ell_0$) squares of side length $q$ as possible,  
then in the remaining rectangle of size $q\times (p-\ell_0 q)$ mark off
as many (say $\ell_1$) squares of side length $(p-\ell_0 q)$ as possible, 
continuing in this way until the rectangle is completely filled.
Then $qx_j$ is the side length of the $(j+1)$st set of squares, while the $\ell_j$ are the multiplicities: 
see~Figure~\ref{figure.expa}.  
As is well known, the multiplicities $\ell_j, 0 \le j \le N$, give the continued fraction expansion $[\ell_0; \ell_1,\dots,\ell_N]$ of $p/q$. 
For example, $25/9=[2;1,3,2]$ and   
$$
\frac {25}9=2 + \frac 1{1+\frac 1{3+\frac 12}} \,.
$$
Notice also that $25/9 = 2\cdot 1^2 + (7/9)^2 + 3(2/9)^2 + 2(1/9)^2$. 

\begin{figure}[ht]
 \begin{center}
  \psfrag{9}{$9$}
  \psfrag{7}{$7$}
  \psfrag{2}{$2$}
  \psfrag{1}{\footnotesize$1$}
  \leavevmode\epsfbox{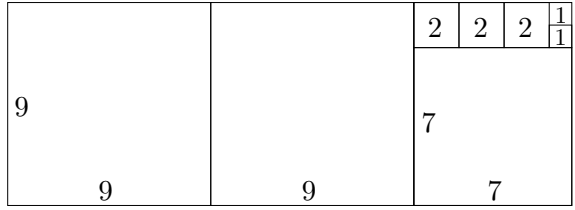}
 \end{center}
 \caption{The expansion for $a=25/9$.}
 \label{figure.expa}
\end{figure}

\begin{lemma}\labell{le:ww} 
Let $\ww := (w_1,\dots,w_M)$ be the weight expansion of $a=\frac pq \ge 1$. Then
\begin{eqnarray} \notag
w_M &=& \textstyle{\frac 1q} , \\ \labell{eq:rsq}
\ww\cdot\ww \,:=\, \sum_{i=1}^M w_i^2 &=& a,\\ \labell{eq:rsum}
\sum_{i=1}^M  w_i & =& a+1-\tfrac 1q .
\end{eqnarray}
\end{lemma}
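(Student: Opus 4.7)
\medskip

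\textbf{Proof plan.}  I will work with the block form \eqref{eq:xs} of the weight expansion and read off everything from a single linear recursion on the block heights $x_j$. Setting $x_{-1} := a$ and $x_0 := 1$, the recursive definition of $\ww(a)$ says that we keep appending copies of the current value $x_j$ as long as there is room inside the \lq\lq remaining'' rectangle of size $x_{j-1}\times x_j$, and then pass to the next block of height
$$
x_{j+1} \;=\; x_{j-1} - \ell_j\,x_j, \qquad 0\le j\le N,
$$
with the convention $x_{N+1}:=0$ at the point where the sequence terminates. Equivalently,
$$
\ell_j\,x_j \;=\; x_{j-1}-x_{j+1}, \qquad 0\le j\le N. \eqno(\ast)
$$

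To identify $w_M = x_N$ with $1/q$, I will multiply the recursion $(\ast)$ through by $q$ and observe that the integers $q x_j$ then satisfy precisely the Euclidean algorithm applied to the pair $(p,q)$, with $qx_{-1}=p$ and $qx_0=q$. Since $a=p/q$ is in lowest terms, $\gcd(p,q)=1$, so the last nonzero remainder is $1$; that remainder is $qx_N$, yielding $w_M=x_N=1/q$.

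For the two sum identities I will use $(\ast)$ to telescope. For \eqref{eq:rsum},
$$
\sum_{i=1}^M w_i \;=\; \sum_{j=0}^N \ell_j\,x_j \;=\; \sum_{j=0}^N \bigl(x_{j-1}-x_{j+1}\bigr) \;=\; x_{-1}+x_0-x_N-x_{N+1} \;=\; a+1-\tfrac{1}{q}.
$$
For \eqref{eq:rsq} I use the same trick after multiplying $(\ast)$ by $x_j$:
$$
\sum_{i=1}^M w_i^2 \;=\; \sum_{j=0}^N \ell_j\,x_j^2 \;=\; \sum_{j=0}^N \bigl(x_{j-1}x_j - x_j x_{j+1}\bigr) \;=\; x_{-1}x_0 - x_N x_{N+1} \;=\; a.
$$

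The only real content is extracting the clean recursion $(\ast)$ from the somewhat awkward case-by-case wording of Definition~\ref{def:wa}; once $(\ast)$ is in hand together with the boundary values $x_{-1}=a$, $x_0=1$, $x_{N+1}=0$ and the Euclidean-algorithm identification $x_N=1/q$, all three identities fall out of two-term telescoping. I expect the bookkeeping step of justifying $(\ast)$, and in particular the termination condition $x_{N+1}=0$, to be the only mildly delicate point; this amounts to matching the two clauses in the definition of $w_{n+1}$ with, respectively, \lq\lq$\ell_j$ is not yet maximal'' and \lq\lq$\ell_j$ is maximal and we pass to a new block,'' which is immediate from the geometric picture of subdividing the $p\times q$ rectangle into squares (Figure~\ref{figure.expa}).
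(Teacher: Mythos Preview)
Your proof is correct and follows essentially the same route as the paper's. Both arguments extract the block recursion $x_{j+1}=x_{j-1}-\ell_j x_j$ and telescope; the only cosmetic difference is that the paper reads off $\sum w_i^2=a$ from the geometric picture (the squares tile the $p\times q$ rectangle), whereas you obtain it by multiplying the recursion through by $x_j$ and telescoping the products $x_{j-1}x_j$. Your Euclidean-algorithm identification of $qx_N$ with $\gcd(p,q)=1$ is exactly the paper's divisibility argument phrased algebraically.
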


\begin{proof}  
Equation~\eqref{eq:rsq} holds because the total area of all the squares is $p\:\!q$.  
To understand the sum,
suppose that there are $N+1$ sets of squares in the expansion~\eqref{eq:xs} 
so that $x_{N+1}=0$ and write
\begin{eqnarray*}
\sum_i w_i &=& 1 +\Bigl( \underbrace{1 + \dots +1}_{\ell_0-1} + x_1\Bigr) + \Bigl( \underbrace{x_1 + \dots +x_1}_{\ell_1-1}+ x_2\Bigr) + \dots \\
&&\qquad\qquad \qquad \qquad \qquad \qquad\qquad  +
\Bigl( \underbrace{x_N + \dots +x_N}_{\ell_N-1}+ x_{N+1}\Bigr)\\
&=& 1 +  (a-1) + (1-x_1) + \dots + (x_{N-1} - x_N) \\
&=& 1 + a-x_N.
\end{eqnarray*}
It remains to note that $x_N = w_M=1/q$.
This is obvious from the geometric construction. 
For, $q \;\!x_N= q\;\!w_M$ is the side length of the smallest square in the decomposition 
of the rectangle.
If this length were divisible by~$s$, then the side lengths of all the squares would be divisible 
by~$s$. Hence both $p$ and $q$ would be divisible by~$s$. But they are mutually prime by hypothesis.
\end{proof}

We next describe the sets $\Ee_M$.  
The first lemma is well known, and can be easily deduced from Proposition~\ref{prop:eek} below.
 
\begin{lemma} \labell{le:eekfin}
The set $\Ee_M$ is finite for $M\le 8$ with elements $(d; m_1,\dots,m_M)$ equal to:
\begin{gather*}  
(0;-1),\;\; 
(1; 1, 1),\;\; (2; 1^{\times 5}),\;\;
(3; 2, 1^{\times 6}), \\
(4;2^{\times3}, 1^{\times5}),\;\;
(5; 2^{\times6}, 1,1),\;\; (6; 3, 2^{\times7}).
\end{gather*}
\end{lemma}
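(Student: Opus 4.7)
The plan is to reduce the problem to a finite Diophantine enumeration together with a realization step. Every class $(d;\mm) \in \Ee_M$ with $d \ge 1$ is represented by a symplectically embedded sphere of self-intersection~$-1$, so it satisfies the two basic identities
$$
d^2 - \sum_{i=1}^M m_i^2 \,=\, -1,
\qquad
3d - \sum_{i=1}^M m_i \,=\, 1,
$$
the first from $E \cdot E = -1$ and the second from the adjunction formula $c_1(E)=1$ (genus zero, self-intersection $-1$). The special class $(0;-1,0,\dots,0)$ handles the $d=0$ case directly.

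The first step is to bound $d$ uniformly in $M \le 8$. Applying Cauchy--Schwarz to $(m_i)$ and $(1,\dots,1)$,
$$
(3d-1)^2 \,=\, \Bigl(\sum_i m_i\Bigr)^{\!2} \,\le\, M \sum_i m_i^2 \,=\, M(d^2+1).
$$
For $M = 8$ this reads $9d^2 - 6d + 1 \le 8d^2 + 8$, i.e.\ $d^2 - 6d - 7 \le 0$, forcing $d \in \{0,1,\dots,7\}$. Moreover equality in Cauchy--Schwarz forces all $m_i$ equal; at $d=7$ that would demand $m_i = 20/8 \notin \ZZ$, ruling out $d=7$. For each remaining $d$ I would then list all weakly decreasing nonnegative integer $8$-tuples $(m_1,\dots,m_8)$ with prescribed sum $3d-1$ and prescribed sum of squares $d^2+1$. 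The most efficient way to run the check is to track the variance $\sum (m_i - \bar m)^2 = (d^2+1) - (3d-1)^2/M$, which is very small for these $(d,M)$ and so cuts down the admissible partitions to the listed ones almost immediately; e.g.\ for $d=6$, $M=8$, $\bar m = 17/8$ and the variance $37 - 289/8 = 7/8$ leaves only $(3,2^{\times 7})$.

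The final step is to promote each surviving numerical solution to an actual element of $\Ee_M$. This is an immediate consequence of Proposition~\ref{prop:eek} below, which supplies necessary and sufficient arithmetic conditions for $(d;\mm) \in \Ee_M$; the listed tuples are easily seen to satisfy its hypotheses (alternatively, each can be reduced to the exceptional class $E_M$ by iterated Cremona moves, as is standard for del Pezzo surfaces with $M \le 8$ points blown up). One also uses $\Ee_M \subset \Ee_{M'}$ for $M \le M'$ to extend the list through all $M \le 8$ at once.

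The step I would expect to be the main obstacle is not the bound on $d$ (trivial) or the realization (cited) but the enumeration for $d=4,5,6$: there one must rule out several near-misses where the sum is correct but the sum of squares is off by a few units. The variance bookkeeping above handles this cleanly, which is why I would organize the case analysis around $\bar m = (3d-1)/M$ rather than directly over partitions.
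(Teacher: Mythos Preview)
Your argument is correct and is precisely the fleshing-out of what the paper leaves implicit: the paper simply says the lemma ``is well known, and can be easily deduced from Proposition~\ref{prop:eek} below,'' and your Cauchy--Schwarz bound $(3d-1)^2 \le M(d^2+1)$ followed by the small enumeration and Cremona reduction is exactly how one deduces it. One small sharpening worth noting: for $M \le 8$ every nonnegative integral solution of the two Diophantine identities automatically reduces to $(0;-1,0,\dots,0)$ (equivalently, the Weyl group acts transitively on $(-1)$-classes on a del Pezzo surface), so the realization step is in fact vacuous once your enumeration shows the listed tuples are the \emph{only} solutions; you need not separately verify the Cremona reductions.
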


From this one can immediately calculate $c(a)$ for those $a$ whose 
weight expansion has $k\le 8$.

\begin{cor}\labell{cor:1} The function $c$ takes the following values:
\begin{gather}\notag
c(2)=c(3)=c(4) = 2, \quad c(5)=c(6) = \ts{\frac 52},\\ \notag
\ts{c\bigl(\frac {13}2\bigr) = \frac{13}5,\quad
c(7) = \frac 83,\quad c(8) = \frac{17}6.}
\end{gather}
Moreover, its graph  is linear on each subinterval
$[1,2]$, $[2,4]$, $[4,5]$, $[5,6]$.
\end{cor}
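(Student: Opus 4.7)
\textbf{Proof plan for Corollary~\ref{cor:1}.}
The plan is to apply Corollary~\ref{cor:wgt} directly: for each of the listed rational numbers $a$ I first compute the weight expansion $\ww(a)$ by Definition~\ref{def:wa}, observe that its length $\ell(a)$ is at most~$8$, and then exhaust the finite list of classes in~$\Ee_{\ell(a)}$ provided by Lemma~\ref{le:eekfin}. Evaluating $\mu(d;\mm)(a) = \mm\cdot\ww(a)/d$ on each such class and taking the maximum with $\sqrt a$ yields $c(a)$.

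The weight expansions are easy to read off from the geometric picture in Figure~\ref{figure.expa}. For the integers $2,3,4,5,6,7,8$ one simply gets $\ww(k) = (1^{\times k})$, while $\ww(13/2) = (1^{\times 6}, (1/2)^{\times 2})$ (six unit squares plus two squares of side $1/2$). In each case one also checks $\ww\cdot\ww = a$ as predicted by Lemma~\ref{le:ww}. For any class $(d;\mm)$ in Lemma~\ref{le:eekfin} with more entries than $\ell(a)$, one may use $\Ee_M \subset \Ee_{M'}$ for $M\le M'$ and pad~$\ww(a)$ with zeros so that the dot product is defined; all other classes of length $\ell(a)$ are obtained by padding $\mm$ with trailing zeros. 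Thus only finitely many values of $\mm\cdot\ww/d$ need be computed for each~$a$.

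A direct calculation shows that the dominant class is $(1;1,1)$ for $a\in\{2,3,4\}$ giving $\mu=2$; $(2;1^{\times 5})$ for $a\in\{5,6\}$ giving $\mu=5/2$; $(3;2,1^{\times 6})$ for $a=7$ giving $\mu=8/3>\sqrt 7$; $(6;3,2^{\times 7})$ for $a=8$ giving $\mu=17/6>\sqrt 8$; and $(5;2^{\times 6},1,1)$ for $a=13/2$ giving $\mu=13/5$, which beats the other candidates (each evaluating to $\le 5/2 < 13/5$). Comparing each maximum to $\sqrt a$ completes the point-values.

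For the linearity assertions, the plan is to use only the qualitative properties in Lemma~\ref{le:1}. The scaling inequality says $x\mapsto c(x)/x$ is nonincreasing on $[1,\infty)$, while $c$ itself is nondecreasing. On $[1,2]$, combining $c(a)/a \ge c(2)/2 = 1$ with $c(a)/a \le c(1)/1 = 1$ forces $c(a) = a$. On $[4,5]$, the same argument applied with the endpoint values $c(4)=2$ and $c(5)=5/2$ gives $c(a)/a = 1/2$, i.e.\ $c(a) = a/2$. On $[2,4]$ and $[5,6]$, monotonicity squeezes $c$ between two equal endpoint values, yielding the constants $2$ and $5/2$ respectively. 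No step here is delicate; the only potential pitfall is bookkeeping when enumerating $\Ee_M$ and padding with zeros, which is resolved by the explicit finite list in Lemma~\ref{le:eekfin}.
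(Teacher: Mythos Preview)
Your proposal is correct and follows essentially the same approach as the paper. The paper's proof is terser—it cites \cite{M} for the integer values and notes that $c(13/2)$ is computed ``similarly'' since $\ell(13/2)<9$, then invokes Lemma~\ref{le:1} for the linearity—but the underlying method is exactly yours: since each $a$ in question has $\ell(a)\le 8$, Corollary~\ref{cor:wgt} reduces the computation to the finite list~$\Ee_{\ell(a)}$ from Lemma~\ref{le:eekfin}, and the linearity on the four subintervals follows from the monotonicity and scaling properties of Lemma~\ref{le:1} together with the endpoint values just computed.
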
 

\begin{proof} 
The values of $c(a)$ for integers $a \in [1,8]$ were calculated in \cite[Cor~1.2]{M}.  
One can similarly calculate $c(\frac{13}2)$ since the length of $\ww \bigl(\frac {13}2 \bigr)$ is~$<9$.
The second statement then follows from Lemma~\ref{le:1}.
\end{proof}

From Lemma~\ref{le:eekfin} we can also compute $c$ near~$7$.

\begin{prop} \labell{prop:7easy} 
For $a \in [6\frac{11}{12},7]$ we have $c(a) = \frac13(a+1)$. 
Also $c(a) = \frac 83$ for $a \in [7, 7\frac 19]$.
\end{prop}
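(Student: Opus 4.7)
My plan is to evaluate the supremum in Corollary~\ref{cor:wgt} on each interval by identifying the single binding obstruction in $\Ee$, namely the class $(3;2,1^{\times 6})$ from Lemma~\ref{le:eekfin}. For the lower bounds, on $a\in[6\tfrac12,7]$ the weight expansion begins $\ww(a)=(1^{\times 6},a-6,7-a,\dots)$ (since $\ell_0=6$ and $x_1=a-6\in(\tfrac12,1]$ forces $\ell_1=1$), so $\mu(3;2,1^{\times 6})(a)=\tfrac{1}{3}(2+5+(a-6))=\tfrac{a+1}{3}$; on $a\in[7,7\tfrac19]$ the expansion begins $(1^{\times 7},\dots)$, giving $\mu=\tfrac{8}{3}$. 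Corollary~\ref{cor:wgt} then yields $c(a)\ge\tfrac{a+1}{3}$ on the first interval and $c(a)\ge\tfrac{8}{3}$ on the second. The volume term does not dominate, since $\sqrt a\le\tfrac{a+1}{3}$ reduces to $a^2-7a+1\ge 0$, which holds for $a\ge\tau^4\approx 6.854$, and $\sqrt a\le\tfrac{8}{3}$ iff $a\le\tfrac{64}{9}=7\tfrac19$.

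For the matching upper bounds I must check that no other $(d;\mm)\in\Ee$ gives $\mu(d;\mm)(a)$ exceeding $\tfrac{a+1}{3}$ or $\tfrac{8}{3}$. On $[7,7\tfrac19]$ I would short-circuit via Lemma~\ref{le:1}: since $c$ is nondecreasing and $c(\tfrac{64}{9})\ge\sqrt{\tfrac{64}{9}}=\tfrac{8}{3}$, once I establish $c(\tfrac{64}{9})\le\tfrac{8}{3}$, monotonicity forces $c(a)\le\tfrac{8}{3}$ throughout $[7,\tfrac{64}{9}]$. On $[6\tfrac{11}{12},7]$ the upper bound must be proved for each $a$, but the weight structure is rigid: $w_1=\cdots=w_6=1$, $w_7=a-6\in[\tfrac{11}{12},1]$, and $w_i\le 7-a\le\tfrac{1}{12}$ for $i\ge 8$, so altogether $w_i\le\tfrac19$ for $i\ge 8$ throughout $[6\tfrac{11}{12},7\tfrac19]$.

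The estimate of $\mu(d;\mm)$ splits naturally by $d$. For $d\le 6$, Lemma~\ref{le:eekfin} supplies the complete list of candidates with $\mm$ supported in the first eight coordinates, and a direct piecewise linear computation at the endpoints of each interval confirms that $(3;2,1^{\times 6})$ is the unique maximizer. For $d\ge 7$ I would exploit the identities $\sum m_i=3d-1$ and $\sum m_i^2=d^2+1$ (the second coming from the self-intersection $-1$ condition) together with the block structure of $\ww(a)$: splitting $\mm\cdot\ww=\sum_{i\le 7}m_iw_i+\sum_{i\ge 8}m_iw_i$, bounding the first sum by $w_i\le 1$ and Cauchy--Schwarz, and the second by the damping $w_i\le\tfrac19$ should eliminate all $d$ beyond an explicit threshold. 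The principal obstacle is the intermediate range of $d$, where $\Ee_M$ with $M\ge 9$ contributes infinitely many classes and the coarse bounds are not tight; there I expect to invoke the characterization of $\Ee$ forthcoming in Proposition~\ref{prop:eek}, which for each fixed $d$ produces only finitely many $\mm$ whose $\mu(d;\mm)(a)$ can compete with $(3;2,1^{\times 6})$, reducing the verification to a finite check.
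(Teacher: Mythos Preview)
Your lower bounds from $(3;2,1^{\times 6})$ are correct, as is the reduction of $[7,7\tfrac19]$ to the single endpoint $a=\tfrac{64}{9}$ via monotonicity. The gap is in the upper bound, and it is the same gap in both parts: you propose to control $\mu(d;\mm)(a)$ class by class, splitting on~$d$, but the intermediate range is not handled. ``Proposition~\ref{prop:eek} gives finitely many $\mm$ for each $d$'' is a promise of a computation, not a proof, and at $a=6\tfrac{11}{12}$ your Cauchy--Schwarz bound (which amounts to $\mu\le\sqrt{a}\sqrt{1+1/d^2}$) only bites once $d^2\ge 9a/(a^2-7a+1)\approx 147$, leaving $d\le 12$ to be checked by hand. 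Note also that Lemma~\ref{le:eekfin} lists $\Ee_M$ for $M\le 8$, not all classes with $d\le 6$; for instance $(6;4,2^{\times 4},1^{\times 5})\in\Ee_{10}$.

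The paper sidesteps all of this with an auxiliary repacking. For $a=6+x$ with $x\ge\tfrac{11}{12}$ one has $1-x\le x/9$, so the tail weights $w_8,w_9,\dots$ begin with at least nine equal terms; by Corollary~\ref{cor:2} (full filling for $a\ge 9$) the entire tail packs into a single ball of size $\lambda$ with $\lambda^2=x(1-x)$. This reduces the question to whether the \emph{eight} balls $(1^{\times 6},x,\lambda)$ embed in $B\bigl(\tfrac{a+1}{3}\bigr)$, which involves only the finite set $\Ee_8$. The binding competitor there is $(5;2^{\times 6},1,1)$, and comparing it to $(3;2,1^{\times 6})$ yields $13x^2-13x+1\ge 0$, which holds exactly for $x\ge\tfrac{11}{12}$---explaining where that threshold comes from. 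The same trick at $a=7\tfrac19$ packs the nine $B(\tfrac19)$ into one $B(\tfrac13)$, again reducing to $\Ee_8$. This repacking is the missing idea: it converts the infinite family of potential obstructions into a finite one without any case analysis on~$d$.
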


\begin{proof}
Since $c$ is continuous, it suffices to prove these identities for $a \in \Q$. 
First assume that $a<7$ and write $a = 6 +x$. Then
$$
\ww(a) \,=\, \left( 1^{\times 6}, x, w_8,\dots, w_M \right),
$$
where $0<w_i < 1-x$ for $i\ge 8$.   
The element $(3;2,1^{\times 6})\in \Ee_7$ gives the constraint
$c(a)\ge \mu_0= \frac13(a+1)$.
 
Since $1-x\le x/9 $, at least the first $9$ of the weights $w_8,w_9,\dots$ are equal.  
Hence, by Corollary \ref{cor:2},
we can fully pack all but the first~$7$ balls into one ball of size~$\la$ 
where $a=6+x^2+\la^2$. It remains to show that the $8$~balls of sizes
$$
W=(1,\dots,1,x,\la)
$$
fit into $B(\mu_0)$, that is, $W\cdot \mm \le \frac  d3(7+x)$ for all $(d;\mm)\in \Ee$.  
 
This is clear for classes in~$\Ee_7$.  
The classes in $\Ee_8 \less\, \Ee_7$ are 
$(4;2^{\times3}, 1^{\times5})$, 
$(5; 2^{\times6}, 1,1)$,
$(6; 3, 2^{\times7})$. 
The strongest constraint comes from
$(5;2^{\times 6},1,1)$ and equals
$$
\mu_1 \,=\, \ts{\frac 15}\bigl(12+x+\la\bigr).
$$
The desired inequality $\mu_1\le \mu_0$ is equivalent to $1+3\la\le 2x$.
Since $\la^2 = x(1-x)$ we need $13x^2-13x+1\ge 0$,
which is satisfied when $x\ge \frac {11}{12}$.
 
We know $c(7) = \frac 83$ by Corollary~\ref{cor:1}.  Therefore it suffices to show that
$c(7\frac 19) = \frac 83$. As above, since the nine balls 
$B(\frac 19)$ fully fill $B(\frac 13)$, we just need to check that
the finite number of elements in $\Ee_8$ give no obstruction to embedding 8~balls, 
seven of size $1$ and one of size $\frac 13$, into $B(\frac 83)$.  
\end{proof}

\begin{rmk} \labell{rmk:8}
{\rm
Similarly, there is an obstruction at $a=8$ given by the class 
$(d;\mm)=(6;3,2^{\times 7})$. 
For $a<8$ with $\ww(a) = (1^{\times 7}, a-7,\dots)$
this gives the constraint $\mu(a) = \frac {1+2a}{6}$, while for $a \ge 8$ we get
$\mu(a) = \frac {17}{6}$.  
Therefore $c(a) \ge \mu(a) \ge \sqrt a$ for
$\frac{8+3\sqrt 7}2 \le a \le 8\frac 1{36}$.  
However, unfortunately, one cannot argue as above
to show that $c = \mu$ on some interval $(8-\eps, 8 \frac 1{36}]$
because the auxiliary packings would involve $9$~balls and $\Ee_9$ is infinite. 
We shall prove that $c = \mu$ on $\bigl[ \frac{8+3\sqrt 7}2, 8\frac 1{36} \bigr]$ 
in Sections~\ref{ss:78} and \ref{ss:89} by different methods.
\diam
}
\end{rmk}

Now consider $\Ee_M$, $M \ge 9$.
We say that a tuple of integers $(d;\mm)=(d;m_1,\dots,m_M)$ is {\it ordered}\/
if $m_i \ge m_{i+1}$ 
 when $m_i\neq 0$, $m_{i+1}\neq 0$,
and if the $m_i$ with $m_i=0$ are at the end.
For instance, the elements of $\Ee_M$ are ordered in view of Definition~\ref{def:ee}.
To characterize~$\Ee_M$ 
when $M\ge 9$ we need the following definition.

\begin{defn}\labell{def:Crem} 
The {\bf Cremona transform} of an ordered tuple $(d;\mm)$
is
$$
\left( 2d-m_1-m_2-m_3;\, d-m_2-m_3,\, d-m_1-m_3,\, d-m_1-m_2,\, m_4,\, m_5, \,\dots \right).
$$ 
A {\bf standard Cremona move} $\Cr$ takes an ordered tuple $(d;\mm)$  
to the tuple obtained by ordering the Cremona transform of $(d;\mm)$. 
More generally, a {\bf Cremona move} is the composite of a Cremona transform with any permutation
of~$\mm$.
\end{defn}

Standard 
Cremona moves preserve $\Ee_M$ because they are achieved by Cremona transformations, which (modulo permutations of the $E_i$) are just reflections $A\mapsto A + (A\cdot C)\, C$
in the $(-2)$-sphere in the class $C := L-E_1-E_2-E_3$; cf.~\cite{LL2}. 
\footnote{
If $\om$ is a symplectic form on $X_M$ for which
the class~$C$ is represented by a Lagrangian sphere $S_L$, then 
the Cremona transformation can be realized by the Dehn twist in
$S_L$; cf.~Seidel~\cite{S}.
}  
In particular these moves preserve the intersection product and the first Chern class $c_1(M) := 3L -\sum_i E_i$.

\begin{prop}\labell{prop:eek}  
\begin{itemize}
\item[(i)]  
The following identities hold for all $(d;\mm)\in \Ee_M$.
\begin{equation}\labell{eq:ee}
\sum_i m_i = 3d-1, \qquad \mm\cdot\mm := \sum_i m_i^2 = d^2+1.
\end{equation}
\item[(ii)] For all pairs $(d;\mm), (d';\mm')$ of distinct elements of $\Ee_M$ we have 
$$\mm\cdot \mm' := \sum_i m_i\,m_i'\; \le \;d\,d'.
$$
\item[(iii)] 
A tuple $(d;\mm)$  satisfying the Diophantine conditions in 
Equation~\eqref{eq:ee} belongs to~$\Ee_M$ exactly if it may be reduced to  
$(0;-1,0,\dots,0)$  
by repeated standard Cremona moves.
\end{itemize}
\end{prop}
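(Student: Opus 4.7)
I would prove (i), (ii), (iii) in sequence, using the first two parts as ingredients for the third.

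\textbf{Part (i).} The class $E_{(d;\mm)}$ is represented by a symplectically embedded $2$-sphere $S$ with $S \cdot S = -1$, so the adjunction formula for embedded spheres gives $c_1(S) = S\cdot S + 2 = 1$. In the basis $L, E_1, \dots, E_M$ of $H_2(X_M)$ (with $L^2 = 1$, $E_i \cdot E_j = -\delta_{ij}$, $L \cdot E_i = 0$, and $c_1 = 3L - \sum E_i$), the two conditions $c_1 \cdot E_{(d;\mm)} = 1$ and $E_{(d;\mm)}^2 = -1$ translate directly to $\sum m_i = 3d-1$ and $\sum m_i^2 = d^2 + 1$.

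\textbf{Part (ii).} I would appeal to positivity of intersections. For a generic $\om_M$-compatible almost complex structure $J$ on $X_M$, each class in $\Ee_M$ has an embedded $J$-holomorphic sphere representative (since its Gromov invariant is nontrivial); two distinct classes are therefore represented by distinct irreducible $J$-holomorphic curves, so their geometric intersection number $dd' - \mm \cdot \mm'$ is nonnegative.

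\textbf{Part (iii).} The reverse direction is immediate: $(0;-1,0,\dots,0)$ is the class of the exceptional divisor $E_1 \in \Ee_M$, and standard Cremona moves preserve $\Ee_M$ (as noted in the paper, being induced by reflection in $L-E_1-E_2-E_3$). For the forward direction I plan to induct on $d$. The base case $d=0$: the Diophantine conditions force $\sum m_i = -1$ and $\sum m_i^2 = 1$, whose unique ordered solution is $(0;-1,0,\dots,0)$. For $d \ge 1$, applying (ii) with each exceptional divisor class $E_i$ gives $m_i \ge 0$, and applying (ii) with the class $(1;1,1,0,\dots) \in \Ee_M$ gives $m_1 + m_2 \le d$, so the three new entries $d-m_2-m_3$, $d-m_1-m_3$, $d-m_1-m_2$ produced by the Cremona transform are nonnegative. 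The identities of (i) are preserved by the move, so the induction closes as soon as one verifies the strict inequality $m_1+m_2+m_3 > d$, which gives $d' := 2d - m_1 - m_2 - m_3 < d$.

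\textbf{The main obstacle} is this Diophantine inequality $m_1+m_2+m_3 > d$ for $d \ge 1$ (with $m_i \ge 0$ ordered). I would argue by contradiction: assume $s := m_1+m_2+m_3 \le d$. If $m_3 = 0$, then $m_1, m_2$ satisfy $m_1 + m_2 = 3d-1$ and $m_1^2 + m_2^2 = d^2+1$, hence are roots of $t^2 - (3d-1)t + (4d^2-3d) = 0$ with discriminant $-(7d+1)(d-1)$, which is negative for $d \ge 2$ and gives only $(1;1,1,0,\dots)$ for $d=1$, a case in which $s = 2 > 1$. If $m_3 \ge 1$, I would combine the two elementary bounds
\[
\textstyle \sum_{i \ge 4} m_i^2 \,\le\, m_3 \sum_{i \ge 4} m_i \,=\, m_3(3d-1-s)
\]
(from $m_i \le m_3$ for $i \ge 4$) and $m_1^2 + m_2^2 \le (m_1+m_2)^2 - 2m_3(m_1+m_2) + 2m_3^2$ (from $(m_1-m_3)(m_2-m_3) \ge 0$); substituting $r := s - 3m_3 \ge 0$ and $m_3 \le (d-r)/3$ (from $s \le d$) into $\sum m_i^2 = d^2+1$ should collapse to the single inequality $d + 3 \le r(2r - 2d + 1)$. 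Since $m_3 \ge 1$ and $s \le d$ force $r \le d - 3$, the right-hand side is at most $0$, contradicting $d + 3 > 0$.
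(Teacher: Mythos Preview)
Your arguments for (i) and (ii) match the paper's. For (iii) you take a genuinely different route. The paper defers to Li--Li \cite[Lemma~3.4]{LL2}: their reduction algorithm (which also allows sign changes $E_i \mapsto -E_i$) terminates either at $E_1$ or at a \emph{reduced} class (one with $d \ge m_1+m_2+m_3$), and an algebraic argument shows a reduced class pairs nonnegatively with every solution of~\eqref{eq:ee} and hence cannot have square $-1$; one then checks the sign changes are never triggered when starting in $\Ee_M$. You instead prove directly, by an elementary Diophantine calculation, that no nonnegative ordered solution of~\eqref{eq:ee} with $d \ge 1$ is reduced: your derivation of $d+3 \le r(2r-2d+1)$ with $0 \le r \le d-3$ is correct and gives the contradiction. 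This is cleaner and self-contained, avoiding the external reference; the paper's route, in exchange, says something about what happens to solutions \emph{not} in $\Ee_M$ under reduction, which is used later in Remark~\ref{rmk:EM}(i).

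Two small cleanups in your inductive step. First, $m_i \ge 0$ for $d\ge 1$ comes straight from Definition~\ref{def:ee}, not from (ii): only the ordered tuple $(0;-1,0,\dots,0)$ lies in $\Ee_M$, so (ii) with ``the class $E_i$'' is not literally available. Second, your check that the three new Cremona entries are nonnegative (via (ii) with $(1;1,1,0,\dots)$) is both unnecessary---since standard Cremona moves preserve $\Ee_M$, the image tuple is automatically in $\Ee_M$ and therefore has $d'\ge 0$ and nonnegative entries or equals the base case---and technically fails when $(d;\mm)=(1;1,1,0,\dots)$ itself (where $m_1+m_2=2>d$, though that class reduces to $(0;-1,0,\dots,0)$ in one move anyway). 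Neither issue affects the correctness of the core argument.
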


\begin{proof}  
The two equations in~(i) express the fact that any symplectically embedded $(-1)$-sphere~$E$ 
has $c_1(E) = 1$ and $E\cdot E = -1.$  
Since the elements in~$\Ee_M$ are all represented by embedded $J$-holomorphic spheres 
for generic $J$, 
part~(ii) holds by positivity of intersections. 

Part~(iii) for a class $(d;\mm)$ with $d=0$ is clear.
Part~(iii) 
for non-negative tuples $(d;\mm)$
may be deduced from Li--Li's arguments in~\cite[Lemma~3.4]{LL2}.  
In this paper the authors work in a more general context than ours, considering {\it all} symplectic forms on~$X_M$, while we consider only those symplectic forms with the standard first Chern class (or anticanonical class) $-K: = 3L-\sum E_i$.   
They also introduce many ideas, such as the symplectic genus.
However, the concept relevant here 
is
that of a {\it reduced}\/ class.  
This is a class 
$A := dL -\sum m_iE_i$ with
$$
d>0,\quad m_1\ge m_2\ge \dots \ge 0, \quad d\ge m_1+m_2+m_3.
$$
We can clearly assume that $M \ge 3$. In this case
they show that for every class $A$ with $A^2=-1$ and $c_1(A)>0$
there is a combination of Cremona moves and reflections 
$E_i \mapsto -E_i$
that transform~$A$ either into~$E_1$ 
(which corresponds to $(0;-1,0,\dots,0)$) 
or into a reduced class~$A'$. 
Each step consists of a Cremona transform, followed by an adjustment of signs 
to make the coefficients of the~$-E_i$  non-negative and then a permutation
to reorder the~$m_i$. Since the Cremona transform takes $(d;\mm)$ to
$$
(2d-m_1-m_2-m_3;\;d-m_2-m_3,\;d-m_1-m_3,\;d-m_1-m_2,\; m_4, \,\dots)
$$
this decreases~$d$ unless $A$ is reduced. 
Li--Li show by a simple algebraic computation that, because we start with a class with 
$A \cdot A\ge -1$ and the reduction process preserves the intersection form, 
the coefficient~$d$ cannot become negative.
Thus one stops the process when the coefficient~$d$ is at its minimum.  
If $d=0$ then the final class~$A_0$ is $(0;-1,0\dots,0)$.  
On the other hand, if~$A_0$ is reduced, another essentially algebraic argument 
(part~3 of their 
Lemma~3.4) shows that $A_0 \cdot E \ge 0$ for all solutions~$E$ to the equations~\eqref{eq:ee}.  
Hence $A_0 \notin \Ee_M$ since $A_0 \cdot A_0= -1$.  

It is easy to adapt the results of this lemma to our situation. 
We are interested here only in symplectic forms with the standard canonical class~$K$, 
and therefore cannot change the signs of the~$E_i$.
However, if there is a sequence~$\si$ of 
standard Cremona moves that takes a tuple $(d;\mm)$ that satisfies
\eqref{eq:ee}  to a tuple $(d';\mm')$
with some $m_i'<0$, then either  $d'=0$ and $(d';\mm') = (0;-1,0,\dots,0)$,
or $d'\ne 0$. But in the latter case  
$(d;\mm)$ cannot be in $\Ee_M$ since we would have
$$
E_{(d;\mm)}\cdot E \,=\, E_{(d';\mm')} \cdot E_i \,=\, m_i' \,<\, 0,
$$
where $E$ is the image of $E_i$ under the reverse sequence $\si^{-1}$ 
of Cremona moves. Therefore~(iii) must hold.
\end{proof}

\begin{defn} 
A class $E=(d;\mm)\in \Ee$ is called {\bf obstructive}
if
$\mu(d;\mm)(z) > \sqrt z$ on some nonempty interval~$I$.   
Further we say that  $E$ is {\bf obstructive at} $a$
 if $\mu(d;\mm)(a) > \sqrt a$.
\end{defn}

Thus our task is to understand enough about the obstructive classes to figure out the supremum of the corresponding constraint functions $\mu(d;\mm)$.

\begin{rmk}\labell{rmk:EM}
\rm   
(i)  
Later we will expend considerable effort to 
show that certain classes $E=(d;\mm)$ 
that satisfy the identities~\eqref{eq:ee}
do in fact lie in~$\Ee_M$.
In some cases, the corresponding constraints $\mu(d;\mm)(a)$ contribute to $c(a)$.  
However, in many other cases 
(for example the classes $E \bigl(b_k(i)\bigr)$ of Proposition~\ref{prop:bkiE} for $i \ge 3$, 
see Lemma~\ref{le:notobs})
the constraint $\mu(d;\mm)(a)$ does not contribute to~$c(a)$; 
rather $E$ influences~$c(a)$ because $E \cdot E' \ge 0$ for all $E' \in \Ee_M \less E$.  
For this positivity of intersections to hold, 
it is not necessary that $E \in \Ee_M$.  
As explained in the proof of Proposition~\ref{prop:eek},
it suffices that when we apply standard Cremona moves to~$E$ 
we do not arrive at a class with $d>0$ and some $m_i<0$, but instead end up at a reduced class.  (By \cite[Lemma~3.6]{LL2}, the class~$E$ then must have positive symplectic genus, 
and therefore cannot be represented by a smoothly embedded sphere.)  
However, it is just as difficult to check this condition as it is to check whether 
$E \in \Ee_M$, and, in fact, it turns out that $E \in \Ee_M$ in all cases of interest to us here.

\MS
\NI (ii)   
As we show in Proposition~\ref{prop:7finite}, there are only finitely many tuples
$(d;\mm) \in \Ee_M$ that are obstructive at some $a \ge 7$.
In fact, there are precisely $13$~such classes;  
the class $\bigl(3;2,1^{\times 6}\bigr)$ centered at $a=7$, 
another $8$~classes that contribute to $c(a)$ as described in Theorem~\ref{thm:78}, 
and $4$~more classes listed in Lemma~\ref{le:7k}, 
that are \lq\lq hidden" in the sense that they contribute nothing new to~$c(a)$.
Although we work mostly by hand, 
we do use the computer programs of Appendix~\ref{app:comp}
to prove  Corollary~\ref{c:zk}, which states
that there are no other relevant classes.
 
In contrast, there are infinitely many classes  
that are obstructive somewhere on the interval~$[1,7]$,
and we do not try to compute them all.
As shown in Example~\ref{exa:Ea2}, the part of the graph of 
an obstruction $\mu(d;\mm)(a)$ 
that lies above~$\sqrt a$ can be quite complicated
and need not have the scaling or positivity properties of~$c$
that are described in Lemma~\ref{le:1}.
Further, even though Corollary~\ref{cor:fin} states that at each point~$a$
where $c(a)>\sqrt a$ there are only finitely many obstructive classes with 
$\mu(d;\mm)(a) = c(a)$, we do not know if for some $a_0$ there are infinitely many classes with $\mu(d;\mm)(a_0) > \sqrt a_0$.
By Remark~\ref{rmk:7finite} this cannot happen when $a_0>\tau^4$.
When $a_0=\tau^4$, Proposition~\ref{prop:newmd} shows that 
there are infinitely many classes that are obstructive on an interval 
whose closure contains~$a_0$.  
However, because $c(\tau^4) = \tau^2$ no class is obstructive at $\tau^4$ itself. 
We have no relevant results when $a_0<\tau^4$.

\MS
\NI (iii)
We compute $c(a)$ for $a \le 7$ by looking not only at classes with 
$\mu(d;\mm)(a)>\sqrt a$ but also at some other classes that influence~$c(a)$ indirectly.
The most interesting of these are the classes described in Proposition~\ref{prop:newmd} 
that are made from the even terms of the Fibonacci sequence. 
They play a dual role.  
Though obstructive, they contribute nothing new to~$c(a)$ and so the corresponding graph is called the {\it ghost stairs}. 
Their importance is rather that they allow one to calculate~$c(a)$ 
at a series of points~$e_k$ where they are not obstructive; 
cf.\/ the proof of Corollary~\ref{c:42}.
\diam
\end{rmk}

\subsection{Outline of paper}\labell{ss:out}

Corollary~\ref{cor:wgt} gives a formula for $c(a)$ that we can interpret using
the description of~$\Ee$ contained in Proposition~\ref{prop:eek}.
However, this formula is not at all explicit, and the methods needed to understand it 
depend on the size of~$a$. 
One can compute $c(a)$ by direct methods when $a<\tau^4$, 
but for larger~$a$ our arguments require a deeper understanding of the constraint functions $\mu(d;\mm)(a)$.  
Therefore we begin in Section~\ref{s:basic} by developing
some tools to distinguish the obstructive classes in~$\Ee$.

First, we show in Proposition~\ref{prop:obs} that $(d;\mm)$ is
obstructive at~$a$ 
only if the vector~$\mm$ is almost parallel to $\ww(a)$.
If $\mm$ is  parallel to $\ww(a)$ for some $a$, then we call $(d;\mm)$ a 
{\it perfect}\/ obstruction at~$a$.  
Lemma~\ref{le:perf} shows that these 
elements determine the function~$c(z)$ 
for $z$ near~$a$, while Corollary~\ref{cor:perf} shows that the only perfect
obstructions occur at the numbers~$b_n$ of the Fibonacci stairs. 
Second, we show in Lemma~\ref{le:I} that if 
$\mu(d;\mm)(a)>\sqrt a$ on the interval~$I$,
then~$I$ has a unique central point~$a_0$ distinguished by the fact 
that $\ell(a_0) = \ell(\mm)$ while $\ell(a)> \ell(\mm)$ for all other $a \in I$.  
(Here, $\ell(\mm)$ denotes the number of positive entries in $\mm$.)
Lemmas~\ref{le:atmost1} and \ref{le:irrel} describe other useful properties of 
obstructive classes.  
     
These are the basic results needed to determine $c(a)$ when $a \ge 7$.   
(Since by Proposition~\ref{prop:7finite} there are only finitely many 
obstructive $(d;\mm)$ for $a \ge 7$, we can analyze these 
on a case by case basis, without using more general results.)
However, we continue in Section~\ref{s:basic} with a deeper analysis of the functions $\mu(d;\mm)$, 
so that we can explain the relation of $c(a)$ to the 
lattice point counting problem. 
This analysis is based on Proposition~\ref{prop:mirror}
which derives  
surprising identities satisfied by weight expansions. 
(These are quadratic identities involving the weight expansions of~$a$ 
and its \lq\lq mirror" $\laa$.) 
This proposition 
also turns out to be helpful in understanding $c(a)$ on $[\tau^4,7]$, 
where there are infinitely many obstructive classes.
 
Our next main result is 
Proposition~\ref{prop:mainc}
which shows that the central point of~$I$
is the break point of $\mu(d;\mm)$ in the sense that
$\mu(d;\mm)$ is linear on each component of $I\less \{a_0\}$.   
Moreover,  
one can apply Proposition~\ref{prop:mirror} to show
that the coefficients of these linear functions
are remarkably close to those of the linear functions that occur in the counting problem.
In~\S\ref{ss:latt}, we explain this connection and prove
Theorem~\ref{thm:ECH} (which states that $c_{ECH} \ge c$).

In Section~\ref{s:JL} we calculate $c(a)$ for $a \in [1,\tau^4]$ by direct methods.
Theorem~\ref{thm:ladder} states that there are classes
 $E(a_n)$ and $E(b_n)$
in~$\Ee$ given by tuples $(d;\mm)$ constructed from the weight expansions
of ratios of odd Fibonacci numbers. 
As we see in Corollary~\ref{cor:ladder},
because these classes are perfect their very existence
together with the scaling property of~$c$
is enough to calculate~$c(a)$ in this range.
The difficulty here is to prove that these classes really do belong to~$\Ee$. 
In particular, 
describing what happens to these classes under Cremona moves involves establishing many quadratic identities for Fibonacci numbers.  
Therefore in Section~\ref{ss:fib.id} we develop an inductive method to prove such identities; 
cf.~Proposition~\ref{prop:proof}.  
The proof of Theorem~\ref{thm:ladder} is completed in~\S\ref{ss:fib.red}.
This section is essentially independent of Section~\ref{s:basic}.

We next compute $c(a)$ on $[\tau^4,7]$.
The obstruction $(3;2,1^{\times 6})$ centered at~$7$
gives the lower bound $c(a) \ge \frac{a+1}3$ on this interval, 
and our task in Section~\ref{s:tau7} is to show that no obstruction exceeds this one. 
One difficulty is that the quantity $y(a) := a+1-3\sqrt a$ tends to~$0$
as~$a$ approaches~$\tau^4$, 
permitting the existence of infinitely many obstructive classes;
cf.~Proposition~\ref{prop:newmd}.
Another is that the line $\frac{a+1}3$ does not pass through the origin. 
Therefore we can no longer use the scaling property of~$c$, 
which in the case of the interval $[1, \tau^4]$
allowed us to restrict attention to the points $a_n,b_n$.
Nevertheless, by using the results of Section~\ref{s:basic} we show in Proposition~\ref{p:fewpoints} that there 
is only a double sequence of relevant points.

One could then attempt a direct calculation of 
$c(a)$ at these points, combining more elaborate versions of the estimation techniques used in Section~\ref{s:78} with arithmetic results 
based on Corollary~\ref{cor:mirror2}. This is possible.
However it is very complicated and 
it turns out that there is a much easier proof.
The sequence $E(b_n)$ of perfect classes that determine the Fibonacci stairs really consists of 
two subsequences $E_k(0)$ and $E_k(1)$ that are the first two members of an infinite family  $E_k(i)$, $i \ge 0$, of sequences of \lq\lq nearly perfect" classes in~$\Ee$.  
The classes $E_k(2)$, $k \ge 1$, form the ghost stairs 
discussed in Remark \ref{rmk:EM}(iii), 
while the classes $E_k(i)$, $i>2$, are not obstructive 
by Lemma~\ref{le:notobs}.   
Nevertheless, as we show in Lemma~\ref{le:ek}, the fact that they are nearly perfect puts constraints on the possible obstructive classes.  
The desired conclusion follows by combining this result with Proposition~\ref{p:fewpoints}.

Section~\ref{s:78} carries out a detailed analysis of the obstructions in the interval $[7,9]$.  
The argument is based on the equality in Proposition~\ref{prop:obs}~(iv).  
This estimates the ``error" 
(the difference between~$\mm$ and a suitable multiple of $\ww(a)$) 
at a rational point $a=p/q$ in terms of the quantity $y(a) -1/q$,
where again $y(a) = a+1-3\sqrt a$.  
Since $y(\tau^4) = 0$, this estimate gets better the 
further~$a$ is from $\tau^4$ and the larger~$q$ is.  
One easy consequence is Proposition~\ref{prop:7finite},  
stating that there are only finitely many obstructive  
classes $(d;\mm)$ for $a \ge 7$.
The proof (in~\S\ref{ss:78}) that $c(a) = \sqrt a$ for $a \ge 8\frac 1{36}$ is also easy.
  
To work out exactly what the constraints are requires some computation.  
It would be possible, though very tedious, to do this entirely by hand.  
We have aimed to use the computer as little as possible, and so have developed quite 
a few techniques for estimating the error.
Since $y(a)-\frac 1q$ is negative when $a=7\frac 1k$, we must treat these points separately, 
by purely arithmetic means. Thus in this case we simply look for suitable solutions
$(d;\mm)$ of the Diophantine equations~\eqref{eq:ee} with centers at these points, 
using Lemma~\ref{le:atmost1} to limit possibilities.
This computation (in Lemma~\ref{le:7k}) finds several classes 
that contribute to $c(a)$ as well as some interesting ``hidden" classes for which 
$\mu(d;\mm)(a) = c(a)$ at just one point, namely the center.  
There are some other obstructive classes centered at points of the form
$a=7\frac 2{2k+1}$.  
(The table in Theorem~\ref{thm:78} lists all classes that contribute to $c(a)$.)
We show that there are no other obstructive classes in~\S\ref{ss:78} using estimates developed 
in~\S\ref{ss:greater7} as well as two computer programs that are described in Appendix~\ref{app:comp}.

Finally, Appendix~\ref{app:wt} explains the connection between our current definition of the weight expansion of~$a$ in terms of the continued fraction expansion of~$a$
and the definition used in~\cite{M}, which came from a blow up construction.   
The results here are no doubt well known; we included them for the sake of completeness.

\MS\MS\NI 
{\bf Acknowledgments.}   
We wish to thank Dylan Thurston for making some very helpful observations at the beginning 
of this project 
(he was the first person to point out a connection with Fibonacci numbers);  
Michael Hutchings for explaining the obstructions coming from embedded contact homology; 
and Helmut Hofer and Peter Sarnak for their inspiration and encouragement.

\section{Foundations} \labell{s:basic}

\subsection{Basic observations}\labell{ss:basob}

Given $a$ with weight expansion $\ww(a)$ of length $\ell(a) = M$
and  $(d;\mm)\in \Ee_M$, we define $\eps := \eps(a) = (\eps_1,\dots,\eps_M)$ by setting
\begin{equation} \labell{eq:eps}
\mm = \frac d{\sqrt a} \ww(a) + \eps.
\end{equation}
We will refer to the vector $\eps$ as the {\it error}, and to quantities such as $\sum \eps_i^2$ as the {\it squared error}.
We need to understand the function $\mu(d;\mm)(a)= \mm\cdot\ww(a)/d$  defined in 
Corollary~\ref{cor:wgt}.

Our arguments will be based on the following observations.

\begin{prop}  \labell{prop:obs} 
For all $(d;\mm)\in \Ee$ and $a$, we have

\begin{itemize}
\item[(i)]
$\mu(d;\mm) := \mu(d;\mm)(a) \,\le\, \sqrt{a} \sqrt{1 + 1/d^2}$;
\SSS

\item[(ii)]
$\mu(d;\mm)>\sqrt a\;\; \Longleftrightarrow\;\; \eps\cdot\ww > 0$;
\SSS

\item[(iii)] 
If $\mu(d;\mm)>\sqrt a$, then $E := \eps\cdot\eps = \sum \eps_i^2 < 1$;
\SSS

\item[(iv)]
Let $y(a): = a+1-3\sqrt a$ where $a=p/q$.  
Then
\begin{equation}\labell{eq:eps0}
{\ts -\sum\eps_i= 1+\frac d{\sqrt a}\bigl(y(a)-\frac 1q\bigr).}
\end{equation}
\end{itemize}
\end{prop}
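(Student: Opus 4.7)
The plan is to derive each of the four statements as a direct algebraic consequence of two sets of structural identities: on the one hand, $\mm\cdot\mm = d^2+1$ and $\sum_i m_i = 3d-1$ for $(d;\mm)\in\Ee$, supplied by Proposition~\ref{prop:eek}(i); on the other, $\ww\cdot\ww = a$ and $\sum_i w_i = a+1-\tfrac1q$ for the weight expansion of $a=p/q$, supplied by Lemma~\ref{le:ww}. The decomposition~\eqref{eq:eps} linking $\mm$, $\ww$ and $\eps$ is the common linchpin: pairing it with $\ww$ itself, with $\mm$, or with the all-ones vector produces (ii), (iii), and (iv) respectively, while (i) follows at once from Cauchy--Schwarz.

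For (i) I would bound $\mm\cdot\ww$ by Cauchy--Schwarz and substitute the two identities on $\mm\cdot\mm$ and $\ww\cdot\ww$:
\[
\mu(d;\mm)(a) \,=\, \frac{\mm\cdot\ww}{d} \,\le\, \frac{\sqrt{\mm\cdot\mm}\,\sqrt{\ww\cdot\ww}}{d} \,=\, \frac{\sqrt{d^2+1}\,\sqrt a}{d} \,=\, \sqrt a\,\sqrt{1+\tfrac1{d^2}}.
\]
For (ii), taking the inner product of \eqref{eq:eps} with $\ww$ and using $\ww\cdot\ww = a$ gives $\mm\cdot\ww = d\sqrt a + \eps\cdot\ww$, whence $\mu(d;\mm)(a) - \sqrt a = \tfrac{1}{d}\,\eps\cdot\ww$; since $d>0$, the equivalence is immediate.

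For (iii), squaring \eqref{eq:eps} and again invoking $\ww\cdot\ww = a$ yields
\[
\mm\cdot\mm \,=\, d^2 + \frac{2d}{\sqrt a}\,\eps\cdot\ww + E,
\]
so the identity $\mm\cdot\mm = d^2+1$ forces $E = 1 - \tfrac{2d}{\sqrt a}\,\eps\cdot\ww$; combined with (ii), the hypothesis $\mu>\sqrt a$ gives $\eps\cdot\ww>0$ and hence $E<1$. For (iv), summing the components of \eqref{eq:eps} gives $\sum_i m_i = \tfrac{d}{\sqrt a}\sum_i w_i + \sum_i \eps_i$; substituting the two structural identities and solving for $-\sum_i\eps_i$ leads, after regrouping and factoring out $d/\sqrt a$, to
\[
-\sum_i \eps_i \,=\, 1 + \frac{d}{\sqrt a}\Bigl(a+1-3\sqrt a - \tfrac1q\Bigr),
\]
which is the claimed identity once one recognises $y(a) = a+1-3\sqrt a$.

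All four parts reduce to routine algebra, so there is no real obstacle. The one step that requires a little care is~(iv), where several terms must be collected and the definition of $y(a)$ recognized only at the end of the calculation.
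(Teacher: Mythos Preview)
Your proof is correct and follows essentially the same approach as the paper: Cauchy--Schwarz for~(i), and for (ii)--(iv) pairing the decomposition~\eqref{eq:eps} with $\ww$, with itself, and with the all-ones vector respectively, using the identities from Lemma~\ref{le:ww} and Proposition~\ref{prop:eek}(i). The paper's proof is terser but the content is identical.
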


\proof
Lemma~\ref{le:ww}
and Proposition~\ref{prop:eek} imply that
$$
 \mu (d;\mm) \,d \,=\, \ww\cdot \mm  \,\le\, \|\ww\| \, \|\mm\| \,=\, \sqrt{a} \sqrt{d^2+1}.
$$
This proves (i).  
(ii) is immediate, while (iii) follows from~(ii) because
$$
d^2 + 1 \,=\, \mm\cdot\mm \,=\, \bigl( \tfrac d{\sqrt a} \ww(a) + \eps \bigr) \cdot \bigl( \tfrac d{\sqrt a}\ww(a) + \eps \bigr) \,=\, d^2 + 2 \tfrac d{\sqrt a}\ww(a) \cdot \eps + \eps\cdot\eps.
$$
Finally, to prove (iv) observe that
$$
3d-1 \,=\, \ts\sum m_i \,=\, 
\tfrac d{\sqrt a} \bigl( a+1-\frac 1q \bigr) + \ts\sum\eps_i.
$$  
Hence 
$$
\tfrac d{\sqrt a} \bigl(a+1-3\sqrt a\bigr) - \tfrac d{\sqrt a\, q} + 1 + \ts\sum \eps_i = 0.
$$
This completes the proof.
\proofend

\begin{rmk}\labell{rmk:near}
\rm
(i)  
Proposition~\ref{prop:obs}~(iii) implies that an element $(d;\mm)\in \Ee$ 
gives an obstruction at~$a$ (i.e.\ has $\mu(d;\mm)(a)>\sqrt a$)
only if the vector~$\mm$ is ``almost parallel'' to the vector $\ww(a)$.
In particular, if we are interested in solutions that provide obstructions when $a = k+x$ 
for $x\in (0,1)$ we need the first~$k$ entries to be equal within the allowable error.  
As we will see in Lemma~\ref{le:atmost1} below, 
this means that the first~$k$ entries of~$\mm$ 
must lie in the set $\{m_1, m_1-1\}$, with at most one entry different from the others.  
Some elements of $\Ee$ with $d\le 9$ that satisfy these conditions 
for $a \in [6,8]$ are
$$
(2; 1^{\times5}),\;\; (3; 2, 1^{\times6}),\;\;
(5; 2^{\times6}, 1^{\times2}),\;\; (8; 3^{\times7}, 1^{\times2}).
$$
It turns out that these elements all do give obstructions. 

\m
(ii) 
Another noteworthy point is that $y(a)=0$ when $a=\tau^4$.
Therefore (iv) gives most information when $a-\tau^4$ is quite large, e.g.\ if $a>7$; see \S\ref{ss:greater7}.
\diam
\end{rmk}

The next result explains the basic structure of the constraints.
Throughout we  write $\ell(\mm)$ for the number of positive entries in $\mm$,
and $\ell(a)$ for the length of the weight sequence $\ww(a)$.

\begin{lemma}\labell{le:I} 
Let $(d;\mm)\in \Ee$, and suppose that $I$ is a maximal nonempty open interval such that
$\sqrt {a} < \mu(d;\mm)(a)$ for all $a\in I$.
Then there is a unique 
$a_0 \in I$ such that $\ell(a_0) = \ell(\mm)$.
Moreover $\ell(a) \ge \ell(\mm)$ for all $a \in I$. 
\end{lemma}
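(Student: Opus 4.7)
The plan splits into two parts: first the global inequality $\ell(a)\ge \ell(\mm)$ throughout~$I$, and then the existence of a unique interior point $a_0$ where equality is attained.

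For the inequality, set $M=\ell(\mm)$ and suppose toward a contradiction that $\ell(a)<M$ for some $a\in I$. After padding $\ww(a)$ with zeros so that it has $M$ components, there must then exist an index $i$ with $\ell(a)<i\le M$ at which $w_i(a)=0$ while $m_i\ge 1$ is a positive integer. The defining relation $\mm=(d/\sqrt{a})\,\ww(a)+\eps$ immediately forces $\eps_i=m_i\ge 1$, so $\eps\cdot\eps\ge 1$. This contradicts the bound $\eps\cdot\eps<1$ furnished by Proposition~\ref{prop:obs}~(iii), whose proof goes through verbatim in the padded setup since it only uses $\mm\cdot\mm=d^2+1$ and $\ww(a)\cdot\ww(a)=a$.

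For existence of $a_0$, I would first observe that $\ell(a)=\infty$ whenever $a$ is irrational, so any $a_0\in I$ with $\ell(a_0)=M$ must be rational. I then construct $a_0$ explicitly from the block structure of $\mm$: write $\mm=(m_1^{\times n_1},\dots,m_k^{\times n_k})$ with $m_1>\cdots>m_k>0$ and run the geometric Euclidean procedure of Definition~\ref{def:wa} in reverse, reading the multiplicities $n_j$ as the successive partial quotients of a continued fraction $[\ell_0;\ell_1,\dots,\ell_N]$ for $a_0=p/q$ (with a small adjustment at the outermost step to account for the fact that every weight expansion begins with $1$'s rather than with $m_1$'s). This produces a rational $a_0$ whose weight expansion has length exactly $M$. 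To verify $a_0\in I$, I would use Lemma~\ref{le:ww} together with the identities $\mm\cdot\mm=d^2+1$ from Proposition~\ref{prop:eek}~(i) to reduce the inequality $\mu(d;\mm)(a_0)>\sqrt{a_0}$ to an explicit arithmetic estimate.

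For uniqueness, suppose $a_0<a_0'$ were two points of $I$ with $\ell=M$. At both points $\mm$ and $\ww(\cdot)$ have the same number of entries, so the error bound $\eps\cdot\eps<1$ pins $\ww(a_0)$ and $\ww(a_0')$ each to a small neighborhood of $(\sqrt{a}/\sqrt{d^2+1})\,\mm$; combined with the fact that any two rationals of continued-fraction length exactly $M$ have structurally different weight expansions, a direct comparison forces $a_0=a_0'$. The main obstacle, I expect, will be making the existence construction clean: one must carefully translate the block data of $\mm$ into a valid continued fraction and then confirm that the resulting $a_0$ lies in the particular maximal component $I$ of $\{a:\mu(d;\mm)(a)>\sqrt a\}$ rather than in some other component.
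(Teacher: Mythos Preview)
Your argument for the inequality $\ell(a)\ge\ell(\mm)$ is correct and essentially equivalent to the paper's (the paper applies Cauchy--Schwarz directly, you route through Proposition~\ref{prop:obs}~(iii); either works).

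The existence argument, however, has a real gap. You propose to build $a_0$ from the block structure of~$\mm$, taking $a_0=[n_1;n_2,\dots,n_k]$ where the $n_j$ are the block lengths. This produces \emph{one} specific rational. But the lemma asserts that \emph{every} maximal interval $I$ on which $\mu(d;\mm)>\sqrt a$ contains such a point, and a single $(d;\mm)$ can have several such intervals. Example~\ref{exa:Ea2} is decisive: for $(d;\mm)=(10;4^{\times 6},1^{\times 5})$ there are (at least) three disjoint maximal intervals where $\mu>\sqrt a$, with centre points $6\tfrac15$, $6\tfrac27$, $6\tfrac38$, all of length $11=\ell(\mm)$; only the first of these is $[6;5]$. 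Your construction says nothing about the other intervals. (Even for the one interval it does hit, the promised ``explicit arithmetic estimate'' that $\mu(d;\mm)(a_0)>\sqrt{a_0}$ is not given, and the same example shows it can fail: at $a=6\tfrac37$, also of length $11$, one has $\mu<\sqrt a$.) The paper's argument is completely different and much shorter: each $w_i(z)$ is piecewise linear with break points only at rationals of length $\le i$, so if $\ell(z)>\ell(\mm)$ throughout $I$ then $\mu(d;\mm)$ is linear on all of $I$; but a linear function equal to the strictly concave $\sqrt z$ at both endpoints of the bounded interval $I\subset(1,9)$ must lie \emph{below} $\sqrt z$ on the interior, contradicting $\mu>\sqrt a$ on $I$.

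Your uniqueness argument has the same underlying problem. It is \emph{not} true that the bound $\eps\cdot\eps<1$ forces two length-$M$ rationals in $\{\mu>\sqrt a\}$ to coincide; the example above exhibits three such rationals. What actually separates them into different maximal intervals is the continued-fraction fact the paper uses: between any two distinct rationals of equal length there lies a rational of strictly smaller length, and by the first part of the lemma that intermediate point cannot belong to $I$.
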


\begin{proof} 
Denote by $w_i(a)$ the $i$th weight of $a$ considered as a function of~$a$.  
Then it is piecewise linear, and is linear on any open interval that does not contain 
an element~$a'$ with length $\ell(a') \le i$.  
That is, the formula\footnote
{See Lemma~\ref{le:w1} for an explicit expression.}
for $w_i(a)$ can change only if it or one of the earlier weights becomes zero.

Therefore if $\ell(a) > \ell(\mm)$ for all $a\in I$, the function $\mu(d;\mm)(a)$ is linear in~$I$. But this is impossible since the function $\sqrt {a}$ is concave and $I \subset (1,9)$ is bounded. 

Thus there is $a_0\in I$ with $\ell(a_0) \le \ell(\mm)$.
On the other hand, if $\ell(a)<\ell(\mm)$, then $\sum_{i\le \ell(a)} m_i^2 <d^2+1$, 
so that
$$
|\ww\cdot \mm| \,\le\, \|\ww\| \sqrt{\sum_{i\le \ell(a)} m_i^2} \,\le\, d\|\ww\| \,=\, d\sqrt a.
$$
Hence $\mu(d;\mm)(a)\le \sqrt a$, i.e.\/ $a \notin I$.

The uniqueness follows from the properties of continued fractions.
We claim that if $b>a$ and $\ell(b) = \ell(a)$ then there must be some
number $y\in (a,b)$ with $\ell(y) < \ell(a)$.
Since such~$y$ cannot be in~$I$, this gives the required uniqueness.
To prove the claim,
let~$a$ have continued fraction expansion $[\ell_0;\ell_1,\dots,\ell_N]$ 
and consider the functions $x_j(a)$ as in
equation~\eqref{eq:xs}. If~$N$ is even, the function $x_N(z)$ decreases as~$z$ increases.  
Hence $\ell_N$ increases and so the next number $z>a$ with length $\le \ell(a)$ is 
    $[\ell_0;\ell_1,\dots,\ell_{N-1}]$ 
which has length $<\ell(a)$.  
Similarly, if we look for numbers $z<a$ with $\ell(z) \le \ell(a)$,
then the first one is $[\ell_0;\dots, \ell_N-1]$. Similar arguments apply if~$N$ is odd.
\end{proof}

\begin{cor}\labell{cor:fin} 
Suppose that $c(a)>\sqrt a$.  
Then 
\begin{itemize}
\item[(i)] 
There are (possibly equal) elements $(d^\pm;\mm^\pm)\in \Ee$ and $\eps>0$ such that 
$$
c(z) \,=\,
\left\{
\begin{array}{ll}
\mu(d^-;\mm^-)(z) &\mbox{ for all }\; z \in (a-\eps,a], \\
\mu(d^+;\mm^+)(z) &\mbox{ for all }\; z \in [a,a+\eps).
\end{array}
\right.
$$
\item[(ii)]  
On each of the intervals in (i) there are rational numbers  $\al, \be \ge 0$ such that
$c(a) = \al+\be a$.

\s
\item[(iii)]  
The set of $(d;\mm)$ such that $c(a) = \mu(d;\mm)(a)$
is finite.
\end{itemize}
\end{cor}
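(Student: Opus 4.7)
The plan is to use Proposition~\ref{prop:obs}(i), which constrains $\mu(d;\mm)(a)\le\sqrt a\,\sqrt{1+1/d^2}$, together with the Diophantine identities $\sum m_i=3d-1$ and $\mm\cdot\mm=d^2+1$ from Proposition~\ref{prop:eek}(i), and the monotonicity and scaling of~$c$ from Lemma~\ref{le:1}. For~(iii), any $(d;\mm)\in\Ee$ with $\mu(d;\mm)(a)=c(a)>\sqrt a$ satisfies $\sqrt a\,\sqrt{1+1/d^2}\ge c(a)$, which yields the explicit bound $d\le D:=\sqrt{a/(c(a)^2-a)}$. For such bounded~$d$, the identity $\mm\cdot\mm=d^2+1$ forces $m_i\le\sqrt{d^2+1}$, while $\sum_i m_i=3d-1$ caps the number of positive entries by $3d-1$; hence only finitely many tuples $(d;\mm)$ can realize the constraint.

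For~(i), I would first use continuity of~$c$ (Lemma~\ref{le:1}) to choose $\delta_0>0$ and $\kappa>1$ with $c(z)\ge\kappa\sqrt z$ on $[a-\delta_0,a+\delta_0]$. Comparison with Proposition~\ref{prop:obs}(i) then gives a uniform bound $d\le D':=1/\sqrt{\kappa^2-1}$ throughout the interval, so the set of $(d;\mm)\in\Ee$ with $\mu(d;\mm)(z)\ge c(z)$ for some $z$ in the neighborhood is contained in a finite list $E_1,\dots,E_N$. On this neighborhood,
\[
c(z)\,=\,\max\bigl(\sqrt z,\;\mu(E_1)(z),\,\dots,\,\mu(E_N)(z)\bigr).
\]
Each $\mu(E_j)(z)=\mm_j\cdot\ww(z)/d_j$ is continuous and piecewise linear in~$z$, as the proof of Lemma~\ref{le:I} shows each weight $w_i(z)$ to be. Hence for every~$j$ there exists $\delta_j>0$ on which $\mu(E_j)$ is linear on each of $(a-\delta_j,a]$ and $[a,a+\delta_j)$, possibly with different slopes at~$a$. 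Shrinking $\delta_0$ below $\min_j\delta_j$, and further so that some $\mu(E_{j^*})$ with $\mu(E_{j^*})(a)=c(a)$ dominates $\sqrt z$ throughout, one reduces $c(z)$ to a maximum of finitely many linear functions on each one-sided interval; such a maximum is itself piecewise linear with finitely many break points, and a last shrinking of~$\delta$ leaves a single $\mu(E^-)$ (respectively $\mu(E^+)$) realizing $c$ on $(a-\delta,a]$ (respectively $[a,a+\delta)$).

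Finally,~(ii) follows by writing $c(z)=\mu(d^\pm;\mm^\pm)(z)=(\mm^\pm\cdot\ww(z))/d^\pm$ on each one-sided interval; since the $w_i(z)$ are rational linear on each linearity piece and $d^\pm,\mm^\pm$ are integral, one obtains $c(z)=\al+\be z$ with rational~$\al,\be$. Then $\be\ge 0$ because $c$ is nondecreasing, and $\al\ge 0$ because the scaling inequality of Lemma~\ref{le:1} forces $c(z)/z=\al/z+\be$ to be nonincreasing in~$z$, which requires $\al\ge 0$. The genuinely substantive step is the geometric reduction in~(i)—from a maximum of finitely many piecewise linear functions down to a single linear function on each side—which depends essentially on both the finiteness of the candidate list $\{E_j\}$ and the piecewise linearity of the $w_i(z)$.
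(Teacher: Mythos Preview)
Your proposal is correct and follows essentially the same route as the paper's proof: bound $d$ via Proposition~\ref{prop:obs}(i), invoke continuity of $c$ to extend the bound to a neighborhood, use piecewise linearity of the weight functions $w_i(z)$ (from the proof of Lemma~\ref{le:I}) to see that each $\mu(E_j)$ is piecewise linear with rational coefficients, and read off $\be\ge0$ and $\al\ge0$ from monotonicity and scaling. You supply slightly more explicit detail than the paper (the explicit formula for $D$, and the argument via $\mm\cdot\mm=d^2+1$ and $\sum m_i=3d-1$ that finitely many $(d;\mm)$ have $d\le D$), but the structure is identical.
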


\begin{proof}
Since $c(a)>\sqrt a$, there exists $D \in \NN$ with $\sqrt{1+1/D^2} < c(a)/\sqrt a$.
If $(d;\mm) \in \Ee$ is such that $\mu(d;\mm)(a) = c(a) > \sqrt a$,
then $d \le D$ by Proposition~\ref{prop:obs}~(i). 
But there are only finitely many elements $(d;\mm) \in \Ee$ with $d\le D$.   
Since $c(a)$ is continuous by Lemma~\ref{le:1}, we must have
$\sqrt{1+1/D^2} < c(z)/\sqrt z$ for all~$z$ sufficiently close to~$a$. 
Further, 
as we have seen in the proof of Lemma~\ref{le:I},
each function $\mu(d;\mm)(z)$ is piecewise linear, and it has rational coefficients because 
the weight functions $w_i(z)$ do.  
Hence, 
$c(z)$ is the supremum of a finite number 
of rational linear functions. This proves~(i) and (iii).  
Moreover, if 
near~$a$ we write $c(z) = \al+\be z$ for some rational numbers $\al, \be$, 
then $\be\ge 0$ since $c$~is nondecreasing, while
$\al\ge0$ because of the scaling property in equation~\eqref{eq:scal}.
\end{proof}

Let us call an element $(d;\mm)\in \Ee$ {\it perfect}\/ if~$\mm$ ~
is a multiple of the weight vector $\ww(b)$ of some $b>1$.
The next lemma combined with Corollary~\ref{cor:fin} shows that these elements determine $c(a)$   for $a$ near~$b$. 

\begin{lemma}\labell{le:perf} 
Suppose that $(d;\mm)\in\Ee$ is perfect: $\mm = \ka\,\ww(b)$ for some $b>1$.
Then
\begin{itemize}
\item[(i)]
$\mu(d;\mm)(b)=c(b) > \sqrt b$, and $(d;\mm)$ is the only class with $\mu(d;\mm)(b) = c(b)$.

\s
\item[(ii)]
$\mm = q\;\!\ww(b)$ where $b= p/q$ in lowest terms, and $b<\tau^4$.
\end{itemize}
\end{lemma}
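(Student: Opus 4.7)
Proof plan. For (i), the key observation is that perfectness turns several inequalities into equalities. Substituting $\mm=\ka\,\ww(b)$ into the identities of Proposition~\ref{prop:eek}(i), and using Lemma~\ref{le:ww} for $\sum w_i^2 = b$ and $\sum w_i = b + 1 - \tfrac 1q$, yields
$$
\ka^2\, b \,=\, d^2+1, \qquad \ka\Bigl( b + 1 - \tfrac 1q \Bigr) \,=\, 3d-1.
$$
The first identity immediately gives
$$
\mu(d;\mm)(b) \,=\, \tfrac{\mm\cdot\ww(b)}{d} \,=\, \tfrac{\ka b}{d} \,=\, \sqrt{b}\,\sqrt{1+1/d^2} \,>\, \sqrt b,
$$
so $c(b)\ge \mu(d;\mm)(b)>\sqrt b$ by Corollary~\ref{cor:wgt}. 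For the uniqueness claim, suppose some distinct $(d';\mm')\in \Ee$ satisfies $\mu(d';\mm')(b) \ge \mu(d;\mm)(b)$. Writing $\mm\cdot\mm' = \ka\,\ww(b)\cdot\mm' = \ka\,d'\,\mu(d';\mm')(b)$ and using $\ka\sqrt b = \sqrt{d^2+1}$, one finds $\mm\cdot\mm' \ge d'(d^2+1)/d > d\,d'$, contradicting the intersection inequality in Proposition~\ref{prop:eek}(ii).

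For (ii), Lemma~\ref{le:ww} forces $q\mid\ka$: the smallest weight is $w_M = 1/q$, and $m_M = \ka/q$ must be a positive integer. Write $\ka = jq$. The two identities above then become the Diophantine system $j^2\, pq = d^2 + 1$ and $j(p+q-1) = 3d-1$. Reducing these mod $j$ yields $9d^2 \equiv 1$ and $d^2 \equiv -1\pmod j$ simultaneously, hence $j\mid 10$, so $j\in\{1,2,5,10\}$. The cases $j=2$ and $j=10$ are eliminated at once: $j=2$ forces $d^2\equiv -1\pmod 4$, which is impossible, and $j=10$ forces $d\equiv 7\pmod{10}$ so $d^2\equiv 49 \not\equiv -1\pmod{100}$. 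The main obstacle is the case $j=5$: the system then forces $d\equiv 7\pmod{25}$, and writing $d = 25r+7$ reduces the existence of integer $p,q$ to the requirement that $125\,r^2 + 94\,r + 17$ be a perfect square; a short mod-$9$ computation (its values always lie in $\{2,3,5,8\}$) rules this out. Hence $j=1$ and $\mm = q\,\ww(b)$.

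For the final bound $b < \tau^4$, with $\ka = q$ the system reduces to $p+q = 3d$ and $pq = d^2 + 1$, so $p,q$ are the roots of $X^2 - 3dX + (d^2+1)$ and the discriminant $5d^2 - 4$ must be a perfect square. Since $x\mapsto (3d+x)/(3d-x)$ is increasing on $[0,3d)$ and $\sqrt{5d^2-4} < d\sqrt{5}$, we obtain
$$
b \,=\, \frac{3d + \sqrt{5d^2-4}}{3d - \sqrt{5d^2-4}} \,<\, \frac{3 + \sqrt{5}}{3 - \sqrt{5}} \,=\, \tau^4,
$$
completing the proof.
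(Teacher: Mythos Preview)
Your argument for (i) is essentially the paper's.

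For (ii) your route is correct but genuinely different from the paper's at the crucial step. Both arguments write $\ka=jq$ and reduce to $j\mid 10$. Your dismissal of $j=10$ is slightly glib as written: $d\equiv 7\pmod{10}$ does not by itself determine $d^2\bmod 100$. But this is harmless, since $2\mid 10$ means the same parity obstruction $d^2\not\equiv -1\pmod 4$ that kills $j=2$ already kills $j=10$. The substantive divergence is at $j=5$. The paper argues geometrically: it shows $j=5$ forces $y(b)>0$, hence $b>\tau^4$, and then derives a contradiction from the intersection inequality $E_{(d;\mm)}\cdot(3;2,1^{\times 6})\ge 0$ --- directly for $b\in[\tau^4,7]$, and via a further descent on $q$ for $b>7$. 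You instead stay purely arithmetic: reducing to whether $125r^2+94r+17$ can be a perfect square and ruling this out by a quadratic-residue check mod~$9$ (the values land in $\{2,3,5,8\}$, disjoint from the squares $\{0,1,4,7\}$). This is cleaner and self-contained; it avoids invoking the specific exceptional class $(3;2,1^{\times 6})$ and the case split on the size of~$b$. Your final discriminant bound $\sqrt{5d^2-4}<d\sqrt 5$ giving $b<\tau^4$ is equivalent to the paper's one-line computation $y(b)=\tfrac 3q(d-\sqrt{d^2+1})<0$, just packaged differently.
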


\begin{proof}   
(i)
Since $(d;\mm) \in \Ee$ and $\mm = \ka\,\ww(b)$, we have
$d^2<d^2+1 = \mm \cdot \mm = \ka^2 \ww(b) \cdot \ww(b) =\ka^2 b$,
whence $d<\ka\sqrt b$.
Therefore, 
$$
\mu(d;\mm)(b) \,:=\ \frac{\mm \cdot \ww(b)}{d} \,=\, \frac{\ka b}{d} \,>\, \frac{\ka b}{\ka\sqrt b} \,=\, \sqrt b .
$$
Let $(d';\mm') \in \Ee$ be another solution.
Since $(d';\mm') \neq (d;\mm)$, positivity of intersections (part~(ii) of Proposition~\ref{prop:eek})
shows that $dd' \ge \mm \cdot \mm' = \ka\, \mm' \cdot \ww(b)$.
This and $d^2<d^2+1 =\mm \cdot \mm = \ka\, \mm \cdot \ww(b)$ yield
$$
\mu (d';\mm')(b) \,=\, \frac{\mm' \cdot \ww(b)}{d'} \,\le\, \frac d{\ka} \,<\, \frac{\mm \cdot \ww(b)}{d} \,=\, \mu(d;\mm)(b) .
$$

(ii)
Let $\ww(b) = (w_1,\dots,w_M)$.  
Since $w_M=\frac 1q$ and $m_M\in\Z$, we have $\ka=sq$ for some integer~$s$.
Equations~\eqref{eq:ee} and Lemma~\ref{le:ww} give
\begin{eqnarray*}
3d-1 &=& \sum m_i \,=\, \ka \sum w_i \,=\, sq\bigl(b +1-\tfrac 1q\bigr),\\
d^2+1&=&  \sum m_i^2 \,=\, \ka^2 \sum w_i^2 \,=\, (sq)^2b.
\end{eqnarray*}
If $s=1$, then $3d = q(b+1)$ so that
$$
1+b-3\sqrt b = 3\tfrac dq- 3\tfrac{\sqrt {d^2+1}}q < 0.
$$
Thus $\sqrt b <\tau^2$. 

Otherwise, adding the two above equations gives $s|d(d+3)$.  
But the first equation shows that $s,d$ are mutually prime.  
Therefore $s|d+3$ and $s|3d-1$; hence $s|10$. Therefore $s=2$, $5$ or~$10$.

The identity $(d+3)^2 = (d^2+1) + 2(3d-1) +10$ shows that $s^2|2(3d-1) +10$.  
If $2|s$ this means that $d$ is even which is impossible since $3d-1$ is even.  

If $s=5$ then $3d +4= 5q(b+1)$ so that
\begin{equation} \labell{eq:yb}
y(b) \,:=\, 1+b-3\sqrt b = \tfrac 1{5q}\bigl(3d + 4- 3\sqrt {d^2+1})> 0
\end{equation}
Thus $\sqrt b >\tau^2$. But this is impossible:  
For $(d;\mm) \in \Ee$ must have nonnegative intersection with the class $(3;2,1^{\times 6})\in \Ee$.  
Therefore
$$
2m_1+m_2+\dots + m_7 \le 3d.
$$
If $b \in [\tau^4,7]$, then $\mm=5q(1^{\times 6},b-6,\dots)$, and we obtain
$$
5q (1+b) \le 3d,  
$$
a contradiction.  
Assume now that $b=7 \frac rq >7$.
Assume first that $b=7 \frac 1q$.
Then $3d-1= 40q$ and $d^2+1=25 q^2(7+\frac 1q)$.
Solving the first equation for~$d$ and inserting the result into the second equation,
we get the equation
$$
5q^2-29q+2 \,=\,0
$$
whose solutions are not integral.
Thus $b=7 \frac rq$ with $r \ge 2$.
By~\eqref{eq:yb}, 
$$
q \,=\, \frac{3d+4-3\sqrt{d^2+1}}{5 y(b)} \,<\, \frac{4}{5 y(b)} \,<\, \frac{4}{5 y(7)} \,<\, 13,
$$
whence $q \le 12$.
Thus $b \ge 7 \frac{2}{12}$, and so $q<\frac{4}{5y(7 \frac 16)} <6$, whence $q\le 5$.
Thus $b \ge 7 \frac{2}{5}$, and so $q<\frac{4}{5y(7 \frac 25)} <4$, whence $q\le 3$.
Thus $b \ge 7 \frac{2}{3}$, and so $q<\frac{4}{5y(7 \frac 23)} <3$, whence $q\le 2$.
Thus $b \ge 8$, and so $q<\frac{4}{5y(8)} <2$,                      whence $q=1$.
Thus $b \ge 9$, and so $q<\frac{4}{5y(9)} <1$, which is impossible.
\end{proof}

\begin{remark}
\rm
We show later that
the only perfect elements are those at the numbers~$b_n$ 
of the Fibonacci stairs; see~Corollary~\ref{cor:perf}.
However there are many nearly perfect elements that are relevant to the problem such as 
the classes $E(a_n)$ of Theorem~\ref{thm:ladder} and
the classes $E\bigl(b_k(i)\bigr)$ of Proposition~\ref{prop:bkiE}.  
These elements are perfect except for some adjustments on the last block.
\diam
\end{remark}

The next lemma expands on the first part of Remark~\ref{rmk:near}.

\begin{lemma}  \labell{le:atmost1}
Assume that
$(d;\mm)\in \Ee$ is such that $\mu (d;\mm)(a) > \sqrt{a}$.  
Let $J := \{k, \dots, k+s-1\}$ be a 
block of $s\ge 2$ consecutive integers for which $w(a_i)$, $i \in J$, is constant.
Then

\begin{itemize}
\item[(i)] 
One of the following holds: 
\begin{eqnarray*}
&& m_k = \dots = m_{k+s-1}  \quad\text{or}\\
&& m_k = \dots = m_{k+s-2} = m_{k+s-1}+1 \quad\text{or}\\
&& m_k-1 = m_{k+1} = \dots = m_{k+s-1} .
\end{eqnarray*}

\item[(ii)]  
There is at most one block of 
length $s\ge 2$ on which the $m_i$ are 
not all equal.
\SSS

\item[(iii)]
If there is a block $J$ of length $s$ on 
which the $m_i$ are not all equal then  $\sum_{i\in J} \eps_i^2 \ge \frac {s-1}s$.
\end{itemize}
\end{lemma}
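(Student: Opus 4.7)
The plan hinges on Proposition~\ref{prop:obs}(iii), which under the hypothesis $\mu(d;\mm)(a)>\sqrt a$ gives the strict bound $E := \sum_i\eps_i^2 < 1$. Because $w_i(a)$ is constant (equal to some $w_J$) on the block~$J$, setting $c := (d/\sqrt a)\,w_J$ one has $\eps_i = m_i - c$ for every $i \in J$, with the same~$c$ throughout the block. Thus all three claims reduce to a variance estimate for the integer tuple $(m_i)_{i\in J}$ together with $E<1$.

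First I would establish the key combinatorial fact: if $(m_i)_{i\in J}$ is a non-constant tuple of integers, then $\sum_{i\in J}(m_i-c)^2 \ge \frac{s-1}{s}$ for every real~$c$. The point is that this quantity is minimized at $c = \bar m := \tfrac{1}{s}\sum_{i\in J} m_i$, where it equals $\tfrac{1}{s}\sum_{k<l}(m_k-m_l)^2$; if $t$ denotes the number of $i\in J$ with $m_i = m_{\max}$ (so $1 \le t \le s-1$), then each of the $t(s-t)$ pairs with distinct values contributes at least~$1$, giving $\sum_{i\in J}(m_i-\bar m)^2 \ge t(s-t)/s \ge (s-1)/s$. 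This immediately yields~(iii).

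Claim~(ii) follows at once: two distinct blocks $J, J'$ of length $\ge 2$ each with non-constant restriction of $\mm$ would, by~(iii), force $E \ge \tfrac{|J|-1}{|J|}+\tfrac{|J'|-1}{|J'|} \ge \tfrac12+\tfrac12 = 1$, contradicting Proposition~\ref{prop:obs}(iii). For~(i) I use two observations. First, if some two entries $m_i, m_j$ on~$J$ differ by~$\ge 2$, then for any real $c$, $(m_i-c)^2+(m_j-c)^2 \ge \tfrac12(m_i-m_j)^2 \ge 2$, again contradicting $E<1$. Hence the entries of $\mm|_J$ take values in $\{n,n+1\}$ for some integer~$n$. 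Second, since $\mm$ is weakly decreasing (Definition~\ref{def:ee}), the $t$ copies of $n+1$ must precede the $s-t$ copies of~$n$. The cases $t \in\{0,1,s-1,s\}$ give exactly the three listed forms, while any remaining $2\le t\le s-2$ forces $s\ge 4$, and in that range the first step yields $\sum_{i\in J}\eps_i^2 \ge t(s-t)/s \ge 2(s-2)/s \ge 1$, once more contradicting $E<1$.

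I do not anticipate any serious obstacle: the whole argument is an elementary integer-variance bound combined with the strict inequality $E<1$. The one mildly delicate point is invoking the monotonicity of~$\mm$ to force any outlier within $J$ to sit at one of the two ends, which is exactly what rules out configurations like an outlier in the middle and pins down precisely the three forms listed in~(i).
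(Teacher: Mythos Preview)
Your proposal is correct and follows essentially the same approach as the paper: both use Proposition~\ref{prop:obs}(iii) to get $E<1$, observe that on a block the $\eps_i$ equal $m_i-c$ for a common $c$, and bound the block error below by $t(s-t)/s$ (with $t$ the multiplicity of the larger value), yielding~(iii) and hence~(ii), while ruling out $2\le t\le s-2$ for~(i). Your use of the variance identity $\sum_i(m_i-\bar m)^2=\tfrac1s\sum_{k<l}(m_k-m_l)^2$ is a slightly slicker packaging of the same minimization the paper carries out directly, and you are right to invoke the monotonicity of $\mm$ from Definition~\ref{def:ee} to pin the outlier to an endpoint of the block.
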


\proof  Let $w_i(a) = x$ for $i\in J$.  
By Proposition~\ref{prop:obs} (iii) we have
$$
\sum_{i=k}^{k+s-1} \left| \tfrac{dx}{\sqrt a} - m_i \right|\,\!^2 \;=\; 
\sum_{i=k}^{k+s-1} \eps_i^2< 1.
$$
Thus $\{m_k, \dots, m_{k+s-1}\}$ can contain at most two different integers, which must be neighbors if they are different, say $m$, $m+1$. We can also clearly assume that $m < \frac{dx}{\sqrt{a}} < m+1$. Therefore~(i) holds when $s<4$.  

So suppose that $s\ge 4$ and
assume that $m+1$ occurs $t$~times.
Set $v = \frac{dx}{\sqrt a}-n\in [0,1)$, where $n\in\Z$.  
Then the squared error on this block is
\begin{eqnarray*}
\sum_{i=k}^{k+s-1} \left| \tfrac{dx}{\sqrt a} - m_i \right|^2 &=&
\sum_{i=k}^{k+t-1} \left| v-1 \right|^2 + \sum_{i=k+t}^{k+s-1}\left| v\right|^2 \\
&=& t(v-1)^2 + (s-t)v^2 \\
&\ge& \tfrac {2(s-2)}{s}\;\text{ for all } v \in (0,1) \;\text{ if } t \in \{2,\dots,s-2\}.
\end{eqnarray*}
But $2(s-2)/s\ge 1$ when $s\ge 4$.  
This proves~(i).
Parts (ii) and (iii) follow from the fact that the minimum squared
error on a block of length $s$ on which the $m_i$ are not all equal is $1-\frac 1s\ge \frac12$.
\proofend

The following lemma will also be important for detecting 
potentially obstructive solutions $(d;\mm)$.

\begin{lemma} \label{le:irrel}
Let $(d;\mm) \in \Ee$ be such that $\mu(d;\mm) > \sqrt a$ for some~$a$ 
with $\ell(a) = \ell(\mm) = M$.
Let $w_{k+1}, \dots, w_{k+s}$ be a block, but not the first block, of $\ww(a)$. 

\begin{itemize}
\item[(i)]
If this block is not the last block, then
$$
\left| m_k- \left( m_{k+1} + \dots + m_{k+s} + m_{k+s+1}\right) \right| \,<\, \sqrt{s+2} .
$$
If this block is the last block, then
$$
\left| m_k - \left( m_{k+1} + \dots + m_{k+s} \right) \right| \,<\, \sqrt{s+1} .
$$

\item[(ii)]
Always,
$$
m_k- \sum_{i=k+1}^M m_i  \,<\, \sqrt{M-k+1} .
$$
\end{itemize}
\end{lemma}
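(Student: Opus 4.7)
The plan is to exploit the block structure of the weight expansion together with the error decomposition $m_i = \tfrac{d}{\sqrt a}w_i + \eps_i$ from~\eqref{eq:eps}. The key observation is that the recursion defining the $x_j$ in~\eqref{eq:xs} gives $x_{j-1} = \ell_j x_j + x_{j+1}$ (with the convention $x_{N+1}=0$). In terms of the weights, if the block $w_{k+1}=\dots=w_{k+s}$ corresponds to $x_j$, so that $w_k = x_{j-1}$, this becomes
\[
w_k \,=\, w_{k+1}+\dots+w_{k+s}+w_{k+s+1}
\]
when the block is not the last, while $w_k = w_{k+1}+\dots+w_{k+s}$ when it is (since $x_{j+1}=0$ forces $w_{k+s+1}$ to be absent).

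For part~(i), I substitute $m_i = \tfrac{d}{\sqrt a} w_i + \eps_i$ into the quantity $m_k - \sum_{i=k+1}^{k+s+1} m_i$ (or $m_k - \sum_{i=k+1}^{k+s} m_i$ in the last-block case). The identity above makes the $\tfrac{d}{\sqrt a}$-term vanish, leaving a pure error expression $\eps_k - \sum_{i=k+1}^{k+s+1}\eps_i$. Cauchy--Schwarz bounds this in absolute value by $\sqrt{s+2}\,\sqrt{E}$, where $E = \sum_i \eps_i^2$. Since $\mu(d;\mm)(a)>\sqrt a$, Proposition~\ref{prop:obs}~(iii) gives $E<1$, and the two inequalities in~(i) follow at once.

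For part~(ii), I need the analogue of the identity above for $\sum_{i=k+1}^M w_i$ where $k+1$ is the first index of some (not necessarily the initial) block. Writing $w_k = x_{j-1}$ and summing the telescoping identities $\ell_i x_i = x_{i-1}-x_{i+1}$ from $i=j$ to $N$ yields
\[
\sum_{i=k+1}^{M} w_i \,=\, x_{j-1}+x_j-x_N \,=\, w_k+w_{k+1}-w_M,
\]
so that $w_k - \sum_{i=k+1}^M w_i = w_M - w_{k+1} \le 0$. Substituting $m_i = \tfrac{d}{\sqrt a} w_i + \eps_i$, the $\tfrac{d}{\sqrt a}$-term becomes non-positive, whence
\[
m_k - \sum_{i=k+1}^{M} m_i \,\le\, \eps_k - \sum_{i=k+1}^M \eps_i \,\le\, \sqrt{M-k+1}\,\sqrt{E} \,<\, \sqrt{M-k+1},
\]
again using $E<1$. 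The estimate is one-sided because $w_M - w_{k+1}$ is generally nonzero, which is reflected in the form of the conclusion.

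The only non-routine step is verifying the two weight identities; once these are in place, everything reduces to Cauchy--Schwarz combined with the squared-error bound from Proposition~\ref{prop:obs}~(iii). I do not anticipate any serious difficulty beyond being careful about whether the block under consideration is the last one, since the presence or absence of the term $w_{k+s+1}$ changes the number of $\eps_i$'s summed and hence the constant appearing under the square root.
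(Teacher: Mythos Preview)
Your proposal is correct and follows essentially the same approach as the paper: both arguments substitute $m_i = \tfrac{d}{\sqrt a}w_i + \eps_i$, use the block recursion $w_k = sw_{k+1} + w_{k+s+1}$ to kill (or bound) the $\tfrac{d}{\sqrt a}$-contribution, and finish with Cauchy--Schwarz together with $E<1$ from Proposition~\ref{prop:obs}~(iii). The only cosmetic difference is in part~(ii): you compute the exact telescoping identity $\sum_{i=k+1}^M w_i = w_k + w_{k+1} - w_M$, whereas the paper is content with the cruder inequality $w_k < \sum_{i=k+1}^M w_i$ (which follows immediately from $w_k = sw_{k+1}+w_{k+s+1}$ plus the existence of further positive weights); both lead to the same bound.
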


\proof
(i)
We prove the first claim, the second claim is proven in the same way.
By definition~\eqref{eq:eps} of the errors, $m_i = \frac{d}{\sqrt a} w_i + \eps_i$ for all~$i$.
Since $w_{k+1} = \dots = w_{k+s}$ and $w_{k+s+1} = w_k - s \;\!w_{k+1}$, we have that
$$
m_k - \left( m_{k+1} + \dots + m_{k+s} + m_{k+s+1} \right) \,=\,
\eps_k - \left( \eps_{k+1} + \dots + \eps_{k+s} + \eps_{k+s+1} \right) ,
$$
and so 
$$
\left| m_k - \left( m_{k+1} + \dots + m_{k+s} + m_{k+s+1} \right) \right| 
\,\le\,
\left| \eps_k \right| + \left| \eps_{k+1} \right| + \dots + \left| \eps_{k+s} \right| + \left| \eps_{k+s+1} \right| .
$$
Since $\eps \cdot \eps = \sum \eps_i^2 < 1$ by Proposition~\ref{prop:obs}~(iii),
the latter sum is $< \sqrt{s+2}$.

\s
(ii)
If the block $w_{k+1}, \dots, w_{k+s}$ is the last block, then $M=k+s$, and so the stated estimate is the same as in~(i).
So assume that $w_{k+1}, \dots, w_{k+s}$ is not the last block.
Since $w_k =  s\;\! w_{k+1} + w_{k+s+1}$, we then have
$w_k < \sum_{i=k+1}^M w_i$.
Therefore, 
$$
m_k \,<\, -\eps_k + \sum_{i=k+1}^M \left( m_i + \eps_i\right) \,\le\, 
\sum_{i=k+1}^M m_i + \sum_{i=k}^M |\eps_i| \,<\, \sum_{i=k+1}^M m_i + \sqrt{M-k+1} ,
$$
as claimed.
\proofend

\subsection{Some identities for weight expansions} \labell{ss:wt}

This subsection establishes some rather surprising identities for weight expansions.  

Let $a>1$ be a rational number and consider its continued fraction expansion 
\begin{equation}\labell{eq:aa}
a \,:=\,  [\ell_0; \ell_1,\dots, \ell_N] \,=\, \ell_0 +\frac 1 {\ell_1 + \frac 1{\ell_2 + \dots}}.
\end{equation}
Usually, to avoid ambiguity, we assume that $\ell_N\ge 2$, but in this section only it is convenient to permit the case $\ell_N=1$ as well.
We define the sequence
$\al^{a} \,:=\, (\al_j^{a})_{j=0}^{N+1}$ by setting $\al^{a}_0=1$, $\al^{a}_1=-\ell_0$, and
\begin{equation}\labell{eq:itere}
\al_j^{a} \,=\, \al^{a}_{j-2} - \ell_{j-1}\al_{j-1}^{a}, \quad j=2,\dots, N+1.
\end{equation}
Similarly, define $\be^{a} = (\be_j^{a})_{j=0}^{N+1}$ by the same recursive formula, 
but starting with $\be^{a}_0=0$, $\be^{a}_1 = 1$. 
Thus $\be^{a}$ does not depend on~$\ell_0$.  
Both sequences alternate in sign.

We chose the notation $\al^a$, $\be^a$ for these sequences because, as we now see, 
they are the coefficients of the linear functions $w_i(z)$ for $z$ lying on 
the appropriate side of~$a$.
Write the weight expansion $\ww(a)$ of~$a$ as
$$
\ww(a) \,=\, \bigl( 1^{\times \ell_0},(x_1(a))^{\times \ell_1}, \dots, (x_N(a))^{\times \ell_N} \bigr) 
       \,=\, \bigl( \underbrace{1,\dots,1}_{\ell_0},\underbrace{x_1,\dots,x_1}_{\ell_1},\dots \bigr).
$$
Then the weights $x_j := x_j(a)$ satisfy the recursive formula~\eqref{eq:itere} with $x_0=1$, 
and $x_1=a-\ell_0$, so that $x_j = \al_j^{a} +a \be^{a}_j$ for $j\le N$.  
If~$N$ is odd, this formula for $x_j(z)$, $j \le N$, continues to hold for all $z<a$ 
that are so close to $a$ that the $x_j(z)$ are positive.   
In this case,
$z = [\ell_0; \ell_1,\dots, \ell_N,h,\dots]$ where $h\ge 1$.
Similarly, if $N$ is even, it holds for $z>a$ and sufficiently close to~$a$.
This proves the following result.

\begin{lemma} \labell{le:w1} 
Let $a$ and $N$ be as above. 
Then if $N$ is odd there is
$\eps>0$ and $h \ge 1$ such that for $z\in (a-\eps,a)$ we have
$$
\ww(z) \,=\,  \bigl( 1^{\times \ell_0},\bigl(x_1(z)\bigr)^{\times \ell_1}, \dots, \bigl(x_N(z)\bigr)^{\times \ell_N}, \bigl(x_{N+1}(z)\bigr)^{\times h},\dots \bigr)
$$
where $x_j(z) = \al^{a}_j +z \be^{a}_j$ for $j\le N+1$.
If $N$ is even, the same statement holds for $z\in (a,a+\eps)$.
Moreover, in  
both
cases $x_j(z)$ is an increasing function of~$z$ for $j$~odd, 
and a decreasing function for $j$~even.
\end{lemma}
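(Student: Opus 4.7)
My plan is to propagate by induction on $j$ the formula $x_j(z) = \al_j^a + z\,\be_j^a$ from $j=0,1$ up through $j = N+1$, while simultaneously verifying the monotonicity claim and the block structure of $\ww(z)$ near $a$.

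The formula itself follows from the base cases $x_0(z) = 1$, $x_1(z) = z - \ell_0$ together with the weight recursion $x_{j+1}(z) = x_{j-1}(z) - \ell_j\, x_j(z)$ and the matching recursion~\eqref{eq:itere} for $\al^a$ and $\be^a$. This gives the formula for all $j \le N+1$ as long as the first $j$ blocks of $\ww(z)$ carry the same multiplicities $\ell_0, \ldots, \ell_{j-1}$ as those of $\ww(a)$. The monotonicity comes along the same induction: since $x_0(z) \equiv 1$ and $x_1(z) = z - \ell_0$ is strictly increasing in~$z$, the recursion shows inductively that $x_{j+1}$ has the same monotonicity as $x_{j-1}$, so $x_j(z)$ is strictly increasing for $j$~odd and strictly decreasing for $j$~even. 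Equivalently, $\be_j^a > 0$ for $j$~odd and $\be_j^a < 0$ for $j$~even.

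The parity restriction on $N$ then emerges from the sign of $x_{N+1}(z)$. The weight expansion $\ww(a)$ terminates at $x_N(a)$ precisely because $x_{N+1}(a) = 0$, so $\al_{N+1}^a = -a\,\be_{N+1}^a$ and hence
\[
x_{N+1}(z) \,=\, (z-a)\,\be_{N+1}^a .
\]
By the previous paragraph, $\be_{N+1}^a > 0$ when $N$ is even and $\be_{N+1}^a < 0$ when $N$ is odd, so $x_{N+1}(z) > 0$ precisely for $z$ on the side of~$a$ singled out in the statement of the lemma.

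Finally I would verify the block structure for such~$z$. At $z=a$ we have the strict inequalities $\ell_j\,x_j(a) < x_{j-1}(a) < (\ell_j+1)\,x_j(a)$ for every $j \le N-1$ (both arising from $0 < x_{j+1}(a) < x_j(a)$), while for $j=N$ the right-hand strict inequality still holds and the left becomes the equality $x_{N-1}(a) = \ell_N x_N(a)$. For $z$ on the chosen side and close enough to~$a$, continuity combined with $x_{N+1}(z) > 0$ upgrades this at $j=N$ to the strict pair $\ell_N x_N(z) < x_{N-1}(z) < (\ell_N+1)\,x_N(z)$, and Definition~\ref{def:wa} then forces the first $N+1$ blocks of $\ww(z)$ to have multiplicities exactly $\ell_0, \ldots, \ell_N$, followed by a further block starting with $x_{N+1}(z)$ whose multiplicity $h$ is at least $1$. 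The only real obstacle is bookkeeping: one must track strict versus borderline inequalities at $z=a$ carefully enough to conclude that the $N$th block has length exactly $\ell_N$ (and not more) once we perturb to one side.
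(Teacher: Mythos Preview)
Your proposal is correct and follows essentially the same approach as the paper's own argument, which appears in the paragraph immediately preceding the lemma: both proofs rest on the fact that the weights $x_j$ satisfy the same recursion~\eqref{eq:itere} as $\al_j^a$ and $\be_j^a$, with matching initial data, and that for $z$ on the correct side of $a$ the continued fraction of $z$ begins $[\ell_0;\ell_1,\dots,\ell_N,h,\dots]$. Your write-up is considerably more explicit than the paper's---in particular you actually verify the monotonicity claim and the block-structure bookkeeping, which the paper leaves to the reader---but the underlying idea is identical.
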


We now give a second description for the sequences $\ww(a)$, $\al^a$, and $\be^a$ 
in terms of the convergents of a related number $\laa$ (the mirror of $a$)
which helps to explain their symmetry properties.  

\begin{defn}\labell{def:mirror}
Let $\ell_j \ge 1$ for $0\le j\le N$, where $N\ge 1$. 
We define
$$
\raa := [\ell_0; \ell_1,\dots,\ell_N], \quad 
\laa := [\ell_N; \ell_{N-1},\dots,\ell_0],
$$
and call $\laa$ the {\bf mirror} of~$\raa$. 
The {\it convergents} $(\raa)_k$, $0 \le k \le N$, to~$\raa$ are defined by setting
$$
(\raa)_k \,:=\, [\ell_0;\ell_1,\dots,\ell_k] \,=:\, \frac{p_k(\raa)}{q_k(\raa)} \,=:\, \frac{p_k}{q_k},
$$
where $p_k(\raa)$, $q_k(\raa)$ are the numerator and denominator of 
the rational number represented by~$(\raa)_k$.
In particular, $(\raa)_N = p_N/q_N = a$, and for short we write $a=\raa$.    

We define the
{\bf normalized weight sequence} of $\raa$ as
$$
W(\raa) := q_N \ww(\raa) = \Bigl( \bigl(X_0(\raa)\bigr)^{\times\ell_0}, \dots, \bigl(X_N(\raa)\bigr)^{\times\ell_N} \Bigr).
$$   
In particular, $X_N = 1$ always.
Finally, we define the {\bf (signed) mirror} of a sequence  
$W := \bigl( X_0^{\times\ell_0}, X_1^{\times\ell_1},\dots, X_N^{\times\ell_N} \bigr)$ to be
$$
\Hat W := \Bigl( X_N^{\times\ell_N}, \bigl(- X_{N-1}\bigr)^{\times \ell_{N-1}},\dots, \bigl((-1)^NX_0\bigr)^{\times\ell_0} \Bigr)
$$
where we reverse the order and change signs. 
\end{defn}

Note that the sequence of weights $\ww(a)$ is independent of the ambiguity in the $\ell_j$, 
but its block description and the $X_j$ do depend on this choice.  
 
The first part of the next lemma is well known.
We then show that the normalized weights of $a=\raa$ are equal to the numerators 
of the convergents of $\laa$.  
Further the coefficients $\al^a$ (resp.~$\be^a$) are the numerators (resp.\/ denominators) 
of the convergents of $a=\raa$ shifted by~$1$.  
Define $p_{-1}(a)=1$, $q_{-1}(a)=0$.

\begin{lemma} \labell{le:mirror}   
Let $a=\raa$ be as above. Then:
\begin{itemize}
\item[(i)] 
$\laa  = p_N(\raa)/p_{N-1}(\raa)$;  
in particular, $p_N(\raa) = p_N(\laa)$.

\s 
\item[(ii)]   
For $0\le j\le N$, we have 
$$
X_j(\raa) = |\al^{\laa}_{N-j}| ,  \qquad 
X_{N-j}(\laa) = |\al^{\raa}_{j}| .
$$
Further $|\al^a_j| = p_{j-1}(a)$ for all $a$ and $0 \le j \le N+1$.
  
\s  
\item[(iii)]   
Define $u = \rau := [\ell_1;\dots,\ell_N]$.  
Then $\lau = (\laa)_{N-1}$, and for $1\le j\le N+1$ we have
$$
|\be^a_j| = |\al^u_{j-1}| = q_{j-1}(a).
$$
\end{itemize}
\end{lemma}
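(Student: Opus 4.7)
The plan is to reduce each part to routine inductions based on the standard convergent recursion $p_k = \ell_k p_{k-1} + p_{k-2}$ (with $p_{-1}=1,p_0=\ell_0$). Part~(i) is the classical identity that reversing the partial quotients sends $p_N/q_N$ to $p_N/p_{N-1}$; I would prove it by induction on $N$, or observe that $\prod_{j=0}^N \bigl(\begin{smallmatrix} \ell_j & 1 \\ 1 & 0 \end{smallmatrix}\bigr) = \bigl(\begin{smallmatrix} p_N & p_{N-1} \\ q_N & q_{N-1} \end{smallmatrix}\bigr)$ and read off entries after transposition.

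Next I would establish the auxiliary identity $|\alpha^a_j| = p_{j-1}(a)$ from part~(ii) by induction on $j$. A short preliminary induction shows that $\alpha^a_j$ has sign $(-1)^j$, so the defining recursion~\eqref{eq:itere} becomes $|\alpha^a_j| = |\alpha^a_{j-2}| + \ell_{j-1}|\alpha^a_{j-1}|$ in absolute values, matching the convergent recursion exactly, with base cases $|\alpha^a_0| = 1 = p_{-1}$ and $|\alpha^a_1| = \ell_0 = p_0$.

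The main identity $X_j(\raa) = |\alpha^{\laa}_{N-j}|$ is then equivalent to $X_j(\raa) = p_{N-j-1}(\laa)$, which I would prove by induction on $j$. The base cases $X_0(\raa) = q_N(\raa) = p_{N-1}(\laa)$ (using part~(i)) and $X_1(\raa) = p_N - \ell_0 q_N = p_{N-2}(\laa)$ (using the convergent recursion for $\laa$, whose $N$-th partial quotient is $\ell_0$) are immediate, and the inductive step aligns the recursion $X_j = X_{j-2} - \ell_{j-1} X_{j-1}$ with $p_{N-j+1}(\laa) = \ell_{j-1}p_{N-j}(\laa) + p_{N-j-1}(\laa)$, since the $(N-j+1)$-th partial quotient of $\laa = [\ell_N;\ldots,\ell_0]$ is precisely $\ell_{j-1}$. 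The symmetric formula $X_{N-j}(\laa) = |\alpha^{\raa}_j|$ then follows by swapping the roles of $\raa$ and $\laa$.

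For part~(iii) the equality $\lau = (\laa)_{N-1}$ is immediate since both are $[\ell_N;\ldots,\ell_1]$. The identity $|\beta^a_j| = |\alpha^u_{j-1}|$ is another direct induction: both sequences satisfy $s_j = s_{j-2} - \ell_{j-1} s_{j-1}$ and agree in absolute value at $j=1,2$ (namely $1$ and $\ell_1$). Finally $|\alpha^u_{j-1}| = p_{j-2}(u) = q_{j-1}(a)$ combines the already-proved identity applied to $u$ with the standard fact $q_k(a) = p_{k-1}(u)$, which follows from $(a)_k = \ell_0 + 1/(u)_{k-1}$ together with coprimality of the numerator and denominator of each convergent. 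Throughout, the only thing that genuinely requires care is the bookkeeping of indices under the mirror symmetry $j \leftrightarrow N-j$ and of signs in the alternating sequences $\alpha^a$ and $\beta^a$; no step is a real obstacle.
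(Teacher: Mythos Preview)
Your proposal is correct and follows essentially the same approach as the paper: both proofs use the matrix identity $\prod_j \bigl(\begin{smallmatrix}\ell_j & 1\\ 1 & 0\end{smallmatrix}\bigr) = \bigl(\begin{smallmatrix}p_N & p_{N-1}\\ q_N & q_{N-1}\end{smallmatrix}\bigr)$ (and its transpose) for~(i), and for~(ii) and~(iii) verify that the various sequences satisfy the same two-term linear recursion with matching initial values. The only cosmetic difference is ordering: the paper compares $X_j(\raa)$ directly with $|\alpha^{\laa}_{N-j}|$ via their shared recursion and afterwards identifies $|\alpha^a_j|$ with $p_{j-1}(a)$, whereas you establish $|\alpha^a_j|=p_{j-1}(a)$ first and then run the induction for $X_j(\raa)=p_{N-j-1}(\laa)$, using~(i) to supply the base cases.
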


\proof  
The following matrix identity holds by induction on $k$:
\begin{equation} \labell{eq:matrix}
\begin{pmatrix}\; \ell_0&1\\1&0 \;\end{pmatrix}\,
\begin{pmatrix}\; \ell_1&1\\1&0 \;\end{pmatrix}\, \cdots \,
\begin{pmatrix}\; \ell_k&1\\1&0 \;\end{pmatrix} \;=\; 
\begin{pmatrix}\; p_k(\raa)&p_{k-1}(\raa)\\q_k(\raa)&q_{k-1}(\raa)  \;\end{pmatrix} .
\end{equation}
(i) follows by considering its transpose. 
    
To prove (ii), observe that the elements $X_j: = X_j(a)$ are decreasing positive integers with 
$X_N = 1$, $X_{N+1} = 0$, and by Definition~\ref{def:wa} may be defined backwards by the 
iterative relation
$$
X_{N-j-1} = X_{N-j+1} + \ell_{N-j}\,X_{N-j}, \qquad j\ge 1.
$$
Similarly, $\al^a_0=1$ and, if we set $\al_{-1}=0$, the identity~\eqref{eq:itere} 
implies that the $|\al^a_j|$ are increasing positive integers
satisfying
$$
|\al^a_{j+1}| = |\al_{j-1}^a| +  \ell_{j} \,|\al^a_{j}|,\qquad j\ge 0 .
$$
But $\laa = [\ell_0';\dots, \ell_N']$ where $\ell_j' = \ell_{N-j}$.
Therefore $X_j(\raa) = |\al^{\laa}_{N-j}|$ for $j=0,\dots, N$.
Since $\llaa = \raa$, we also have that
$X_j(\laa) = |\al^{\raa}_{N-j}|$ for $j=0,\dots, N$.

To understand the relation between $\al^a_j$ and $p_{j-1} := p_{j-1}(a)$, 
note first that $p_0 = \ell_0 = |\al^a_1|$.  
Further, \eqref{eq:matrix} implies that
$p_j = p_{j-2} + \ell_j p_{j-1}$. 
Therefore $|\al^a_j|=p_{j-1}(a)$ for $j=0,\dots, N+1$, as claimed.
This proves~(ii).

The first claim in (iii) is obvious. 
To prove the second, note that the identity 
$|\be^a_j| = |\al^u_{j-1}|$ follows 
immediately from the definitions of~$\al$ and~$\be$.  
Therefore, by~(ii), $|\be^a_j | = |\al^u_{j-1}| = p_{j-2}(u)$.  
But if $v := [0;\ell_1,\dots,\ell_N]$, then
$\frac 1u = v = a-\lfloor a\rfloor$.  
Hence $p_{0}(u) = q_{1}(v) = q_1(a)$, and more generally,     
$p_{k}(u) = q_{k+1}(a)$.  
Hence $|\be^a_j | = q_{k-1}(a)$. 
\proofend

\begin{cor} \labell{cor:mirror}
\begin{eqnarray*}
\Hat W(\laa) &:=&\left( \bigl(X_N(\laa)\bigr)^{\times\ell_0},  \bigl(-X_{N-1}(\laa)\bigr)^{\times\ell_{1}}, \dots, 
\bigl( (-1)^N X_0(\laa) \bigr)^{\times\ell_N} \right) \\
&=& 
\left( \bigl(\al_0^a\bigr)^{\times\ell_0}, \bigl(\al_{1}^a\bigr)^{\times\ell_{1}},\dots, \bigl(\al_N^a\bigr )^{\times\ell_N} \right)
\end{eqnarray*} 
\end{cor}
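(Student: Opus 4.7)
The plan is to show that the two vectors displayed in the statement agree block-by-block. The first equality in the display is essentially the definition of $\Hat W$ applied to $W(\laa)$, once one keeps careful track of indices, and the real content is the second equality, which identifies each signed entry $(-1)^j X_{N-j}(\laa)$ with the corresponding $\al_j^a$.

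First I would expand $W(\laa)$ using Definition~\ref{def:mirror}. Since $\laa = [\ell_N;\ell_{N-1},\dots,\ell_0]$, writing $\ell_j' := \ell_{N-j}$ for its partial quotients gives
$$
W(\laa) \,=\, \bigl( X_0(\laa)^{\times \ell_N}, X_1(\laa)^{\times \ell_{N-1}}, \dots, X_N(\laa)^{\times \ell_0} \bigr).
$$
Applying the signed-mirror operation to this (reverse the order of blocks and multiply the $k$-th from the end by $(-1)^k$) produces
$$
\Hat W(\laa) \,=\, \bigl( X_N(\laa)^{\times \ell_0}, (-X_{N-1}(\laa))^{\times \ell_1}, \dots, ((-1)^N X_0(\laa))^{\times \ell_N} \bigr),
$$
which is the first equality in the statement.

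Next I would match this block-by-block to $\bigl(\al_0^a,\al_1^a,\dots,\al_N^a\bigr)$ (with multiplicities $\ell_0,\dots,\ell_N$). By Lemma~\ref{le:mirror}(ii) applied with the roles of $\raa$ and $\laa$ exchanged (legitimate since $\llaa = \raa$), we have
$$
X_{N-j}(\laa) \,=\, |\al_j^a|, \qquad 0 \le j \le N.
$$
It therefore remains to verify that $\al_j^a = (-1)^j |\al_j^a|$, i.e.\ that $\al_j^a$ has sign $(-1)^j$. This I would prove by a short induction on $j$, using the defining recursion $\al_j^a = \al_{j-2}^a - \ell_{j-1}\al_{j-1}^a$ from~\eqref{eq:itere} and the initial data $\al_0^a = 1$, $\al_1^a = -\ell_0 \le 0$: assuming $\al_{j-2}^a$ and $\al_{j-1}^a$ have signs $(-1)^j$ and $(-1)^{j-1}$ respectively, both summands on the right have sign $(-1)^j$, and so does $\al_j^a$.

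Combining these three ingredients gives
$$
(-1)^j X_{N-j}(\laa) \,=\, (-1)^j |\al_j^a| \,=\, \al_j^a
$$
for each $j$, which is the second equality. No step is really an obstacle; the only thing to be careful about is the re-indexing when one reverses a sequence whose blocks have different multiplicities, and for this it is safest to introduce the auxiliary symbols $\ell_j'$ as above before applying $\Hat{}$.
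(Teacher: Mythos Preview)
Your proof is correct and is exactly the argument the paper has in mind: the corollary is stated without proof precisely because it is immediate from Lemma~\ref{le:mirror}(ii) together with the remark (made just after~\eqref{eq:itere}) that the sequence $\al_j^a$ alternates in sign. Your careful bookkeeping with the block multiplicities and the short sign induction fill in the only details one might want to see.
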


We now show that there is a mirror version of the quadratic relation $\ww(a) \cdot \ww(a) = a$. Since $W(\raa) = q_N(\raa)\, \ww(a)$,  this relation is equivalent to the identity
$$
W(\raa)\cdot W(\raa) \,=\, p_N(\raa) \,q_N(\raa)
$$

We need the following lemma.
 
\begin{lemma}\labell{le:sumwt}  
Let $b \in \Q$ have weight expansion
$
\ww(b) = (1^{\times \ell_0},(y_1)^{\times \ell_1}, \dots, (y_N)^{\times \ell_N}),
$ 
and set $y_{N+1} := 0$.  Then, if $N=2J+1$ is odd
$$
y_{2k}   =  \sum_{j\ge k}^{J}\ell_{2j+1}\, y_{2j+1}, \quad 
y_{2k+1} = y_{2J+1} + \sum_{j> k}^{J}\ell_{2j}\, y_{2j}\;\; \mbox{ for each } k\ge 0,
$$
while if $N=2J$ is even
$$
y_{2k+1} = \sum_{j>k}^{J}\ell_{2j}\, y_{2j}, \quad 
y_{2k}   = y_{2J} + \sum_{j \ge k}^{J-1}\ell_{2j+1}\, y_{2j+1} \;\;\mbox{ for each } k\ge 0.
$$
\end{lemma}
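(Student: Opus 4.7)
The plan is to deduce Lemma \ref{le:sumwt} directly from the defining recursion for the weight expansion and the boundary condition $y_{N+1} = 0$. Recall from the discussion following~\eqref{eq:xs} that the block values $y_j$ of $\ww(b)$ satisfy $y_0 = 1$, $y_1 = b - \ell_0$, and
\[
y_{j+1} \,=\, y_{j-1} - \ell_j \,y_j \qquad (j \ge 1),
\]
or equivalently $y_{j-1} = y_{j+1} + \ell_j\, y_j$. The geometric picture is that at the $(j+1)$-st stage of the square decomposition of the $p \times q$ rectangle (with $b = p/q$), the leftover side length after cutting off $\ell_j$ squares of side $q\,y_j$ from a rectangle of sides $q\,y_{j-1}$ and $q\,y_j$ is $q\,y_{j+1}$.

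First, I would treat the case $N = 2J+1$ odd. Setting $j = 2i+1$ in the rearranged recursion yields $y_{2i} = y_{2i+2} + \ell_{2i+1}\,y_{2i+1}$. Iterating this identity from $i = k$ upward telescopes to
\[
y_{2k} \,=\, y_{2J+2} + \sum_{j=k}^{J} \ell_{2j+1}\, y_{2j+1},
\]
and the first formula follows because $y_{2J+2} = y_{N+1} = 0$. Similarly, setting $j = 2i+2$ gives $y_{2i+1} = y_{2i+3} + \ell_{2i+2}\, y_{2i+2}$; iterating from $i = k$ upward now terminates at the odd index $2J+1 = N$, producing
\[
y_{2k+1} \,=\, y_{2J+1} + \sum_{j=k+1}^{J} \ell_{2j}\, y_{2j}
\]
as required (the $y_{2J+1}$ term must be kept, since $2J+1 = N$ and $y_N$ need not vanish).

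Second, for $N = 2J$ even, the same two telescoping arguments work, but the roles are interchanged: now $y_{N+1} = y_{2J+1} = 0$, so the iteration producing $y_{2k+1}$ ends with no remainder, while the iteration producing $y_{2k}$ terminates at the even index $2J = N$ and retains the term $y_{2J}$. This gives exactly the two formulas stated in the even case. The argument is thus a purely mechanical telescoping; the only thing to be careful about is matching the parity of the terminal index $N$ with the correct boundary condition, so that the leftover boundary term is either the nonzero weight $y_N$ or the vanishing weight $y_{N+1}=0$. There is no real obstacle.
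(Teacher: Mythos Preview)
Your proof is correct and is essentially the same as the paper's, just written out algebraically: the paper's one-line proof appeals to the geometry of the square decomposition in Figure~\ref{figure.expa}, where each identity $y_{j-1} = \ell_j\,y_j + y_{j+1}$ is read off as a side length, and the telescoping you perform is exactly the statement that a given side of a subrectangle is the sum of the square-sides stacked along it. No gap, and nothing substantively different.
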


\begin{proof}
This follows immediately from the construction of the weight expansion as in diagram~\eqref{figure.expa};
interpret each sum as the length of a side of some subrectangle.
\end{proof}

\begin{prop} \labell{prop:mirror}
Let $\raa = [\ell_0;\dots,\ell_N]$.
Then
$W(\raa) \cdot \Hat W(\laa) = p_N(\raa)$ if $N$ is even, 
and $=0$ if $N$ is odd.
\end{prop}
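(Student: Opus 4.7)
The plan is to evaluate $S := W(\raa) \cdot \Hat W(\laa)$ by a summation-by-parts argument exploiting the two recursions that govern the weight expansion. By Corollary~\ref{cor:mirror},
\[
S \,=\, \sum_{j=0}^N \ell_j\, X_j(\raa)\, \al_j^a.
\]
The $X_j(\raa)$ satisfy $X_{j-1} = X_{j+1} + \ell_j X_j$ for $j = 1, \dots, N$, with $X_{N+1} = 0$; I would extend this to $j=0$ by defining $X_{-1} := X_1 + \ell_0 X_0$. The $\al_j^a$ satisfy $\al_{k+1}^a = \al_{k-1}^a - \ell_k \al_k^a$ by~\eqref{eq:itere}. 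Rewriting both relations as $\ell_j X_j = X_{j-1} - X_{j+1}$ and $\ell_k \al_k^a = \al_{k-1}^a - \al_{k+1}^a$ puts the two sequences in a symmetric position ideal for Abel summation.

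First I would substitute $\ell_j X_j = X_{j-1} - X_{j+1}$ into $S$ and shift indices in the two resulting sums. Collecting boundary terms and using $X_{N+1}=0$ yields
\[
S \,=\, X_{-1}\al_0^a + X_0\al_1^a + \sum_{k=1}^{N-1} X_k\bigl(\al_{k+1}^a - \al_{k-1}^a\bigr) - X_N \al_{N-1}^a.
\]
Next I would apply the $\al$-recursion $\al_{k+1}^a - \al_{k-1}^a = -\ell_k \al_k^a$ to the middle sum, rewriting it as $-\bigl(S - \ell_0 X_0 \al_0^a - \ell_N X_N \al_N^a\bigr)$, and then use $\al_0^a = 1$, $X_N = 1$, the cancellation $\al_1^a + \ell_0\al_0^a = 0$, and $\al_{N+1}^a = \al_{N-1}^a - \ell_N \al_N^a$ to collapse the boundary data. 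After rearrangement this should give the clean identity
\[
2S \,=\, X_{-1} - \al_{N+1}^a.
\]

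The final step is to evaluate these two boundary terms in closed form. The extension of the $X$-recursion one step backwards preserves the relation $X_j(\raa) = p_{N-j-1}(\laa)$ of Lemma~\ref{le:mirror}(ii), and so $X_{-1} = p_N(\laa) = p_N(\raa)$ by Lemma~\ref{le:mirror}(i). On the other side, $|\al_j^a| = p_{j-1}(a)$ together with the sign alternation $\operatorname{sign}(\al_j^a) = (-1)^j$ gives $\al_{N+1}^a = (-1)^{N+1} p_N(\raa)$. Substituting yields $2S = p_N(\raa)\bigl(1 + (-1)^N\bigr)$, which is exactly $p_N(\raa)$ when $N$ is even and $0$ when $N$ is odd, as claimed.

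The one delicate point will be the boundary bookkeeping at $j=0$ and $j=N$: in particular, verifying that $X_{-1}$ defined via backward extension really does equal $p_N(\raa)$, and that the would-be leftover term $X_0(\al_1^a + \ell_0\al_0^a)$ vanishes automatically. Beyond these checks, the argument is purely mechanical and uses no input beyond the two dual recursions and Lemma~\ref{le:mirror}.
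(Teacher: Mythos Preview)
Your proof is correct and takes a genuinely different route from the paper's. The paper splits into the cases $N$ even and $N$ odd, introduces the auxiliary Lemma~\ref{le:sumwt} (expressing each weight $y_k$ as a partial sum of the later $\ell_j y_j$ of the opposite parity), and then manipulates the resulting double sum $\sum_j \ell_{2j} Y_{2(J-j)} X_{2j}$ by swapping the order of summation until it telescopes against $\sum_j \ell_{2j+1} Y_{2(J-j)-1} X_{2j+1}$ plus a boundary term that recovers $p_N(\raa)$ via \eqref{eq:p}. Your approach instead exploits directly the symmetry between the two recursions $\ell_j X_j = X_{j-1} - X_{j+1}$ and $\ell_k \al_k^a = \al_{k-1}^a - \al_{k+1}^a$: one Abel summation followed by substituting the second recursion back yields $2S = X_{-1} - \al_{N+1}^a$ in one stroke, and both parities fall out of the single formula $2S = p_N(\raa)(1+(-1)^N)$. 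Your argument is shorter, avoids the case split, and makes Lemma~\ref{le:sumwt} unnecessary; the paper's argument, by contrast, stays entirely inside the weight picture without ever invoking the $\al_j^a$ (it uses the $Y_j = X_j(\laa)$ directly), which is perhaps more self-contained but at the cost of the extra combinatorics. The boundary checks you flag are indeed routine: $X_{-1} = \ell_0 X_0 + X_1 = \ell_0\, p_{N-1}(\laa) + p_{N-2}(\laa) = p_N(\laa) = p_N(\raa)$ by the convergent recursion for $\laa$ and Lemma~\ref{le:mirror}(i), and $X_0(\al_1^a + \ell_0 \al_0^a) = X_0(-\ell_0 + \ell_0) = 0$ is immediate from the initial values of $\al^a$.
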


\begin{proof} 
Write 
$$
W(\raa) := (X_0^{\times \ell_0},\dots, X_N^{\times\ell_N}),\quad W(\laa) := 
(Y_0^{\times\ell_N},\dots, Y_N^{\times\ell_0}).
$$   
Then 
$\Hat W(\laa) = \bigl( Y_N^{\times\ell_0},\dots, ((-1)^N Y_0)^{\times\ell_N} \bigr)$.
Hence Lemma~\ref{le:mirror} part~(i) implies that
\begin{equation}\labell{eq:p}
p := p_N(\raa) = \ell_0 X_0 + X_1=\ell_N Y_0 + Y_{1}.
\end{equation}

Suppose first that $N=2J$. We must show that
\begin{equation}\labell{eq:summ}
\sum_{j=0}^J \ell_{2j} Y_{2(J-j)}\, X_{2j} = p+ \sum_{j=0}^{J-1} \ell_{2j+1} 
\, Y_{2(J-j)-1} \,X_{2j+1}.
\end{equation}
Lemma~\ref{le:sumwt} implies that
\begin{equation}\labell{eq:claim}
Y_{2(J-k)-1} = \sum _{j\ge 0}^k \ell_{2j}\,Y_{2(J-j)}
 \quad \mbox{ for each } k\le J.
\end{equation}
Hence
\begin{eqnarray*}
\sum_{j=0}^{J-1} \ell_{2j+1}\, Y_{2(J-j)-1}\, X_{2j+1}&=&
\sum_{j=0}^{J-1} \Bigl(\ell_{2j+1} X_{2j+1}\bigl(\sum_{s\le j} \ell_{2s} \,Y_{2(J-s)}\bigr)\Bigr)\\
&=& \sum_{s=0}^{J-1} \Bigl(\ell_{2s} \,Y_{2(J-s)} \bigl(\sum_{j\ge s}^{J-1} \ell_{2j+1}\,X_{2j+1}\bigr)\Bigr)\\
&=& \sum_{s=0}^{J-1} \ell_{2s} \,Y_{2(J-s)}\,\bigl( X_{2s}- X_{2J}\bigr)\\
&=& \sum_{s=0}^{J-1} \ell_{2s} \,Y_{2(J-s)}\,X_{2s} - X_{2J} Y_{1},
\end{eqnarray*}
where the penultimate equality follows from Lemma~\ref{le:sumwt} and the last one uses 
equation~\eqref{eq:claim}.
Therefore equation~\eqref{eq:summ} will follow if we show that
$(\ell_{N} Y_0 + Y_{1}) X_N = p$. Since  $X_N = 1$, this holds by
equation~\eqref{eq:p}.

The proof when~$N$ is odd is similar, and is left to the reader.
\end{proof}

\begin{cor}\labell{cor:mirror2} 
Let $x_j:= x_j(a)$, where $a=[\ell_0;\ell_1,\dots,\ell_N]>1$ 
and define $\al_j^{a}, \be^{a}_j$ as in equation~\eqref{eq:itere}.   
Then:
\begin{itemize}
\item[(i)] 
If $N$ is even,  $\sum_{j=0}^N \ell_j\, x_j\, \al^{a}_j = a$ and 
$\sum_{j=0}^N \ell_j\, x_j\, \be^{a}_j = 0$; 

\m
\item[(ii)]
If $N$ is odd, $\sum_{j=0}^N \ell_j\, x_j\, \al^{a}_j = 0$ and 
$\sum_{j=0}^N \ell_j\, x_j\, \be^{a}_j = 1$.
\end{itemize}
\end{cor}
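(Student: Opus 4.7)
The plan is to read off part~(i) directly from Proposition~\ref{prop:mirror} by rewriting the dot product $W(\raa)\cdot \Hat W(\laa)$ in terms of the unnormalized weights $x_j$ and the $\al_j^a$, and then to reduce part~(ii) to part~(i) applied to the one-step truncation $u := [\ell_1;\ell_2,\dots,\ell_N]$ of~$a$.

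For part~(i), Corollary~\ref{cor:mirror} identifies $\Hat W(\laa) = \bigl((\al_0^a)^{\times \ell_0},\dots,(\al_N^a)^{\times \ell_N}\bigr)$, while by definition $W(\raa) = q_N \ww(\raa) = \bigl((q_N x_0)^{\times\ell_0},\dots,(q_N x_N)^{\times\ell_N}\bigr)$. Hence
$$
W(\raa)\cdot \Hat W(\laa) \,=\, q_N\sum_{j=0}^N \ell_j\, x_j\, \al_j^a,
$$
and Proposition~\ref{prop:mirror} equates this with $p_N$ when $N$ is even and with~$0$ when $N$ is odd. Dividing by~$q_N$ and using $a = p_N/q_N$ yields the two $\al$-identities.

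For the $\be$-identities, the key is two elementary matching lemmas for the truncation $u$, whose continued-fraction length is $N-1$. First, a straightforward induction on the recursion~\eqref{eq:itere}, run in parallel for~$a$ (started from $\be_0^a=0$, $\be_1^a=1$) and for~$u$ (started from $\al_0^u=1$, $\al_1^u=-\ell_1$), shows that $\be_j^a = \al_{j-1}^u$ for all $j\ge 1$; this upgrades the absolute-value statement in Lemma~\ref{le:mirror}(iii) to the correct signed version. Second, since $x_1(a) = a-\ell_0 = 1/u$, an easy induction on $x_{j+1}(a) = x_{j-1}(a)-\ell_j x_j(a)$ gives $x_j(a) = x_{j-1}(u)/u$ for all $j\ge 1$. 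Combining these with $\be_0^a=0$,
$$
\sum_{j=0}^N \ell_j\, x_j(a)\, \be_j^a \,=\, \frac{1}{u}\sum_{k=0}^{N-1} \ell_{k+1}\, x_k(u)\, \al_k^u,
$$
which is $\tfrac{1}{u}$ times the $\al$-sum attached to~$u$. Applying part~(i), already proven, to~$u$, and noting that the parity of its length is the opposite of~$N$, delivers the value~$0$ when~$N$ is even and the value~$1$ when~$N$ is odd.

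The main obstacle is purely bookkeeping: keeping the signs straight in the identification $\be_j^a = \al_{j-1}^u$ so that it holds on the nose rather than merely in absolute value, and verifying that the block multiplicities $\ell_{k+1}$ appearing in the rewritten sum are indeed the standard continued-fraction multiplicities for~$u$. Once these small matters are in place, the reduction to part~(i) is immediate.
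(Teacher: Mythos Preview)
Your proof is correct and follows essentially the same route as the paper: derive the $\al$-identities directly from Proposition~\ref{prop:mirror} via Corollary~\ref{cor:mirror}, then reduce the $\be$-identities to the $\al$-identities for the truncation $u=[\ell_1;\dots,\ell_N]$ by matching $\be^a_j$ with $\al^u_{j-1}$ and the weights $x_j(a)$ with a rescaling of $x_{j-1}(u)$. The only cosmetic difference is that the paper carries out the shift using the normalized weights $X_j$ and the identity $X_i(u)=X_{i+1}(a)$, then closes with $p_{N'}(u)=q_N(a)$, whereas you work unnormalized and let the factor $1/u$ cancel against the value~$u$ of the $\al$-sum for~$u$; your explicit check that $\be^a_j=\al^u_{j-1}$ holds with signs (not merely in absolute value) in fact patches a small omission in the paper's reference to Lemma~\ref{le:mirror}(iii).
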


\begin{proof}    
The sums involving $\al$ have the stated value by Proposition~\ref{prop:mirror} 
and Corollary~\ref{cor:mirror}.
To prove the claims involving~$\be^a$, write
$$
\rau \,=\, [\ell_1;\ell_2,\dots,\ell_N] \,=\, [\ell_0';\ell_1',\dots,\ell_{N'}'],
$$
where $N'=N-1$.  
Note that $X_i(u) = X_{i+1}(a)$, for $0\le i\le N'$.
Thus, 
with $\beta_0^a=0$, and
since $\be^a_j = \al^u_{i}$ where $i=j-1$ by Lemma~\ref{le:mirror}~(iii), we find that
\begin{eqnarray*}
q_N(a) \sum_{j=0}^N \ell_j\, x_j(a)\, \be^{a}_j &=& 
\sum_{j=1}^N \ell_j\, X_j(a)\, \al^{u}_{j-1}  \\
&=& \sum_{i=0}^{N'}\ell_i'\, X_i(u)\, \alpha_i^u .
\end{eqnarray*}
By what we have already shown, 
this sum is~$0$ when $N'$ is odd (i.e.~$N$ is even)
and equals the numerator $p_{N'}(u)$ of $u$ when $N'$ is even (i.e.~$N$ is odd).
But $p_{N'}(u)$ is the denominator of 
$\frac 1u = [0;\ell_1,\dots,\ell_N] = a-\lfloor a\rfloor$, and so equals $q_N(a)$.
The result follows.
\end{proof}

\subsection{The nature of the obstructions}\label{ss:nat}

We saw in Corollary~\ref{cor:fin} that near each point~$a$
where $c(a)>\sqrt a$, the function~$c$ is the supremum of a finite number of piecewise linear functions $\mu(d;\mm)$, 
and that each linear segment of~$c$ has the form  $z \mapsto \al + \be z$ with rational 
and nonnegative coefficients. The next example shows that the coefficients of the functions 
$\mu(d;\mm)$, though rational, are not restricted in this way even if we suppose that
$\mu(d;\mm)(z) > \sqrt z$.

\begin{example}  \label{exa:Ea2}
{\rm
Consider the class $(d;\mm) = \left( 10; 4^{\times 6}, 1^{\times 5} \right)$ in~$\Ee$. 
(Under the name $E(a_2)$, this class will play a role in Section~\ref{s:JL}.) 
Abbreviate $\mu(z) = \mu(d;\mm) (z)$.
We compute $d\,\mu(z) = 10\,\mu(z)$ on the interval $\left[ 6, 6 \frac 12 \right]$.
$$
\begin{array}{lrr}
\mbox{on } I_1 = \left[ 6, 6 \frac 14 \right] \colon 
& -6+5z \,\mbox{ on } \left[ 6\phantom{\frac 14}, 6 \frac 15 \right] , \quad
& 25 \phantom{ - 33z} \,\mbox{ on } \left[ 6 \frac 15, 6 \frac 14 \right] ; \\
\mbox{on } I_2 = \left[ 6 \frac 14, 6 \frac 13 \right] \colon 
& 4z \,\mbox{ on } \left[ 6 \frac 14, 6 \frac 27 \right] , \quad
& 44-3z \,\mbox{ on } \left[ 6 \frac 27, 6 \frac 13 \right] ; \\
\mbox{on } I_3 = \left[ 6 \frac 13, 6 \frac 25 \right] \colon 
& -13+6z \,\mbox{ on } \left[ 6 \frac 13, 6 \frac 38 \right] , \quad
& 38-2z \,\mbox{ on } \left[ 6 \frac 38, 6 \frac 25 \right] ; \\
\mbox{on } I_4 = \left[ 6 \frac 25, 6 \frac 12 \right] \colon 
& 6+3z \,\mbox{ on } \left[ 6 \frac 25, 6 \frac 37 \right] , \quad
& 51-4z \,\mbox{ on } \left[ 6 \frac 37, 6 \frac 12 \right] ;
\end{array}
$$
see Figure~\ref{fig.ex}. 
The figure also shows the graph of $\sqrt{z}$ and of
$c(z)$ (dashed) on $\left[ 6, 6 \frac 12\right]$,
which by Theorem~\ref{thm:main}~(i) is
$$
c(z) = \tfrac 52 \,\mbox{ on } \left[ 6, 6 \tfrac 14\right], \qquad
c(z) = \tfrac 25 \:\!z\,\mbox{ on } \left[ 6 \tfrac 14, 6 \tfrac 12\right] . 
$$
Note that $\ell (\mm) = \ell(a) = 11$ at $6 \frac 15$, $6 \frac 27$, $6 \frac 38$, $6 \frac 37$.
Also note that at $a=6 \frac 38$ we have $\mu(a) > \sqrt{a}$ 
while at $a =6 \frac 37$ we have $\mu(a) < \sqrt{a}$.

Similar results hold for the functions 
$\mu(d;\mm)$ given by the classes $E(a_n)$, $n>2$, of Theorem~\ref{thm:ladder}. 
For example, one can use Corollary~\ref{cor:mirror2} to show that these functions equal~$c(z)$
for~$z$ near~$a_n$. 
\diam

\begin{figure}[ht]
 \begin{center}
  \psfrag{z}{$z$}
  \psfrag{6}{$6$}
  \psfrag{15}{$6\frac 15$}
  \psfrag{14}{$6\frac 14$} 
  \psfrag{27}{$6\frac 27$}
  \psfrag{13}{$6\frac 13$}
  \psfrag{38}{$6\frac 38$}
  \psfrag{25}{$6\frac 25$}
  \psfrag{37}{$6\frac 37$}
  \psfrag{12}{$6\frac 12$}   
  \psfrag{52}{$\frac 52$}  
  \psfrag{c}{$c(z)$}
  \psfrag{s}{$\sqrt z$}    
  \psfrag{m}{$\mu(z)$}    
 \leavevmode\epsfbox{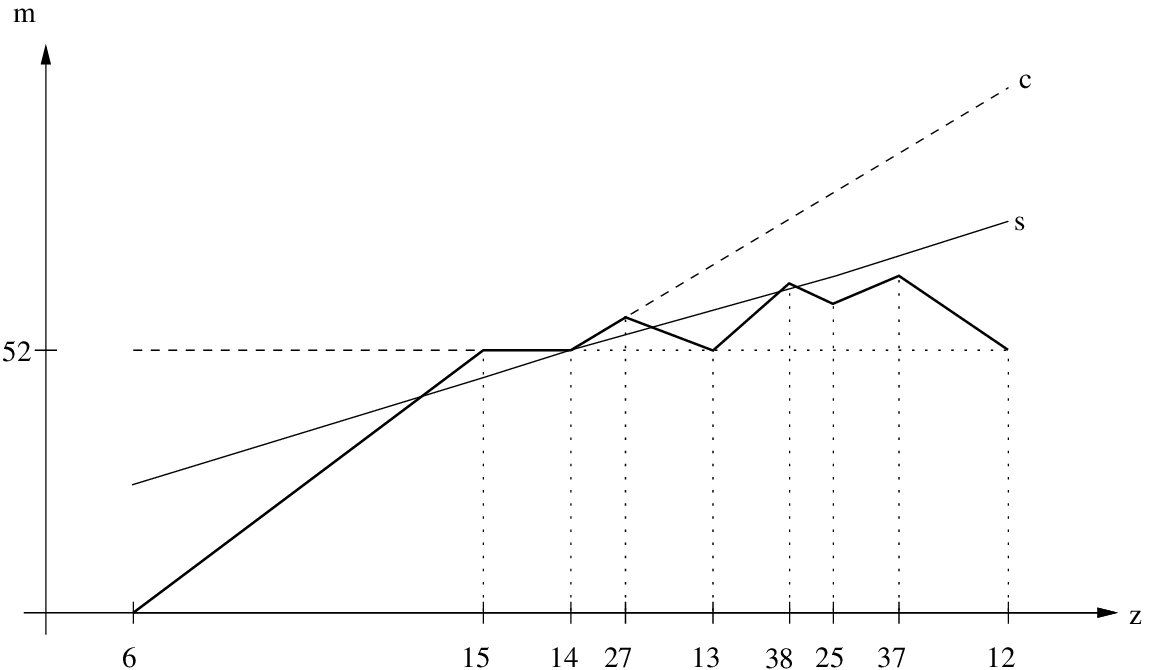}
 \end{center}
 \caption{The graph of $\mu$ on $\left[ 6,6\frac 12\right]$.}
 \label{fig.ex}
\end{figure}
%
%

}
\end{example}

We now show that although the coefficients $\al$, $\be$ may be negative, 
they are somewhat restricted.

\begin{prop} \labell{prop:mainc}
Let $(d;\mm)\in \Ee$ and $a\in \Q$ be such that $\ell(\mm) = \ell(a)$
and $\mu(d;\mm)(a)>\sqrt a$.   
Write $a =: p/q$ in lowest terms, let $m := m_M$ be the last nonzero entry in~$\mm$ 
and let~$I$ be the connected component of the set 
$\{ z \mid \mu(d;\mm)(z) > \sqrt z \}$ that contains~$a$. 
Then there are integers $A<p$ and $B<(m+1)q$ such that
$$
d\,\mu(d;\mm)(z) \,=\,
\left\{
\begin{array}{ll}
\phantom{(} 
A+Bz           & \mbox{ if }\;  z<a,\; z\in I, \\
(A+mp)+(B-mq)z & \mbox{ if }\;  z>a,\; z\in I.
\end{array}
\right.
$$
\end{prop}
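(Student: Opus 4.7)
The plan is to derive explicit linear formulas for $d\mu(d;\mm)(z)$ on each side of $a$ by applying Lemma~\ref{le:w1} to the two continued-fraction expansions $a = [\ell_0;\ldots,\ell_N]$ (with $\ell_N \ge 2$) and $a = [\ell_0;\ldots,\ell_N-1,1]$, and then to bound the coefficients using Proposition~\ref{prop:obs}(iii) and Corollary~\ref{cor:mirror2}. Assume $N$ is even; the odd case is symmetric, with the roles of the $\alpha$- and $\beta$-sums in Corollary~\ref{cor:mirror2} interchanged. By Lemma~\ref{le:w1}, for $z$ slightly greater than~$a$ the first $M$ weights satisfy $w_i(z) = \alpha_{j(i)}^a + z\,\beta_{j(i)}^a$, where $j(i)$ is the block containing~$i$, and summing over~$i$ yields $d\mu(z)|_{z>a} = A' + B' z$ with integer coefficients $A' = \sum_j T_j \alpha_j^a$, $B' = \sum_j T_j \beta_j^a$, and $T_j := \sum_{i \in B_j} m_i$. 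Applying the same procedure to the alternate expansion (whose $\alpha^{(2)}, \beta^{(2)}$ coincide with $\alpha, \beta$ in indices $j \le N$, while $\alpha^{(2)}_{N+1} = \alpha_{N+1}+\alpha_N$ and similarly for $\beta$) shows that the first $M$ weights for $z<a$ differ from those for $z>a$ only in the last entry, which is replaced by $x_N(z) + x_{N+1}(z)$, where $x_{N+1}(z) := \alpha_{N+1}^a + z\,\beta_{N+1}^a$. Since only $m_M = m$ multiplies this difference, $d\mu(z)|_{z<a} = A' + B'z + m\,x_{N+1}(z)$. By Lemma~\ref{le:mirror}(ii)--(iii) with signs $\alpha_j = (-1)^j p_{j-1}(a)$ and $\beta_j = (-1)^{j+1} q_{j-1}(a)$, one has $x_{N+1}(z) = qz - p$; hence setting $A := A' - mp$ and $B := B' + mq$ gives the two claimed formulas with $A, B \in \ZZ$.

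For the bounds, decompose $\mm = (d/\sqrt a)\,\ww(a) + \eps$ and let $E_j := \sum_{i\in B_j} \eps_i$. The vanishing identity $\sum_j \ell_j x_j \beta_j = 0$ from Corollary~\ref{cor:mirror2}(i) gives $B' = \sum_i \eps_i \beta_{j(i)}^a$. Since $\eps \cdot \eps < 1$ by Proposition~\ref{prop:obs}(iii), Cauchy-Schwarz and Lemma~\ref{le:mirror}(iii) yield $|B'|^2 < \sum_j \ell_j\,q_{j-1}(a)^2$, which telescopes via $q_j = q_{j-2} + \ell_j q_{j-1}$ to $q\,q_{N-1}(a) < q^2$; hence $|B'| < q$ and $B < (m+1)q$. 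The parallel identity $\sum_j \ell_j x_j \alpha_j = a$ gives $A' = d\sqrt a + \sum_i \eps_i \alpha_{j(i)}^a$. Using $d\sqrt a = mp - p\eps_M$ (from $\eps_M = m - d/(q\sqrt a)$ and $a = p/q$), together with $\alpha_N^a = p_{N-1}$ and $p = \ell_N p_{N-1} + p_{N-2}$, this simplifies to
\[
A \;=\; -\eps_M\,p_{N-2} \,+\, \delta\,p_{N-1} \,+\, \sum_{j<N} E_j\,\alpha_j^a,
\]
where $\delta \in \{0, 1, \ell_N - 1\}$ records the last-block distribution classified by Lemma~\ref{le:atmost1}(i). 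In the generic case $\delta = 0$, two applications of Cauchy-Schwarz bound $|A|$ by $\sqrt{p_{N-2}(p_{N-2} + p_{N-1})}$, which is strictly less than $p = \ell_N p_{N-1} + p_{N-2}$ by direct expansion.

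The main obstacle is the non-generic cases $\delta \in \{1, \ell_N - 1\}$, where the extra $\delta\,p_{N-1}$ term threatens to push $A$ past~$p$. Here one invokes Lemma~\ref{le:atmost1}(iii): when the $m_i$ in block~$N$ are not all equal, $\sum_{i\in B_N}\eps_i^2 \ge (\ell_N-1)/\ell_N$, so $\sum_{i\notin B_N}\eps_i^2 < 1/\ell_N$, whence $\bigl|\sum_{j<N} E_j\alpha_j^a\bigr| < \sqrt{p_{N-1} p_{N-2}/\ell_N} \le p_{N-1}/\sqrt{\ell_N}$. Combined with $|\eps_M| < 1$ and the identity $(\ell_N-1)p_{N-1} + p_{N-2} = p - p_{N-1}$, this yields $A < p - p_{N-1}(1 - 1/\sqrt{\ell_N}) < p$ because $\ell_N \ge 2$. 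The analogous case analysis, extracting the $E_N \beta_N^a$ contribution explicitly, handles the remaining refinement for~$B$.
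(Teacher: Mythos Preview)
Your proof is correct and follows essentially the same approach as the paper: both derive the $(mp,mq)$ shift between the two linear pieces (you via the alternate continued-fraction expansion, the paper via the direct computation recorded as Lemma~\ref{le:AB}), then bound the coefficients by applying Corollary~\ref{cor:mirror2} to reduce to $\eps$-sums, followed by Cauchy--Schwarz together with the telescoping estimate $\sum_j \ell_j q_{j-1}^2 = q_{N-1}q_N$ (which is the content of the paper's Lemma~\ref{le:est}). Your case analysis for the hard bound is organized around the block-$N$ classification $\delta\in\{0,1,\ell_N-1\}$ from Lemma~\ref{le:atmost1}(i), whereas the paper splits first on the sign of the last-block error $\delta_N$ and then treats the non-constant-last-block case separately; both routes reach the same conclusion.
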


We begin the proof by establishing the following lemma.
 
\begin{lemma} \labell{le:AB} 
Consider $(d;\mm) \in \Ee$ and $a= \frac pq$ (in lowest terms) 
such that $\ell(\mm) = \ell(a) =: M$.
Let $d \:\! \mu(d;\mm)(z) = A + Bz$ on a nonempty interval of the form $(a-\eps,a)$.    
Then 
there is $\eps'>0$ so that for $z' \in (a,a+\eps')$ 
$$
d \;\! \mu(d;\mm)(z') \,=\, A + Bz' +  m(p-qz') \,=:\, A'+B'z'.
$$
\end{lemma}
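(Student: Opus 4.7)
My plan is to reduce the lemma to a clean statement about how the weight functions $w_i(z)$ vary as $z$ crosses~$a$. Since $d\,\mu(d;\mm)(z) = \sum_{i=1}^{M} m_i\, w_i(z)$, where we pad $\mm$ by trailing zeros so that indices $i>M$ contribute nothing, it suffices to analyze each of $w_1(z), \ldots, w_M(z)$ as $z$ moves through~$a$. Concretely, I aim to prove that the piecewise-linear formulas agree on the two sides of~$a$ for $i < M$, while for $i = M$ they differ by exactly $p - qz$. Granting this, multiplying by $m_M = m$ and summing gives $d\mu(d;\mm)(z') = A + Bz' + m(p - qz')$, which is the lemma.

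To prove this key claim, I would apply Lemma~\ref{le:w1} in the two standard representations of~$a$. Write the canonical expansion $a = [\ell_0; \ell_1, \ldots, \ell_N]$ with $\ell_N \ge 2$, and consider also the alternate representation $a = [\ell_0; \ldots, \ell_N - 1, 1]$, which has one extra entry and therefore opposite parity. By Lemma~\ref{le:w1}, one representation gives the linear formulas $x_j(z) = \alpha_j^a + z\beta_j^a$ for the first $M$ weights of $\ww(z)$ valid on the side $(a-\eps,a)$, while the other gives analogous formulas $\tilde x_j(z)$ valid on $(a,a+\eps')$. Because the two representations share the initial data $\ell_0, \ldots, \ell_{N-1}$, the recursion~\eqref{eq:itere} forces $\tilde x_j(z) = x_j(z)$ for $0 \le j \le N$, so the first $M-1$ weights of the two expansions coincide as functions of~$z$.

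The last weight is the one that changes. A direct computation with the recursion gives, on the ``alternate'' side, $w_M^{\mathrm{alt}}(z) = \tilde x_{N+1}(z) = \tilde x_{N-1}(z) - (\ell_N-1)\tilde x_N(z) = x_{N+1}(z) + x_N(z)$, whereas on the ``canonical'' side $w_M^{\mathrm{can}}(z) = x_N(z)$. Hence $w_M^{\mathrm{alt}}(z) - w_M^{\mathrm{can}}(z) = x_{N+1}(z)$. Since $x_{N+1}(a) = 0$ (the canonical expansion terminates at index~$N$), we have $x_{N+1}(z) = \beta_{N+1}^{a}(z-a)$, and Lemma~\ref{le:mirror}(iii) together with the alternating sign pattern of~$\beta_j^a$ gives $\beta_{N+1}^{a} = (-1)^N q_N(a) = (-1)^N q$. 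A short case-check on the parity of~$N$ (which also determines which side is ``$+$'' and which is ``$-$'' in the preceding step) shows that in both cases $w_M^+(z) - w_M^-(z) = p - qz$, as required.

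The main obstacle is precisely this parity bookkeeping: depending on whether $N$ is even or odd, the roles of canonical vs.\ alternate representation switch between the two sides of~$a$, and the sign of $\beta_{N+1}^a$ flips accordingly. Both flips have to cancel so that the identity $w_M^+(z) - w_M^-(z) = p - qz$ holds with a uniform sign. Once that is handled, everything else reduces to a one-line manipulation of the continued-fraction recursion, with the conceptual content being that the alternate representation simply splits the final block of $\ww(z)$ by peeling off its last entry.
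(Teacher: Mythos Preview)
Your proof is correct and follows essentially the same approach as the paper's. Both arguments identify that only the $M$th weight function changes across $a$, compute that change as $x_{N+1}(z)=\pm(p-qz)$ via Lemma~\ref{le:mirror}, and do a parity check; the only cosmetic difference is that you organize the two sides of $a$ via the canonical and alternate continued-fraction representations of $a$, whereas the paper directly describes the weight expansion of $z'$ on the far side (``its $N$th multiplicity is $\ell_N-1$'') and computes the same quantity $x'_{N+1}(z')=x_N(z')+x_{N+1}(z')$ in place.
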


\begin{proof}  
Suppose first that $N$ is odd. Then by Lemma~\ref{le:w1} 
when $z<a$,  $x_j(z) = \al^{a}_j + z\be^{a}_j$, for $j \le N+1$. 
Further,
\begin{equation}\labell{eq:XNN}
x_{N+1}(z) \,=\, x_{N-1}(z) - \ell_N x_N(z) \,=\, 
\al^{a}_{N+1} + z\be^{a}_{N+1} = p-qz
\end{equation}
where the last equality holds by Lemma~\ref{le:mirror}.  

When $z'$ is just larger than~$a$, its $N$th multiplicity is 
$\ell_N - 1$, and $\ell_{N+1}$ is very big.  
(Since $\ell_N\ge 2$ this still gives an allowed set of multiplicities.)  
Hence for such~$z'$ the formula for the linear functions
$x_j(z'), j \le N,$ is unchanged, but now $x'_{N+1}(z') = x_{N-1}(z') - (\ell_N-1)x_N(z')$. 
(For clarity we denote by $x_{N+1}'$ the formula that holds for $z'>a$ and by $x_{N+1}$ the formula that holds for $z<a$.)
Note that because $\ell(\mm) = \ell(a)$, just one term from the $(N+1)$st block is counted 
in $\mu(d;\mm)(z')$. Hence, with $m := m_M$, we have
\begin{eqnarray*}
d\mu(d;\mm)(z') - (A + Bz') &=& -m x_N(z') + mx'_{N+1}(z') \\
&=& m\bigl(-x_N(z') + x_{N-1}(z') - (\ell_N - 1)x_N(z')\bigr)\\
&=& m\bigl(x_{N-1}(z') - \ell_N x_N(z')\bigr)  = m(p-qz'),
\end{eqnarray*}
where the last equality uses equation~\eqref{eq:XNN}.

Now suppose that $N$ is even.  
Then the formulas 
$x_j(z') := \al^a_j + z\be^a_j$
give the (beginning of the) weight expansion for~$z'$ just larger than~$a$.
As above, when~$z$ is just less than~$a$, we must modify the last multiplicities of~$a$, 
reducing $\ell_N$ by~$1$, and making $\ell_{N+1}$ arbitrarily large. 
Thus as above, the formulas for the weights $x_j(z)$, $j \le N,$ 
are unchanged but that for the $(N+1)$st weight is modified. 
As above we denote by $x_{N+1}'$ the formula that holds for $z'>a$ and by $x_{N+1}$ the formula that holds for $z<a$.
Then $x_{N+1}'(z') = -p+qz'>0$.
Further, if $d\mu(d;\mm)(z') = A' + Bz'$ for $z'>a$, 
we find for $z<a$ that 
\begin{eqnarray*}
d\mu(d;\mm)(z) -(A' + B'z) &=& -m x_N(z) + mx_{N+1}(z) \\
&=& m\bigl(-x_N(z) + x_{N-1}(z) - (\ell_N-1)x_N(z) \bigr) \\
   &=&  m\,x_{N+1}'(z) \,=\, -m(p-qz).
\end{eqnarray*}
Therefore $A+Bz = A'+B'z - m(p-qz)$, as claimed.
\end{proof}

To complete the proof of Proposition~\ref{prop:mainc} we need to estimate
the size of $A, B$.  
Here is an auxiliary lemma.

\begin{lemma}\labell{le:est} 
Let $\ell_0;\ell_1,\dots,\ell_N$ 
be any sequence of positive integers with $\ell_N \ge 2$, 
and let $\eta_j$, $j \ge 0$, be one of the sequences $|\al^{a}_j|$, $|\be^{a}_j|$.  
Then
$\sum_{j=0}^N \ell_j |\eta_j|^2 < \frac 12 |\eta_{N+1}|^2$.
\end{lemma}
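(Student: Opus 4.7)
The plan is to collapse the left-hand side of the inequality to the single product $|\eta_N|\,|\eta_{N+1}|$ via a telescoping identity, and then extract the factor $\tfrac12$ from the hypothesis $\ell_N \ge 2$.

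Since both $\al^{a}_j$ and $\be^{a}_j$ alternate in sign past their initial entries, after taking absolute values the recursion~\eqref{eq:itere} becomes the all-positive recursion
\[
|\eta_{j+1}| \,=\, |\eta_{j-1}| + \ell_j\,|\eta_j|, \qquad j \ge 1.
\]
I would extend this to $j = 0$ by setting $|\eta_{-1}| := 0$ in the $\al$-case and $|\eta_{-1}| := 1$ in the $\be$-case; with the actual initial values ($|\eta_0| = 1$, $|\eta_1| = \ell_0$ for $\al$ and $|\eta_0| = 0$, $|\eta_1| = 1$ for $\be$), the recursion now holds for all $j \ge 0$.

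Rewriting as $\ell_j |\eta_j| = |\eta_{j+1}| - |\eta_{j-1}|$ and multiplying by $|\eta_j|$, the summand telescopes:
\[
\ell_j |\eta_j|^2 \,=\, |\eta_j|\,|\eta_{j+1}| - |\eta_{j-1}|\,|\eta_j|.
\]
Summing from $j=0$ to $j=N$ yields
\[
\sum_{j=0}^N \ell_j |\eta_j|^2 \,=\, |\eta_N|\,|\eta_{N+1}| - |\eta_{-1}|\,|\eta_0| \,=\, |\eta_N|\,|\eta_{N+1}|,
\]
because in each of the two cases the boundary term $|\eta_{-1}|\,|\eta_0|$ vanishes.

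It then suffices to show $|\eta_N|\,|\eta_{N+1}| < \tfrac12 |\eta_{N+1}|^2$, i.e.\ $2|\eta_N| < |\eta_{N+1}|$. But
\[
|\eta_{N+1}| - 2|\eta_N| \,=\, |\eta_{N-1}| + (\ell_N - 2)|\eta_N| \,\ge\, 0,
\]
and this is strictly positive as soon as $|\eta_{N-1}| > 0$, which holds for all $N \ge 1$ in the $\al$-case and all $N \ge 2$ in the $\be$-case. The only potential obstacle is a couple of degenerate small-$N$ cases where $|\eta_{N-1}| = 0$ and $\ell_N = 2$ give equality rather than strict inequality; I would dispatch these by direct computation (or observe that they are excluded by the way the lemma is invoked in the proof of Proposition~\ref{prop:mainc}).
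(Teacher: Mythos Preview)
Your approach is correct and takes a genuinely different route from the paper's. The paper argues by induction on~$k$ that
\[
\ell_k\Bigl(\sum_{j=0}^k \ell_j\,\eta_j^2\Bigr)\;\le\;\eta_{k+1}^2,
\]
and then sets $k=N$ and divides by $\ell_N\ge 2$. Your telescoping argument is sharper: you obtain the \emph{exact identity} $\sum_{j=0}^N \ell_j|\eta_j|^2 = |\eta_N|\,|\eta_{N+1}|$, from which the paper's inductive inequality drops out immediately via $\ell_k|\eta_k|\le|\eta_{k+1}|$. The identity is more transparent and also pinpoints exactly when the bound is tight.

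Your caution about the edge cases is well placed. In fact, for $\be$ with $N=1$, $\ell_1=2$ (and for $\al$ with $N=0$, $\ell_0=2$) one gets equality rather than strict inequality, so the lemma as stated is very slightly overstated---and the paper's own argument shares the same defect. This does no harm in the only application (the proof of Proposition~\ref{prop:mainc}): the strict inequalities $A<p$ and $B<(m+1)q$ there ultimately come from $E<1$, so the non-strict version of the present lemma is all that is actually needed.
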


\begin{proof} 
By definition
$\eta_j = \eta_{j-2} + \ell_{j-1}\eta_{j-1}$.
The inequality
\begin{equation} \label{eq:letak}
\ell_k \left(\sum_{j=0}^k \ell_j \eta_j^2 \right)\,\le\, \eta^2_{k+1} 
\end{equation}
holds for $k=0$, and 
may be proved for all larger~$k$ by induction.
Setting $k=N$ yields the lemma.
\end{proof}

\NI {\bf Proof of Proposition~\ref{prop:mainc}.}
Suppose first that~$N$ is odd, and  write $m_i = \frac d{\sqrt a} w_i(a) + \eps_i$ 
as in equation~\eqref{eq:eps}. For notational convenience, let us first assume that the~$m_i$ 
are constant on each of the blocks of length~$\ell_j$.  
Then define~$n_j$ to be this constant value on the $j$th block.  
If this assumption holds, then the $\eps_i$ are also constant on the blocks, 
and we denote their values by~$\de_j$. 
Then $d \;\! \mu(\mm;d)(a) = A + Ba$ where, by Lemma~\ref{le:w1}, we have
$$
A = \sum \ell_j\, n_j\, \al^{a}_j, \qquad B=\sum \ell_j \, n_j\, \be^{a}_j.
$$
Therefore, substituting
$n_j = \frac d{\sqrt a} x_j + \de_j$, we find
\begin{eqnarray*} 
A &=& \sum \ell_j\, n_j\, \al^{a}_j
\;=\; \sum \tfrac d{\sqrt a} \ell_j x_j \al^{a}_j +\sum \ell_j \de_j\al^{a}_j \\
&=& 0 +\sum \ell_j \de_j\al^{a}_j\\
&\le & \Bigl(\sum \ell_j\de_j^2\Bigr)\,\!^{1/2}\;\Bigl(\sum \ell_j |\al^{a}_j|^2\Bigr)\,\!^{1/2} \;<\;
\sqrt {E/2} \,|\al^{a}_{N+1}| \,<\, p.
\end{eqnarray*}
Here we used Corollary~\ref{cor:mirror2} for the third equality, 
and for the inequalities used the Cauchy--Schwarz inequality, 
$\sum \ell_j\de_j^2 =:E <1$ from Proposition~\ref{prop:obs}, 
Lemma~\ref{le:est} and 
finally the fact that $|\al^a_{N+1}| = p$ from Lemma~\ref{le:mirror}~(ii).
 
This argument is also valid if the $m_i$ are not constant on the blocks. 
In this case, by Lemma~\ref{le:atmost1} the values of~$n_j$ and $\de_j$ may vary by~$1$ 
over the entries of one block,
but that variation can be absorbed into the sum that gives~$\sqrt E$ 
and will not increase it above $\sqrt {E+1} < \sqrt 2$. 

Similarly,
\begin{eqnarray*} 
B &=& \sum \ell_j\, n_j\, \be^{a}_j \;=\;
\sum \tfrac d{\sqrt a} \ell_j x_j \be^{a}_j + \sum \ell_j \de_j\be^{a}_j \\
&=& \tfrac d{\sqrt a} +\sum \ell_j \de_j\be^{a}_j 
\;=:\; \tfrac d{\sqrt a} + S,\\
\end{eqnarray*}
where $S := \sum_{j\le N} \ell_j \de_j \be^{a}_j$.
By definition, $m = m_M = \frac d{\sqrt a}\,x_N + \de_N$, 
and $x_N = \frac 1q$.
Therefore, assuming that the~$m_i$ are constant on the blocks we have 
$$
B-(m+1)q
\,=\, \tfrac d{\sqrt a} -qm -q +S \,=\, -q(1+\de_N) + S.
$$
We need to show that $S < q(1+\de_N) = |\be^a_{N+1}|\,(1+\de_N)$.
If $\de_N\ge 0$ we may estimate~$S$ as before by
$$
S \,\le\, \sqrt {E} \Bigl(\sum_{j=0}^{N}\ell_j (\be^{a}_j)^2\Bigr)^{1/2}
\,<\,
\tfrac 1{\sqrt 2} |\be^a_{N+1}| \,<\, q.
$$
Now assume that $\de_N = -\de$ is negative and note that  
$\be_N^{a}>0$ because $N$ is odd.  
Therefore
$$
S \,:=\, \sum_{j\le N} \ell_j \de_j\be^{a}_j\le -\ell_N\de \be^{a}_{N} + 
\sqrt E \Bigl( \sum_{j=0}^{N-1}\ell_j (\be^{a}_j)^2 \Bigr)^{1/2}
\,\le\, \be^{a}_N \bigl( \sqrt E-\ell_N\de \bigr),
$$
where we used equation~\eqref{eq:letak} with $k=N-1$ and $\ell_{N-1}\ge 1$.
Since $\be_N^{a}<|\be^a_{N+1}|/2=q/2$ by the inductive formula, the desired result follows easily.

Suppose now that the $m_i$ are not constant on the $j$th~block.
If $j<N$, 
then, as before, we simply need to replace~$E$ by $E+1$ in the above estimates.  
It is easy to check that the argument still goes through. 

It remains to consider the case when the $m_i$ are not constant on the last block.
Define $\delta_N$ again by $m=m_M=\frac d{\sqrt a} x_N + \delta_N$.
By Lemma~\ref{le:atmost1} the last block of $\mm$ is either 
$(m+1)^{\times \ell}, m$ with errors $(\delta_N+1)^{\times \ell}, \delta_N$; or 
$m+1, m^{\times \ell}$ with errors $\delta_N+1, \delta_N^{\times \ell}$,
where $\ell := \ell_N-1$.
Note that $\delta_N =: -\delta$ is negative.
The sum $S_N$ of $\de_i \be^a_i$ over the last block is either
$\bigl( \ell(1-\de) - \de \bigr) \be^a_N$ or $\bigl( (1-\delta) - \ell \de \bigr) \be^a_N$.  
Since $\ell \ge 1$, 
in either case $S_N \le \bigl( \ell(1-\de)-\de \bigr) \be_N^a$.
But because
$(\ell+1)\be^a_N 
< \left| \be_{N+1}^a \right| =
q$, we can estimate $B-(m+1)q$ as follows:
\begin{eqnarray*}
B-(m+1)q 
&=& - q(1-\de) + \ts \sum_{j<N} \ell_j \de_j\be^{a}_j + S_N\\
&\le& - \be^a_N\Bigl((\ell+1)(1-\de) - \sqrt{E}-\ell(1-\de) + \de\Bigr)  
< 0. 
\end{eqnarray*}
This completes the proof when $N$ is odd. 
The case when $N$ is even is similar, and is left to the reader.
\QED

\subsection{Connection to the lattice counting problem} 
\labell{ss:latt}

In this section we prove Theorem~\ref{thm:ECH}, 
stating that $c_{ECH}(a) \ge c(a)$ for all $a \ge 1$.
Recall that for $a \ge 1$,
$$
c_{ECH}(a) \,:=\, \inf \left\{ \mu >0 \mid N(1,a) \preccurlyeq N(\mu,\mu) \right\} .
$$
The first step is to describe $c_{ECH}$ in another way.
As Hutchings pointed out\footnote{Private communication.},
the inequalities $N(1,a) \preccurlyeq N(\mu,\mu)$ can be understood in terms of counting 
lattice  points in triangles, as follows. 
Let $a\ge 1$ be irrational.
For each pair of integers $A,B\ge 0$, consider the closed triangle 
$$
T^a_{A,B} \,:=\, \bigl\{ (x,y) \in \R^2 \mid x,y\ge 0,\, x+ay \le A+aB \bigr\}.
$$
Thus the slant edge of $T^a_{A,B}$ has slope $-\frac 1a$ and passes through the integral point~$(A,B)$. 
Then the number $\#\bigl(T^a_{A,B} \cap \Z^2 \bigr)$ of integer points in the triangle 
$T^a_{A,B}$ is just the number of elements in $N(1,a)$ that are $\le A+Ba$.
We define 
\begin{equation} \labell{eq:kAB}
k_{A,B}(a) \,:=\, \tfrac{A+Ba}d ,
\end{equation}
where $d$ is the smallest positive integer such that
$$
\#\bigl( T^a_{A,B} \cap \Z^2\bigr) \le \tfrac 12 (d+1)(d+2).
$$
(Note that $N(1,1) = (0,1,1,2,2,2,3,3,3,3,4,\dots)$ has precisely
$\tfrac 12 (d+1)(d+2)$ entries that are $\le d$.)
Further, set
$$
K(a) \,:=\, \sup_{A,B \ge 0} \bigl\{k_{A,B}(a) \bigr\} .
$$ 
We extend the function $K$ to rational~$a$ by defining
\begin{equation} \label{e:irrat}
K(a) \,:=\, \sup_{ z<a, \,z \,\mbox{\scriptsize irrat}}\; K(z).
\end{equation}

\begin{lemma}\labell{lem:ECH} 
$K(a) = c_{ECH}(a)$ for all $a \ge 1$.
\end{lemma}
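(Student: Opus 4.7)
My plan is to first verify $K(a)=c_{ECH}(a)$ directly for irrational $a$ by reducing the order relation $\preccurlyeq$ to lattice-point counts, and then to deduce the rational case by establishing left continuity of $c_{ECH}$ at rational points. The key observation I will use is that for nondecreasing sequences $(x_k),(y_k)$, the inequality $x_k\le y_k$ for every $k$ is equivalent to $\#\{k:y_k\le t\}\le\#\{k:x_k\le t\}$ for every $t\ge 0$. Now the number of elements of $N(\mu,\mu)$ at most $t$ equals $\tfrac12(\lfloor t/\mu\rfloor+1)(\lfloor t/\mu\rfloor+2)$ (counting lattice points in $\{x,y\ge 0,\;x+y\le t/\mu\}$), while the number of elements of $N(1,a)$ at most $A+aB$ equals $\#(T^a_{A,B}\cap\Z^2)$. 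For irrational $a$ the values $v:=A+aB$ are distinct; I will test the above inequality just to the left of each such $v$, where the left-hand count reads $\tfrac12\lceil v/\mu\rceil(\lceil v/\mu\rceil+1)$ and the right-hand count reads $\#(T^a_{A,B}\cap\Z^2)-1$. Unwinding via~\eqref{eq:kAB} converts this into $\mu\ge k_{A,B}(a)$; varying $(A,B)$ then yields $c_{ECH}(a)=\sup_{A,B\ge 0}k_{A,B}(a)=K(a)$ whenever $a$ is irrational.

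For monotonicity, if $z\le a$ then $m+nz\le m+na$, so the same characterization gives $N(1,z)\preccurlyeq N(1,a)$ and transitivity yields $c_{ECH}(z)\le c_{ECH}(a)$; hence for rational $a$,
\[
K(a)\,=\,\sup_{z<a,\,z\text{ irrat}}c_{ECH}(z)\,\le\,c_{ECH}(a).
\]
For the reverse inequality I will check the same characterization at jumps of the \emph{left}-hand count (that is, at $t=m\mu$ with $m\ge 1$) to obtain, for every $a\ge 1$,
\[
c_{ECH}(a)\,=\,\sup_{m\ge 1}\phi_m(a),\qquad \phi_m(z)\,:=\,\tfrac 1m\, N(1,z)_{(m+1)(m+2)/2}.
\]
A standard area estimate forces $\phi_m(a)\to\sqrt a$ as $m\to\infty$, so when $c_{ECH}(a)>\sqrt a$ the supremum is attained at some finite $m^*$; the remaining case $c_{ECH}(a)=\sqrt a$ falls out of the squeeze $\sqrt z\le c_{ECH}(z)\le c_{ECH}(a)=\sqrt a$. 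For each fixed $m$, the map $z\mapsto N(1,z)_k$ is continuous as the $k$-th order statistic of the continuous family $\{m'+n'z\}$, so $\phi_{m^*}(z)\to c_{ECH}(a)$ as $z\to a^-$. Combined with $c_{ECH}(z)\ge\phi_{m^*}(z)$ I obtain $\lim_{z\to a^-}c_{ECH}(z)=c_{ECH}(a)$, and density of irrationals in $(0,a)$ upgrades the previous inequality to an equality.

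The main obstacle is that for rational $a$ the naive formula $c_{ECH}(a)=\sup_{A,B}k_{A,B}(a)$ \emph{fails}: for instance at $a=2$ one has $\sup_{A,B}k_{A,B}(2)=\tfrac32<2=c_{ECH}(2)$, because both $(0,1)$ and $(2,0)$ produce the value $A+aB=2$, whose doubled multiplicity at rank $3$ of $N(1,2)$ is invisible to the paper's definition of $d$. The workaround of approximating $a$ from below by irrationals thus hinges on the left continuity of $c_{ECH}$, which in turn rests on the supremum $\sup_m\phi_m(a)$ being attained at bounded $m$; this is the principal analytic input.
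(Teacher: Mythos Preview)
Your argument is correct. For irrational $a$ your route is essentially the paper's, but packaged more cleanly: you use the equivalence between $x_k\le y_k$ for all $k$ and the counting inequality $\#\{k:y_k\le t\}\le\#\{k:x_k\le t\}$, then evaluate at the critical values $t=(A+aB)^-$ to obtain directly that $N(1,a)\preccurlyeq N(\mu,\mu)$ iff $\mu\ge k_{A,B}(a)$ for every $(A,B)$. The paper instead argues each inequality $c_{ECH}\gtrless K$ separately by contradiction, extracting the strict-inequality pair $(A,B)$ from a multiplicity comparison of the two sequences; your formulation avoids that detour.

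The genuine difference is at rational $a$. The paper observes that $N(1,\lambda a)\preccurlyeq \lambda N(1,a)$ for $\lambda>1$, so $c_{ECH}$ inherits the scaling property of Lemma~\ref{le:1} and is therefore continuous; since $K$ is \emph{defined} at rationals by left approximation through irrationals, the equality at rationals then follows in one line. You instead derive a second representation $c_{ECH}(a)=\sup_{m\ge1}\phi_m(a)$ with $\phi_m(z)=\tfrac1m N(1,z)_{(m+1)(m+2)/2}$ (by testing the counting inequality at $t=m\mu$), and use continuity of each $\phi_m$ together with the area asymptotic $\phi_m(a)\to\sqrt a$ to prove left continuity of $c_{ECH}$ directly. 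This is a legitimate and self-contained alternative; it is longer but has the merit of not relying on the scaling trick, and your remark that $\sup_{A,B}k_{A,B}(2)=\tfrac32<2=c_{ECH}(2)$ is a nice illustration of why some care is needed at rational points. If you want the shortest proof, note that the scaling inequality $N(1,\lambda a)\preccurlyeq\lambda N(1,a)$ already gives $c_{ECH}(\lambda a)\le\lambda\,c_{ECH}(a)$, which together with monotonicity yields full continuity and replaces your entire $\phi_m$ argument.
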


\proof
For each $\lambda >1$ we have $N(1,\la a) \preccurlyeq \la N(1,a)$.
Therefore, the conclusions of Lemma~\ref{le:1} hold for $c_{ECH}$ as well as for~$c$.
In particular, $c_{ECH}$ is continuous and  
nondecreasing.
Therefore, \eqref{e:irrat} also holds for $c_{ECH}$. It hence suffices to prove the lemma 
for irrational~$a$.

Fix an irrational~$a$.
If $c_{ECH}(a) < K(a)$ then one can find a rational number $\mu>c_{ECH}(a)$ 
and non-negative integers $A,B$ 
with $\mu< k_{A,B}(a)$.
Since $\mu>c_{ECH}(a)$ we have $N(1,a)\preccurlyeq N(\mu,\mu)$.
This inequality implies that for all non-negative integers~$A,B$ we have
$$
\# \left\{ p \in N(1,a) \mid p \le A+Ba \right\}
\,\ge\, 
\# \left\{ p \in N(\mu,\mu) \mid p \le A+Ba \right\} .
$$ 
The number on the left is $\# \bigl( T_{A,B}^a \cap \ZZ^2 \bigr)$,
while the number on the right is $\frac 12 (D+1)(D+2)$, 
where $D := \lfloor \frac{A+Ba}{\mu} \rfloor$.
This must be a strict inequality for some $A,B$.
To see this, let $u = (u_1, u_2, u_3, \dots)$ be the sequence of natural numbers obtained by arranging in increasing order
all the numbers on the LHS obtained by running through all pairs of integers $A,B \ge 0$.
Since $a$ is irrational, each number in $N(1,a)$ occurs with multiplicity~$1$.
The definition of $N(1,a)$ therefore shows that $u=(1,2,3,\dots)$.
On the other hand, the numbers in $N(\mu,\mu)$ appear with larger and larger multiplicity.
The sequence obtained in this way from the RHS therefore jumps by larger and larger amounts.

Consider $A,B$ such that this is a strict inequality.  
Then $k_{A,B}(a) = \frac{A+Ba}d$ where $d>D$. 
On the other hand because~$a$ is irrational and $\mu$ is rational, 
$D+1 > \frac{A+Ba}{\mu}> D$, so that 
$$
k_{A,B}(a) \,=\, \tfrac{A+Ba}{d} \,\le\, \tfrac{A+Ba}{D+1} \,<\, \mu .
$$  
Since this contradicts our assumptions, we conclude that $c_{ECH}(a) \ge K(a)$.  
 
To complete the proof, it suffices to show that
$K(a) \ge \mu$ for all $\mu<c_{ECH}(a)$.
For such~$\mu$ we have
$N(1,a) \not\preccurlyeq N(\mu,\mu)$.  Therefore there is~$A,B$ such that
$$
\# \left\{ p \in N(1,a) \mid p \le A+Ba \right\}
\,<\, 
\# \left\{ p \in N(\mu,\mu) \mid p \le A+Ba \right\} .
$$ 
With $D$ as before, this implies that $d \le D$, so that
$k_{A,B}(a) = \frac{A+Ba}{d} \ge \tfrac{A+Ba}{D} \ge \mu$.
Hence $K(a) = \sup k_{A,B}(a) \ge \mu$ as required. 
\proofend

We are now going to prove 
Theorem~\ref{thm:ECH} by direct calculation,
showing that for each of the constraints $(d;\mm)$ that contributes to~$c(a)$ 
there is a triangle that contributes to $K(a)$ in exactly the same way.
Therefore we will assume the results of Theorems~\ref{thm:main} and~\ref{thm:78}.

The key to understanding the relation between the functions $k_{A,B}$ of 
equation~\eqref{eq:kAB} and the number of lattice points in the triangles $T^a_{A,B}$ 
is the following lemma, that was explained to us by Hutchings.

\begin{lemma}\labell{le:ECH1} 
Suppose that $a$ is rational, abbreviate $T := T^{a}_{A,B}$, and suppose that
$$
\# (T\cap \Z^2) \,\le\, {\ts \frac 12(d+1)(d+2) + s-1 \,=\, \frac 12 (d^2+3d)} + s,
$$
where $s\ge 1$ is the number of integral points on the slant edge of~$T$. 
Assume that $(A,B)$ (resp.~$(A',B')$) is the integral point on the slant edge with smallest 
(resp.\ largest) $x$-coordinate.
Then there is $\eps>0$ such that 
$$
K(z) \ge {\ts \frac {A+zB}d}   \;\mbox{ if }\, z \in (a-\eps,a], \qquad
K(z) \ge {\ts \frac {A'+zB'}d} \;\mbox{ if }\, z \in [a,a+\eps).
$$
\end{lemma}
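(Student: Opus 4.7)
My plan is to perturb the rational $a$ to a nearby $z$ and track how the lattice-point count of the relevant triangle changes. For $z<a$ I will work with $T^z_{A,B}$, and for $z>a$ with $T^z_{A',B'}$. In each case I expect the triangle to lose exactly the $s-1$ integer points on the slant edge of $T$ that are distinct from the pivot, and to gain none. The hypothesis then improves to $\#(T^z_{A,B}\cap\Z^2)\le\tfrac12(d+1)(d+2)$, so by the definition~\eqref{eq:kAB} the minimal admissible exponent $d^{\ast}$ satisfies $d^{\ast}\le d$, whence $k_{A,B}(z)\ge(A+zB)/d$, and similarly for $k_{A',B'}$.

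For $z<a$, the key geometric observation is that $T^a_{A,B}$ and $T^z_{A,B}$ share the pivot $(A,B)$ on their slant edges, and the new slope $-1/z$ is strictly steeper than $-1/a$. Hence the new slant edge lies below the old one for $x>A$ and above it for $x<A$. Because $(A,B)$ is the integer point on the slant edge of $T$ with smallest $x$-coordinate, the remaining $s-1$ integer points on that edge all satisfy $x>A$ and so are excluded from $T^z_{A,B}$. Conversely, any lattice point strictly outside the closed triangle $T^a_{A,B}$ has positive distance from its slant line and hence remains outside $T^z_{A,B}$ for $z$ sufficiently close to $a$; there are only finitely many lattice points in any bounded region, so a single $\eps>0$ works for all of them. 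This gives the asserted count for $z\in(a-\eps,a)$, and hence $K(z)\ge k_{A,B}(z)\ge(A+zB)/d$ for irrational $z$ in this interval. For rational $z\in(a-\eps,a]$ the same inequality is inherited from irrationals via the definition~\eqref{e:irrat} of $K$ by letting $w\nearrow z$ through irrationals.

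The case $z>a$ is entirely symmetric: I rotate the slant edge about $(A',B')$ in the opposite sense (the new slope $-1/z$ is now less steep), so that the $s-1$ integer points on the slant edge of $T$ other than $(A',B')$ all lie to the left of $A'$ and drop out of $T^z_{A',B'}$. The same uniform-perturbation argument applies on this side. The only technical point in either direction is the uniform choice of $\eps$ ensuring that no lattice point from outside $T$ slips in through the widening wedge on the opposite side of the pivot, and this reduces to the trivial discreteness observation mentioned above; this is the main—and quite minor—obstacle in the proof.
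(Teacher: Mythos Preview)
Your proof is correct and follows essentially the same approach as the paper's: pivot the slant edge about $(A,B)$ (resp.\ $(A',B')$), observe that the $s-1$ other integer points on the slant edge drop out while no new lattice points enter for small enough perturbation, and conclude via the definition of $k_{A,B}$. The only minor difference is that the paper extends from irrational to rational $z$ by invoking the continuity of $K$ (established in Lemma~\ref{lem:ECH}), whereas you use the one-sided definition~\eqref{e:irrat} directly; both are valid, and your more explicit treatment of the geometry is a welcome elaboration of what the paper leaves implicit.
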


\begin{proof}  
Recall that $c_{ECH}$ and hence $K$ is continuous.
To prove the statement for $z<a$ it therefore suffices to consider irrational~$z$ of the form $z=a-\eps$.  
Then, for small enough $\eps>0$, the triangle $T^z_{A,B}$ 
contains $s-1$ fewer integral points than~$T$.  
Therefore
$k_{A,B}(z)\ge \frac {A+zB}d$, which proves the first statement.
Similarly,  the second statement holds because if $z=a+\eps$ is irrational and $\eps>0$ is 
sufficiently small, the triangle $T^z_{A',B'}$ contains $s-1$ fewer integral points than~$T$.
\end{proof}

\begin{lemma}\labell{le:ECH2} 
$K(b_n) \ge \sqrt{a_{n+1}}$ for all $n\ge 1$.
\end{lemma}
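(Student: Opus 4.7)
The plan is to apply Lemma~\ref{le:ECH1} to a right triangle whose area encodes the Fibonacci identity $g_n\,g_{n+2} = g_{n+1}^2 + 1$. The choice of triangle is motivated by the perfect obstructive class $(g_{n+1};\, g_n\,\ww(b_n)) \in \Ee$ from Theorem~\ref{thm:main}(i), which achieves $c(b_n) = \sqrt{a_{n+1}} = g_{n+2}/g_{n+1}$.

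Concretely, I will take $A = g_{n+2}$ and $B = 0$, so that $T := T^{b_n}_{g_{n+2},\,0}$ is the right triangle with vertices $(0,0)$, $(g_{n+2},0)$, $(0,g_n)$ (using $b_n = g_{n+2}/g_n$). Two standard identities for the odd Fibonacci numbers, both immediate from the three-term recursion $g_{n+2} = 3g_{n+1} - g_n$, do all the work: $g_n\,g_{n+2} = g_{n+1}^2 + 1$, which forces $\gcd(g_n,g_{n+2}) = 1$ and hence makes the slant edge contain exactly the $s=2$ lattice points at its two vertices, together with $g_n + g_{n+2} = 3\,g_{n+1}$.

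A brief application of Pick's theorem then yields
\begin{equation*}
\#\bigl(T \cap \Z^2\bigr) \,=\, \tfrac12\bigl(g_{n+1}^2+1\bigr) + \tfrac12\bigl(3g_{n+1}+1\bigr) + 1 \,=\, \tfrac12(g_{n+1}+1)(g_{n+1}+2) + 1,
\end{equation*}
which is exactly $\tfrac12(d+1)(d+2) + s - 1$ with $d := g_{n+1}$. Lemma~\ref{le:ECH1} therefore applies, and taking $(A',B') = (g_{n+2},0)$ as the slant-edge integer point with largest $x$-coordinate gives
\begin{equation*}
K(b_n) \,\ge\, \frac{A' + b_n B'}{d} \,=\, \frac{g_{n+2}}{g_{n+1}} \,=\, \sqrt{a_{n+1}}.
\end{equation*}

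There is no real obstacle once the correct triangle is identified: the proof reduces to two Fibonacci identities and a Pick count. The deeper issue, which is presumably tackled in the remainder of the section, is the construction of analogous triangles for the other obstructive classes appearing in Theorems~\ref{thm:main} and~\ref{thm:78}, since Theorem~\ref{thm:ECH} requires the inequality $c_{ECH}(a) \ge c(a)$ to hold at every $a \ge 1$, not only at the Fibonacci stair corners $b_n$.
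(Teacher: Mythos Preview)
Your proof is correct and essentially the same as the paper's: both use the right triangle with vertices $(0,0)$, $(g_{n+2},0)$, $(0,g_n)$, the identities $g_n g_{n+2} = g_{n+1}^2+1$ and $g_n+g_{n+2}=3g_{n+1}$, and Lemma~\ref{le:ECH1} with $s=2$, $d=g_{n+1}$. The only cosmetic differences are that the paper labels the triangle as $T^{b_n}_{0,g_n}$ and counts lattice points via $\tfrac12(g_n+1)(g_{n+2}+1)+1$ rather than invoking Pick explicitly, and it also records the left-hand inequality $K(z)\ge zg_n/g_{n+1}$ for $z<b_n$, which is not needed for this lemma but is used in the subsequent corollary.
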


\begin{proof} 
Consider the triangle $T_n \subset \R^2$ with vertices $(0,0)$, $(g_{n+2},0)$ and $(0,g_n)$, 
where $g_n$ is the $n$\;\!th odd  Fibonacci number.  
Because $g_n, g_{n+2}$ are mutually prime and satisfy the identities
$$
g_n+g_{n+2} = 3g_{n+1},\qquad g_n g_{n+2} = g_{n+1}^2 + 1
$$
(see Section~\ref{ss:JL}),
we find that
\begin{eqnarray} \label{e:Tn}
\# (T_n\cap \Z^2) &=& {\ts\frac 12} (g_n+1)(g_{n+2}+1) + 1\\
&=&  {\ts\frac 12} (g_{n+1}^2 + 3g_{n+1}) + 2. \notag
\end{eqnarray}
Since $b_n = \frac {g_{n+2}}{g_{n}}$, we have $T_n=T_{0,g_n}^{b_n}$.
In view of~\eqref{e:Tn} we can apply 
Lemma~\ref{le:ECH1} with $s=2$ and $d=g_{n+1}$:
For some $\eps>0$ we have
$$
K(z) \ge \tfrac {z g_{n}}{g_{n+1}} \;\mbox { when }\, z\in (b_n-\eps, b_n] ,
$$
and
$$
K(z) \ge \tfrac {g_{n+2}}{g_{n+1}} \;\mbox { when }\, z\in [b_n, b_n+\eps) .
$$
In particular, $K(b_n) = \frac {g_{n+2}}{g_{n+1}} = \sqrt{a_{n+1}}$.
\end{proof}

\begin{cor}\labell{cor:ECH2} 
$K(a)\ge c(a)$ for all $a\in [1,\tau^4]$.
\end{cor}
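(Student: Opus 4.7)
The plan is to combine the corner-wise lower bounds from Lemma~\ref{le:ECH2} with the qualitative properties of $K = c_{ECH}$. As observed in the proof of Lemma~\ref{lem:ECH}, the inequality $N(1,\lambda a) \preccurlyeq \lambda N(1,a)$ for $\lambda > 1$ shows that $K$ inherits from Lemma~\ref{le:1} continuity, monotonicity, and the scaling inequality $K(\lambda a)/(\lambda a) \le K(a)/a$. Moreover, Theorem~\ref{thm:main}(i) gives $c(b_n) = b_n/\sqrt{a_n} = g_{n+2}/g_{n+1} = \sqrt{a_{n+1}}$, so Lemma~\ref{le:ECH2} is precisely the statement $K(b_n) \ge c(b_n)$ for every $n \ge 1$.

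Granted this corner estimate for all $n \ge 0$ (the case $n = 0$ is handled below), the desired bound follows by dissecting $[1, \tau^4]$ along the Fibonacci stair. On each plateau $[b_n, a_{n+1}]$, where $c \equiv \sqrt{a_{n+1}}$, monotonicity of $K$ gives
$K(a) \ge K(b_n) \ge \sqrt{a_{n+1}} = c(a)$.
On each ramp $[a_n, b_n]$, where $c(a) = a/\sqrt{a_n}$, the scaling inequality applied at $\lambda = b_n/a \ge 1$ gives $K(a)/a \ge K(b_n)/b_n$, whence
$$
K(a) \,\ge\, \frac{a}{b_n}\, K(b_n) \,\ge\, \frac{a\sqrt{a_{n+1}}}{b_n} \,=\, \frac{a}{\sqrt{a_n}} \,=\, c(a),
$$
using $\sqrt{a_{n+1}}/b_n = (g_{n+2}/g_{n+1})/(g_{n+2}/g_n) = 1/\sqrt{a_n}$. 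Continuity of $K$ extends the estimate to the endpoint $a = \tau^4$.

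The base case $K(b_0) = K(2) \ge 2 = \sqrt{a_1}$ is not literally contained in Lemma~\ref{le:ECH2}, but its proof applies verbatim when $n = 0$: the triangle $T_0$ with vertices $(0,0)$, $(g_2,0) = (2,0)$ and $(0,g_0) = (0,1)$ contains exactly $4 = \tfrac 12(1+1)(1+2) + 1$ lattice points, of which $s = 2$ lie on the slant edge, so Lemma~\ref{le:ECH1} with $d = g_1 = 1$ yields $K(2) \ge 2$. The main obstacle is thus purely organizational: once one sees that the corner-wise bounds of Lemma~\ref{le:ECH2} match the Fibonacci stair of $c$ corner by corner, monotonicity (handling plateaus) and scaling (handling ramps) automatically propagate equality at the corners to the full inequality $K \ge c$ on $[1, \tau^4]$; the only care needed is to extend Lemma~\ref{le:ECH2} to the initial corner $b_0 = 2$.
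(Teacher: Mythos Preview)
Your proof is correct and follows essentially the same route as the paper: both combine the corner estimates $K(b_n)\ge\sqrt{a_{n+1}}$ of Lemma~\ref{le:ECH2} with the continuity, monotonicity, and scaling properties of $K$ inherited from Lemma~\ref{le:1}. The paper phrases this abstractly, observing that $c$ is the \emph{smallest} continuous nondecreasing function on $[1,\tau^4]$ with $c\ge\sqrt a$, the scaling property, and $c(b_n)=\sqrt{a_{n+1}}$, while you unpack this by verifying $K\ge c$ directly on each plateau $[b_n,a_{n+1}]$ (via monotonicity) and each ramp $[a_n,b_n]$ (via scaling). Your explicit treatment of the case $n=0$ is a welcome addition, since Lemma~\ref{le:ECH2} is stated only for $n\ge1$ even though its proof works verbatim for $n=0$; note also that your piecewise argument never invokes $K(a)\ge\sqrt a$, which the paper's minimality characterization uses.
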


\begin{proof}  
First observe that $c$ is the smallest continuous and nondecreasing function on $[1,\tau^4]$ 
that is $\ge \sqrt a$, has the scaling property of Lemma~\ref{le:1}, 
and also satisfies $c(b_n)= \sqrt{a_{n+1}}$.
On the other hand, we already remarked that $c_{ECH}$ and hence $K$
is continuous, nondecreasing and has the scaling property.
It is also easy to see that $K(a) \ge \sqrt a$, because the number of integer points 
in a large triangle approximates its area.  
Therefore 
$K(b_n) \ge c(b_n)$ implies that 
$K(a) \ge c(a)$ over the whole interval.
\end{proof}

\MS \NI 
{\bf Proof of Theorem \ref{thm:ECH}.}  
By Corollary~\ref{cor:ECH2} we only need to show $K \ge c$ on the interval 
$[\tau^4,\infty)$.
Since, as remarked there, $K(a) \ge \sqrt a$ for all~$a$, 
we just need to check that $K(a) \ge \mu(d;\mm)(a)$ for all $(d;\mm)$ that contribute to~$c$.
Recall from the proof of Proposition~\ref{prop:7easy} that the class $(3;2,1^{\times 6})$ 
gives the constraint $\frac{a+1}3$ on $[\tau^4,7]$.
Together with Theorems~\ref{thm:main}~(ii) and~\ref{thm:78}, 
we see that it suffices to check $K(a) \ge \mu(d;\mm)(a)$ for the nine classes 
in Table~\ref{t:0} below.
Each of these classes contributes on both sides of its center point.
It suffices to show that in each case there is a triangle that gives an equal constraint.  
Proposition~\ref{prop:mainc} shows which triangles to take:
if the constraint $(d;\mm)$ is centered at~$a$, then one should consider
$T := T_{A,B}^a = T_{A',B'}^a$ 
where $\mu(d;\mm)(z)$ equals $\frac 1d (A+Bz)$ to the left of~$a$ and 
$\frac 1d (A'+B'z)$ to the right.  
Because $c=\mu(d;\mm)$ in a neighborhood of the center point, 
this proposition together with 
Corollary~\ref{cor:fin} implies 
that $0 \le A < p$ and $mq\le B<(m+1)q$, so that the integral points $(A,B)$ and
$(A',B') = (A+mp, B-mq)$ are the first and last on the slant edge of~$T$, 
as required by Lemma~\ref{le:ECH1}. Therefore, it suffices to check that in each case 
the coefficient~$d$ occurring in $(d;\mm)$ satisfies the condition in Lemma~\ref{le:ECH1}.  
Thus the number $N(A,B)$ of integer points in~$T$ must be $\le N(d) := \frac 12(d+1)(d+2) + s-1$, 
where $s=m+1$ is the number of points on the slant edge of~$T$.  
In fact, as the following table shows we find that $N(A,B) = N(d)$ in each case.
\begin{equation}\labell{t:0}
\begin{array}{|c|l|c|c|r|c|r|}
 \hline
 a&(d;\mm)& (A,B)& (A',B')& N(A,B)&s& N(d)\\
 \hline
 7&(3;2,1^{\times 6})&(1,1)&(8,0)&11&2&11\\ 
 \hline
 7\frac 18&(48;18^{\times 7},3,2^{\times 7})&(7,17)&(121,1)& 1227&3&1227\\ 
 \hline
 7\frac 2{15}&(64;24^{\times 7},3^{\times 7},1^{\times 2})&(14,22)&(121,7)& 2146&2&2146\\
 \hline
 7\frac 17&(24;9^{\times 7},2,1^{\times 6})&(7,8)&(57,1)& 326&2&326\\
 \hline
 7\frac 2{13}&(40;15^{\times 7},2^{\times6},1^{\times 2})&(14,13)&(107,0)& 862&2&862\\
 \hline
 7\frac 15&(16;6^{\times 7},1^{\times 5})&(7,5)&(43,0)&154&2&154\\
 \hline
7\frac 14&(35;13^{\times 7},4,3^{\times 3})&(0,13)&(87,1)&669&4&669\\
 \hline
7\frac 12&(8;3^{\times 7},1^{\times 2})&(7,2)&(22,0)&46&2&46\\
 \hline
 8&(6;3,2^{\times 7})&(1,2)&(17,0)&30&3&30\\\hline 
\end{array}
\end{equation}

\begin{figure}[ht]
 \begin{center}
  \psfrag{AB}{$(A,B)$}
  \psfrag{ABs}{$(A',B')$}
  \psfrag{a}{$\alpha$}
  \psfrag{b}{$\beta$}
  \psfrag{c}{$\gamma$}
  \psfrag{d}{$\delta$}
  \psfrag{e}{$\vareps$}
 \leavevmode\epsfbox{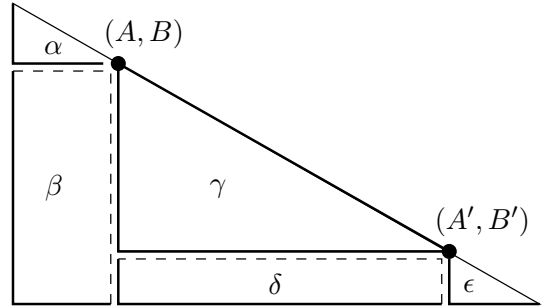}
 \end{center}
 \caption{The subdivision of the triangle $T^a_{A,B}$.}
 \label{fig.triang}
\end{figure}
%
%

Here we calculate $N(A,B)$ by
subdividing~$T$ into five parts labeled $\al,\dots,\eps$
as in Figure~\ref{fig.triang}.  
Each part
besides~$\gamma$ 
is half open, and includes the integer points on the heavy boundary edges 
but not those on the  
dashed boundary edges. 
For example, the rectangle~$\be$ includes the integer points on the~$x$ and $y$-axes, but not those on the 
(dashed) edges shared by $\al$, $\gamma$ or $\de$.
Thus $\#(\be\cap \Z^2) = AB$.  Further, because $A,B$ (resp.\ $(A',B')$) is the integer point on the slant edge with smallest (resp.\ largest) $x$ coordinate, we put all integer points on the slant edge  into $\ga$. Thus, we find that
$$
\#(\ga\cap \Z^2) \,=\, \tfrac 12 \Bigl((A'-A+1)(B'-B+1) -s\Bigr) + s.
$$
For example, in the case of the triangle $T^a_{7,17}$ with $a=7\frac 18$ and $s=3$, the numbers of integer points in $\al,\dots,\eps$ are
$7, 119, 979, 114$ and $8$, giving a total of $1227$.
 
This completes the proof of Theorem~\ref{thm:ECH}.\QED

\begin{rmk}\label{rmk:hid}
\rm  
(i)
On the interval $[7,8]$,
there are four other classes (described in Table~\ref{t:5}) 
with the property that $\mu(d;\mm)(a) = c(a)$ at their center points $a=\frac pq$, 
but that do not contribute otherwise to~$c(a)$. 
Let us look at their contribution to~$K$.
In each case $(A,B)=(-1,mq)$, where~$m$ is the last nonzero entry in~$\mm$,
so that $\mu(d;\mm)$ does not satisfy the scaling condition to the left.  
In the corresponding triangles the first point on the slant edge is 
$(A_1,B_1) = (-1+p, (m-1)q)$ and one can check as before that in each case $s=m$ and
$N(A,B) = N(d)$.
Therefore to the left of each center point we obtain the inequality
$$
K(a) \,\ge\, k_{A_1,B_1}(a) \,=\, \frac{p-1 + (m-1)q a}{d}.
$$
In each case, one can check that  $k_{A_1,B_1}(a)$ is precisely $c(a)$.
For example, at $7 \frac 18 = \frac{57}8 =: \frac pq$, we get 
$\frac{56 + 17 \cdot 8 a}{384} = \frac {7 + 17a}{48}$ 
which agrees with the first line in Table~\ref{t:1}.

\m
(ii)
Recall from the introduction that
the expected functorial properties of embedded contact homology 
should establish that $c_{ECH}(a) \le c(a)$ for all~$a$.
For $a \le 6 \frac 14$, $a=9$ and for $a \ge 11$ one can prove this 
by directly showing that for each $d$ the closed triangle 
with vertices $(0,0)$, $(d\,c(a),0)$, $(0, d\, \frac{c(a)}{a})$
contains at least $\frac 12 (d+1)(d+2)$ lattice points.  
For other values of~$a$ the lattice point formula
in Theorem~2.10 of~\cite{BR} should be useful. 
This is work in progress.
\diam
\end{rmk}

\section{The Fibonacci stairs.}\labell{s:JL}

In this section we establish the behavior of $c(a)$ for $a\le \tau^4$.

\subsection{Main results.}\labell{ss:JL}

Recall that the  Fibonacci numbers $f_n$ for $n \ge 0$ are recursively defined by 
\begin{equation}\label{eq:defFib}
f_0 = 0, \; f_1 = 1 
\quad \text{ and } \quad 
f_{n+1} = f_n+f_{n-1}, \;\, n \ge 1 .
\end{equation}
Denote by $g_n = f_{2n-1}$, $n \ge 1$, the sequence of {\it odd}\, Fibonacci numbers.  
The sequence $g_n$ starts with 
$$
1,\; 2,\; 5,\; 13,\; 34,\; 89,\; 233,\; 610,\; 1597,\; 4181,\; 10946, \;\dots
$$
The recursion formula $f_{n+1} = f_n + f_{n-1}$ implies the recursion formula
\begin{equation}  \labell{re:g1}
g_{n+1} \,=\, 3\:\!g_n - g_{n-1} .
\end{equation}
Using this and induction we find that
\begin{equation}  \labell{re:g2}
g_n^2 + 1 \,=\, g_{n-1} g_{n+1} .
\end{equation}
Set 
$$
a_n \,=\, \left( \tfrac{g_{n+1}}{g_n} \right)^2
\quad \text{ and } \quad
b_n  \,=\, \tfrac{g_{n+2}}{g_n} .
$$
Then $\dots < a_n < b_n < a_{n+1} < b_{n+1} < \dots$.
Since $\displaystyle{ 
\lim_{n \to \infty}} \tfrac{f_{n+1}}{f_n} = \tfrac{1+\sqrt 5}{2}=:\tau$, we have
$$
\lim_{n \to \infty} a_n \,=\, \lim_{n \to \infty} b_n \,=\, 
\tau^4 \,\approx\, 6.8541 .
$$

They key to establishing the Fibonacci stairs is the following result that states that there are elements in~$\Ee$ corresponding to the points $a_n, b_n$.

\begin{thm} \labell{thm:ladder}
\begin{itemize}
\item[(i)] 
Let $W(b_n) = g_n\,\ww(b_n)$. 
Then $E(b_n) := \bigl( g_{n+1}; W(b_n) \bigr) \in \Ee$.

\SSS
\item[(ii)]  
Let $W'(a_n)$ be the tuple obtained from $W(a_n) := g_n^2\,\ww(a_n)$ by adding an extra~$1$ at the end. 
Then $E(a_n) := \bigl( g_n g_{n+1}; W'(a_n) \bigr) \in \Ee$.
\end{itemize}
\end{thm}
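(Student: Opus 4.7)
The plan is to invoke Proposition~\ref{prop:eek}(iii): it suffices (a) to verify the Diophantine identities $\sum m_i = 3d-1$ and $\sum m_i^2 = d^2+1$ for $E(b_n)$ and $E(a_n)$, and (b) to exhibit, for each of these tuples, a sequence of standard Cremona moves terminating at $(0;-1,0,\dots,0)$.

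Step (a) is an exercise in Fibonacci arithmetic. Using Lemma~\ref{le:ww}, the identities for $E(b_n)$ amount to $g_n(b_n+1-1/g_n) = 3g_{n+1}-1$ and $g_n^2\,b_n = g_{n+1}^2+1$, which follow from $g_n+g_{n+2}=3g_{n+1}$ (a consequence of~\eqref{re:g1}) and from~\eqref{re:g2} shifted by one (so $g_n g_{n+2}=g_{n+1}^2+1$). For $E(a_n)$ the identities collapse to $g_n^2+g_{n+1}^2 = 3\,g_n g_{n+1}-1$ (equivalent again to Cassini) and $g_n^4\,a_n = g_n^2\,g_{n+1}^2$ (immediate from $a_n=g_{n+1}^2/g_n^2$).

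Step (b) is the heart of the proof, and I would attack it by induction on $n$. The first task is to make the weight expansion $\ww(b_n)$ completely explicit. Because $\lim_n b_n = \tau^4$ has periodic continued fraction $[6;\overline{1,5}]$, the continued fraction of $b_n$ begins $[6;1,5,1,5,\dots]$ and terminates in a parity-dependent way. The corresponding normalized weights $X_j^{(n)} := g_n\,x_j(b_n)$ are integer combinations of consecutive odd Fibonacci numbers; concretely, $X_0^{(n)}=g_n$, $X_1^{(n)}=3g_{n-1}-2g_{n-2}$, $X_2^{(n)}=g_{n-2}$, and the whole pattern then repeats via the self-similar shift $X_{j+2}^{(n)} = X_j^{(n-2)}$. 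With this description in hand, the strategy is to show that a prescribed sequence of standard Cremona moves transforms $E(b_n)$ into a tuple of the same shape with smaller index, modulo trailing zeros. Iterating reaches $E(b_1) = (2;1^{\times 5})$, which lies in $\Ee_5$ by Lemma~\ref{le:eekfin} and reduces in two further moves, via $(1;1,1,0,\dots,0)$, to $(0;-1,0,\dots,0)$. A parallel induction based on the explicit weight expansion of $a_n$ handles $E(a_n)$.

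The main obstacle is that each Cremona move replaces several entries of the tuple by new linear combinations of Fibonacci numbers, and checking that the resulting tuple still has the predicted form requires a substantial collection of quadratic Fibonacci identities. Rather than verify these one at a time, I would set up a uniform inductive framework that reduces any such identity to the basic recursion~\eqref{re:g1} and Cassini~\eqref{re:g2}, by checking a base case and invariance under the shift $n\mapsto n+1$. With that machinery in place, tracking the Cremona reduction becomes a careful but essentially routine bookkeeping exercise.
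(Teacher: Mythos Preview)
Your plan is essentially the paper's strategy, and for part~(i) it lines up closely: the paper proves $E(b_n)\in\Ee$ by an induction reducing $E(b_n)$ to $E(b_{n-2})$ via five Cremona moves (carried out inside the more general family $E(b_k(i))$ of Proposition~\ref{prop:bkiE}, your $E(b_n)$ being the cases $i\in\{0,1\}$). Your self-similarity $X_{j+2}^{(n)}=X_j^{(n-2)}$ is correct and is exactly what makes that induction go. Your ``uniform inductive framework'' for Fibonacci identities is also what the paper sets up: Proposition~\ref{prop:proof} shows that any quadratic identity of the relevant shape holds for all~$s$ once it holds for three values.

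The gap is in part~(ii). The phrase ``a parallel induction handles $E(a_n)$'' substantially understates what is required. The weights of $a_n=(g_{n+1}/g_n)^2$ are \emph{quadratic} in Fibonacci numbers (e.g.\ $u_k=f_{2n-2k-1}^2+2H_k$ in the paper's notation, mixing indices $k$ and $n-k$), and this destroys the self-similar pattern you exploited for~$b_n$: there is no direct Cremona reduction of $E(a_n)$ to $E(a_{n-2})$. The paper's argument (\S\ref{ss:fib.red}) instead proceeds in three stages, treating the parities of~$n$ separately. First, $5(m-1)$ moves (with $n=2m$ or $2m+1$) process the weight expansion block by block, shifting $\{1,5\}$-pairs from one side of the central block to the other and producing a hybrid vector~$V_n^1$. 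Second, a short fixed sequence of moves eliminates the remaining quadratic terms, yielding a vector~$V_n^2$ whose entries are \emph{linear} in the Fibonacci numbers. Only then does a genuine induction $V_n^2\leadsto V_{n-2}^2$ (six moves per step) apply. The discovery that the intermediate vectors have tractable closed forms, and the separation into these stages, is the real content of the proof of part~(ii); your proposal does not anticipate it.
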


\begin{cor}   \labell{cor:ladder}
Part (i) of Theorem~\ref{thm:main} holds.
\end{cor}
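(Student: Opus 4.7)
\noindent\emph{Proof plan.}  My plan is to deduce Part~(i) of Theorem~\ref{thm:main} from Theorem~\ref{thm:ladder} by pinning down $c$ at the special values $a_n$ and $b_n$, then extending to each interval in between using the monotonicity and scaling properties of Lemma~\ref{le:1}.  The easy anchor is $c(b_n)$: the class $E(b_n)=(g_{n+1};\,g_n\ww(b_n))$ is perfect in the sense of Lemma~\ref{le:perf}, and the identity $\ww(b_n)\cdot\ww(b_n)=b_n=g_{n+2}/g_n$ gives $\mu(E(b_n))(b_n)=g_n b_n/g_{n+1}=g_{n+2}/g_{n+1}=\sqrt{a_{n+1}}$.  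Lemma~\ref{le:perf}(i) then immediately yields $c(b_n)=\sqrt{a_{n+1}}$.

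Next I would extract $c(a_n)=\sqrt{a_n}$ from the existence of $E(a_n)=(g_ng_{n+1};\,W'(a_n))\in\Ee$ using positivity of intersections.  The key point is that the final~$1$ of $W'(a_n)$ has no counterpart in the shorter tuple $\ww(a_n)$, so $W'(a_n)\cdot\ww(a_n)=g_n^2\,\ww(a_n)\cdot\ww(a_n)=g_{n+1}^2$, and hence $\mu(E(a_n))(a_n)=g_{n+1}/g_n=\sqrt{a_n}$.  For any other $(d;\mm)\in\Ee$ with $d>0$, after padding $\mm$ with zeros so that both classes sit in a common $\Ee_M$, Proposition~\ref{prop:eek}(ii) gives $\mm\cdot W'(a_n)\le d\,g_ng_{n+1}$.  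Since the entry of the padded $\mm$ opposite the final~$1$ is nonnegative, $g_n^2(\mm\cdot\ww(a_n))\le\mm\cdot W'(a_n)$, and therefore $\mu(d;\mm)(a_n)\le\sqrt{a_n}$.  Combined with the volume lower bound, Corollary~\ref{cor:wgt} then forces $c(a_n)=\sqrt{a_n}$.

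The rest is bookkeeping.  On $[b_n,a_{n+1}]$, monotonicity traps $c(a)$ between $c(b_n)=\sqrt{a_{n+1}}$ and $c(a_{n+1})=\sqrt{a_{n+1}}$, producing the horizontal step.  On $[a_n,b_n]$, the identities $c(a_n)/a_n=1/\sqrt{a_n}=c(b_n)/b_n$, combined with the scaling property~\eqref{eq:scal} (which makes $a\mapsto c(a)/a$ nonincreasing), pin $c(a)/a$ to the constant $1/\sqrt{a_n}$ on the whole interval, giving $c(a)=a/\sqrt{a_n}$.  Since $a_n\nearrow\tau^4$ and $c$ is continuous, this determines $c$ on all of $[1,\tau^4]$.

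The genuine work underlying Part~(i) is already done in Theorem~\ref{thm:ladder}---verifying that the Fibonacci classes actually belong to~$\Ee$.  Within this deduction the only delicate point is the positivity argument at~$a_n$: one must carefully align the two classes of different length and exploit the fact that the extra~$1$ in $W'(a_n)$ is paired with a nonnegative entry, in order to sharpen the intersection bound to exactly $\sqrt{a_n}$ rather than a slightly weaker constant.  Everything else is a mechanical consequence of Theorem~\ref{thm:ladder}, Lemma~\ref{le:perf} and Lemma~\ref{le:1}.
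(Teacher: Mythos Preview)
Your proposal is correct and follows essentially the same route as the paper's proof: Lemma~\ref{le:perf} for $c(b_n)$, positivity of intersections against $E(a_n)$ for $c(a_n)=\sqrt{a_n}$, then monotonicity and scaling to fill in the intervals.  You are a bit more explicit than the paper about padding $\mm$ with a zero opposite the extra~$1$ in $W'(a_n)$ and about why the scaling property forces $c(a)/a$ to be constant on $[a_n,b_n]$, but the ideas and the order of the argument are the same.
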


\begin{proof}[Proof of Corollary]
Since $E(b_n)$ is a perfect element, Lemma~\ref{le:perf}~(i) shows that
$$
c(b_n) \,=\, \mu \bigl(g_{n+1};W(b_n) \bigr) (b_n)
       \,=\, \tfrac{g_n}{g_{n+1}} \ww(b_n) \cdot \ww(b_n)
       \,=\, \tfrac{g_n}{g_{n+1}} b_n \,=\, \tfrac{g_{n+2}}{g_{n+1}} 
       \,=\, \sqrt {a_{n+1}} .
$$
Suppose that $c(a_{n}) > \sqrt{a_{n}}$ for some~$n$.
Then Corollary~\ref{cor:wgt} implies that there is $(d;\mm)\in \Ee$ such that
$$
\mm \cdot \ww(a_{n}) \,>\, d\,\sqrt{a_{n}}.
$$
Note that $(d;\mm) \ne \bigl( g_ng_{n+1}; W'(a_n) \bigr)$ since
$W'(a_n)\cdot \ww(a_{n}) = g_n^2 a_n =  g_ng_{n+1}\,\sqrt{a_{n}}.$
Therefore by positivity of intersections 
(part~(ii) of Proposition~\ref{prop:eek}) 
we must have
$$
d g_ng_{n+1} \,\ge\, \mm \cdot W'(a_n) \,\ge\, g_n^2\,\mm \cdot \ww(a_n),
\quad\mbox{ i.e. } \;\; d \,\sqrt{a_{n}} \,\ge\, \mm\cdot \ww(a_n).
$$
It follows that $c(a_n) = \sqrt{a_n}$ for all $n$. 
Thus $c(b_n) = \sqrt {a_{n+1}} = c(a_{n+1})$. 
Moreover,
$$
\tfrac{c(b_n)}{b_n} \,=\, \tfrac{\sqrt{a_{n+1}}}{b_n}
                    \,=\, \tfrac 1{\sqrt {a_n}} \,=\, \tfrac{c(a_n)}{a_n} .
$$
Hence $c$ is linear on the interval $[a_n,b_n]$ by the scaling property.
\end{proof}

\begin{corollary} \labell{cor:perf}
The classes $E(b_n)$ are the only perfect elements.  
\end{corollary}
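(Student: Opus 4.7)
The plan is to extract an explicit formula for $\mu(d;\mm)(b)$ purely from the perfection hypothesis and the defining identities of $\Ee$, then match this formula against the shape of $c$ on $[1,\tau^4]$ that was determined in Theorem~\ref{thm:main}(i). I expect no serious obstacle: once Lemma~\ref{le:perf} is in hand the whole argument is essentially arithmetic.

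First, suppose $(d;\mm)\in\Ee$ is perfect, so $\mm = \kappa\,\ww(b)$ for some $b>1$. By Lemma~\ref{le:perf}(ii), $\kappa = q$ where $b=p/q$ in lowest terms, and $b<\tau^4$. Since $\ww(b)\cdot\ww(b) = b$ by Lemma~\ref{le:ww}, this gives
$$
\mu(d;\mm)(b) \,=\, \frac{\mm\cdot\ww(b)}{d} \,=\, \frac{qb}{d}.
$$
On the other hand, Proposition~\ref{prop:eek}(i) together with Lemma~\ref{le:ww} yields $3d-1 = \sum m_i = q\sum w_i = qb+q-1$, so $d = \tfrac{1}{3}q(b+1)$, and hence
$$
\mu(d;\mm)(b) \,=\, \frac{3b}{b+1}.
$$

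By Lemma~\ref{le:perf}(i) we have $c(b) = \mu(d;\mm)(b) = \tfrac{3b}{b+1} > \sqrt b$, so $b$ must lie in some interval $(a_n,a_{n+1})$ (Theorem~\ref{thm:main}(i) shows that on $[1,\tau^4]$ the set $\{z:c(z)>\sqrt z\}$ is precisely the complement of the countable set $\{a_n\}$). On $(a_n,b_n]$, Theorem~\ref{thm:main}(i) gives $c(b)= b\,g_n/g_{n+1}$; equating with $3b/(b+1)$ forces $g_n(b+1)=3g_{n+1}$, so $b= (3g_{n+1}-g_n)/g_n = g_{n+2}/g_n = b_n$. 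On $[b_n,a_{n+1})$, $c(b)=g_{n+2}/g_{n+1}$ is constant; equating with $3b/(b+1)$ gives $b(3g_{n+1}-g_{n+2}) = g_{n+2}$, i.e.\ $bg_n = g_{n+2}$, so again $b=b_n$.

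Either way $b=b_n$, and then the uniqueness clause of Lemma~\ref{le:perf}(i) identifies $(d;\mm)$ with the perfect class $E(b_n)$ already constructed in Theorem~\ref{thm:ladder}. The only non-computational ingredient is the shape of~$c$ on the Fibonacci stairs, which pins the possible perfect centers down to the discrete set $\{b_n\}$; the remaining steps use nothing more than the recursion $g_{n+1}=3g_n-g_{n-1}$.
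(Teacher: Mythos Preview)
Your proof is correct and takes a genuinely different route from the paper's.

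The paper argues structurally: from Lemma~\ref{le:perf}(i), the perfect element $E(b_n)$ is the unique class realising $c(b_n)$; combined with Proposition~\ref{prop:mainc} (which pins down the piecewise linear shape of a constraint near its center), this forces the whole step over $[a_n,a_{n+1}]$ to equal $\mu(E(b_n);\cdot)$. Uniqueness from Lemma~\ref{le:perf}(i) then rules out any further perfect element on that step. Your approach is instead purely arithmetic: you extract from $\mm = q\,\ww(b)$ and the identities of Proposition~\ref{prop:eek}(i) the closed formula $\mu(d;\mm)(b)=3b/(b+1)$, and then solve $c(b)=3b/(b+1)$ against each linear piece of the Fibonacci stairs using the recursion $g_{n+2}=3g_{n+1}-g_n$. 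This bypasses Proposition~\ref{prop:mainc} entirely, which is a real economy since that proposition rests on the mirror identities of \S\ref{ss:wt}. The paper's argument, on the other hand, yields the extra information that $\mu(E(b_n);\cdot)$ actually coincides with $c$ on the full step $[a_n,a_{n+1}]$, not just at $b_n$ --- a fact you do not need for the corollary itself but which is used elsewhere.
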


\proof
On $[1,\tau^4]$, $c(a)$ is given by the Fibonacci stairs.
By Lemma~\ref{le:perf}~(i),
the perfect element~$E(b_n)$ is the only class giving the constraint $c(b_n)$ at~$b_n$.
This and 
Proposition~\ref{prop:mainc} show that 
the step of the stairs over $[a_n,a_{n+1}]$ centered at $b_n$ 
is the constraint~$\mu$ given by the perfect element~$E(b_n)$.
Lemma~\ref{le:perf}~(i) now shows that there cannot be another perfect element on $[1,\tau^4]$. 
By (ii)~of Lemma~\ref{le:perf} there is no perfect element on $[\tau^4,\infty)$. 
\proofend

We now turn to the proof of Theorem~\ref{thm:ladder}.
The proof of part~(i) is relatively easy; it is deferred to Corollary~\ref{c:41} 
since it is a special case of Proposition~\ref{prop:bkiE}.
To prove part~(ii) we first need to show that the elements~$E(a_n)$ satisfy 
the appropriate Diophantine equations, which is accomplished in~Lemma~\ref{le:dioph}.   Second, we must check that~$E(a_n)$ reduces correctly under Cremona moves.  
As we see~in Section~\ref{ss:fib.red},
the reduction process is quite complicated 
(and in fact is much more complicated than for the~$E(b_n)$), 
basically because the weight expansions~$\ww(a_n)$ involve
quadratic rather than linear functions in the Fibonacci numbers.
The intermediate Section~\ref{ss:fib.id} collects basic identities on Fibonacci numbers
and explains an inductive procedure useful for checking identities on them.

\begin{lemma}\labell{le:dioph}  
The tuples  
$E(a_n) := \bigl( g_ng_{n+1}; W'(a_n) \bigr)$ have integer entries and
satisfy equations~\eqref{eq:ee}.
\end{lemma}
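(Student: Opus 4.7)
The plan is to apply Lemma~\ref{le:ww} to the rational number $a_n = g_{n+1}^2/g_n^2$, which is automatically in lowest terms since consecutive Fibonacci numbers are coprime (hence so are $g_n^2$ and $g_{n+1}^2$). Setting $p = g_{n+1}^2$ and $q = g_n^2$, the lemma gives
$$
w_M = \tfrac{1}{g_n^2}, \qquad \sum w_i^2 = \tfrac{g_{n+1}^2}{g_n^2}, \qquad \sum w_i = \tfrac{g_{n+1}^2}{g_n^2} + 1 - \tfrac{1}{g_n^2}.
$$

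First I would verify integrality and correct ordering of $W'(a_n)$. By the rectangle-decomposition description following Lemma~\ref{le:ww}, every side length $q\, x_j = g_n^2 w_i$ is a positive integer, so $W(a_n) = g_n^2 \ww(a_n)$ has positive integer entries. Its last entry equals $g_n^2 \cdot w_M = 1$, so appending one more $1$ to form $W'(a_n)$ preserves the weakly decreasing order required by Definition~\ref{def:ee}.

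Next I would verify equations~\eqref{eq:ee} with $d = g_n g_{n+1}$. The second equation is immediate from the displayed formulas above:
$$
\sum (W'_i)^2 \,=\, g_n^4 \sum w_i^2 + 1 \,=\, g_n^2 g_{n+1}^2 + 1 \,=\, d^2+1.
$$
For the first, compute
$$
\sum W'_i \,=\, g_n^2 \sum w_i + 1 \,=\, g_{n+1}^2 + g_n^2 ,
$$
so the claim $\sum W'_i = 3d-1$ reduces to the Fibonacci identity $g_n^2 + g_{n+1}^2 + 1 = 3 g_n g_{n+1}$. This follows directly from the two identities already recorded in the excerpt: by~\eqref{re:g2} we have $g_n^2 + 1 = g_{n-1} g_{n+1}$, hence
$$
g_n^2 + g_{n+1}^2 + 1 \,=\, g_{n+1}\bigl(g_{n-1} + g_{n+1}\bigr) \,=\, g_{n+1}\cdot 3g_n
$$
using~\eqref{re:g1} in the last step.

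There is no real obstacle in this lemma: once one unpacks $W'(a_n)$ via Lemma~\ref{le:ww}, the two Diophantine conditions collapse to the two recorded Fibonacci identities \eqref{re:g1} and \eqref{re:g2}. The genuinely hard work, namely showing that this Diophantine solution actually lies in $\Ee$ by reducing under Cremona moves, is postponed to Section~\ref{ss:fib.red}, where the quadratic nature of the weights $\ww(a_n)$ in the Fibonacci numbers forces a considerably more intricate inductive argument.
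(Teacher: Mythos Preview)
Your proof is correct and follows essentially the same approach as the paper's: apply Lemma~\ref{le:ww} to $a_n = g_{n+1}^2/g_n^2$, then reduce the two Diophantine conditions to the identities~\eqref{re:g1} and~\eqref{re:g2}. One small slip: $g_n = f_{2n-1}$ and $g_{n+1} = f_{2n+1}$ are not \emph{consecutive} Fibonacci numbers, so your stated reason for coprimality does not literally apply; but the conclusion is still correct, e.g.\ because~\eqref{re:g2} gives $g_{n-1}g_{n+1} - g_n^2 = 1$, so any common divisor of $g_n$ and $g_{n+1}$ divides~$1$.
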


\begin{proof} 
Consider $\bigl( g_ng_{n+1}; W'(a_n) \bigr)$.  
Since $a_n = \frac{g_{n+1}^2}{g_n^2}$, 
it follows from Lemma~\ref{le:ww} that the last entry $w_M$ of $\ww(a_n)$
is $\frac 1{g_n^2}$.
Therefore, the terms in $W(a_n) = g_n^2 \ww(a_n)$ and hence in $W'(a_n)$ are all integers.
Next, 
\begin{eqnarray*}
\sum_i W_i'(a_n) &=& g_n^2 \left(\sum_i w_i\right) + 1  \;=\; 
g_n^2 \left( a_n+1 - \tfrac1{g_n^2} \right) + 1 \\
&=& 
g_{n+1}^2 + g_n^2 \,=\, g_{n+1}^2 + g_{n+1}g_{n-1} -1 \\
&=& g_{n+1} \left( g_{n+1}+g_{n-1} \right) -1 \;=\; 3 g_{n+1}g_{n}-1 .
\end{eqnarray*}
Finally, $W'(a_n) \cdot W'(a_n) = g_n^4\, a_n + 1 = (g_ng_{n+1})^2 + 1$.
This completes the proof.
\end{proof}

\subsection{Identities for Fibonacci numbers} \label{ss:fib.id}

The proof that the classes $E(a_n)$ reduce correctly involves many small calculations. 
To avoid having to do them explicitly, we first explain a general inductive procedure 
whose conclusions are summarized in Proposition~\ref{prop:proof}. 
It is based on the following elementary result.
Recall that $f_k$ denotes the $k$th Fibonacci number defined in~\eqref{eq:defFib}.

\begin{lemma}\labell{sl:basic}  
Given any three distinct numbers
$s_0,s_1,s_2\ge 0$,  
there are rational constants $\la, \mu$ 
such that $f_{s_2+j} = \la f_{s_0+j} + \mu f_{s_1+j}$ 
for all $j\ge 0$. 
\end{lemma}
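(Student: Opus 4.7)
The plan is to exploit the fact that the Fibonacci recurrence has a two-dimensional solution space, so any two linearly independent shifted Fibonacci sequences form a basis.

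First I would let $V$ denote the $\mathbb{Q}$-vector space of rational sequences $(a_j)_{j\ge 0}$ satisfying $a_{j+2}=a_{j+1}+a_j$. The evaluation map $(a_j)\mapsto (a_0,a_1)$ is a linear isomorphism $V\to \mathbb{Q}^2$, so $\dim_{\mathbb{Q}} V = 2$. For each $s\ge 0$, the shifted sequence $F^{(s)}:=(f_{s+j})_{j\ge 0}$ belongs to $V$, since the Fibonacci recurrence is shift-invariant. Thus for any three shifts $s_0,s_1,s_2$, the three vectors $F^{(s_0)},F^{(s_1)},F^{(s_2)}$ lie in a $2$-dimensional space, so some nontrivial rational linear relation exists. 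To get the particular relation $F^{(s_2)}=\lambda F^{(s_0)}+\mu F^{(s_1)}$, I need $F^{(s_0)}$ and $F^{(s_1)}$ to be linearly independent.

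The second step is to verify this independence. Under the evaluation isomorphism, $F^{(s_i)}$ corresponds to $(f_{s_i},f_{s_i+1})$, so independence is equivalent to the non-vanishing of the $2\times 2$ determinant
\[
D(s_0,s_1) \,:=\, f_{s_0}\,f_{s_1+1}-f_{s_0+1}\,f_{s_1}.
\]
By the standard d'Ocagne/Catalan identity for Fibonacci numbers (a one-line induction on $\min(s_0,s_1)$ using $f_{k+1}=f_k+f_{k-1}$), one has $D(s_0,s_1)=\pm f_{s_0-s_1}$ up to sign, which is nonzero whenever $s_0\neq s_1$. I will record this as a sublemma; its proof is a short induction I would not write out in full.

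The third step is then formal: since $F^{(s_0)},F^{(s_1)}$ form a basis of $V$, there exist unique rationals $\lambda,\mu$ with $F^{(s_2)}=\lambda F^{(s_0)}+\mu F^{(s_1)}$, and these are obtained by solving the $2\times 2$ system
\[
\begin{pmatrix} f_{s_0} & f_{s_1}\\ f_{s_0+1} & f_{s_1+1}\end{pmatrix}\begin{pmatrix}\lambda\\ \mu\end{pmatrix} \,=\, \begin{pmatrix} f_{s_2}\\ f_{s_2+1}\end{pmatrix},
\]
whose coefficient matrix has integer entries and nonzero integer determinant $D(s_0,s_1)$. Cramer's rule then gives $\lambda,\mu\in\mathbb{Q}$, and the desired identity $f_{s_2+j}=\lambda f_{s_0+j}+\mu f_{s_1+j}$ holds for all $j\ge 0$ because it holds for $j=0,1$ and both sides satisfy the Fibonacci recurrence.

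There is no real obstacle here; the only content is the nonvanishing of $D(s_0,s_1)$, which is a classical identity. In fact the hypothesis that all three $s_i$ be distinct is slightly stronger than needed: distinctness of $s_0$ and $s_1$ suffices, and $s_2$ may coincide with one of them (in which case one of $\lambda,\mu$ is $0$ and the other is $1$).
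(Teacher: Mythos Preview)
Your proof is correct and follows essentially the same approach as the paper: solve the $2\times 2$ linear system at $j=0,1$, verify nonsingularity because $s_0\ne s_1$, and propagate to all $j$ via the Fibonacci recurrence. The paper is terser, asserting only that $f_{s_0}/f_{s_0+1}\ne f_{s_1}/f_{s_1+1}$ when $s_0\ne s_1$, whereas you supply the d'Ocagne identity $f_{s_0}f_{s_1+1}-f_{s_0+1}f_{s_1}=\pm f_{s_0-s_1}$ as explicit justification; this is a harmless elaboration, not a different argument.
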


\begin{proof}  
The equations
$$
\la f_{s_0} + \mu f_{s_1} = f_{s_2},\qquad 
\la f_{s_0+1} + \mu f_{s_1+1} = f_{s_2+1}
$$
have a unique solution because $\frac{f_{s_0}}{f_{s_0+1}} \ne
\frac{f_{s_1}}{f_{s_1+1}}$ when $s_0\ne s_1$.  
Now apply the defining relation~\eqref{eq:defFib}.
\end{proof}

We will frequently use the following relations between Fibonacci 
numbers.
\begin{eqnarray}
f_k^2   &=& f_{k+1}f_{k-1} - (-1)^k  \label{eq:cassini} \\
f_{2k-1}  &=& f_k^2 + f_{k-1}^2      \label{eq:doubling.odd} \\
f_{2k}  &=& f_{k+1}^2 - f_{k-1}^2    \label{eq:doubling.even} 
\end{eqnarray}

\begin{lemma}\labell{le:Q}
For each $i\ge 0$ and $s\ge 0$, 
there is an identity of the form
$$ 
f_{s+i} f_{s} \,=\, \sum_{j \ge 0} a_{ij} f_{2s+j} + (-1)^s c_i,
$$
with a finite number of coefficients $c_i,a_{ij} \in \Q$ 
that do not depend on~$s$.   
Further, $c_i = -\sum_{j\ge 0} a_{ij}f_j$.
\end{lemma}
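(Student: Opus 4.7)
My plan is to prove the identity by induction on $i \ge 0$, with base cases $i = 0$ and $i = 1$, and the inductive step an immediate consequence of the Fibonacci recursion. The key observation is that $f_{s+i} f_s = f_{s+i-1} f_s + f_{s+i-2} f_s$, while the right-hand side of the claim is linear in the coefficients $a_{i,j}$ and $c_i$. Hence, if the identity in the claimed form holds for $i-1$ and $i-2$, it holds for $i$ with
$$
a_{i,j} \,:=\, a_{i-1,j} + a_{i-2,j}, \qquad c_i \,:=\, c_{i-1} + c_{i-2}.
$$
The recursion keeps the set $\{j : a_{i,j} \ne 0\}$ inside the union of the corresponding sets for $i-1$ and $i-2$, so starting from two base cases with finite support, only finitely many coefficients are ever nonzero (in fact one checks that the support is contained in $\{0,1,2\}$ for every~$i$). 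Moreover, the asserted side relation $c_i = -\sum_j a_{i,j} f_j$ is simply the main identity specialized to $s=0$ (since $f_0 = 0$); provided each base case is verified at $s=0$ as well, this relation is automatic for all~$i$.

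For the base cases, I would combine the doubling identities \eqref{eq:doubling.odd}--\eqref{eq:doubling.even} with Cassini \eqref{eq:cassini} to derive the explicit formulas
$$
5 f_s^2 \,=\, 2 f_{2s+1} - f_{2s} - 2(-1)^s, \qquad
5 f_s f_{s+1} \,=\, f_{2s+2} + f_{2s} - (-1)^s,
$$
both valid for all $s \ge 0$. Concretely, for $i = 0$, \eqref{eq:doubling.odd} gives $f_{s+1}^2 = f_{2s+1} - f_s^2$, \eqref{eq:doubling.even} expands as $f_{2s} = (f_{s+1} - f_{s-1})(f_{s+1}+f_{s-1}) = 2 f_s f_{s+1} - f_s^2$, and Cassini rearranges to $f_s f_{s+1} = f_{s+1}^2 - f_s^2 - (-1)^s$; eliminating $f_{s+1}^2$ and $f_s f_{s+1}$ from these three relations yields the first displayed identity. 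The $i = 1$ formula is obtained by the same manipulations, or by shifting the $i = 0$ formula and applying the Fibonacci recursion. This gives the coefficients $(a_{0,0}, a_{0,1}, c_0) = \bigl(-\tfrac15, \tfrac25, -\tfrac25\bigr)$ and $(a_{1,0}, a_{1,2}, c_1) = \bigl(\tfrac15, \tfrac15, -\tfrac15\bigr)$, and in both cases the side relation is directly verified.

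The main obstacle, such as it is, lies only in the $i = 0$ base case: the more symmetric identity $5 f_s^2 = f_{2s-1} + f_{2s+1} - 2(-1)^s$ (immediate from $\phi\psi = -1$ in Binet's formula) features the forbidden offset $j = -1$, so one must absorb $f_{2s-1} = f_{2s+1} - f_{2s}$ into the admissible range $j \ge 0$. This is what produces the slightly asymmetric pair $a_{0,0} = -\tfrac15$, $a_{0,1} = \tfrac25$, but it introduces no genuine difficulty; once the two base-case identities are in hand, the rest of the proof is essentially bookkeeping built on top of standard Fibonacci algebra.
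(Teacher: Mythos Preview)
Your proof is correct and follows essentially the same approach as the paper's: reduce to two base cases via the Fibonacci recursion $f_{s+i} = f_{s+i-1} + f_{s+i-2}$, then verify explicit identities for those base cases using Cassini and the doubling formulas. The only cosmetic difference is that the paper uses $i=0$ and $i=2$ as base cases (proving $5 f_{s+2} f_s = f_{2s+1}+f_{2s+3}-3(-1)^s$ for the second), while you use $i=0$ and $i=1$; your $i=0$ identity is literally the paper's, and both handle the side relation $c_i = -\sum_j a_{ij} f_j$ by specializing to $s=0$.
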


\begin{proof} 
By~\eqref{eq:defFib} it suffices to prove this for $i=0$ and $i=2$.   
We claim that for all $s \ge 0$, 
\begin{eqnarray}
5 f_sf_s     &=& -f_{2s}+2f_{2s+1}-2(-1)^s  ,       \label{e:ff0} \\
5 f_{s+2}f_s &=& f_{2s+1}+f_{2s+3}-3(-1)^s . \label{e:ff2}
\end{eqnarray}
For $s=0$, equation~\eqref{e:ff0} is true.
For $s\ge 1$, equation~\eqref{e:ff0} can be rewritten as
$$
5 f_s^2  \,=\, f_{2s+1}+f_{2s-1}-2(-1)^s .
$$
By~\eqref{eq:doubling.odd}, the RHS is $f_{s+1}^2+2f_s^2+f_{s-1}^2-2(-1)^s$, 
whence we need to check
\begin{equation} \labell{e:ffind}
3f_s^2 \,=\, f_{s+1}^2+f_{s-1}^2-2(-1)^s .
\end{equation}
Replacing $f_{s+1}$ by $f_s+f_{s-1}$, this becomes
$2 f_s^2 = 2 f_sf_{s-1} +2f_{s-1}^2-2(-1)^s$,
which is true since by~\eqref{eq:cassini}, 
$f_s^2 = f_{s+1}f_{s-1}-(-1)^s = f_sf_{s-1} + f_{s-1}^2 - (-1)^s$.

By~\eqref{eq:cassini} and~\eqref{eq:doubling.odd},
equation~\eqref{e:ff2} becomes 
$$
5 f_{s+1}^2+5(-1)^{s+1} \,=\, f_{s+2}^2+2f_{s+1}^2+f_s^2-3 (-1)^s,
$$
i.e.,
\begin{equation*}
3 f_{s+1}^2 \,=\, f_{s+2}^2 +f_s^2-2 (-1)^{s+1} 
\end{equation*} 
which is true by~\eqref{e:ffind}.
The formula for $c_i$ holds because $f_0=0$.
\end{proof}

\begin{prop}\labell{prop:proof}  
A quadratic identity of the form 
$$
Q(s) \,:=\, \sum_{i,j \ge 0} a_{ij} f_{s+i}f_{s+j} + \sum_{j \ge 0} b_jf_{2s+j} + (-1)^s c \,=\, 0
$$
holds for all $s\ge 0$ if it holds for any three distinct values of~$s$.  
Moreover, if the relation is homogeneous and linear 
(that is, if $a_{ij} = c =0$ for all $i,j$), 
then it suffices to check two values of~$s$.
\end{prop}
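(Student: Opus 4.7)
The plan is to reduce an arbitrary identity $Q(s)$ to a canonical form involving only three elementary sequences, and then to exploit the linear independence of those sequences. First I would eliminate the products in the quadratic part via Lemma~\ref{le:Q}. Writing any $f_{s+i}f_{s+j}$ with $i \le j$ as $f_{(s+i) + (j-i)}\,f_{s+i}$ and applying Lemma~\ref{le:Q} with the shift $s \mapsto s+i$, one obtains an expansion of the form $\sum_k a_{j-i,\,k}\, f_{2s+2i+k} + (-1)^{s+i}\, c_{j-i}$. Reindexing and gathering all such contributions together with $\sum_j b_j f_{2s+j}$ and $(-1)^s c$, the original $Q(s)$ takes the canonical form
$$
Q(s) \,=\, \sum_{k \ge 0} \tilde A_k\, f_{2s+k} \,+\, (-1)^s \tilde C,
$$
with $\tilde C$ equal to $c$ plus a finite sum built from the $a_{ij}$ and the $(-1)^i c_{j-i}$ factors.

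Next I would apply the Fibonacci recurrence $f_{n+2} = f_{n+1} + f_n$ iteratively to eliminate every $f_{2s+k}$ with $k \ge 2$ in favor of $f_{2s}$ and $f_{2s+1}$. This further collapses the canonical form to
$$
Q(s) \,=\, \alpha\, f_{2s} \,+\, \beta\, f_{2s+1} \,+\, (-1)^s C
$$
for uniquely determined rationals $\alpha, \beta, C$. The proposition now reduces to showing that this three-term combination vanishes identically provided it vanishes at three distinct non-negative integers $s_1, s_2, s_3$.

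To see this, I would invoke Binet's formula $f_n = (\tau^n - \bar\tau^n)/\sqrt 5$ with $\bar\tau = -1/\tau$ to rewrite $Q(s) = A\,\tau^{2s} + B\,\tau^{-2s} + C\,(-1)^s$, where $(A,B)$ is an invertible linear image of $(\alpha,\beta)$ since the change-of-basis matrix $\bigl(\begin{smallmatrix} 1 & \tau \\ -1 & \tau^{-1}\end{smallmatrix}\bigr)$ has determinant $\tau + \tau^{-1} = \sqrt 5 \ne 0$. The ratios $\tau^2, \tau^{-2}, -1$ are pairwise distinct, so the three sequences are linearly independent, and the $3 \times 3$ generalized Vandermonde matrix with rows $(\tau^{2s_i},\tau^{-2s_i},(-1)^{s_i})$ is nonsingular at any three distinct indices; after scaling rows by $\tau^{-2s_1}$ this reduces to verifying the Fibonacci inequality
$$
(-1)^{u+1} f_{2t} \,+\, (-1)^t f_{2u} \,-\, f_{2(u-t)} \,\ne\, 0 \quad \text{for } 0 < t < u,
$$
which follows by a short induction on $u$ using the addition formula $f_{m+n} = f_m f_{n+1} + f_{m-1} f_n$. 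This forces $A = B = C = 0$, hence $\alpha = \beta = C = 0$ and $Q \equiv 0$.

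For the homogeneous linear case the argument shortens: since $a_{ij} = c = 0$, there are no product terms and no $(-1)^s$ contribution, so the reduction delivers $Q(s) = \alpha f_{2s} + \beta f_{2s+1}$ directly, and its vanishing at two distinct values $s_1 \ne s_2$ forces $\alpha = \beta = 0$ via the classical identity $f_m f_{n+1} - f_{m+1} f_n = (-1)^n f_{m-n}$, which shows the $2 \times 2$ coefficient determinant equals $\pm f_{2|s_1-s_2|} \ne 0$. The genuine obstacle throughout is the $3 \times 3$ Vandermonde nonsingularity for arbitrary (not necessarily consecutive) indices; for three consecutive values it would be automatic from the third-order recursion $u_{s+3} = 2 u_{s+2} + 2 u_{s+1} - u_s$ satisfied by every linear combination of $\tau^{2s}, \tau^{-2s}, (-1)^s$, but the general case requires the explicit Binet computation above.
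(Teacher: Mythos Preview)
Your proof is correct, and both arguments begin the same way (invoking Lemma~\ref{le:Q} to eliminate the products), but from the resulting expression $\sum_k A_k f_{2s+k} + (-1)^s C$ the routes diverge. The paper keeps this form and argues in two stages: first it uses Lemma~\ref{sl:basic} to write $f_{2s_2+j}$ as $\mu f_{2s_1+j} + \lambda f_{2s_0+j}$, derives the constraint $1 \in \{\pm\mu \pm\lambda\}$ under the assumption $C\neq 0$, and rules this out via an explicit Fibonacci inequality; then, with $C=0$, it applies Lemma~\ref{sl:basic} once more to reduce to $Q'(0)=Q'(1)=0$ and concludes by the Fibonacci recursion. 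You instead collapse the linear part further to $\alpha f_{2s}+\beta f_{2s+1}+C(-1)^s$ via the recurrence, diagonalize with Binet's formula, and verify nonsingularity of a single $3\times 3$ alternant. Your packaging is more direct and makes the underlying linear algebra transparent; the paper's stays entirely within Fibonacci identities and avoids irrationals. Both ultimately hinge on the same arithmetic fact: expanding your determinant along the last column (using $f_m f_{n+1}-f_{m+1}f_n=(-1)^n f_{m-n}$) yields exactly $\pm\bigl[(-1)^{u+1}f_{2t}+(-1)^t f_{2u}-f_{2(u-t)}\bigr]$, and the paper's four sign cases $\pm f_{2(s_2-s_0)}=\pm f_{2(s_2-s_1)}+f_{2(s_1-s_0)}$ are precisely the parity cases of this same expression. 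One cosmetic slip: ``scaling rows by $\tau^{-2s_1}$'' should be a column rescaling (columns by $\tau^{-2s_1},\tau^{2s_1},(-1)^{s_1}$ respectively) to shift all indices by $s_1$; the outcome you state is nonetheless correct.
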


\begin{proof} 
Suppose that $Q(s)=0$ for $s=s_0,s_1,s_2$ where $0\le s_0<s_1<s_2$.
By Lemma~\ref{le:Q} one can convert
$Q(s)$ to an equivalent identity of the form 
$$
Q'(k) \,:=\, \sum_{j \ge 0} a_{j}f_{k+j} + (-1)^s c'=0 \quad \text{ where } k=2s .
$$

We first claim that $c'=0$.
By Lemma~\ref{sl:basic} there are constants $\mu, \la$ such that 
\begin{equation} \label{e:mula}
f_{2s_2+j} \,=\, \mu f_{2s_1+j} + \la f_{2s_0+j} 
\quad \text{ for all }\, j \ge 0.
\end{equation}
Since $Q'(k)=0$ for $k=2s_0,2s_1,2s_2$, and by~\eqref{e:mula},
$$
c' \,=\, \left( (-1)^{s_0+s_2}\mu+(-1)^{s_1+s_2}\la \right) c'.
$$
If $c' \neq 0$, we thus have $1 \in \left\{ \pm\mu \pm \la \right\}$.
This is impossible:
We use the recurrence relation~\eqref{eq:defFib} to extend
the sequence $(f_n), n\ge0$, to negative index~$n$. 
Note that
$f_{-n}=(-1)^{n+1}f_n$.
Then~\eqref{e:mula} holds for all~$j \in \ZZ$.
With $j=-2s_0$ and $j=-2s_1$ we get
$$
\mu = \frac{f_{2(s_2-s_0)}}{f_{2(s_1-s_0)}},
\quad 
\la = -\frac{f_{2(s_2-s_1)}}{f_{2(s_1-s_0)}}.
$$
Therefore, $\pm \mu \pm \la = 1$ exactly if 
$$
\pm f_{2(s_2-s_0)} \,=\, \pm f_{2(s_2-s_1)} + f_{2(s_1-s_0)} .
$$
The signs $++$ are impossible because $f_{m+n} = f_{m+n-1}+f_{m+n-2} > f_m+f_n$ for all even $m,n>0$. 
Further, $+-$, $-+$, and $--$ are impossible because $s_2>s_1>s_0>0$.

We have shown that $Q'(k) = \sum_{j \ge 0} a_{j}f_{k+j}$.
Recall that $Q'(k)=0$ for $k=2s_0,2s_1,2s_2$.
By Lemma~\ref{sl:basic}, the expression for $Q'(0)$ can be written as a linear
combination of the expressions for $Q'(2s_0)$ and $Q'(2s_1)$,
and the same is true for $Q'(1)$.
Therefore, $Q'(0)=0$ and $Q'(1)=0$.
This and the defining relation~\eqref{eq:defFib} show that $Q'(k)=0$ for all~$k$. 
In particular, $Q'(k)=0$ for all even~$k$, and so $Q(s)=0$ for all~$s$.
This proves the first statement. The second holds similarly. 
\end{proof}

In the subsequent sections, the following abbreviations will be useful.

\begin{defn}\labell{def:Lucas}
The $k$th Lucas number is defined to be 
$\ell_k = f_{k-1}+f_{k+1}$,\,  $k \ge 1$.
We set $F_k := \frac 13 f_{4k}$ and $L_k := \frac 13 \ell_{4k+2}$. 
\end{defn}

Then
\begin{gather*}
F_1 = 1,\, 
F_2 = 7,\, 
F_3 = 48,\, 
F_4 = 329,
\\
L_0 = 1,\,
L_1 = 6,\, 
L_2 = 41,\, 
L_3 = 281,\, 
L_4 = 1926 .
\end{gather*}
Further for $k \ge 0$ define the sequence~$H_k$ by
\begin{equation}\labell{eq:defH}
H_k \,=\, \tfrac 13\, f_{2k} f_{2k+2}.
\end{equation}
Then $H_0 = 0, \; H_1=1,\; H_2 = 8, \; H_3 = 56, \; H_4 = 385, \; H_5 = 2640, \dots$.

\begin{lemma} \labell{le:FLH} 
The following identities hold for all $k\ge 0$:
\begin{itemize}
\item[(i)] 
$F_{k+1} = L_k + F_k$;    

\vspace{0.2em}
\item[(ii)] $
L_{k+1} = 5\,F_{k+1}+L_k$;

\vspace{0.2em}
\item[(iii)] 
$H_{k+1} = H_k + F_{k+1} = \sum_{i=1}^{k+1} F_i$;

\vspace{0.2em}
\item[(iv)]  
$L_k = 5H_k + 1$.

\vspace{0.2em}
\item[(v)]  
$F_{k+1}^2-F_{k}F_{k+2} =1$.
\end{itemize}
\end{lemma}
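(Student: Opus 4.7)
The plan is to prove each of (i)--(v) by unfolding the definitions $F_k = \tfrac{1}{3} f_{4k}$, $L_k = \tfrac{1}{3}(f_{4k+1} + f_{4k+3})$, and $H_k = \tfrac{1}{3} f_{2k} f_{2k+2}$ to recast each identity as a Fibonacci identity of the form $Q(s) = 0$ covered by Proposition~\ref{prop:proof}, and then to verify a small number of base cases. After setting $s = 2k$ or $s = 4k$ as appropriate, each identity fits either the quadratic form (so three base values suffice) or the linear homogeneous form (two values), and the restriction of $s$ to values of the form $2k$ or $4k$ is of course automatic once the identity holds for all $s$.

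Explicitly: identity~(i) unfolds to $f_{4k+4} = f_{4k} + f_{4k+1} + f_{4k+3}$, which follows directly from two applications of the Fibonacci recurrence and does not even require Proposition~\ref{prop:proof}. Identity~(ii) becomes, with $s = 2k$, the linear homogeneous relation
\[
f_{2s+5} + f_{2s+7} - 5 f_{2s+4} - f_{2s+1} - f_{2s+3} = 0,
\]
which is of the form $\sum_j b_j f_{2s+j} = 0$, so by Proposition~\ref{prop:proof} it suffices to check $s = 0, 1$. Identity~(iii) becomes, with $s = 2k$,
\[
f_{s+2} f_{s+4} - f_s f_{s+2} - f_{2s+4} = 0,
\]
a quadratic identity of the required form, verified at $s = 0, 1, 2$; the second equality in (iii) then follows by induction on $k$ using $H_0 = 0$ and the first equality. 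Identity~(iv) unfolds, with $s = 2k$, to
\[
f_{2s+1} + f_{2s+3} - 5 f_s f_{s+2} - 3(-1)^s = 0,
\]
where the factor $(-1)^s$ is needed to make the identity true for all integer $s$ (at $s = 1$ one computes $2 + 5 - 10 = -3$, not $3$); once again three base cases suffice, and at $s = 2k$ the sign $(-1)^s$ equals $+1$, recovering (iv). Identity~(v) unfolds, with $s = 4k$, to $f_{s+4}^2 - f_s f_{s+8} - 9(-1)^s = 0$, verified at three base values, or alternatively recognized as the instance $n = 4k + 4$, $m = 4$ of the generalized Cassini identity $f_n^2 - f_{n-m} f_{n+m} = (-1)^{n-m} f_m^2$.

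The whole argument is routine bookkeeping; there is no genuine obstacle. The only minor subtlety is that identities (iv) and (v), when stated for unrestricted integer $s$ rather than only at $s = 2k$ or $s = 4k$, require the $(-1)^s$ sign correction in order to fit the form of Proposition~\ref{prop:proof}, and one must not forget it. The tabulated values $F_1 = 1$, $F_2 = 7$, $L_0 = 1$, $L_1 = 6$, $H_1 = 1$, $H_2 = 8$ provide convenient cross-checks for the base cases.
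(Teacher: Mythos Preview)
Your proof is correct and takes essentially the same approach as the paper: both reduce each identity to an application of Proposition~\ref{prop:proof} after rewriting in terms of Fibonacci numbers. The paper's proof is a single sentence (``All other identities have the form considered in Proposition~\ref{prop:proof}, and so it is enough to check each of them for at most three low values of~$k$''); you have simply made explicit the unfolding and the $(-1)^s$ sign corrections needed for (iv) and (v) to fit the form required by that proposition, which the paper leaves implicit.
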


\begin{proof}  
The second identity in (iii) follows from the first identity in~(iii) by induction.
All other identities have the form considered in Proposition~\ref{prop:proof}, and so it is enough to check each of them for at most three low values of~$k$.
\end{proof}

\subsection{Reducing $E(a_n)$}  \label{ss:fib.red}

We begin with a general remark about the reduction process.

\begin{rmk}\labell{rmk:order}
\rm  
Consider a tuple
$(d;\mm)$ that satisfies the Diophantine identities~\eqref{eq:ee}.
Proposition~\ref{prop:eek} states that $(d;\mm)\in\Ee$ exactly if 
it reduces to $(0;-1, 0,\dots,0)$ under standard Cremona moves, 
as defined in Definition~\ref{def:Crem}.
In fact, it clearly suffices to reduce~$(d;\mm)$ 
to a known element of~$\Ee$ by any sequence of Cremona moves. 
Each such move consists of an application of the Cremona transformation 
$$
(d; \mm)\mapsto (2d-m_1-m_2-m_3; d-m_2-m_3, d-m_1-m_3, d-m_1-m_2, \dots)
$$ 
followed by a choice of reordering.
It does not matter whether this reordering restores the natural order; 
all that is important is that in the end, after doing many such moves, 
we arrive at a known element of~$\Ee$. 
In fact, the reorderings chosen below all do restore the natural order.   
The point of this remark is that there is no need to {\it prove} this.
\diam
\end{rmk}

\begin{example} \labell{ex:an}
{\rm  
The first few elements $E(a_n)$ are
\begin{eqnarray*}
\bigl(g_2g_3;W'(a_2)\bigr) &=&\bigl(10; 4^{\times 6}, 1^{\times 5}\bigr),\\
\bigl(g_3g_4;W'(a_3)\bigr) &=&\bigl(65; 25^{\times 6}, 19^{\times1}, 6^{\times 3},1^{\times 7}\bigr),\\
\bigl(g_4g_5;W'(a_4)\bigr) &=&\bigl(442; 169^{\times 6}, 142^{\times1},
27^{\times 5}, 7^{\times 3},6^{\times1}, 1^{\times 7}\bigr).
\end{eqnarray*}
These values of $n$ are too low for our general arguments in \S\ref{ss:fib.even} and \ref{ss:fib.odd} to apply.  
Hence one proves that they reduce correctly by direct calculation.
}
\end{example}

The following list of Fibonacci numbers will be useful in the subsequent proofs.
\begin{center}
  \begin{tabular}{ | c || c | c | c | c | c | c | c | c | c | c | c |}
    \hline
    $n$     & 0 & 1 &  2  &  3  &  4  &  5 & 6  & 7 & 8  & 10 & 12  \\  
    \hline 
    $f_n$   & 0 & 1 &  1  &  2  &  3  &  5 & 8  & 13& 21 & 55 & 144  \\ 
    \hline 
\end{tabular}
\end{center}

\subsubsection{Reducing $E(a_n)$ for even $n$.}\labell{ss:fib.even}

Throughout this subsection we will consider $n=2m\ge 2$ to be a fixed even number.  
We will obtain an explicit expression for $W'(a_n)$ and then
examine its reduction by Cremona moves.
By Example~\ref{ex:an} it suffices to consider the case $n\ge 6$. 
Hence this case of Theorem~\ref{thm:ladder} 
follows from Propositions~\ref{prop:st1}, \ref{prop:st2}
and~\ref{prop:st3}.

For each fixed $n$, denote $k' := n-k$ and define
\begin{eqnarray}\labell{eq:uvdef}
u_k &:=& f_{2n-2k-1}^2 + 2\, H_k \,=\, f_{2k'-1}^2 + 2\,H_k 
\quad k = 0, \dots, m-1 , \\\notag
v_k &:=& 3\,F_{n-k} - 2\,F_k = 3\, F_{k'} - 2 \,F_k,\phantom{H_n}
\quad k = 1, \dots, m. 
\end{eqnarray}
Note that $u_k, v_k$ depend on $n$, though for simplicity the notation does not make this explicit. Also,
$v_k > 0$ for all $k\le m$ and $v_m=F_m$.  
However, the above formula for $v_k$ gives a negative number when $k>m$.  
This is why the expansion in Proposition~\ref{prop:a2m} below changes its form at the term~$v_m$.   
Note also that by equations~\eqref{eq:cassini} and \eqref{eq:defH}
\begin{equation}\labell{eq:uk}
f_{2n+1}^2 \,=\, f_{2n} f_{2n+2} + 1 \,=\, 3 H_{n} + 1.
\end{equation}
Hence we also have
\begin{equation}\labell{eq:uk2}
u_k \,=\, 3\,H_{k'-1} + 2\, H_k + 1.
\end{equation}

\begin{proposition}\labell{prop:a2m}
If $n=2m$ is even, then the continued fraction expansion of~$a_n$ is 
$$
[ 6; \underbrace{1,\, 5}_{m-1}, \, 3, \, 1, \underbrace{5,\, 1}_{m-1}]  \,=:\,
[ 6; \{1,5\}^{\times(m-1)},3, \,1,\{5,1\}^{\times(m-1)}],
$$
and the (renormalized) weight expansion~$W(a_n)$ is $(A_n, B_n)$, 
where the vectors $A_n,B_n$ are 
\begin{eqnarray*}
A_n &=& \bigl({u_0}^{\times6}, {v_1}^{\times1}, {u_1}^{\times5}, {v_2}^{\times1}, \dots, {u_{m-2}}^{\times5}, {v_{m-1}}^{\times1}, {u_{m-1}}^{\times5} \bigr) , \\
B_n &=& \bigl({F_m}^{\times3}, {L_{m-1}}^{\times1}, {F_{m-1}}^{\times5}, {L_{m-2}}^{\times1}, \dots,{F_2}^{\times5}, {L_1}^{\times1}, {F_1}^{\times5},
{L_0}^{\times1} \bigr).
\end{eqnarray*}
\end{proposition}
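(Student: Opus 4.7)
The plan is to prove the two assertions in tandem by induction on~$m$. Once the continued fraction expansion of~$a_{2m}$ is established, the normalized weight sequence $W(a_{2m})$ is determined algorithmically from it via the backward recursion $X_{j-1} = X_{j+1} + \ell_j X_j$, starting from $X_{N+1}=0$, $X_N = 1$; this recursion is implicit in Definition~\ref{def:wa} and appears in the proof of Lemma~\ref{le:mirror}. So the task splits into first establishing the continued fraction and then verifying that the values produced by the recursion match the predicted $u_k, v_k, F_k, L_k$.

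For the continued fraction, the low cases $m = 1, 2$ are immediate from Example~\ref{ex:an}. For the inductive step, I would use the matrix identity~\eqref{eq:matrix}: the claim reduces to showing that the product of $2{\times}2$ matrices associated to the proposed expansion has upper-left entry $f_{4m+1}^2$ and lower-left entry $f_{4m-1}^2$. A convenient way to organize the induction is to factor out the repeating pattern, writing the product as $\bigl(\begin{smallmatrix}6&1\\1&0\end{smallmatrix}\bigr)\bigl(\begin{smallmatrix}6&1\\5&1\end{smallmatrix}\bigr)^{m-1}\bigl(\begin{smallmatrix}3&1\\1&0\end{smallmatrix}\bigr)\bigl(\begin{smallmatrix}1&1\\1&0\end{smallmatrix}\bigr)\bigl(\begin{smallmatrix}6&5\\1&1\end{smallmatrix}\bigr)^{m-1}$; passing from~$m$ to~$m+1$ inserts one additional factor $\bigl(\begin{smallmatrix}6&1\\5&1\end{smallmatrix}\bigr)$ on the left and $\bigl(\begin{smallmatrix}6&5\\1&1\end{smallmatrix}\bigr)$ on the right. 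The resulting transformation on the target entries is a quadratic Fibonacci identity, so by Proposition~\ref{prop:proof} it suffices to verify it for three values of~$m$.

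With the continued fraction in hand, the backward recursion starting from $X_{4m-2}=1$ produces, on the right-hand half, alternating values $X_{4m-2-2k} = L_k$ and $X_{4m-3-2k} = F_{k+1}$; these follow directly from the relations $L_{k+1} = 5F_{k+1} + L_k$ and $F_{k+1} = F_k + L_k$ in Lemma~\ref{le:FLH}~(i)-(ii). At the middle, where the partial quotients $3, 1$ appear, the recursion gives $X_{2m-1} = F_{m-1} + L_{m-1} = F_m$ and $X_{2m-2} = L_{m-1} + 3F_m$; the latter equals $u_{m-1}$ once one substitutes $L_{m-1} = 5H_{m-1}+1$ (Lemma~\ref{le:FLH}~(iv)), uses $H_m = H_{m-1}+F_m$ (Lemma~\ref{le:FLH}~(iii)), and invokes $f_{2m+1}^2 = 3H_m+1$ from~\eqref{eq:uk}. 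Continuing on the left half, each step of the recursion produces a $v_k$ or a $u_k$ value, and the identity required at each step reduces, via Proposition~\ref{prop:proof} together with Lemma~\ref{le:Q}, to checking finitely many scalar cases.

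The main obstacle is the middle transition, where the simple linear relations governing the right half give way to the quadratic expressions $u_k = f_{2k'-1}^2 + 2H_k$ and $v_k = 3F_{k'} - 2F_k$ (with $k' = n-k$) on the left. These identities, being quadratic in Fibonacci numbers, fit the format of Proposition~\ref{prop:proof}; the delicate part is correctly matching the indexing of the $u_k, v_k$ sequences (in which the dependence on $n=2m$ enters via $k'$) with the position in the backward recursion and tracking how the values alternate along the left half of the expansion. Once this bookkeeping is set up, the argument ultimately reduces to finitely many base-case checks combined with the identities assembled in Section~\ref{ss:fib.id}.
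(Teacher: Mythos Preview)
Your approach is valid but takes a different and somewhat longer route than the paper's. The paper does not separate the continued fraction from the weight expansion: it runs the \emph{forward} Euclidean algorithm on the pair $(f_{2n+1}^2,\,f_{2n-1}^2)$ and reads off both at once. Concretely, the proof reduces entirely to Lemma~\ref{le:expanan}, whose four parts are precisely the successive division-with-remainder steps
\[
f_{2n+1}^2 = 6u_0 + v_1,\qquad u_k = 1\cdot v_{k+1} + u_{k+1},\qquad v_k = 5u_k + v_{k+1},\qquad u_{m-1} = 3F_m + L_{m-1},
\]
each paired with the inequality that pins down the quotient; the $B_n$ tail then follows from the linear relations $F_{k+1}=L_k+F_k$ and $L_{k+1}=5F_{k+1}+L_k$ of Lemma~\ref{le:FLH}. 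No separate argument for the continued fraction is needed.

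Your two-pass scheme --- matrices for the continued fraction, then backward recursion for the weights --- works in principle, but the matrix induction as you sketch it is incomplete: the transition $P_m \mapsto \bigl(\begin{smallmatrix}6&1\\5&1\end{smallmatrix}\bigr) P_m \bigl(\begin{smallmatrix}6&5\\1&1\end{smallmatrix}\bigr)$ mixes columns, so you cannot track just the first column $(p_N,q_N)$; you must conjecture and carry closed Fibonacci forms for all four entries (equivalently, for $p_{N-1},q_{N-1}$ as well) before Proposition~\ref{prop:proof} can be invoked. Once you do that, the equalities your backward recursion then needs on the left half are exactly those of Lemma~\ref{le:expanan} read in reverse (minus the inequalities, which you no longer need since the $\ell_j$ are already fixed). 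So the two approaches rest on the same identities; the paper's forward organization simply avoids the extra bookkeeping of the matrix step.
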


\begin{rmk}\rm 
Although our usual convention is that the expression 
$a=[\ell_0;\ell_1,\dots,\ell_N]$ always has $\ell_N\ge 2$, 
we relax this condition here in order to simplify the formulas.
We allow ourselves another liberty at the end of these expansions:
in the formula for $B_n$ in Proposition~\ref{prop:a2m}
the last two weights $F_1,L_0$ are equal, so the ending multiplicity is in fact $6$ rather than~$5,1$.
\diam
\end{rmk}

\begin{proof}  Recall that 
$a_n = \left(\frac{g_{n+1}}{g_n}\right)\,\!^2 = \frac{f_{2n+1}^2}{f_{2n-1}^2}$. 
Also, because  $b_{n-1}< a_n < b_n$, the weight expansion 
$\ww(a_n)$ begins as $(1^{\times6}, (a_n-6)^{\times 1}, \dots)$.  
Hence 
$$
W(a_n) = f_{2n-1}^2 \ww(a_n) = \bigl({f_{2n-1}^2}^{\times6}, f_{2n+1}^2 - 6\,f_{2n-1}^2,\dots\bigr).
$$
Therefore, because $v_m=F_m$ as noted above, the expansion of  $W(a_n)$ up to
and including the term $L_{m-1}$ follows from Lemma~\ref{le:expanan} below. 
The rest of the expansion holds by Lemma~\ref{le:FLH}.
\end{proof}

\begin{lemma} \labell{le:expanan}   
Let $n = 2m \ge 4$. 
The following identities hold.

\begin{itemize}
\item[(i)]
$f_{2n+1}^2 = 6\,u_0+v_1$ and $v_1 < u_0$;
\item[(ii)]
$u_k = v_{k+1} + u_{k+1}$ and $u_{k+1} < v_{k+1}$ for $k = 0, \dots, m-2$;
\item[(iii)]
$v_k = 5\,u_k + v_{k+1}$ and $v_{k+1} < u_k$ for $k = 1, \dots, m-1$;
\item[(iv)]
$u_{m-1} = 3\, F_m + L_{m-1}$ and $L_{m-1} < F_m$.
\end{itemize}
\end{lemma}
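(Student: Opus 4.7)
My plan is to split the four parts into two groups: the identities in~(ii) and~(iii), which telescope directly from Lemma~\ref{le:FLH}, and the identities in~(i) and~(iv), which are quadratic Fibonacci relations that can be verified on a few base cases via Proposition~\ref{prop:proof}. The three inequalities will then follow with a little extra work.

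For~(ii), I will use the form $u_k = 3H_{k'-1}+2H_k+1$ from equation~\eqref{eq:uk2} with $k'=n-k$. Telescoping via $H_j-H_{j-1}=F_j$ (Lemma~\ref{le:FLH}(iii)) gives
\[
u_k - u_{k+1} \,=\, 3(H_{n-k-1}-H_{n-k-2}) - 2(H_{k+1}-H_k) \,=\, 3F_{n-k-1} - 2F_{k+1} \,=\, v_{k+1} .
\]
For~(iii), combining $F_{k+1}-F_k=L_k$ (Lemma~\ref{le:FLH}(i)) with $L_k=5H_k+1$ (Lemma~\ref{le:FLH}(iv)) gives
\[
v_k - v_{k+1} \,=\, 3L_{n-k-1}+2L_k \,=\, 15H_{n-k-1}+10H_k+5 \,=\, 5u_k .
\]
The inequality $v_{k+1}<u_k$ in~(iii) is then immediate from~(ii), since $v_{k+1} = u_k - u_{k+1}$ and $u_{k+1}>0$.

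After expanding the definitions, the identity in~(i) becomes the quadratic Fibonacci relation
\[
f_{4m+1}^2 \,=\, 6f_{4m-1}^2 + f_{8m-4} - 2,
\]
and the identity in~(iv) becomes
\[
3f_{2m+1}^2 + 2f_{2m-2}f_{2m} \,=\, 3f_{4m} + f_{4m-3} + f_{4m-1} .
\]
Both have the form treated by Proposition~\ref{prop:proof} in the single variable $s=m$, so it suffices to verify them at three values of $m$ (for instance $m=1,2,3$); these are routine numerical checks.

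It remains to establish the three inequalities. For $v_1<u_0$ in~(i), Cassini's identity~\eqref{eq:cassini} combined with the standard identity $f_{4n-4}=f_{2n-2}\ell_{2n-2}$ yields $u_0-v_1 = f_{2n-2}f_{2n-4}+3$, which is positive for $n\ge 2$. The inequality $u_{k+1}<v_{k+1}$ in~(ii), combined with the identity just proved in~(ii), is equivalent to $2u_{k+1}<u_k$; since the dominant terms $f_{2n-2k-1}^2$ of $u_k$ grow between consecutive $k$ by a factor approaching $\tau^4\approx 6.85$, the factor of $2$ is very comfortable, and the elementary bound $f_{j+2}^2\ge 4f_j^2$ for $j\ge 3$ (immediate from $f_{j+2}=3f_j-f_{j-2}$) suffices to dominate the correction terms $2H_k$ and $2H_{k+1}$. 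Finally, $L_{m-1}<F_m$ in~(iv) is the elementary Fibonacci inequality $f_{4m}>f_{4m-3}+f_{4m-1}$, valid for $m\ge 2$; the borderline case $m=1$ (where equality holds) is absorbed by the direct verification in Example~\ref{ex:an}. The main obstacle is the bookkeeping of index shifts between $k$, $k'=n-k$, and the Fibonacci subsequences $F_j,L_j,H_j$; once those are aligned correctly, everything reduces either to Lemma~\ref{le:FLH} or to a finite Fibonacci verification.
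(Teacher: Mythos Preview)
Your arguments for the equalities are correct, and in fact your proof of the identity in~(iii) is slightly cleaner than the paper's: by working throughout with the form $u_k=3H_{k'-1}+2H_k+1$ you need only Lemma~\ref{le:FLH}, whereas the paper splits~(iii) into a $k$-part and a $k'$-part and invokes Proposition~\ref{prop:proof} for the latter. One technical slip: Proposition~\ref{prop:proof} applies to expressions of the shape $\sum a_{ij}f_{s+i}f_{s+j}+\sum b_j f_{2s+j}+(-1)^sc$, so for~(i) the correct parameter is $s=2n$ (not $s=m$), and the constant must appear as $-2(-1)^s$. Checking at $m=1,2,3$ then amounts to three even values of~$s$, which still suffices by the proof of that proposition; this is exactly how the paper proceeds.

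Where you work much harder than necessary is on the inequalities, and here you should adopt the paper's one-line argument. Since $u_k,v_k>0$ in the stated ranges, the equality in~(ii) immediately gives $v_{k+1}=u_k-u_{k+1}<u_k$, hence the inequality in~(iii) for $k\le m-2$ (you omit the endpoint $k=m-1$, which needs $v_m=F_m<3F_m+L_{m-1}=u_{m-1}$ from~(iv)) and in particular $v_1<u_0$; and the equality in~(iii) gives $v_{k+1}-u_{k+1}=4u_{k+1}+v_{k+2}>0$, which is the inequality in~(ii). Your growth-rate argument for $2u_{k+1}<u_k$ is plausible but not completed---you would still need to bound the correction $4H_{k+1}-2H_k$ against the slack in $f_{2k'-1}^2\ge 4f_{2k'-3}^2$---whereas positivity of $v_{k+2}$ gives the conclusion for free.
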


\begin{proof}
Statement (i) is equivalent to
\begin{equation}  \label{e:=62}
f_{2n+1}^2  = 6\,f_{2n-1}^2  + 3F_{n-1} - 2.
\end{equation}
Since $f_{s+1}^2=6f_{s-1}^2+f_{2s-4}-2(-1)^s$ holds true for $s=2,3,4$, this
equation holds true for all $s \ge 2$ by Proposition~\ref{prop:proof};
in particular it holds true for all even $s \ge 2$, and so~\eqref{e:=62} holds true.

The equality in (ii) follows from
the definitions of $u_k,v_k$ by using~\eqref{eq:uk2} and $H_k = \sum_{i=1}^k F_i$ and by dividing the equations into two equations, one for $k$ and one for $k':=n-k$.
To prove the equation in (iii) we again divide it into two equations, one for $k$ and one for $k':=n-k$, namely
$$
-2 F_k = 10 H_k +2 -2 F_{k+1}, \qquad
3 F_{k'} = 5f_{2k'-1}^2 -2 +  3F_{k'-1} .
$$
The first equation is equivalent to $L_k=5H_k+1$, which holds by Lemma~\eqref{le:FLH}~(iv),
and the second holds because $f_{2s} = 5f_{s-1}^2 -2(-1)^s + f_{2s-4}$ is true for
$s=2,3,4$ and hence for all~$s\ge 2$ by Proposition~\ref{prop:proof}.

Equation~\eqref{eq:uk2} and Lemma~\ref{le:FLH} imply that
$$
u_{m-1} = 3H_m + 2H_{m-1} + 1 = 3F_m + 5H_{m-1} + 1= 3F_m + L_{m-1}.
$$
This proves (iv).

Since $u_k, v_k$ are positive in the given ranges,
the equalities in~(ii)  and (iv) imply the inequalities in~(i) and~(iii).
Similarly, the equalities in~(iii) imply the inequalities in~(ii).  
This completes the proof.
\end{proof}

The reduction process has three steps that are described in Propositions~\ref{prop:st1}, \ref{prop:st2} and~\ref{prop:st3}.  Notice, before we begin, that the weights of $a_n$ divide into three groups, namely
$(m-1)$ pairs $\{1,5\}$, two central  blocks with multiplicities $3, 1$, and finally $(m-1)$ pairs $\{5,1\}$.  
In the first step we show that a set of $5$~Cremona moves has the effect of moving the first $\{1,5\}$ pair from the left to a $\{5,1\}$ pair on the right. 
Moreover, doing this introduces no new weights to the right 
while slightly modifying the first block on the left.

Denote $V_n = E(a_n) = \left(g_n g_{n+1}; W(a_n), 1 \right)$.
Note that 
$$
g_n g_{n+1} = f_{2n-1}f_{2n+1} = f_{2n}^2+1 = f_{2n}^2+f_1^2
$$
by~\eqref{eq:cassini}.

\begin{prop}\labell{prop:st1}
For $n =2m\ge 6$, 
the vector $V_n$ is reduced by $5(m-1)$ Cremona moves to the vector
$V_n^1 = \left( f_{2(m+1)}^2 + f_{2m-1}^2;\, A_n^1, B_n^1 \right)$,
where 
\begin{eqnarray*}
A_n^1: &=&
\bigl(\, (u_{m-1}+F_{m-1})^{\times 1}, \, {u_{m-1}}^{\times 5} \bigr),
\\
B_n^1: &=&
\bigl(\,
{F_m}^{\times 3}, \,{L_{m-1}}^{\times 1}, 
\,{F_{m-1}}^{\times 10}, \, {L_{m-2}}^{\times 2},
\dots, 
\,{F_2}^{\times 10}, \,{L_1}^{\times 2}, 
\,{F_1}^{\times 10}, \, {L_0}^{\times 2}, \,1^{\times 1} 
\,\bigr).
\end{eqnarray*}
\end{prop}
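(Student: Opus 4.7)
My proof proceeds by induction on $k \in \{0, 1, \dots, m-1\}$, defining a chain of intermediate tuples $V_n = V_n^{(0)}, V_n^{(1)}, \dots, V_n^{(m-1)} = V_n^1$ such that each $V_n^{(k+1)}$ arises from $V_n^{(k)}$ by exactly five standard Cremona moves, for a total of $5(m-1)$ moves. The predicted form of $V_n^{(k)}$ is
\begin{multline*}
V_n^{(k)} \,=\, \bigl(d^{(k)};\, (u_k+F_k),\, u_k^{\times 5},\, v_{k+1},\, u_{k+1}^{\times 5},\, \dots,\, v_{m-1},\, u_{m-1}^{\times 5}, \\
F_m^{\times 3},\, L_{m-1},\, F_{m-1}^{\times 5},\, L_{m-2},\, \dots,\, F_{k+1}^{\times 5},\, L_k,\, F_k^{\times 10},\, L_{k-1}^{\times 2},\, \dots,\, F_1^{\times 10},\, L_0^{\times 2},\, 1 \bigr),
\end{multline*}
under the convention $F_0 = 0$, so that $V_n^{(0)}$ has leading block $u_0^{\times 6}$ and no doubled $(F,L)$-blocks, recovering the $V_n$ of Proposition~\ref{prop:a2m}. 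At $k = m-1$ the middle $(v_j, u_j)$-chain disappears and $V_n^{(m-1)}$ becomes the $V_n^1$ of the statement.

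For the inductive step, I will show that a round of five standard Cremona moves has two simultaneous effects: (a)~it collapses the leading twelve-entry chunk $(u_k+F_k,\, u_k^{\times 5},\, v_{k+1},\, u_{k+1}^{\times 5})$ of $V_n^{(k)}$ into the six-entry chunk $(u_{k+1}+F_{k+1},\, u_{k+1}^{\times 5})$, and (b)~it augments the block $F_{k+1}^{\times 5}, L_k$ of the $B$-tail by five further $F_{k+1}$'s and one further $L_k$ to form the doubled block $F_{k+1}^{\times 10}, L_k^{\times 2}$. Concretely, writing $c_1 := d^{(k)} - 2u_k$ and letting $c_2$ denote the analogous value produced after the second move, the five moves are: a Cremona transform on the leading triple $((u_k+F_k),\, u_k,\, u_k)$, producing $(c_1,\, c_1-F_k,\, c_1-F_k)$; a transform on a triple of $u_k$'s (still at the top after reordering), producing a triple of $c_2$'s; a pivot on $(v_{k+1},\, c_1,\, c_1)$, producing one further $c_2$ together with two entries equal to $F_{k+1}$; and two cascading pivots on $(c_1,\, c_2,\, c_2)$ and $(d_3 - 2c_2,\, c_2,\, c_2)$, each producing two further $F_{k+1}$'s, with the leading entry after the fifth move evaluating to exactly $u_{k+1}+F_{k+1}$ and one of the auxiliary entries from the fourth move being $L_k$.

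Each of the numerical equalities---that the Cremona transform produces the specific values claimed at each step, that the sort order after each move is as asserted, and that the auxiliary $c_1, c_2$ entries do not collide with existing entries elsewhere in the tuple---reduces to a quadratic identity among Fibonacci and Lucas numbers (or the derived sequences $F_k, L_k, H_k$ of Definition~\ref{def:Lucas}). Each such identity has the form considered in Proposition~\ref{prop:proof} and can therefore be verified by checking at most three initial values together with the elementary identities of Lemma~\ref{le:FLH}. The main obstacle is the sheer volume of bookkeeping required to track the intermediate tuple after each individual Cremona move and to verify all the sort orderings and identities, but the inductive structure keeps each round's work essentially uniform in $k$.
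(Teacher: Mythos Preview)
Your approach is exactly the paper's: your intermediate vectors $V_n^{(k)}$ coincide (after the index shift $k\mapsto k+1$) with the paper's $V(n,k)$, and the paper likewise proves Lemma~\ref{le:nk} by exhibiting five explicit Cremona moves from $V(n,k)$ to $V(n,k+1)$, with each intermediate identity handled via Proposition~\ref{prop:proof}.

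One correction to your sketch of the five moves: after your first move the three new entries are $c_1,\,(c_1-F_k)^{\times 2}$, so after the second move the top of the reordered tuple reads $v_{k+1},\,c_1,\,(c_1-F_k)^{\times 2},\,c_2^{\times 3}$. Hence the third move acts on the triple $(v_{k+1},\,c_1,\,c_1-F_k)$, not on $(v_{k+1},\,c_1,\,c_1)$, and the fourth move acts on $(c_1-F_k,\,c_2,\,c_2)$, not on $(c_1,\,c_2,\,c_2)$. With these triples fixed, the outputs of moves three and four each contribute one extra $F_{k+1}$ and one extra $L_k$ (rather than the pair of $F_{k+1}$'s you describe), and the bookkeeping then matches the paper's Steps~3--5 exactly.
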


Let $$
V(n,1) := \left( f_{2n}^2+1; A(n,1), B(n,1) \right) := 
\left( f_{2n}^2+1; A_n, B_n \right) = V_n,
$$
and for $k = 2, \dots, m$ define the vector $V(n,k)$ by 
$\left( f_{2(n-k+1)}^2+f_{2k-1}^2; A(n,k), B(n,k) \right)$,
where 
\begin{eqnarray*}
A(n,k) &=& \bigl(\, (u_{k-1} + F_{k-1})^{\times 1},
{u_{k-1}}^{\times 5}, {v_k}^{\times 1},  {u_{k}}^{\times 5}, \dots,,{v_{m-1}}^{\times 1}, {u_{m-1}}^{\times 5}\bigr) , \\
B(n,k) &=& \bigl(\, {F_m}^{\times 3}, {L_{m-1}}^{\times 1}, \dots, 
{F_{k}}^{\times 5}, {L_{k-1}}^{\times 1}, 
{F_{k-1}}^{\times 10}, {L_{k-2}}^{\times 2}, 
\dots,\\
&&\hspace{2.4in}
{F_1}^{\times 10}, {L_0}^{\times 2}, 1^{\times 1}\, \bigr) .
\end{eqnarray*}
Then $V(n,1) = V_n$ and $V(n,m)=V_n^1$.
Moreover, $A(n,k+1)$ is obtained from $A(n,k)$ by replacing its first seven entries by
the single entry $u_k+F_k = f_{2(n-k)-1}^2 +2 \,H_k + F_k$;
and $B(n,k+1)$ is obtained from $B(n,k)$ by inserting 
$\bigl(\, {F_{k}}^{\times5}, L_{k-1} \,\bigr)$.
Proposition~\ref{prop:st1} will follow if we prove:

\begin{lemma}  \labell{le:nk} For $1 \le k \le m-1$,
$V(n,k)$ reduces to $V(n,k+1)$
by 5~Cremona moves.
\end{lemma}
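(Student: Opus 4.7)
The plan is to exhibit an explicit sequence of 5 Cremona moves that takes $V(n,k)$ to $V(n,k+1)$, and then to verify the resulting numerical equalities by reducing them to quadratic Fibonacci identities of the form controlled by Proposition~\ref{prop:proof}. By Remark~\ref{rmk:order}, it suffices to specify which triple to transform at each step; no justification of orderings is required.

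Moves~1 and~2 would act on the initial block: the first on $(u_{k-1}+F_{k-1},u_{k-1},u_{k-1})$ and the second on three further copies of $u_{k-1}$. Together they reduce the leading coefficient from $d^{(0)}=f_{2(n-k+1)}^2+f_{2k-1}^2$ to some $d^{(2)}$, and produce six intermediate entries of the form $d^{(i-1)}-2u_{k-1}$ (with an $F_{k-1}$-correction in Move~1). Moves~3, 4 and~5 would each act on a triple formed by the next surviving large entry --- $v_k$ at Move~3, and larger intermediates from earlier moves at Moves~4 and~5 --- together with two of the previously created intermediates. A direct calculation (which I verified explicitly in the case $n=6,\,k=1$, where the five moves take the leading coefficient through $20737\to 17711\to 11659\to 6765\to 4897\to 3029$) shows that each of Moves~3, 4, 5 produces one larger entry that is consumed at the following step, together with two small leftover entries, which in general equal $F_k$ or $L_{k-1}$ (and in the above numerical case equal $1$, consistent with $F_1=L_0=1$).

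After all five moves one reads off the resulting vector and checks three things: (a) the leading coefficient equals $f_{2(n-k)}^2+f_{2k+1}^2$; (b) the first seven entries have become $(u_k+F_k,\,u_k^{\times 5},\,v_{k+1})$, matching the start of $A(n,k+1)$; and (c) the six leftover small entries split as $F_k^{\times 5}$ together with one $L_{k-1}$, which joined to the surviving tail of $B(n,k)$ yield $B(n,k+1)$. Each required equality is a polynomial identity in $f_{2(n-k+1)}$, $f_{2k-1}$ and their near neighbors, of precisely the form covered by Proposition~\ref{prop:proof}; hence each reduces to checking three specific instances. The elementary relations of Lemma~\ref{le:FLH} (for instance $F_{k+1}=L_k+F_k$ and $L_k=5H_k+1$) supply most of the building blocks.

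The main obstacle will be bookkeeping: tracking the six intermediate entries across the five moves and matching their values to the expected output. A minor subtlety is the boundary case $k=1$, where $F_{k-1}=F_0=0$ and the first triple in Move~1 reduces to three equal entries $u_0$; with this convention the formulas for $k\ge 1$ are uniform and the same verification covers both cases.
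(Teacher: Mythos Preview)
Your proposal is correct and follows essentially the same approach as the paper: the paper also performs five explicit Cremona moves, the first two consuming the six $u_{k-1}$-entries (producing the intermediates $f_{2k'+1}f_{2k'}-H_{k-1}$, $f_{2k'+1}f_{2k'}-H_{k-1}-F_{k-1}$, and three copies of $f_{2k'}f_{2k'-1}-f_{2k}f_{2k-1}$), and the last three consuming $v_k$ together with these intermediates, each time spinning off small entries that total $F_k^{\times 5},\,L_{k-1}$. Your numerical sequence $20737\to 17711\to 11659\to 6765\to 4897\to 3029$ for $n=6,\,k=1$ matches the paper's formulas $f_{4k'+2}-F_{k-1}$, $f_{4k'}+f_{2k'+1}f_{2k'}-H_k-F_{k-1}$, $f_{4k'}-F_{k-1}$, $f_{2k'+1}f_{2k'}-H_{k-1}-F_{k-1}+2F_k$, $f_{2k'}^2+f_{2k+1}^2$ exactly, and the verification via Proposition~\ref{prop:proof} is precisely what the paper invokes (cf.\ Remark~\ref{rmk:proof}).
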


We prove Lemma~\ref{le:nk} in five steps.
Throughout, we abbreviate $n-k$ to $k'$.  Thus $k'>k$.

\begin{rmk}\labell{rmk:proof} 
\rm (i)
Each step of the reduction involves many small calculations 
that can be done directly using identities such as~\eqref{eq:cassini}, 
\eqref{eq:doubling.odd} and \eqref{eq:doubling.even}.  
However, in all cases the required identity is quadratic in the sense of Proposition~\ref{prop:proof}.
Hence they can be all proved by verifying them for just 
three low values of~$s$,
as we have already illustrated in the proof of Lemma~\ref{le:expanan}.
The only condition on the choice of~$s$ is that all subscripts
of the~$f_i$ should be $\ge 0$.  

\s
(ii)  
In each step of the reduction process there is no interaction 
between the terms in~$k$ and those in~$k'$; 
we simplify each set of terms separately.
\diam
\end{rmk}

\NI {\bf Step 1:}  
{\it There is a Cremona move that takes $V(n,k)$ to
\begin{eqnarray*}  
V_1(n,k) &:=& \bigl(\, f_{4k'+2} - F_{k-1}; \,
       {u_{k-1}}^{\times 3},\, v_k,\, 
       f_{2k'+1}f_{2k'} - H_{k-1},\\ \notag
       &&\hspace{1in}
       (f_{2k'+1}f_{2k'} - H_{k-1} - F_{k-1})^{\times 2},\,
       {u_k}^{\times 5}, \dots
       \bigr).
\end{eqnarray*}

\proof 
The first component of the Cremona transform of $V(n,k)$ is
$2 \left( f_{2(k'+1)}^2+f_{2k-1}^2 \right) - 3\, u_{k-1}$, and we must show that it equals $f_{4k'+2}$.  
Since $u_{k-1} := f_{2k'+1}^2 + 2H_{k-1}$,
we need to see that 
$$
2 \, f_{2(k'+1)}^2 - 3\, f_{2k'+1}^2- f_{4k'+2}  \,=\,   6\,H_{k-1}  - 2\, f_{2k-1}^2
\,=\, 2 \left(f_{2k-2}f_{2k}-f_{2k-1}^2 \right).
$$
But both sides are equal to $-2$. This is clear for the RHS by~\eqref{eq:cassini}.
For the LHS, note that $2 f_{s+2}^2 - 3 f_{s+1}^2- f_{2s+2} = -2(-1)^s$,
since this is true for $s=-1,0,1$ and hence for all $s \ge -1$ by 
Proposition~\ref{prop:proof}. 

The second three terms of the Cremona transform of $V(n,k)$ equal
$\left(f_{2(k'+1)}^2+f_{2k-1}^2 \right) - 2\, u_{k-1}$, and one can check as above that this is $f_{2k'+1}f_{2k'} - H_{k-1}$.
Therefore the given vector $V_1(n,k)$ is a reordering of the 
Cremona transform of $V(n,k)$.
\proofend

\NI {\bf Step 2:} 
{\it There is a Cremona move that takes $V_1(n,k)$ to
\begin{eqnarray*} 
 V_2(n,k)  &:=& \Bigl( f_{4k'} + f_{2k'+1}f_{2k'} -H_k-F_{k-1}; \,
       v_k,\, f_{2k'+1}f_{2k'} - H_{k-1},    \,\Bigr. \\
    &&   \hspace{.5in} \Bigl. 
     (f_{2k'+1}f_{2k'} - H_{k-1} - F_{k-1})^{\times 2}, (f_{2k'}f_{2k'-1} - f_{2k}f_{2k-1})^{\times 3}, \,
       {u_k}^{\times 5}, \dots
       \Bigr) . \notag
\end{eqnarray*}

\proof
For the first component we need to see that
$$
2 \left( f_{4k'+2}-F_{k-1} \right) - 3 \, u_{k-1}
\,=\, 
f_{4k'} + f_{2k'+1}f_{2k'} -H_k-F_{k-1}.
$$
But this is equivalent to the identity
\begin{equation}  \label{e:23}
2\, f_{4k'+2} - 3\, f_{2k'+1}^2 - f_{4k'} - f_{2k'+1}f_{2k'}
\,=\,
6\, H_{k-1} + F_{k-1} - H_k,
\end{equation}
and one can check that both sides here equal~$-1$.

The Cremona transform also contains three terms of the form
$f_{4k'+2} - F_{k-1} - 2\,u_{k-1}$, and we need to check that this is
$f_{2k'}f_{2k'-1} - f_{2k}f_{2k-1}$. But this holds because
$$
f_{4k'+2} - 2\, f_{2k'+1}^2 - f_{2k'}f_{2k'-1} 
\,=\, 
\tfrac 43 f_{2k-2}f_{2k} - f_{2k}f_{2k-1} + \tfrac 13 f_{4k-4} \,=\, -1.
$$
Thus $V_2(N,k)$ is a reordering of the Cremona transform of $V_1(n,k)$.
\proofend

\NI {\bf Step 3:}  
{\it There is a Cremona move that takes $V_2(n,k)$ to
\begin{equation*} 
V_3(n,k) \,:=\, \bigl(\, f_{4k'} - F_{k-1};\,
       f_{2k'+1}f_{2k'} - H_{k-1} - F_{k-1}, \,
       (f_{2k'}f_{2k'-1} - f_{2k}f_{2k-1})^{\times 4}, \,
       {u_k}^{\times 5}, \dots \bigr) 
\end{equation*} 
where the multiplicities of~$F_{k}$ and $L_{k-1}$ are each increased by one to
$ {F_k}^{\times 6}, \, {L_{k-1}}^{\times 2}$.}

\proof
Since $H_k = H_{k-1}+F_k$, the first term
$$
2 \left( f_{4k'} + f_{2k'+1}f_{2k'} -H_k-F_{k-1} \right) - v_k 
- 2 \left( f_{2k'+1}f_{2k'} - H_{k-1} \right) + F_{k-1} 
$$
of the Cremona transform of $V_2(n,k)$ is equal to
$$
2\,f_{4k'} - 2\, F_k - F_{k-1} - (f_{4k'}-2\,F_k) 
\,=\,
f_{4k'}-F_{k-1} .
$$
Its second term
$$
\left( f_{4k'} + f_{2k'+1}f_{2k'} -H_k-F_{k-1}\right) - 
2 \left( f_{2k'+1}f_{2k'} - H_{k-1} \right) + F_{k-1}
$$ 
simplifies to
$$
f_{4k'} - f_{2k'+1}f_{2k'} -F_k + H_{k-1} 
\,=\,
f_{2k'}f_{2k'-1} - f_{2k}f_{2k-1}, 
$$
where the last equality follows from the identities
\begin{equation} \label{e:red4}
f_{4k'} - f_{2k'+1}f_{2k'} - f_{2k'}f_{2k'-1} =0, \quad
F_k - H_{k-1} - f_{2k}f_{2k-1} =0.
\end{equation}

The third term of the Cremona transform of $V_2(n,k)$ is
$$
\left( f_{4k'} + f_{2k'+1}f_{2k'} -H_k-F_{k-1} \right) 
- v_k 
- \left( f_{2k'+1}f_{2k'} - H_{k-1} - F_{k-1}  \right) .
$$
This is equal to $F_k$.
In the same way, we find that its fourth term  is 
$F_k-F_{k-1} = L_{k-1}$.
The result follows immediately.
\proofend

\NI {\bf Step 4:}  
{\it There is a Cremona move that takes $V_3(n,k)$ to
\begin{eqnarray*} 
V_4(n,k) &:=&\bigl(\,
f_{2k'+1}f_{2k'} - H_{k-1} - F_{k-1} + 2\, F_k; \,
       f_{2k'}^2 + f_{2k-1}^2 + F_k, \\  \notag&&\hspace{1.5in}
       (f_{2k'}f_{2k'-1} - f_{2k}f_{2k-1})^{\times 2}, \,
       {u_k}^{\times 5}, \dots
\bigr) 
\end{eqnarray*}
where the multiplicities of $F_{k}$ and $L_{k-1}$ are now
$ {F_k}^{\times 8}, \, {L_{k-1}}^{\times 2}$.}

\proof
By~\eqref{e:red4},
the first term of the Cremona transform of $V_3(n,k)$ is
$$
f_{2k'+1}f_{2k'} - H_{k-1} - F_{k-1} + 2\, F_k.
$$
We claim that its second term is
$$
f_{4k'} - F_{k-1} -2 \left( f_{2k'}f_{2k'-1} - f_{2k}f_{2k-1}\right)
\,=\,
f_{2k'}^2 + f_{2k-1}^2 + F_k .
$$
This follows from
\begin{equation} \label{e:red5}
f_{4k'} -2 \,f_{2k'}f_{2k'-1} - f_{2k'}^2 = 0, 
\quad
F_{k-1} - 2 \,f_{2k}f_{2k-1} + f_{2k-1}^2 + F_k = 0 .
\end{equation}

Finally, the third and fourth term of the Cremona transform of
$V_3(n,k)$ are
$$
f_{4k'} -F_{k-1} - \left( f_{2k'+1}f_{2k'} - H_{k-1} - F_{k-1} \right) - 
\left( f_{2k'}f_{2k'-1} - f_{2k}f_{2k-1}  \right) \,=\, F_k
$$
where the equality holds by \eqref{e:red4}. 
Hence $V_4(n,k)$ is a reordering of the Cremona transform, as claimed.
\proofend

\NI {\bf Step 5:}  
{\it There is a Cremona move that takes $V_4(n,k)$ to
\begin{equation} \label{v:red6o}
V(n,k+1) \,:=\, \bigl(\, f_{2k'}^2 + f_{2k+1}^2;\,
       u_k + F_k, \,
       {F_k}^{\times 2}, \,
       {u_k}^{\times 5}, \dots
\bigr) 
\end{equation}
where  the multiplicities of $F_{k}$ and $L_{k-1}$ are
${F_k}^{\times 10}, \, {L_{k-1}}^{\times 2}$.
}

\proof  
The above expression for the first term of the Cremona transform of $V_4(n,k)$
follows from the identities
\begin{gather*}
2\, f_{2k'+1}f_{2k'} - 2\, f_{2k'}^2 - 2\, f_{2k'}f_{2k'-1} = 0,\\
2\,H_{k-1} + 2\, F_{k-1} - 3\, F_k + f_{2k-1}^2 - 2\, f_{2k}f_{2k-1} + f_{2k+1}^2 = 0. 
\end{gather*}
(The second identity can be simplified by subtracting the second identity of~\eqref{e:red5}.)

We next claim that the second term of this transform  is
$u_k + F_k$.  Since $u_k = f_{2k'-1}^2 +2\,H_k$ and $H_{k-1} + F_k = H_k$,
this is equivalent to the identity 
$$
f_{2k'+1}f_{2k'} - 2\, f_{2k'}f_{2k'-1} - f_{2k'-1}
\,=\,
3\,H_{k-1} + F_{k-1} + F_k - 2\, f_{2k}f_{2k-1} .
$$
But both sides equal $-1$.

Finally, its third and fourth terms are $F_k$ because
$$
f_{2k'+1}f_{2k'} - f_{2k'}^2 - f_{2k'}f_{2k'-1}
\,=\,
H_{k-1} + F_{k-1} + f_{2k-1}^2 - f_{2k}f_{2k-1} .
$$
Here both sides vanish.
\proofend

This completes the proof of Lemma~\ref{le:nk} and hence of Proposition~\ref{prop:st1}.
The next stage of the reduction process
results in a vector $V_n^2$ whose components are linear (rather than quadratic) 
functions of the~$f_k$ 
and do not involve the index~$k'$.

\begin{prop}\labell{prop:st2} 
When $n=2m\ge 6$, the vector $V_n^1$ may be reduced by four Cremona moves to 
$$
V_n^2 \,=\, \bigl(\, {F_m}^{\times 1}; {L_{m-1}}^{\times 1}, {F_{m-1}}^{\times 11}, {L_{m-2}}^{\times 2}, \dots, {F_2}^{\times 10}, {L_1}^{\times 2}, {F_1}^{\times 13} \bigr).
$$
\end{prop}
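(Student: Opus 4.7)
The plan is to perform four Cremona moves in succession, verifying after each that the resulting vector, once reordered, takes the claimed form. Abbreviate $u := u_{m-1} = 3 F_m + L_{m-1}$ (Lemma~\ref{le:expanan}~(iv)), let $d^{(0)} := f_{2(m+1)}^2 + f_{2m-1}^2$ denote the scalar part of $V_n^1$, and recall that $F_m = L_{m-1} + F_{m-1}$ by Lemma~\ref{le:FLH}~(i). The whole argument pivots on one new Fibonacci identity, namely
$$
2\, d^{(0)} \,=\, 5\, u + F_{m-1},
$$
which, after substituting the definitions and multiplying by $3$, reduces to the homogeneous quadratic relation $3 f_{s+3}^2 + 5 f_s^2 - 8 f_{s+2}^2 = 4 f_{s-1} f_{s+1}$ at $s = 2m-1$; this is of the form treated by Proposition~\ref{prop:proof} and so can be checked by evaluating at three small values of $s$.

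Granting this identity, set $P := 2 F_m$ and $Q := P - F_{m-1} = 2 F_m - F_{m-1}$. Move~1 applies the Cremona transform to the leading triple $(u + F_{m-1},\, u,\, u)$: the key identity yields $d^{(1)} = 2 d^{(0)} - 3 u - F_{m-1} = 2 u$, while the three fresh entries produced are $P,Q,Q$; since $u > P > Q > F_m$, the three untouched $u$'s ascend to the top after reordering. Move~2 then applies Cremona to the new top triple $(u,u,u)$: the new scalar is $d^{(2)} = 2(2u) - 3u = u$, and because $d^{(1)} - 2u = 0$ the three fresh entries are all zero. Move~3 acts on the next triple $(P, Q, Q)$. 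Using $F_m = L_{m-1} + F_{m-1}$ one computes $d^{(3)} = 2 u - P - 2 Q = 2 F_m$, $u - 2 Q = F_{m-1}$, and $u - P - Q = 0$ (twice); since $F_{m-1} < L_{m-1}$ for $m \ge 2$, the freshly produced $F_{m-1}$ merges with the existing $F_{m-1}^{\times 10}$ block to form $F_{m-1}^{\times 11}$. Move~4 applies Cremona to $(F_m, F_m, F_m)$, taking $d^{(3)} = 2 F_m$ to $d^{(4)} = F_m$ and replacing the three leading $F_m$'s by zeros.

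After discarding the accumulated zero entries, the resulting vector is
$$
\bigl( F_m;\, L_{m-1},\, F_{m-1}^{\times 11},\, L_{m-2}^{\times 2},\, \dots,\, F_2^{\times 10},\, L_1^{\times 2},\, F_1^{\times 10},\, L_0^{\times 2},\, 1 \bigr),
$$
and since $F_1 = L_0 = 1$ the tail $F_1^{\times 10}, L_0^{\times 2}, 1$ collapses to $F_1^{\times 13}$, reproducing $V_n^2$ exactly.

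The main obstacle is bookkeeping rather than any deep calculation: at each stage one must track how the untouched blocks rearrange around the moving triple and verify a handful of ordering inequalities (most notably $u > P > Q > F_m$ before Move~1 and $F_{m-1} < L_{m-1} < F_m$ before Move~4). Once the single quadratic identity $2 d^{(0)} = 5 u + F_{m-1}$ is established via Proposition~\ref{prop:proof}, every simplification in Moves~2--4 is a direct consequence of $F_m = L_{m-1} + F_{m-1}$ or an immediate cancellation, so no further input beyond Lemmas~\ref{le:FLH} and~\ref{le:expanan} is required.
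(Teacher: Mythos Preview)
Your proof is correct and follows essentially the same route as the paper: the four Cremona moves act on exactly the same triples in the same order, producing the same intermediate vectors. The only difference is packaging. The paper expresses the new entries after Move~1 as $f_{4m-1}+F_{m-1}$ and $f_{4m-1}$ (your $P$ and $Q$), and verifies four separate identities along the way ($2d^{(0)}-3u_{m-1}=f_{4m+2}$, $d^{(0)}-2u_{m-1}=f_{4m-1}+F_{m-1}$, $2u_{m-1}=f_{4m+2}-F_{m-1}$, $u_{m-1}-2f_{4m-1}=F_{m-1}$), whereas you collapse everything to the single relation $2d^{(0)}=5u+F_{m-1}$ and then let $F_m=L_{m-1}+F_{m-1}$ do all the remaining work. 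Your version is a slight streamlining, but the substance is identical.
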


\proof
Note that $V_n^2$ is obtained from $B_n^1$ by removing two copies of~$F_m$ and adding one~$F_{m-1}$.

We first claim that the Cremona transform of $V_n^1$ is
$$
\bigl(\,
f_{4m+2}-F_{m-1};\, f_{4m-1}+F_{m-1},\, {f_{4m-1}}^{\times 2},\, {u_{m-1}}^{\times 3},\, B_n^1 \,\bigr),
$$
which we reorder as
\begin{equation} \label{v:2.1}
\bigl(\, 
f_{4m+2}-F_{m-1};\, {u_{m-1}}^{\times 3}, \, f_{4m-1}+F_{m-1},\, {f_{4m-1}}^{\times 2},\, B_n^1 \,\bigr) .
\end{equation}
Here one obtains the first term of the transform from the identity
$$
2 \left( f_{2(m+1)}^2+f_{2m-1}^2\right) -3\, u_{m-1} \,=\, f_{4m+2},
$$
and the second term from
$$
f_{2m+2}^2 + f_{2m-1}^2 -2\,u_{m-1}\,=\, f_{4m-1} + F_{m-1}.
$$

Next, observe that Lemma~\ref{le:expanan}~(iv) implies that
$2\,u_{m-1} = 6 F_m+ 2L_{m-1}$,
which, as one can easily check, is just $f_{4m+2}-F_{m-1}$.
Therefore, the second Cremona transform moves the vector~\eqref{v:2.1} to
\begin{equation}  \label{v:000}
\bigl(\,
u_{m-1};\, f_{4m-1}+F_{m-1},\, {f_{4m-1}}^{\times 2},\, B_n^1 
\,\bigr) .
\end{equation}

We next claim that
$u_{m-1} - 2\,f_{4m-1} \,=\, F_{m-1}$.
Hence the third Cremona transform moves the vector~\eqref{v:000} to
$$
\bigl(\, f_{4m-1}+F_{m-1};\, F_{m-1}, \,{0}^{\times 2},\, B_n^1 \,\bigr),
$$ 
which we reorder as
\begin{equation*}
\bigl(\, 2F_m;
{F_m}^{\times 3}, \, {L_{m-1}}^{\times 1},
\,{F_{m-1}}^{\times 11}, \,{L_{m-2}}^{\times 2},
\,{F_{m-2}}^{\times 10}, \,{L_{m-2}}^{\times 2},
\dots, \,
\,{F_2}^{\times 10}, \,{L_1}^{\times 2}, 
\,{F_1}^{\times 13} 
\,\bigr) .
\end{equation*}
Therefore, another Cremona move takes the above vector to
$$
V_n^2 \,=\, \bigl(\,F_m;\, {L_{m-1}}^{\times 1}, 
\,{F_{m-1}}^{\times 11}, \,{L_{m-2}}^{\times 2},
\,{F_{m-2}}^{\times 10}, \,{L_{m-2}}^{\times 2},
\dots, \,
\,{F_2}^{\times 10}, \,{L_1}^{\times 2}, 
\,{F_1}^{\times 13} \bigr).
$$
This completes the proof of Proposition~\ref{prop:st2}.
\proofend

\begin{prop}\labell{prop:st3}  
For $n=2m \ge 6$, 
the vector $V_n^2$ may be reduced to~$(2; 1^{\times 5})$ by Cremona moves.
\end{prop}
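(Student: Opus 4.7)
The plan is to argue by induction on $m \ge 3$ (recall $n = 2m$), establishing that six Cremona moves reduce $V_n^2$ to a vector of the same shape indexed by $m-1$, with the base case $m = 3$ producing the degenerate vector $V_4^2 := (7;\, 6,\, 1^{\times 14})$, from which the reduction to $(2;\, 1^{\times 5})$ is immediate.

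The inductive step peels off the leading block $L_{m-1},\, F_{m-1}^{\times 11}$ of $V_n^2$. Three consequences of Lemma~\ref{le:FLH} suffice throughout: $F_m = L_{m-1}+F_{m-1}$ from (i), $L_{m-1} = 5F_{m-1}+L_{m-2}$ from (ii), and the shifted identity $F_{m-1} = L_{m-2}+F_{m-2}$. The first five moves each act on a triple $(\alpha, F_{m-1}, F_{m-1})$, where $\alpha$ is the current leading entry. The key observation is that at every stage $\alpha = d - F_{m-1}$, so the two trailing Cremona outputs $d-\alpha-F_{m-1}$ vanish, the new leading entry is $d - 2F_{m-1} = \alpha - F_{m-1}$, and the new degree equals the old~$\alpha$. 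Hence after $k$ moves ($1 \le k \le 5$) the state is
\[
\bigl((6-k)F_{m-1} + L_{m-2};\; (5-k)F_{m-1} + L_{m-2},\; F_{m-1}^{\times(11-2k)},\; L_{m-2}^{\times 2},\; F_{m-2}^{\times 10},\; \dots\bigr).
\]
After move~5 this reorders, using $F_{m-1} > L_{m-2}$, to $(F_{m-1} + L_{m-2};\, F_{m-1},\, L_{m-2}^{\times 3},\, F_{m-2}^{\times 10},\, \dots)$. The sixth move, acting on $(F_{m-1}, L_{m-2}, L_{m-2})$, uses $F_{m-1} = L_{m-2} + F_{m-2}$ to produce new entries $F_{m-2},\, 0,\, 0$ and degree~$F_{m-1}$; since $L_{m-2} > F_{m-2}$, the final reordering yields $(F_{m-1};\, L_{m-2},\, F_{m-2}^{\times 11},\, L_{m-3}^{\times 2},\, \dots) = V_{n-2}^2$.

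Iterating this step down to $V_4^2$, five further standard Cremona moves on triples $(k, 1, 1)$ produce the sequence $(7;\, 6,\, 1^{\times 14}) \to (6;\, 5,\, 1^{\times 12}) \to \cdots \to (3;\, 2,\, 1^{\times 6}) \to (2;\, 1^{\times 5})$. The work is essentially bookkeeping, with the main subtleties being verifying that the reorderings go through (using $L_k > F_k$ and $F_{k+1} > L_k$ for $k \ge 1$, both immediate from Lemma~\ref{le:FLH}), and handling the base case $m = 3$ where the internal blocks $F_{m-2}^{\times 10},\, L_{m-3}^{\times 2}$ predicted by the inductive formula degenerate and merge with the tail~$F_1^{\times 13}$. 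A direct calculation confirms that the six-move procedure applied to $V_6^2 = (48;\, 41,\, 7^{\times 11},\, 6^{\times 2},\, 1^{\times 13})$ still terminates at $V_4^2 = (7;\, 6,\, 1^{\times 14})$, so no separate argument is needed at the bottom of the induction.
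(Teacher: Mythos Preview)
Your proof is correct and follows the same inductive strategy as the paper: reduce $V_n^2$ to $V_{n-2}^2$ by six Cremona moves and handle the base case by direct calculation. The paper's own proof is a sketch that either defers the six-move computation to the reader (via Lemma~\ref{le:FLH}) or verifies it numerically for $n=8,10$ and invokes Proposition~\ref{prop:proof}; you have instead carried out the direct argument in full, giving the closed form $\bigl((6-k)F_{m-1}+L_{m-2};\,(5-k)F_{m-1}+L_{m-2},\,F_{m-1}^{\times(11-2k)},\dots\bigr)$ after $k$ moves and checking the reorderings. A minor organizational difference is that you push the induction one step further down to the auxiliary vector $V_4^2=(7;6,1^{\times14})$ rather than treating $n=6$ itself as the base case, but this is equivalent.
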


\proof  
One shows by direct calculation that this holds when $n=6$.  
Therefore, by induction it suffices to show that the vector~$V_n^2$ is reduced to 
$V_{n-2}^2$ by 6~Cremona moves. 
By using the identities in Lemma~\ref{le:FLH}, it is not hard to prove directly that 
this reduction may be achieved by six standard Cremona moves.
Alternatively,
one can check numerically that this holds when $n=8$ and $10$, 
checking also that the reordering required is the same in both cases at each stage. 
Then it holds for all~$n$ by Proposition~\ref{prop:proof}. 
(Note that we only need to check two values since all identities are homogeneous and linear.)
\endproof

\subsubsection{Reducing $E(a_n)$ for odd $n$.}\labell{ss:fib.odd}

Throughout this section we denote $n = 2m+1$, where $m\ge 1$.
By Example~\ref{ex:an} it suffices to consider the case $n\ge 5$. Hence  this case of Theorem~\ref{thm:ladder} 
follows from Propositions~\ref{prop:st1o}, \ref{prop:st2o} and \ref{prop:st3o}.
 
We consider the numbers 
$$
u_k = f_{2n-2k-1}^2 + 2\, H_k,\;\; k = 0, \dots, m, \quad
v_k = f_{4(n-k)} - 2\,F_k,\;\;k = 1, \dots, m,
$$
as before. Again we have $v_m>0>v_{m+1}$ but now $u_m=L_m$.
Therefore
Lemma~\ref{le:expanan} takes the following form.
 
\begin{lemma}\labell{le:expanano}   
If $n=2m+1$, the following identities hold.

\begin{itemize}
\item[(i)]
$f_{2n+1}^2 = 6\,u_0+v_1$ and $v_1 < u_0$;
\item[(ii)]
$u_k = v_{k+1} + u_{k+1}$ and $u_{k+1} < v_{k+1}$ for $k = 0, \dots, m-1$;
\item[(iii)]
$v_k = 5\,u_k + v_{k+1}$ and $v_{k+1} < u_k$ for $k = 1, \dots, m-1$;
\item[(iv)]
$v_{m} = 3\,u_m+ F_m = 3\, L_m + F_{m}$ and $F_m<L_m$.
\end{itemize}
\end{lemma}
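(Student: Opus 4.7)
The plan is to mirror the proof of Lemma~\ref{le:expanan} nearly verbatim. The definitions of $u_k$ and $v_k$ do not involve the parity of $n$, and items (i)--(iii) amount to the same algebraic identities as in the even case; only item (iv) carries genuinely new content, because for odd $n = 2m+1$ the central block of the expansion of $a_n$ has multiplicity $v_m$ rather than $u_{m-1}$.

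For (i), the equality $f_{2n+1}^2 = 6 u_0 + v_1$ unfolds (using $H_0 = 0$ and $v_1 = f_{4n-4} - 2$) to the identity $f_{2n+1}^2 = 6 f_{2n-1}^2 + f_{4n-4} - 2$, which is the relation~\eqref{e:=62} already established in the proof of Lemma~\ref{le:expanan} via Proposition~\ref{prop:proof}; it holds for all $n$, in particular for $n$ odd. The equality in (ii) follows just as in the even case from the definitions of $u_k, v_k$, from equation~\eqref{eq:uk2}, and from $H_{k+1} = H_k + F_{k+1}$ (Lemma~\ref{le:FLH}(iii)). For (iii), one splits the equation into a $k$-part and a $k'$-part with $k' = n - k$: the $k$-part reduces to $F_{k+1} - F_k = L_k = 5 H_k + 1$ (Lemma~\ref{le:FLH}(i),(iv)), and the $k'$-part reduces to the same quadratic Fibonacci identity $f_{2s} - 5 f_{s-1}^2 + 2(-1)^s - f_{2s-4} = 0$ used in the even case, which is independent of the parity of $n$ and holds for all $s \ge 2$ by Proposition~\ref{prop:proof}.

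The heart of the proof is item (iv). I would first establish the new identity $u_m = L_m$. Using $n - m = m+1$ to unwind the definitions, this amounts to the quadratic Fibonacci identity
\[
3\, f_{2m+1}^2 + 2\, f_{2m}\, f_{2m+2} \,=\, f_{4m+1} + f_{4m+3},
\]
which again fits the template of Proposition~\ref{prop:proof} and is verified by evaluating at three small values of $m$ (e.g.\ $m = 1,2,3$). Granted $u_m = L_m$, the relation $v_m = 3\, u_m + F_m$ becomes $f_{4(m+1)} - 2 F_m = 3\, L_m + F_m$, i.e.\ $3\, F_{m+1} = 3\, L_m + 3\, F_m$, which is immediate from Lemma~\ref{le:FLH}(i). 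The strict inequalities then follow from the equalities in exactly the way described at the end of the proof of Lemma~\ref{le:expanan}: positivity of $u_k, v_k$ in the given ranges together with the equalities in (iii) forces the inequalities in (i) and (ii), while the equalities in (ii) force the inequalities in (iii); and $F_m < L_m$ follows from $L_m = 5 F_m + L_{m-1} > F_m$ via Lemma~\ref{le:FLH}(ii).

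The only non-routine step is identifying the quadratic Fibonacci identity $3 f_{2m+1}^2 + 2 f_{2m} f_{2m+2} = f_{4m+1} + f_{4m+3}$ that lies behind $u_m = L_m$ in item (iv). Once this is in hand, the argument is a direct transcription of the even-parity proof, and Proposition~\ref{prop:proof} eliminates any need for genuine algebraic manipulation.
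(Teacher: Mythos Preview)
Your proof is correct and follows the paper's approach: the equalities in (i)--(iii) carry over verbatim from the even case, and you spell out the verification of (iv) (via the identity $u_m=L_m$) that the paper merely says ``one then checks.'' One small slip in your final paragraph: your account of which equalities yield which inequalities has (ii) and (iii) interchanged, and in the odd case it is equality (iv) that supplies the last instance $u_m<v_m$ of inequality (ii) (while equality (ii), not (iii), gives inequality (i))---so the inequality bookkeeping is \emph{not} exactly as in Lemma~\ref{le:expanan}, though the argument is equally trivial once the equalities are in hand.
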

 
\begin{proof}    
The proofs of the equalities in~(i), (ii) and (iii) go through as before, 
since these are based on equalities that do not mention~$m$. 
(Note that the proof of the equality in~(ii) works when $k=m-1$, 
though the corresponding inequality failed for even~$n$.)   
One then checks~(iv). Then the inequality in~(ii) follows from the equalities in~(iii) and (iv), 
while the inequality in~(iii) holds by~(ii).
\end{proof}
 
As before, this lemma immediately gives the following result. 

\begin{proposition}\labell{prop:anodd}
If $n=2m+1$ is odd, the continued fraction expansion of~$a_n = \frac{f_{2n+1}^2}{f_{2n-1}^2}$ is 
$$
[ 6;\, \{1,\,5\}^{\times (m-1)},\, 1,
\, 3, \,
\{5, \, 1\}^{\times m} ],
$$
and the (renormalized) weight expansion~$W(a_n)$ is $(\HA_n, \HB_n)$, 
where  
\begin{eqnarray*}
\HA_n &:=& \bigl({u_0}^{\times6},\,{v_1}^{\times1}, {u_1}^{\times5},  \dots,{v_{m-1}}^{\times1},\, {u_{m-1}}^{\times5}, {v_{m}}^{\times1}  \bigr) , \\
\HB_n &:=& \bigl((u_m=L_m)^{\times3},  {F_{m}}^{\times5}, {L_{m-1}}^{\times1}, \dots, {F_1}^{\times5},
{L_0}^{\times1} \bigr).
\end{eqnarray*}
\end{proposition}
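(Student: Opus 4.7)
The plan is to mirror the proof of Proposition~\ref{prop:a2m}, with Lemma~\ref{le:expanano} substituted for Lemma~\ref{le:expanan}. Since $a_n = f_{2n+1}^2/f_{2n-1}^2$ and $b_{n-1} < a_n < b_n$, the weight expansion $\ww(a_n)$ begins $(1^{\times 6}, (a_n-6)^{\times 1}, \dots)$, so $W(a_n) = f_{2n-1}^2\,\ww(a_n)$ starts as $\bigl(u_0^{\times 6}, f_{2n+1}^2 - 6 u_0, \dots\bigr)$, whose seventh entry is $v_1$ by Lemma~\ref{le:expanano}~(i).

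First I would read off the outer blocks of $W(a_n)$ by iterating the rectangle/Euclidean-algorithm description of Lemma~\ref{le:ww}. Each of the alternating identities in Lemma~\ref{le:expanano} contributes exactly one block: the relation $u_k = v_{k+1} + u_{k+1}$ with $u_{k+1} < v_{k+1}$ from part~(ii) yields a block $v_{k+1}^{\times 1}$, while $v_k = 5 u_k + v_{k+1}$ with $v_{k+1} < u_k$ from part~(iii) yields a block $u_k^{\times 5}$. Running this alternation through $k = 1, \dots, m-1$ produces precisely the outer portion of $\HA_n$ and terminates with $v_m^{\times 1}$.

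The central step is the middle transition supplied by Lemma~\ref{le:expanano}~(iv): $v_m = 3 u_m + F_m$ with $F_m < u_m$, producing the block $u_m^{\times 3}$. To match the stated form of $\HB_n$ one must identify $u_m = L_m$; this follows from the Cassini identity $f_{2m+1}^2 = f_{2m} f_{2m+2} + 1 = 3 H_m + 1$ combined with Lemma~\ref{le:FLH}~(iv), which gives $u_m = f_{2m+1}^2 + 2 H_m = 5 H_m + 1 = L_m$. After this block the remaining subrectangle has sides $L_m$ and $F_m$, and the descending portion of the expansion is driven entirely by Lemma~\ref{le:FLH}: the identity $L_k = 5 F_k + L_{k-1}$ from~(ii) supplies each block $F_k^{\times 5}$, while $F_k = L_{k-1} + F_{k-1}$ from~(i) supplies each block $L_{k-1}^{\times 1}$. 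The inequalities $L_{k-1} < F_k$ and $F_{k-1} < L_{k-1}$ needed for the algorithm's validity follow immediately from these same two relations by induction, and the recursion terminates at $F_1 = L_0 = 1$. Reading off the block multiplicities then yields the continued fraction $[6; \{1,5\}^{\times(m-1)}, 1, 3, \{5,1\}^{\times m}]$.

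The whole argument is essentially routine once Lemmas~\ref{le:expanano} and~\ref{le:FLH} are in hand; the one delicate point is the central identification $u_m = L_m$, without which the transition between the $\HA_n$ and $\HB_n$ portions would not line up. The edge case $m = 1$ (where the pairs $(v_k^{\times 1}, u_k^{\times 5})$ are empty in the formula for $\HA_n$) is automatically handled by the same argument and reproduces $W(a_3) = (25^{\times 6}, 19, 6^{\times 3}, 1^{\times 6})$, consistent with Example~\ref{ex:an}.
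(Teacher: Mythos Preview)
Your proof is correct and follows essentially the same approach as the paper, which simply says ``As before, this lemma immediately gives the following result,'' referring to Lemma~\ref{le:expanano} and the template of Proposition~\ref{prop:a2m}. You have carefully unpacked that template, including the verification $u_m = L_m$ (which the paper states without proof just before Lemma~\ref{le:expanano}) and the explicit use of Lemma~\ref{le:FLH} for the $\HB_n$ portion.
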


The proof of Proposition~\ref{prop:st1} also goes through. 
In other words, each set of five Cremona moves takes
one of the $m-1$ pairs $\{1,5\}$ from the left to the right of the central blocks 
(which now have multiplicities $1,3$), while introducing no new weights on the right.
Since we start with~$m$ blocks on the right but only $(m-1)$ on the left, this means that one pair $\{5,1\}$ still remains on the right, though all the others become $\{10,2\}$.
The only other difference is in the interpretation of the first term of $V(n,m)$: 
when $n=2m+1$ and $k=m$ we have $2(n-k+1) = 2(m+2)$.
Thus we obtain:

\begin{prop}\labell{prop:st1o}
For $n =2m+1\ge 5$, 
the vector $\HV_n=\left(g_n g_{n+1}; W(a_n), 1 \right)$ is reduced by $5(m-1)$ Cremona moves to the vector
${\HV}\,\!_n^1 = \left( f_{2m+4}^2 + f_{2m-1}^2;\, \Hat A_n^1, \Hat B_n^1 \right)$,
where 
\begin{eqnarray*}
\Hat A_n^1 &:=&
\bigl(\, (u_{m-1}+F_{m-1})^{\times 1}, \, {u_{m-1}}^{\times 5},\,
{v_{m}}^{\times 1} \bigr),
\\
\Hat B_n^1 &:=&
\bigl(\,
{L_m}^{\times 3}, \,{F_{m}}^{\times 5}, 
\,{L_{m-1}}^{\times 1}, \, {F_{m-1}}^{\times 10},
\dots, 
\,{F_2}^{\times 10}, \,{L_1}^{\times 2}, 
\,{F_1}^{\times 10}, \, {L_0}^{\times 2}, \,1^{\times 1} 
\,\bigr).
\end{eqnarray*}
\end{prop}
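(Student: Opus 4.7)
The plan is to follow line-by-line the inductive five-step reduction used in Lemma~\ref{le:nk}, adapted to the odd-$n$ setting. I would introduce intermediate vectors $\HV(n,k)$ for $k = 1, \dots, m$ with $\HV(n,1) = \HV_n$ and $\HV(n,m) = \HV\,\!_n^1$, defined by
\begin{eqnarray*}
\HV(n,k) &:=& \bigl(\, f_{2(n-k+1)}^2 + f_{2k-1}^2;\, (u_{k-1}+F_{k-1})^{\times 1},\, u_{k-1}^{\times 5},\, v_k,\, u_k^{\times 5},\, \dots,\, u_{m-1}^{\times 5},\, v_m,  \\
&& \quad L_m^{\times 3},\, F_m^{\times 5},\, L_{m-1},\, \dots,\, F_k^{\times 5},\, L_{k-1},\, F_{k-1}^{\times 10},\, L_{k-2}^{\times 2},\, \dots,\, F_1^{\times 10},\, L_0^{\times 2},\, 1 \,\bigr).
\end{eqnarray*}
Thus $\HV(n,k+1)$ is obtained from $\HV(n,k)$ by replacing the first seven entries $(u_{k-1}+F_{k-1}, u_{k-1}^{\times 5}, v_k)$ by the single entry $u_k+F_k$ and by raising the multiplicity of $F_k$ by five and that of $L_{k-1}$ by one in the right block, while the leading coefficient updates from $f_{2(n-k+1)}^2 + f_{2k-1}^2$ to $f_{2(n-k)}^2 + f_{2k+1}^2$.

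The central claim is that $\HV(n,k)$ reduces to $\HV(n,k+1)$ by exactly five Cremona moves. Its proof would transcribe Steps~1--5 of Lemma~\ref{le:nk} verbatim. All identities checked in those steps---equations~\eqref{e:23}, \eqref{e:red4}, \eqref{e:red5}, and their companions---are quadratic Fibonacci identities indexed by $k$ and $k':=n-k$, so by Proposition~\ref{prop:proof} they hold universally, independently of the parity of~$n$. The descending order required at each step, ensuring that the Cremona transform acts on the three largest entries of the vector, is supplied by Lemma~\ref{le:expanano}: it gives the chain $u_0 > v_1 > u_1 > \dots > u_{m-1} > v_m > u_m = L_m > F_m > L_{m-1} > \dots$ (extended by Lemma~\ref{le:FLH}(i)--(ii) on the right tail). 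Iterating the reduction from $k=1$ to $k=m-1$ yields the required $5(m-1)$ Cremona moves taking $\HV_n$ to $\HV\,\!_n^1$.

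The main obstacle is bookkeeping rather than new arithmetic: one must confirm that the reduction steps never disturb the trailing entry $v_m$ on the left or the central triple $L_m^{\times 3}, F_m^{\times 5}, L_{m-1}$ on the right. This is clear from the structure of Steps~1--5, which act only on the coefficient and the first three entries of the reordered vector, while the new entries they produce have size $F_k$ or $L_{k-1}$ with $k \le m-1$, hence strictly less than $L_{m-1}$ by Lemma~\ref{le:FLH}. Consequently these new entries slot into the right block to the right of the central triple, and $v_m$---sitting beyond the five copies of $u_{m-1}$---is never within reach of a Cremona transform. With this iterative lemma in hand, Proposition~\ref{prop:st1o} follows by a direct induction on~$k$.
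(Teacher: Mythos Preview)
Your proposal is correct and follows essentially the same approach as the paper. The paper's own proof simply states that ``the proof of Proposition~\ref{prop:st1} also goes through,'' noting that the five-step reduction of Lemma~\ref{le:nk} carries over since all the Fibonacci identities involved are independent of the parity of~$n$, and pointing out the structural differences (the extra pair $\{5,1\}$ on the right and the interpretation $2(n-m+1)=2(m+2)$ of the leading coefficient); you have spelled this out in more detail than the paper itself.
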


Notice that the entries in $\HV_n^1$ still depend explicitly both on~$k$ and on $k'=n-k$
since $u_{m-1}$ and $v_{m-1}$ have this structure.
This means that there are entries in $\HV_n^1$ that do not occur anywhere in the reduction of $\HV_{n+2}$.
The next stage takes us to a vector that occurs in the reduction of all $\HV_{n+2i}$.  
For even~$n$, this stage consisted of four moves, but now it takes six moves.

\begin{prop}\labell{prop:st2o} 
When $n=2m+1\ge 5$, the vector $\HV_n^1$ is reduced by six Cremona moves to 
\begin{eqnarray*}
\HV_n^2 &:=& \bigl(\,L_m-F_m; 
({L_m-2F_m})^{\times 1}, {F_m}^{\times 7},
{L_{m-1}}^{\times 2}, {F_{m-1}}^{\times 10}, {L_{m-2}}^{\times 2}, \dots \bigr),
\end{eqnarray*}
where the terms including and after ${F_{m-1}}^{\times 10}$ in $\HV_n^2$ are the same 
as those in $\HB_n^1$.
\end{prop}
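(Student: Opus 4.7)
The plan is to verify by direct calculation that six \emph{standard} Cremona moves reduce $\HV_n^1$ to $\HV_n^2$. The strategy closely parallels the proof of Proposition~\ref{prop:st2}, but requires two extra moves because $\HB_n^1$ has the block $L_m^{\times 3}$ (rather than the $F_m^{\times 3}$ of the even case) and $\Hat A_n^1$ contains the additional entry $v_m$, neither of which is handled by the four moves that sufficed there.

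Before beginning the reduction, I would express all leading quantities in the basis $\{L_m, F_m, L_{m-1}, F_{m-1}\}$ using Lemma~\ref{le:expanano}~(iv) together with Lemma~\ref{le:FLH}. This gives $u_m = L_m$, $v_m = 3L_m + F_m$, $u_{m-1} = u_m + v_m = 4L_m + F_m$, $L_{m-1} = L_m - 5F_m$, and $F_{m-1} = 6F_m - L_m$. I would also establish the auxiliary identity $d^1 := f_{2m+4}^2 + f_{2m-1}^2 = 10L_m + 5F_m$, which, being quadratic in Fibonacci numbers, reduces to checking three small values of $m$ by Proposition~\ref{prop:proof}.

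With these formulas in hand the six moves can be carried out mechanically. The first acts on $(u_{m-1}+F_{m-1},\, u_{m-1},\, u_{m-1})$ and produces leading coefficient $2d^1 - 3u_{m-1} - F_{m-1} = 8L_m + 6F_m + L_{m-1}$ together with new entries $2L_m + 3F_m$ and $(3L_m - 3F_m)^{\times 2}$. The second acts on the three remaining $u_{m-1}$'s and produces leading coefficient $6L_m - F_m$ and three copies of $L_m - F_m$. The third acts on $(v_m,\, 2L_m + 3F_m,\, 3L_m - 3F_m)$, yielding leading $4L_m - 3F_m$ and new entries $L_m - F_m,\, F_m,\, L_{m-1}$. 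The fourth acts on $(3L_m - 3F_m,\, L_m,\, L_m)$, yielding leading $3(L_m - F_m)$ and new entries $2L_m - 3F_m,\, 0,\, 0$. The fifth acts on $(2L_m - 3F_m,\, L_m,\, L_m - F_m)$, yielding leading $2(L_m - F_m)$ and new entries $L_m - 2F_m,\, F_m,\, 0$. The sixth acts on the three remaining copies of $L_m - F_m$ and produces leading $L_m - F_m$ with three zeros; after reordering this is precisely $\HV_n^2$.

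The ordering assumptions needed after each step all reduce to the inequality $L_m > 3F_m$ (equivalently $L_{m-1} + 2F_m > 0$), which holds for all $m \ge 2$. The hard part is not the individual computations, each of which is routine, but finding the correct sequence of six moves: unlike the even case, there is no single identity like $2u_{m-1} = f_{4m+2} - F_{m-1}$ that dramatically collapses the argument after the first move, so the reduction proceeds in smaller increments and the entries must be tracked carefully. Once the sequence above is exhibited, however, every arithmetic identity arising has the quadratic form covered by Proposition~\ref{prop:proof} and may therefore be verified by checking any three small values of $m$, or deduced symbolically from Lemma~\ref{le:FLH}, which completes the proof.
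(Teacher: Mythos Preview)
Your proof is correct and follows essentially the same approach as the paper: both perform the identical sequence of six standard Cremona moves, and the intermediate vectors you list agree with the paper's (e.g.\ your $9L_m+F_m$ is the paper's $f_{4m+6}-F_{m-1}$, your $3L_m-3F_m$ is $f_{4m+3}+f_{4m-1}$, your $L_m-F_m$ is $f_{4m+1}$). The only difference is cosmetic: you work in the basis $\{L_m,F_m\}$ and verify the arithmetic symbolically via Lemma~\ref{le:FLH}, while the paper lists the results in terms of Fibonacci numbers $f_{4m+k}$ and appeals to Proposition~\ref{prop:proof}. One minor remark: your claim that ``all ordering assumptions reduce to $L_m>3F_m$'' also implicitly uses $L_m<6F_m$ (equivalently $F_{m-1}>0$) at the third move, but since the paper's Remark~\ref{rmk:order} explains there is no need to verify the reorderings, this is harmless.
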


\begin{proof}
Since the proof is much the same as that of Proposition~\ref{prop:st2}, 
we simply list the results of each Cremona move.  
Here are the results of the first four moves:
\begin{eqnarray*}
 &&\bigl(\,f_{4m+6} -F_{m-1} ; {u_{m-1}}^{\times 3}, v_m, f_{4m+3} + f_{4m-1}+F_{m-1}, (f_{4m+3} + f_{4m-1})^{\times 2}, {L_m}^{\times 3},\dots \bigr)\\
  &&\bigl(\,f_{4m+5} +L_{m-1} ;   v_m, f_{4m+3} + f_{4m-1}+F_{m-1},
   (f_{4m+3} + f_{4m-1})^{\times 2},{L_m}^{\times 3}, \\
  &&\hspace{4in}{f_{4m+1}}^{\times 3}, \dots \bigr)\\
  &&\bigl(\,f_{4m+4} -F_{m-1} ;   {f_{4m+3} + f_{4m-1}}, {L_m}^{\times 3}, {f_{4m+1}}^{\times 4}, F_m, L_{m-1}\dots \bigr)\\
  &&\bigl(\,f_{4m+3} + f_{4m-1};
{f_{4m+3} + f_{4m-1} - L_m},
{L_{m}}, {f_{4m+1}}^{\times 4}, F_m, L_{m-1}, \dots\bigr).
\end{eqnarray*} 
None of these moves uses up any of the terms 
of~$\HB_n^1$ after ${L_m}^{\times 3}$, 
though the multiplicity of $F_m, L_{m-1}$ is increased.
Hence after the entry for~$L_m$ we have simply listed the extra terms that get added to~$\HB_n^1$.  
Note also there is just one new number that appears after $L_m$, namely $f_{4m+1}=L_m-F_m$ 
which appears with multiplicity~$4$.   
Using the same conventions, the next move gives:
$$
\bigl(\,2 f_{4m+1} ;   {f_{4m+1}}^{\times 3}, f_{4m+1}- F_m,
   {F_m}^{\times 2}, L_{m-1}, \dots \bigr).
$$
The last move changes the first four terms to the single term  $f_{4m+1} = L_m-F_m$.
\end{proof}

\begin{prop}\labell{prop:st3o}  
For $n=2m+1 \ge 5$, 
the vector $\HV_n^2$ may be reduced to~$(2; 1^{\times 5})$ by Cremona moves.
\end{prop}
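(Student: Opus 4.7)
The plan is to mirror the proof strategy of Proposition~\ref{prop:st3}. First I would verify the base case $n = 5$ (i.e.\ $m = 2$) directly by exhibiting an explicit sequence of Cremona moves taking $\HV_5^2$ to $(2;1^{\times 5})$. Once the base case is in hand, an inductive argument reduces the task to showing that for every $m \ge 3$ there is a fixed (short) sequence of Cremona moves taking $\HV_n^2$ to $\HV_{n-2}^2$, where $n = 2m+1$.

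To construct the inductive step, I would look for roughly six Cremona moves (guided by the analogous six-step reduction in the even case) that peel off the \lq\lq top'' of the vector $\HV_n^2$, namely the initial block $\bigl(L_m-F_m;\, (L_m-2F_m)^{\times 1}, F_m^{\times 7}, L_{m-1}^{\times 2}\bigr)$, and leave behind precisely the vector $\HV_{n-2}^2$ with first coordinate $L_{m-1}-F_{m-1}$ and tail beginning $(L_{m-1}-2F_{m-1})^{\times 1}, F_{m-1}^{\times 7}, L_{m-2}^{\times 2}, F_{m-1}^{\text{extra}}, \dots$. Each individual Cremona transform is governed by identities among $F_k, L_k$ (for instance $L_k = F_{k+1}-F_k$, $L_k = 5H_k+1$, and the recursions in Lemma~\ref{le:FLH}), so every step comes down to a homogeneous linear identity in Fibonacci numbers.

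The key simplifying observation, following Remark~\ref{rmk:proof}, is this: all entries of $\HV_n^2$ are linear combinations of the numbers $F_{m-j}, L_{m-j}$, and the proposed sequence of Cremona moves only affects the first few entries, so the claim that a given sequence of moves takes $\HV_n^2$ to $\HV_{n-2}^2$ reduces to a finite collection of linear homogeneous identities among Fibonacci numbers indexed by~$m$. By the second part of Proposition~\ref{prop:proof}, each such identity needs to be verified for only two values of~$m$. I would therefore carry out the reduction numerically for $m = 3$ and $m = 4$ (i.e.\ $n = 7$ and $n = 9$), checking that the \emph{same} sequence of standard Cremona moves (including the same reorderings after each step) works in both cases and produces $\HV_{n-2}^2$. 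Proposition~\ref{prop:proof} then promotes this to a proof for all $m \ge 3$.

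The main obstacle will be identifying the correct six (or so) Cremona moves in the first place: unlike the even case, $\HV_n^2$ contains the anomalous entries $L_m-F_m$ as the degree and $L_m - 2F_m$ as the first multiplicity, so the early moves must interact with these entries before the reduction settles into a periodic pattern matching $\HV_{n-2}^2$. I expect the first two or three moves to be somewhat ad hoc, exploiting identities like $2(L_m-F_m) - (L_m-2F_m) - 2F_m = L_m - 2F_m$, before the pattern stabilizes. Once an explicit candidate sequence is found by inspection at $m=3$, the numerical check at $m=4$ plus Proposition~\ref{prop:proof} finishes the argument.
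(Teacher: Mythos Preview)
Your proposal is correct and essentially identical to the paper's approach: the paper's proof simply reads ``This is just the same as the proof of Proposition~\ref{prop:st3}'', meaning base case plus an induction reducing $\HV_n^2$ to $\HV_{n-2}^2$ by six Cremona moves, verified numerically for two values of~$m$ via Proposition~\ref{prop:proof} since all the relevant identities are homogeneous linear in Fibonacci numbers.
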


\begin{proof} 
This is just the same as the proof of Proposition~\ref{prop:st3}.
\end{proof}

\section{The interval $[\tau^4,7]$} \label{s:tau7}

This section is devoted to the 
calculation of $c(a) $ on the interval $[\tau^4,7]$, 
thus completing the proof of part (ii) of Theorem~\ref{thm:main}.
Proposition~\ref{prop:7easy} gives an easy argument that 
$c(a) = \frac{a+1}3$ when 
$a \in [6\frac{11}{12}, 7\frac 19]$.   
To prove that this holds on the whole interval $[\tau^4,7]$, 
we shall adapt the arithmetic approach that works for $a<\tau^4$ 
rather than using more analytical arguments as in the case $a>7$.     
We show in Proposition~\ref{p:fewpoints} that it suffices to restrict attention to 
some special points with relatively short continued fraction expansions that are related to 
the convergents of $\tau^4$, and then deal with these special points by largely arithmetic means.
The proof that $c(a) = \frac{a+1}3$ on $[\tau^4,7]$ is given at the end 
of \S\ref{ss:specpt}.

\subsection{Reduction to special points} \labell{ss:specpt}

As in Section~\ref{ss:basob}, 
given $a$ with weight expansion $\ww(a)$ 
and  $(d;\mm)\in \Ee$, we define $\eps$ by 
$\mm = \tfrac d{\sqrt a}\ww(a) + \eps$, and
denote $E := \sum \eps_i^2$. Also set $\la^2 := 1-E$.
Denote
\begin{equation*} 
y(a) := a+1-3\sqrt a .
\end{equation*}
Since $y(\tau^4)=0$ we have $y(a)>0 $ for $a>\tau^4$.
The first result extends Proposition~\ref{prop:obs}.

\begin{prop} \labell{prop:obs1} 
Let $a\in (\tau^4,7)$ and suppose that $(d;\mm) \in \Ee$ is such that
$\mu(d;\mm)(a) > \frac{a+1}3$. 
Then 
\begin{itemize}
\item[(i)]
$\displaystyle d < \frac{3 \sqrt a}{\sqrt{a^2-7a+1}}$;

\b
\item[(ii)]
$\displaystyle \gl^2 > \frac{2d^2y(a)}{3\sqrt a}$.
\end{itemize}
\end{prop}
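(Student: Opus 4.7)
The plan is to extract both bounds directly from the fundamental identities already collected in Proposition~\ref{prop:obs}, interpreting them in the presence of the sharper lower bound $\mu(d;\mm)(a) > \tfrac{a+1}{3}$.

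For part~(i), I would simply combine the hypothesis with the Cauchy--Schwarz bound in Proposition~\ref{prop:obs}(i), namely $\mu(d;\mm)(a) \le \sqrt{a}\sqrt{1+1/d^2}$. Squaring gives
$$\left(\tfrac{a+1}{3}\right)^{\!2} \;<\; a + \tfrac{a}{d^2},$$
which, after clearing denominators and noting $(a+1)^2 - 9a = a^2-7a+1$, becomes $d^2(a^2-7a+1) < 9a$. The quadratic $a^2-7a+1$ has roots $\tfrac{7\pm 3\sqrt 5}{2} = \tau^{\pm 4}$, so it is strictly positive on $(\tau^4,7)$. Dividing then yields~(i). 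This is the only place where the restriction $a>\tau^4$ is genuinely used.

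For part~(ii), the plan is to rewrite $\mu(d;\mm)(a)$ cleanly in terms of $\la^2 = 1-E$ using the decomposition~\eqref{eq:eps}, $\mm = \tfrac{d}{\sqrt a}\ww+\eps$. Since $\ww\cdot\ww = a$ by Lemma~\ref{le:ww},
$$\mu(d;\mm)(a) \;=\; \tfrac{\mm\cdot\ww}{d} \;=\; \sqrt a + \tfrac{\eps\cdot\ww}{d}.$$
On the other hand, the identity $\mm\cdot\mm=d^2+1$ from Proposition~\ref{prop:eek}, expanded using~\eqref{eq:eps}, reads
$$d^2+1 \;=\; d^2 + \tfrac{2d}{\sqrt a}\,\eps\cdot\ww + E,$$
so $\eps\cdot\ww = \tfrac{\sqrt a}{2d}(1-E) = \tfrac{\sqrt a}{2d}\la^2$. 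Substituting back gives
$$\mu(d;\mm)(a) \;=\; \sqrt a + \tfrac{\sqrt a}{2d^2}\,\la^2.$$
Since $\tfrac{a+1}{3} = \sqrt a + \tfrac{y(a)}{3}$ by definition of $y(a)$, the hypothesis $\mu(d;\mm)(a)>\tfrac{a+1}{3}$ is equivalent to $\tfrac{\sqrt a}{2d^2}\la^2 > \tfrac{y(a)}{3}$, which rearranges to the stated inequality $\la^2 > \tfrac{2d^2 y(a)}{3\sqrt a}$.

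Neither part poses any serious obstacle: both amount to algebraic manipulations of the identities already in hand. The only subtlety worth flagging is the positivity of $a^2-7a+1$ in~(i), which is precisely the algebraic reason the bound is only useful once $a$ exceeds $\tau^4$ --- in agreement with the fact that for $a=\tau^4$ one has $y(a)=0$ and the RHS of~(ii) degenerates.
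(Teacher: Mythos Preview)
Your proof is correct and follows essentially the same route as the paper: both parts use Proposition~\ref{prop:obs}(i) for the bound on $d$, and the expansion of $\mm\cdot\mm=d^2+1$ via~\eqref{eq:eps} to identify $\eps\cdot\ww=\tfrac{\sqrt a}{2d}\la^2$ for~(ii). Your write-up is in fact more explicit than the paper's, which leaves the algebra in~(i) and the final rearrangement in~(ii) to the reader.
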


\begin{proof} 
By Proposition~\ref{prop:obs}~(i) we have
$\frac{a+1}3 < \sqrt{a} \sqrt{1+1/d^2}$, proving~(i).

Since $\frac {a+1}3 <\mm\cdot\ww/d = \sqrt a + \eps\cdot\ww/d$, we have
$y(a)\:\! d/3 < \eps \cdot \ww$.
Further, 
$$
d^2+1 \,=\, \mm \cdot \mm \,=\, 
\tfrac{d^2}{a} a + \tfrac{2d}{\sqrt a} \ww(a) \cdot \eps + \eps \cdot \eps ,
$$
i.e., $\eps \cdot \ww (a) = \frac{\la^2\sqrt a}{2d}$, proving~(ii).
\end{proof}

The continued fraction expansion of $\tau^4=\frac{7+3\sqrt 5}2$ is $[6;1,5,1,5,\dots]$. 
For $k \ge 1$ define its $k$th convergent $c_k$ by
\begin{eqnarray*}
c_{2k-1} &:=& \bigl[ \,6;\{1,5\}^{\times (k-1)},1 \,\bigr] 
         ,=\, \bigl[ \,6;\{1,5\}^{\times (k-2)},1,6 \,\bigr] , \\
c_{2k}   &:=& \bigl[ \,6;\{1,5\}^{\times k} \,\bigr].
\end{eqnarray*}
Thus
\begin{gather} \labell{eq:ck}
c_1 = 
7,\;\; 
c_2 = 
6 \tfrac 56 = \tfrac{41}{6}, \;\;
c_3 = [6;1,5,1] = [6;1,6] = 6 \tfrac 67 = \tfrac {48}7 , \\ \notag
c_4 = [6;1,5,1,5] = 6 \tfrac{35}{41} = \tfrac{281}{41}, \;\; 
c_5 = [6;1,5,1,6] = 6 \tfrac{41}{48} = \tfrac {329}{48} ,
\end{gather}  
and more generally
$$
c_{2k} < c_{2k+2} < \tau^4 < c_{2k+1}< c_{2k-1}
\quad \mbox{ for all }\, k \ge 1 .
$$
Moreover, 
for $k \ge 1$ and $j \ge 1$ define the numbers $u_k(j), v_k(j) \in (c_{2k+1},c_{2k-1})$ by
\begin{eqnarray*}
u_k(j) &:=& \bigl[ \,6;\{1,5\}^{\times (k-1)},1,6,j \,\bigr], \\
v_k(j) &:=& \bigl[ \,6;\{1,5\}^{\times (k-1)},1,j \,\bigr] .
\end{eqnarray*}
As in Definition~\ref{def:Lucas}, 
we define for $k \ge 1$ the $k$th Lucas number 
$\ell_k = f_{k-1}+f_{k+1}$,
and set $F_k = \frac 13 f_{4k}$ and $L_k = \frac 13 \ell_{4k+2}$. 
Recall from Lemma~\ref{le:FLH} that for all $k\ge 0$,
\begin{equation} \label{FL:repeat}
F_{k+1} = L_k + F_k \quad \mbox{ and } \quad 
L_{k+1} = 5\,F_{k+1}+L_k 
\end{equation}
as well as
\begin{equation} \label{e:Fkkk}
F_{k+1}^2-F_{k}F_{k+2} \,=\,1 .
\end{equation}

\begin{lemma} \label{le:Fpq}
For all $k\ge 1$,

\m
\begin{itemize}
\item[(i)]
$\displaystyle c_{2k-1} = \frac{F_{k+1}}{F_k};  \qquad c_{2k} = \frac{L_{k+1}}{L_k}$;

\m
\item[(ii)]
$\displaystyle u_k(j) \,=\, \frac{F_{k+1} + jF_{k+2}}{F_k + jF_{k+1}} 
\;\mbox{ for all } \, j \ge 1$;

\m
\item[(iii)]
$\displaystyle v_k(j) \,=\, 
\frac{L_k + jF_{k+1} }{L_{k-1} + jF_k} \,=\,
\frac{(j-6)F_{k+1} + F_{k+2} }{(j-6)F_k + F_{k+1}}
\;\mbox{ for all } \,j \ge 1$.
\end{itemize}
\end{lemma}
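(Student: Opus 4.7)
My plan is to prove all three parts by induction on $k$, using the matrix identity \eqref{eq:matrix} to track convergents: if a continued fraction has penultimate convergent $p'/q'$ and ultimate convergent $p/q$, then appending a digit $a$ produces the new convergent $(ap+p')/(aq+q')$, which itself has preceding convergent $p/q$.

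I would begin with part (i). The base cases $c_1 = 7 = F_2/F_1$ and $c_2 = 41/6 = L_2/L_1$ are immediate from \eqref{eq:ck}. For the induction, assuming $c_{2k-1} = F_{k+1}/F_k$ with preceding convergent $c_{2k-2} = L_k/L_{k-1}$, appending $5$ gives numerator $5F_{k+1}+L_k = L_{k+1}$ and denominator $5F_k+L_{k-1} = L_k$, using the second identity of \eqref{FL:repeat}; so $c_{2k} = L_{k+1}/L_k$. Appending $1$ next produces numerator $L_{k+1}+F_{k+1} = F_{k+2}$ and denominator $L_k+F_k = F_{k+1}$, by the first identity of \eqref{FL:repeat}, closing the induction.

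Parts (ii) and (iii) will then follow as one-step applications of the same appending formula. For (ii), the expression $[6;\{1,5\}^{\times(k-1)},1,6]$ arises from $c_{2k-1}$ (ultimate convergent $F_{k+1}/F_k$, penultimate $L_k/L_{k-1}$) by appending $6$; its new convergent is $(6F_{k+1}+L_k)/(6F_k+L_{k-1})$, which equals $F_{k+2}/F_{k+1}$ via the identities $F_{k+2} = 6F_{k+1}+L_k$ and $F_{k+1} = 6F_k+L_{k-1}$ (both quick consequences of \eqref{FL:repeat}). Its preceding convergent is the original $F_{k+1}/F_k$, so appending $j$ yields $u_k(j) = (jF_{k+2}+F_{k+1})/(jF_{k+1}+F_k)$. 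For (iii), $v_k(j)$ comes from $c_{2k-1}$ by a single appending of $j$, giving the first form directly. The alternative form follows by substituting $L_k = F_{k+2}-6F_{k+1}$ and $L_{k-1} = F_{k+1}-6F_k$ into the numerator and denominator.

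I do not anticipate a real obstacle; the proof is essentially bookkeeping of Fibonacci--Lucas identities via \eqref{FL:repeat}. The only point warranting attention is that $[6;\{1,5\}^{\times(k-1)},1,6]$ and $[6;\{1,5\}^{\times k},1]$ are two distinct CF expansions of the same rational number $c_{2k+1}$, and the formula in (ii) depends on using the former expansion, whose penultimate truncation is $c_{2k-1}$ rather than $c_{2k}$---this is precisely what makes the coefficients line up cleanly as stated.
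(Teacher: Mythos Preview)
Your proposal is correct and follows essentially the same approach as the paper's proof: induction on $k$ using the convergent recursion $[\ell_0;\dots,\ell_{n-1},x]=(xp_{n-1}+p_{n-2})/(xq_{n-1}+q_{n-2})$ together with the identities \eqref{FL:repeat}. The paper phrases the inductive step as passing from $(c_{2k-1},c_{2k})$ to $(c_{2k+1},c_{2k+2})$ while you pass from $(c_{2k-2},c_{2k-1})$ to $(c_{2k},c_{2k+1})$, but this is only an index shift; your treatment of (ii) and (iii) is in fact more explicit than the paper's one-line references to (i) and \eqref{FL:repeat}.
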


\begin{proof} 
Recall that if $p_n/q_n$ is the $n$th convergent to
$[\ell_0; \ell_1, \dots, \ell_{N}]$ then, for any $n<N$ and 
any positive $x\in\R$ we have
\begin{equation} \labell{eq:pqn}
[\ell_0; \ell_1,\dots,\ell_{n-1},x] \,=\, 
\frac { p_{n-2} + x p_{n-1}}{q_{n-2} +x q_{n-1}}.
\end{equation}

(i) follows from induction on~$k$: 
The statement is true for~$k=1$. Assume it holds for~$k$.
By~\eqref{eq:pqn} with $x=1$, and by~\eqref{FL:repeat},
$$
c_{2k+1} \,=\, \frac{F_{k+1}+L_{k+1}}{F_k+L_k} \,=\, \frac{F_{k+2}}{F_{k+1}} .
$$
Then, by~\eqref{eq:pqn} with $x=5$, and by~\eqref{FL:repeat},
$$
c_{2k+2} \,=\, \frac{L_{k+1}+5F_{k+2}}{L_k+5F_{k+1}} \,=\, \frac{L_{k+2}}{L_{k+1}} .
$$

(ii) follows from (i)
by using $c_{2k+1} = [6;\{1,5\}^{\times (k-1)},1,6]$.

(iii) follows from (i) and \eqref{FL:repeat}.
\end{proof}

\begin{corollary}  \label{c:order}
For all $k \ge 1$, $s \ge 3$ and $t \ge 8$ we have
$$
c_{2k+1} < u_k(s+1) < u_k(s) < \dots < u_k(2) < u_k(1) = v_k(7) < v_k(t) < v_k(t+1) < c_{2k-1} .
$$
\end{corollary}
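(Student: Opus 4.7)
The plan is to read the strings of inequalities off the explicit rational functions of~$j$ provided by Lemma~\ref{le:Fpq} and use the single Fibonacci identity~\eqref{e:Fkkk}, that is $F_{k+1}^2 - F_k F_{k+2} = 1$.

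First I would verify the middle equality $u_k(1) = v_k(7)$. Setting $j=1$ in Lemma~\ref{le:Fpq}~(ii) gives $u_k(1) = \frac{F_{k+1}+F_{k+2}}{F_k+F_{k+1}}$, and setting $j=7$ in the second expression for $v_k$ in Lemma~\ref{le:Fpq}~(iii) gives $v_k(7) = \frac{F_{k+1}+F_{k+2}}{F_k+F_{k+1}}$, so the two agree.

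Next I would treat the chain on the $u_k$-side. Consider the M\"obius function $\varphi(x) = \frac{F_{k+1}+xF_{k+2}}{F_k+xF_{k+1}}$, so that $u_k(j) = \varphi(j)$. A direct computation gives
\[
\varphi'(x) \,=\, \frac{F_{k+2}F_k - F_{k+1}^2}{(F_k+xF_{k+1})^2} \,=\, \frac{-1}{(F_k+xF_{k+1})^2}
\]
by~\eqref{e:Fkkk}, so $\varphi$ is strictly decreasing on $[1,\infty)$. Hence $u_k(s+1) < u_k(s)$ for all $s \ge 1$, and moreover $u_k(j) > \lim_{x\to\infty}\varphi(x) = F_{k+2}/F_{k+1} = c_{2k+1}$, the last equality coming from Lemma~\ref{le:Fpq}~(i) applied with index $k+1$.

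For the $v_k$-side, set $\psi(x) = \frac{(x-6)F_{k+1}+F_{k+2}}{(x-6)F_k + F_{k+1}}$. The same calculation yields
\[
\psi'(x) \,=\, \frac{F_{k+1}^2 - F_{k+2}F_k}{((x-6)F_k+F_{k+1})^2} \,=\, \frac{1}{((x-6)F_k+F_{k+1})^2} \,>\, 0,
\]
so $\psi$ is strictly increasing on $[7,\infty)$. Thus $v_k(t) < v_k(t+1)$ for $t \ge 7$, and $v_k(j) < \lim_{x\to\infty}\psi(x) = F_{k+1}/F_k = c_{2k-1}$ by Lemma~\ref{le:Fpq}~(i). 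The only step that is not purely mechanical is recognizing that the Cassini-type identity~\eqref{e:Fkkk} is exactly what pins down the sign of each derivative; everything else is routine.
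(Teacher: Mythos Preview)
Your proof is correct and follows exactly the approach the paper intends: the paper's proof is the single line ``This follows from Lemma~\ref{le:Fpq} and identity~\eqref{e:Fkkk},'' and you have simply unpacked that line by writing the two M\"obius functions $\varphi,\psi$ explicitly and using $F_{k+1}^2 - F_kF_{k+2}=1$ to determine the sign of each derivative.
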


\proof  
This follows from Lemma~\ref{le:Fpq} and identity~\eqref{e:Fkkk}.
\proofend

The following corollary will be very useful.

\begin{corollary} \labell{co:hh} 
(i) 
Let $u = u_k(j) =: \frac pq$, where $j \ge 1$.
Then  
$$
q^2 \left( u^2-7u+1 \right) \,=\, j^2 + 7j + 1.
$$

\m
(ii)
Let $v = v_k(j) =: \frac pq$, where $j \ge 1$.
Then  
$$
q^2 \left( v^2-7v+1 \right) \,=\, j^2 - 5j- 5.
$$
\end{corollary}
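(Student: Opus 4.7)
The plan is to convert the claimed identities into polynomial identities in $j$ whose coefficients are symmetric polynomials in consecutive $F_i$, and then reduce those symmetric polynomials using two elementary identities for the sequence $F_k$. By Lemma~\ref{le:Fpq}, in case (i) we have $p = F_{k+1} + jF_{k+2}$ and $q = F_k + jF_{k+1}$, while in case (ii) we have $p = F_{k+2} + j'F_{k+1}$ and $q = F_{k+1} + j'F_k$ with $j' := j-6$. In both cases the quantity $q^2(u^2-7u+1) = p^2 - 7pq + q^2$ is a quadratic polynomial in $j$ (or $j'$) whose coefficients are built from $F_k, F_{k+1}, F_{k+2}$.

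The two identities I plan to use are both consequences of Lemma~\ref{le:FLH}. The first is the Cassini-type relation $F_{k+1}^2 - F_k F_{k+2} = 1$, which is exactly part (v) of that lemma. The second is the three-term recursion
\[
F_{k+2} + F_k = 7F_{k+1},
\]
which follows by combining parts (i) and (ii) of Lemma~\ref{le:FLH}: from $F_{k+2} = L_{k+1} + F_{k+1}$ and $L_{k+1} = 5F_{k+1} + L_k = 5F_{k+1} + (F_{k+1} - F_k) = 6F_{k+1} - F_k$.

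With these in hand, the computation is routine. For (i), expanding $p^2 - 7pq + q^2$ and collecting in powers of $j$ gives three coefficients
\[
C_0 = F_{k+1}^2 + F_k^2 - 7F_kF_{k+1}, \quad
C_1 = 2F_{k+1}F_{k+2} + 2F_kF_{k+1} - 7F_{k+1}^2 - 7F_kF_{k+2}, \quad
C_2 = F_{k+2}^2 + F_{k+1}^2 - 7F_{k+1}F_{k+2}.
\]
Using $7F_{k+1} = F_{k+2} + F_k$ one rewrites $7F_kF_{k+1} = F_kF_{k+2} + F_k^2$ and $7F_{k+1}F_{k+2} = F_{k+2}^2 + F_kF_{k+2}$, so $C_0 = C_2 = F_{k+1}^2 - F_kF_{k+2} = 1$; for $C_1$ one writes $2F_{k+1}(F_{k+2}+F_k) = 14 F_{k+1}^2$ and gets $C_1 = 7(F_{k+1}^2 - F_kF_{k+2}) = 7$. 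This yields $q^2(u^2-7u+1) = j^2 + 7j + 1$. For (ii) the same calculation in $j'$ (note the roles of $F_k$ and $F_{k+2}$ are swapped relative to (i), which is harmless since both reductions only use the two identities symmetrically) gives $q^2(v^2-7v+1) = j'^2 + 7j' + 1$; substituting $j' = j - 6$ gives $j^2 - 5j - 5$, as required.

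There is no real obstacle here: once the recursion $F_{k+2} + F_k = 7F_{k+1}$ is observed, everything reduces to Lemma~\ref{le:FLH}(v). Alternatively, one could invoke Proposition~\ref{prop:proof} to verify each of the three coefficient identities by checking at three small values of~$k$, which bypasses the symbolic manipulation entirely.
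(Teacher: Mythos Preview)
Your proof is correct. The paper takes a different, shorter route: it uses the first expression for $v_k(j)$ in Lemma~\ref{le:Fpq}(iii) (the one in terms of $L_k,L_{k-1}$), writes out $p^2-7pq+q^2$ as a polynomial identity in $F$'s and $L$'s for fixed~$j$, and then simply checks it for $k=1,2,3$ and invokes Proposition~\ref{prop:proof}. No symbolic manipulation is carried out.

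Your approach is more explicit and arguably more illuminating: by using the second expression for $v_k(j)$ in Lemma~\ref{le:Fpq}(iii) (the one in $F_k,F_{k+1},F_{k+2}$ with $j'=j-6$), you make part~(ii) literally the same computation as part~(i) up to swapping the roles of the constant and linear coefficients, and the substitution $j'=j-6$ then produces $j^2-5j-5$ directly. The key observation you isolate, namely the three-term recursion $F_{k+2}+F_k=7F_{k+1}$, is exactly what makes all three coefficients collapse to the Cassini identity $F_{k+1}^2-F_kF_{k+2}=1$. The paper's verification-by-three-values hides this structure; your argument exposes it. You correctly note at the end that Proposition~\ref{prop:proof} provides the alternative the paper actually uses.
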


\begin{proof} 
The proofs of (i) and (ii) are similar.
We prove~(ii).
In view of Lemma~\ref{le:Fpq}~(iii) we need to show
\begin{equation}  \label{e:LhF}
\left( L_k+j F_{k+1} \right)^2 - 7 \left( L_{k-1}+jF_k \right) \left( L_k+jF_{k+1} \right)
+ \left( L_{k-1}+j F_k\right)^2 \,=\,
j^2-5j-5 .
\end{equation}
Fix $j$. Identity~\eqref{e:LhF} is true for $k=1,2,3$.
It therefore holds for all~$k$ by Proposition~\ref{prop:proof}.
\end{proof}

\begin{defn}
We say that a point $a \in [\tau^4,7]$ is {\bf regular}
if for all $(d;\mm) \in \Ee$ with $\ell(\mm) = \ell(a)$
we have $\mu(d;\mm)(a) \le \frac{a+1}3$.
\end{defn}

\begin{proposition} \label{p:fewpoints}
Assume that all the points $c_{2k-1}$ and all the points
$$
u_k(j) \,\text{ with $k \ge 1$ and $j \ge 2$}
\quad \text{ and } \quad
v_k(j) \,\text{ with $k \ge 1$ and $j \ge 7$}
$$
are regular.
Then $c(a) = \frac{a+1}{3}$ on $[\tau^4,7]$.
\end{proposition}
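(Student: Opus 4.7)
The plan is to prove the lower bound $c(a)\ge(a+1)/3$ directly from the class $(3;2,1^{\times 6})\in\Ee$, and then to obtain the matching upper bound by contradiction: any obstructive class will be reduced via Lemma~\ref{le:I} to its central point $a_0$, which I then show must lie in $[\tau^4,7]$ and coincide with one of the listed regular points, contradicting the regularity hypothesis.

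For the lower bound, for $z\in[6,7]$ the first seven entries of $\ww(z)$ are $(1^{\times 6},z-6)$, so the class $(3;2,1^{\times 6})$ gives $\mu(3;2,1^{\times 6})(z)=\frac{1}{3}(2+5+(z-6))=(z+1)/3$; by Corollary~\ref{cor:wgt} this yields $c(a)\ge(a+1)/3$ on $[\tau^4,7]\subset[6,7]$. For the upper bound, I suppose by contradiction that some $(d;\mm)\in\Ee$ satisfies $\mu(d;\mm)(a)>(a+1)/3$ at a rational $a\in[\tau^4,7]$ (continuity of $c$ from Lemma~\ref{le:1} reduces to rational $a$). Let $I$ be the maximal open interval containing $a$ on which $\mu(d;\mm)>\sqrt{z}$, and $a_0\in I$ the unique point with $\ell(a_0)=\ell(\mm)$ from Lemma~\ref{le:I}; recall also that $\ell(z)\ge\ell(\mm)$ for every $z\in I$.

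I next transfer the obstruction to the central point, arguing that $\mu(d;\mm)(a_0)>(a_0+1)/3$ and $a_0\in[\tau^4,7]$. By Proposition~\ref{prop:mainc}, $\mu(d;\mm)$ is linear on each of the two half-intervals of $I$ flanking $a_0$, so $f(z):=\mu(d;\mm)(z)-(z+1)/3$ is piecewise linear with a single break at $a_0$. On $[\tau^4,\infty)$ we have $\sqrt{z}\le(z+1)/3$, so $f\le 0$ at every endpoint of $I$ lying in $[\tau^4,\infty)$; if an endpoint lies below $\tau^4$ then $f(\tau^4)=0$ on that side of $a_0$. A short case analysis on the sign of $a-a_0$ and on the position of $\partial I$ relative to $[\tau^4,7]$ then forces the linear function $f$ on the relevant half to have $f(a_0)>0$. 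For the location of $a_0$: if $a_0<\tau^4$, then $\mu(a_0)\ge\mu(a)>(a+1)/3\ge\tau^2$ contradicts the bound $c<\tau^2$ on $[1,\tau^4)$ from the Fibonacci stairs (Corollary~\ref{cor:ladder}); if $a_0>7$, then $\mu(a_0)>(a_0+1)/3$ exceeds $c(a_0)$, which stays below $(z+1)/3$ on $(7,\infty)$ by Proposition~\ref{prop:7easy} and Theorem~\ref{thm:main}(iii)--(iv).

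It remains to show $a_0$ is one of the listed regular points. Suppose not. By Corollary~\ref{c:order}, the listed points together with $\tau^4$ subdivide $(\tau^4,7]$ into closed intervals whose interiors are the gaps $(u_k(j+1),u_k(j))$ for $j\ge 1$ (using $u_k(1)=v_k(7)$) and $(v_k(j),v_k(j+1))$ for $j\ge 7$, glued at the odd convergents $c_{2k-1}$. Any rational strictly inside such a gap has a continued-fraction expansion that properly extends the common initial segment shared by its flanking regular points $s_L<a_0<s_R$, so $\ell(a_0)>\max\{\ell(s_L),\ell(s_R)\}$. The minimality in Lemma~\ref{le:I} then forces $s_L,s_R\notin I$, hence $I\subset(s_L,s_R)$. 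The final contradiction will combine this confinement with the linear formulas $d\mu(z)=A+Bz$ and $A'+B'z$ on the two halves of $I$ (Proposition~\ref{prop:mainc}, with $A<p$ and $B<(m+1)q$), the upper bound $d<3\sqrt{a_0}/\sqrt{a_0^2-7a_0+1}$ from Proposition~\ref{prop:obs1}(i), the explicit denominator identities $q^2(s^2-7s+1)=j^2+7j+1$ or $j^2-5j-5$ at $s\in\{u_k(j),v_k(j)\}$ from Corollary~\ref{co:hh}, and the boundary condition $\mu=\sqrt{z}$ at $\partial I$: these together yield an arithmetic inequality that cannot be satisfied by any non-special rational $a_0$. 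Thus $a_0$ is a listed regular point, and regularity contradicts $\mu(d;\mm)(a_0)>(a_0+1)/3$. The hardest step is this final arithmetic one: the double-sequence $\{c_{2k-1}\}\cup\{u_k(j)\}_{j\ge 2}\cup\{v_k(j)\}_{j\ge 7}$ is designed precisely to exhaust the relevant rationals in $[\tau^4,7]$.
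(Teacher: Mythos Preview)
Your argument has a genuine gap at the final and decisive step. You correctly locate the central point $a_0$ in a gap $(s_L,s_R)$ between adjacent listed points and correctly deduce $I\subset(s_L,s_R)$ from Lemma~\ref{le:I}, but you then merely assert that the bounds from Propositions~\ref{prop:mainc} and~\ref{prop:obs1} together with Corollary~\ref{co:hh} ``yield an arithmetic inequality that cannot be satisfied,'' without ever deriving such an inequality. Nothing you have written rules out a tent-shaped $\mu$ that rises above $(a_0+1)/3$ at $a_0$ and falls back to $\sqrt{z}$ strictly before reaching either $s_L$ or $s_R$; the ingredients you list do not obviously combine to exclude this. Your closing remark that the listed points are ``designed precisely to exhaust the relevant rationals'' is also misleading: they form a discrete set, and the central point of an obstructive class can perfectly well be a non-listed rational.

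The paper's proof takes a different route that avoids this difficulty. Rather than starting from an arbitrary obstructive class, it works with the graph of $c$ itself and sets $a_0=\max S_+$, the largest convex non-smooth point of $c$ on $(\tau^4,7)$. By Corollary~\ref{cor:fin} there is then a class $(d;\mm)$ with $\mu=c$ on $[a_0,a_0+\eps]$, and since $c$ is nondecreasing one obtains $\beta'\ge 0$, i.e.\ $\mu$ is nondecreasing to the right of $a_0$. The key arithmetic input---which you never invoke---is Lemma~\ref{le:zigzag}: for adjacent listed points $u_-<u_+$ one has $(u_-+1)/3>\sqrt{u_+}$. Combining this with $\mu(a_0)>(a_0+1)/3>(u_-+1)/3>\sqrt{u_+}$ and the monotonicity $\beta'\ge 0$ forces $u_+\in I$, which contradicts the length minimality at $a_0$. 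Thus the contradiction comes from showing that $I$ must \emph{contain} a listed point, the opposite of your confinement.

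Two smaller issues: your inequality $\mu(a_0)\ge\mu(a)$ in the case $a_0<\tau^4$ is unjustified (you have established no monotonicity of $\mu$ at that stage), and your appeal to Theorem~\ref{thm:main}(iii)--(iv) to rule out $a_0>7$ forward-references results proved later in the paper.
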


A main ingredient in the proof will be the following

\begin{lemma} \label{le:zigzag} 
Consider the functions $\varphi(a) := \frac{a+1}{3}$ and $\psi(a) := \sqrt{a}$.
Fix $k \ge 1$. Then
\begin{itemize}
\item[(i)]
$\varphi \bigl(u_k(j+1)\bigr) > \psi \bigl(u_k(j) \bigr)$ for all $j \ge 1$;

\s
\item[(ii)]
$\varphi \bigl(v_k(j) \bigr) > \psi \bigl(v_k(j+1) \bigr)$ for all $j \ge 7$.
\end{itemize}
\end{lemma}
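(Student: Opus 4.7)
The first observation is the identity
$$
\varphi(a)^2-\psi(a)^2 \,=\, \frac{(a+1)^2-9a}{9} \,=\, \frac{a^2-7a+1}{9},
$$
which vanishes exactly at $a=\tau^4$. Combined with Corollary~\ref{co:hh}, this gives, for $u_k(j)=p/q$ and $v_k(j)=p/q$ in the forms of Lemma~\ref{le:Fpq}, the clean expressions
$$
\varphi(u_k(j))^2 \,=\, u_k(j) + \frac{j^2+7j+1}{9q^2}, \qquad
\varphi(v_k(j))^2 \,=\, v_k(j) + \frac{j^2-5j-5}{9q^2}.
$$
Since $\varphi,\psi>0$, it suffices to prove the squared inequalities. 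For (i) this is $u_k(j+1) + \tfrac{(j+1)^2+7(j+1)+1}{9q_{j+1}^2} > u_k(j)$, and for (ii) it is $v_k(j) + \tfrac{j^2-5j-5}{9q_j^2} > v_k(j+1)$, where $q_j$ denotes the denominator of the relevant convergent.

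The second step is to compute the gaps between consecutive convergents. Using the formulas of Lemma~\ref{le:Fpq} and the key identity $F_{k+1}^2-F_k F_{k+2}=1$ from~\eqref{e:Fkkk}, a direct determinant computation yields
$$
u_k(j) - u_k(j+1) \,=\, \frac{1}{q_j\,q_{j+1}}, \qquad
v_k(j+1) - v_k(j) \,=\, \frac{1}{q_j\,q_{j+1}},
$$
with $q_j = F_k+jF_{k+1}$ in the first case and $q_j = L_{k-1}+jF_k$ in the second. The second equality uses the auxiliary identity $L_{k-1}F_{k+1} - L_k F_k = 1$, which follows immediately upon substituting $L_k = F_{k+1}-F_k$ (a consequence of~\eqref{FL:repeat}) and invoking~\eqref{e:Fkkk} shifted to~$k-1$.

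Putting the two steps together, part~(i) reduces to $(j^2+9j+9)\,q_j > 9\,q_{j+1} = 9(q_j+F_{k+1})$, i.e.\ $(j^2+9j)\,q_j > 9\,F_{k+1}$; since $q_j \ge j F_{k+1}$, one gets $(j^2+9j)\,q_j \ge (j^3+9j^2)\,F_{k+1} \ge 10\,F_{k+1}$ for $j\ge 1$. Part~(ii) reduces to $(j^2-5j-5)\,q_{j+1} > 9\,q_j$; using $q_{j+1}>q_j$, this follows from $j^2-5j-5 \ge 9$, equivalently $(j-7)(j+2)\ge 0$, which is precisely where the hypothesis $j\ge 7$ enters. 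At $j=7$ one has the tight equality $j^2-5j-5=9$, and the strict inequality $q_{j+1}>q_j$ closes the gap.

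There is no substantial obstacle in this approach; everything is routine arithmetic once one has the ``room'' expression from Corollary~\ref{co:hh} and the ``gap'' identity from~\eqref{e:Fkkk}. The only subtle point is the tight threshold $j\ge 7$ in~(ii), which arises because $j^2-5j-5$ first reaches the critical value~$9$ exactly at $j=7$, matching the constant produced by the squaring trick at $a=\tau^4$; this is ultimately the arithmetic shadow of the Fibonacci stairs terminating at $\tau^4$.
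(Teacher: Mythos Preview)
Your proof is correct and follows essentially the same approach as the paper's: both use Corollary~\ref{co:hh} to rewrite $\varphi^2$, compute the gap $u_k(j)-u_k(j+1)=\frac{1}{q_jq_{j+1}}$ (resp.\ $v_k(j+1)-v_k(j)$) via the determinant identity~\eqref{e:Fkkk}, and reduce to an elementary polynomial inequality in~$j$. The only differences are cosmetic---the paper squares $\frac{u+1}{3}>\sqrt{u'}$ directly and finishes~(i) by bounding $9q_{j+1}/q_j<18<19\le j^2+9j+9$, while you rearrange to $(j^2+9j)q_j>9F_{k+1}$; for~(ii) the paper simply says ``similar (but slightly easier),'' and your argument fills that in correctly.
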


\proof
(i)
Abbreviate $u := u_k(j+1)$, $u'= u_k(j)$.
We need to show that $\frac{u+1}3 > \sqrt{u'}$, i.e.,
\begin{equation} \label{e:u9.2}
u^2+2u+1 \,>\, 9u' .
\end{equation}
Recall from Lemma~\ref{le:Fpq}~(iii) that
\begin{equation}  \label{e:uvh}
u = \frac{(j+1)F_{k+2}+F_{k+1}}{(j+1)F_{k+1}+F_{k}}, 
\qquad
u' = \frac{jF_{k+2}+F_{k+1}}{jF_{k+1}+F_{k}} .
\end{equation}
In particular, the denominator of $u$ is $q := (j+1)F_{k+1}+F_{k}$.
Applying Corollary~\ref{co:hh}~(i) to~$u$ we therefore find
$$
u^2 \,=\, \frac{(j+1)^2+7(j+1)+1}{q^2} +7u-1 
$$
and so \eqref{e:u9.2} is equivalent to
\begin{equation} \label{e:9vu1}
9(u'-u)q^2 \,<\, (j+1)^2+7(j+1)+1 .
\end{equation}
Using \eqref{e:uvh} and $F_{k+1}^2-F_kF_{k+2} = 1$ we compute
$$
u'-u \,=\, \frac{1}{\bigl((j+1)F_{k+1}+F_k \bigr) \bigl( jF_{k+1}+F_k \bigr)} . 
$$
Inequality~\eqref{e:9vu1} therefore becomes
\begin{equation} \label{e:9h1}
9 \,\frac{(j+1)F_{k+1}+F_k}{jF_{k+1}+F_k} \,<\, (j+1)^2+7(j+1)+1 .
\end{equation}
For all $j \ge 1$ the second factor on the LHS is $<2$ and the RHS is $\ge 19$,
and so~\eqref{e:9h1} holds for all $j \ge 1$. 

The proof of (ii) is similar (but slightly easier).
\proofend

\m
\ni
{\it Proof of Proposition~\ref{p:fewpoints}:}
Assume that $c(a) \le \frac{a+1}3$ does not hold on $[\tau^4,7]$.
Since $c(a) \ge \frac{a+1}3 > \sqrt{a}$ on
$(\tau^4,7]$,
Corollary~\ref{cor:fin} shows that $c(a)$ is piecewise linear 
on $(\tau^4,7]$.
Let~$S \subset (\tau^4,7)$ be the set of non-smooth points of~$c$ 
on $(\tau^4,7)$.
This set decomposes as $S = S_+ \cup S_-$,
where $S_+$ (resp.~$S_-$) consists of those $s \in S$ near which~$c$ is convex (resp.\ concave).
Note that for $s \in S_+$ we have $c(s) > \frac{s+1}3$.
By Proposition~\ref{prop:7easy}, $c(a)= \frac{a+1}3$ for 
$a \in [6 \frac{11}{12}, 7]$, and so 
the biggest point of~$S$ is in $S_-$.
This and $c(\tau^4) = \frac{\tau^4+1}3$ imply that the set~$S_+$ is non-empty.
Let $a_0= \max S_+$. Then $a_0 \in ( \tau^4,7 )$.
By Corollary~\ref{cor:fin}~(i) there exists $(d;\mm) \in \Ee$ and $\eps >0$
such that 
\begin{equation} \label{e:mua3}
c(z) = \mu (d;\mm)(z) \quad \mbox{ on }\; [ a_0,a_0+\eps ] .
\end{equation}
Abbreviate $\mu (z) := \mu(d;\mm)(z)$.
By~\eqref{e:mua3}, $\mu(a_0) = c(a_0) > \frac{a_0+1}3 > \sqrt{a_0}$.
Let $I$ be the maximal open interval containing $a_0$ such that 
$\mu(z) > \sqrt z$ for all $z \in I$.
By Lemma~\ref{le:I}, there exists a unique $a' \in I$ with $\ell (\mm)=\ell(a')$,
and $\ell(\mm) < \ell(z)$ for all other $z \in I$.
Further, by Proposition~\ref{prop:mainc},
the constraint $\mu(z)$ is given by two linear functions on~$I$:
$$
\mu(z) \,=\,
\left\{
\begin{array}{ll}
\alpha + \beta z & \mbox{ if }\;  z<a',\; z\in I, \\
\alpha'+ \beta'z & \mbox{ if }\;  z>a',\; z\in I,
\end{array}
\right.
$$
that is, $a'$ is the only non-smooth point of $\mu$ on~$I$.
By~\eqref{e:mua3}, and since $a_0 \in S_+$ and $\mu \le c$,
the point $a_0 \in I$ is also a non-smooth point of $\mu$, 
and hence $a'=a_0$.
Now~\eqref{e:mua3} and the fact that~$c$ is 
nondecreasing 
show that $\beta' \ge 0$.

Let $k \ge 1$ be such that $a_0 \in [ c_{2k+1},c_{2k-1} ]$.
Since $c_{2k+1}$ and $c_{2k-1}$ are regular by assumption, 
we have $a_0 \in (c_{2k+1},c_{2k-1} )$.
Note that
$u_k(j) \to c_{2k+1}$ and $v_k(j) \to c_{2k-1}$ as $j \to \infty$.
Let $u_-,u_+$ be the two neighboring points from the sequence 
$$
\cdots < u_k(s+1) < u_k(s) < \cdots < u_k(2) < u_k(1) = v_k(7) < v_k(t) < v_k(t+1) < \cdots  
$$ 
from Corollary~\ref{c:order}~(ii) with $a_0 \in [u_-,u_+]$.
Since $u_-$ and $u_+$ are regular by assumption, we have $a_0 \in ( u_-,u_+ )$.
Then $\mu(a_0) > \frac{a_0+1}3 > \frac{u_-+1}3 = \varphi(u_-) > \psi(u_+) = \sqrt{u_+}$ 
by Lemma~\ref{le:zigzag}.
Since $\beta' \ge 0$, it follows that $\mu(u_+) \ge \mu(a_0) > \sqrt{u_+}$, and hence $u_+ \in I$,
and hence $\ell(u_+) > \ell(a_0)$.
However, $\ell(z) > \ell(u_-)$ and $\ell(z) > \ell(u_+)$ for all $z \in (u_-,u_+)$;
in particular, $\ell(a_0) > \ell(u_+)$, a contradiction.
\proofend

As we see in the next two lemmas, one can prove that most of the points 
$u_k(j)$ and $v_k(j)$ are regular by direct arguments.

\begin{lemma} \label{le:ah}
The points $u_k(j)$ with $k \ge 1$ and $j \ge 2$ are regular.
\end{lemma}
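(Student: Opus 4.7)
We argue by contradiction: suppose $(d;\mm) \in \Ee$ has $\ell(\mm) = \ell(u_k(j)) =: M$ and $\mu(d;\mm)(u) > \frac{u+1}{3}$, where $u = u_k(j) =: p/q$ in lowest terms. Since $\mu(3;2,1^{\times 6})(u) = \frac{u+1}{3}$ exactly, our $(d;\mm)$ is distinct from the class $(3;2,1^{\times 6})$ padded by zeros up to length $M$; positivity of intersections (Proposition~\ref{prop:eek}(ii)) therefore yields
\[
2m_1 + m_2 + \cdots + m_7 \;\le\; 3d.
\]

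Two quantitative tools drive the argument. First, combining $\mu(d;\mm)(u) > \frac{u+1}{3}$ with Proposition~\ref{prop:obs}(i) and the identity $q^2(u^2-7u+1) = j^2+7j+1$ from Corollary~\ref{co:hh}(i) yields
\[
d^2 \;<\; \frac{9u\,q^2}{j^2+7j+1};
\]
since $j \ge 2$ forces $j^2+7j+1 \ge 19$, this is a stringent bound on~$d$. Second, Proposition~\ref{prop:mainc} applies (as $\ell(\mm) = \ell(u)$) and provides integers $A < p$, $B < (m+1)q$ with $m := m_M$, such that $d\,\mu(d;\mm)(z) = A+Bz$ on an interval ending at~$u$ from the left. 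Clearing denominators, $\mu(d;\mm)(u) > \frac{u+1}{3}$ becomes the Diophantine condition
\[
3(Aq + Bp) \;\ge\; d(p+q) + 1.
\]

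To close the argument we sharpen our knowledge of the structure of $\mm$. Lemma~\ref{le:atmost1} forces the entries $m_i$ to be essentially constant on each block of $\ww(u)$, so that $A$ and $B$ are determined, up to small controlled errors, by the block values together with the coefficients $\alpha_j^u$, $\beta_j^u$ introduced in Section~\ref{ss:wt}; using the identities of Corollary~\ref{cor:mirror2} this pins down $A$ and $B$ tightly. Combined with the intersection bound applied to the first blocks of $\ww(u)$ (the six $1$'s followed by the singleton $u - 6$) and the upper bound on~$d$, the Diophantine condition above becomes inconsistent. The main obstacle lies in carrying out this case analysis uniformly in $k$ and $j$: the Fibonacci-type identities of Lemma~\ref{le:Fpq} (notably $F_{k+1}^2 - F_k F_{k+2} = 1$) control how $p$, $q$, and the weight coefficients depend on $k$, while the slack factor $j^2+7j+1 \ge 19$ is precisely what rules out solutions for $j \ge 2$. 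We anticipate the analysis reducing to finitely many extremal cases indexed by the value of $m = m_M$, each eliminated by direct arithmetic.
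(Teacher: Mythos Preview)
Your proposal is not a proof: after correctly obtaining the bound $d/(q\sqrt u) < 3/\sqrt{j^2+7j+1}$, you set up the machinery of Proposition~\ref{prop:mainc} and a Diophantine inequality in $A$, $B$, but you never carry out the analysis. The closing paragraph (``We anticipate the analysis reducing to finitely many extremal cases\ldots'') is a plan, not an argument, and there is no indication how the intersection inequality $2m_1+m_2+\cdots+m_7\le 3d$ and the constraints on $A$, $B$ would combine to force a contradiction uniformly in $k$ and $j$.

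More importantly, you are missing the simple observation that makes this lemma short. The continued fraction of $u_k(j)$ ends with a block of length $j$, so the last $j$ weights of $\ww(u)$ are all $1/q$. Since $\ell(\mm)=\ell(u)$, each of the corresponding $m_i$ is at least $1$, hence each error satisfies $\eps_i \ge 1 - d/(q\sqrt u) > 0$ (positivity coming from your own bound, since $3/\sqrt{j^2+7j+1}<1$ for $j\ge 2$). Therefore
\[
E \;\ge\; j\Bigl(1-\tfrac{d}{q\sqrt u}\Bigr)^2 \;>\; j\Bigl(1-\tfrac{3}{\sqrt{j^2+7j+1}}\Bigr)^2,
\]
and this already exceeds $1$ for $j\ge 3$, contradicting Proposition~\ref{prop:obs}(iii). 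Only the case $j=2$ needs a little more: there one feeds the sharper estimate $E<1-\tfrac{2d^2 y(u)}{3\sqrt u}$ from Proposition~\ref{prop:obs1}(ii) into the same inequality and reduces to a quadratic discriminant check. The detour through $A$, $B$ and Corollary~\ref{cor:mirror2} is unnecessary here.
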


\proof
Abbreviate $u := u_k(j)$ and $\frac pq := u$ in lowest terms.
Assume that $(d;\mm) \in \Ee$ is such that $\mu(d;\mm)(u) > \frac{u+1}3$ and $\ell(u)=\ell(\mm)$.
By Proposition~\ref{prop:obs1}~(i) and Corollary~\ref{co:hh}~(i) we can estimate
$$
\frac{d}{q \sqrt u} \,<\, \frac{3}{q \sqrt{u^2-7u+1}} \,=\, \frac{3}{\sqrt{j^2+7j+1}},
$$
an estimate independent of~$k$.
Note that $\frac{3}{\sqrt{j^2+7j+1}} < 1$ for $j \ge 2$.
Since $\ell(u)=\ell(\mm)$, we have $m_i \ge 1$ for all~$i$.
Therefore, 
$$
E \,\ge\, j \left( 1-\frac d{q\sqrt u} \right)^2 \,>\, j \left( 1- \frac{3}{\sqrt{j^2+7j+1}} \right) \,=:\, s(j) . 
$$ 
The function $s(j)$ is increasing in~$j$, and $s(3)>1$,
proving the lemma for $j \ge 3$ and all $k \ge 1$.

Assume now that $j=2$. In this case, $s(j) < 1$. 
We therefore need to use the better estimate 
$E = 1-\gl^2 < 1- \frac{2d^2y(u)}{3 \sqrt u}$
from Proposition~\ref{prop:obs1}~(ii).
With this estimate we have
\begin{eqnarray*}
0 \,=\, 
E+\gl^2-1 &>& 
2 \left( 1-\frac{d}{q\sqrt u} \right)^2 + \frac 23 \frac{d^2y(u)}{\sqrt u}-1 \\
&=& 
\left( \frac{2}{q^2u} + \frac 23 \frac{y(u)}{\sqrt u} \right) d^2 + \left( -\frac{4}{q \sqrt u}\right) d +1
\,=:\, f(d) .
\end{eqnarray*}
We need to show that $f(d) \ge 0$.
Since $f$ is a quadratic polynomial in~$d$, this holds if its discriminant is negative,
$$
\frac{16}{q^2u} \,<\, 4 \left( \frac{2}{q^2u}+\frac 23 \frac{y(u)}{\sqrt u} \right) .
$$
Multiplying by $\frac u8$ and using $y(u)=u+1-3\sqrt u$, this is equivalent to
$$
\frac{1}{q^2}+u \,<\, \frac{u+1}3 \sqrt u .
$$
Taking squares, replacing $u$ by $\frac pq$, and multiplying by $9q^4$,
this becomes
\begin{equation} \label{ine:-9}
0 \,<\, -9 + p^3 q - 7 p^2 q^2 + p\;\! q \left(-18 + q^2\right) .
\end{equation}
Recall now from Lemma~\ref{le:Fpq}~(ii) that $p = 2 F_{k+2}+ F_{k+1}$ and $q = 2 F_{k+1}+ F_k$.
Since $p\;\!q > 9$ for all~$k \ge 1$, \eqref{ine:-9} follows from the identities
\begin{equation*} 
1 \,=\,  p^2 - 7 p \;\!q + \left(-18 + q^2\right) ,
\end{equation*}
which hold true for $k=1,2,3$ and hence for all~$k$ by Proposition~\ref{prop:proof}.
\proofend

\begin{lemma} \label{le:bh}
The points $v_k(j)$ with $k \ge 1$ and $j \ge 8$ are regular.
\end{lemma}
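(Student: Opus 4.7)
The plan is to adapt the strategy of Lemma~\ref{le:ah}. Write $v := v_k(j) =: p/q$ in lowest terms and assume for contradiction that $(d;\mm)\in\Ee$ satisfies $\ell(\mm)=\ell(v)$ and $\mu(d;\mm)(v) > \tfrac{v+1}{3}$. Combining Proposition~\ref{prop:obs1}~(i) with Corollary~\ref{co:hh}~(ii) gives
$$
\tfrac{d}{q\sqrt v} \,<\, \tfrac{3}{q\sqrt{v^2-7v+1}} \,=\, \tfrac{3}{\sqrt{j^2-5j-5}},
$$
which is strictly less than~$1$ for $j\ge 8$ (since then $j^2-5j-5\ge 19$). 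Because the continued-fraction expansion of $v_k(j)$ ends in~$j$, the last $j$ entries of $\ww(v)$ all equal~$1/q$, and $\ell(\mm)=\ell(v)$ forces $m_i\ge 1$ at each such position; the corresponding errors therefore satisfy $\eps_i\ge 1-d/(q\sqrt v)>0$, so
$$
E \,\ge\, j\Bigl(1-\tfrac{d}{q\sqrt v}\Bigr)^2 \,>\, j\Bigl(1-\tfrac{3}{\sqrt{j^2-5j-5}}\Bigr)^2 \,=:\, s(j).
$$

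For $j\ge 9$ the function~$s$ is increasing (since $j\mapsto j^2-5j-5$ is), and $s(9)=9(1-3/\sqrt{31})^2>1$; this contradicts Proposition~\ref{prop:obs}~(iii) and disposes of all $j\ge 9$. The real obstacle is the borderline case $j=8$, where $s(8)<1$ and the naive argument falls just short. Here I would run the refined strategy used for $j=2$ in Lemma~\ref{le:ah}: combine the identity $E+\la^2=1$ with Proposition~\ref{prop:obs1}~(ii) to deduce that the quadratic
$$
f(d) \,:=\, 8\Bigl(1-\tfrac{d}{q\sqrt v}\Bigr)^2 + \tfrac{2d^2\, y(v)}{3\sqrt v} - 1
$$
satisfies $f(d)<0$, and then derive a contradiction by showing that $f(d)\ge 0$ for every real~$d$.

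Because $f$ is a quadratic in~$d$ with positive leading coefficient, this last step reduces to a discriminant computation. Clearing denominators, adding $3\sqrt v$, squaring (both sides are positive since $v>\tau^4$), and invoking the identity $q^2(v^2-7v+1)=j^2-5j-5=19$ from Corollary~\ref{co:hh}~(ii) with $j=8$, the condition simplifies to a lower bound of the form $q^2 v\ge C$ with~$C$ a small explicit rational. Since $v\ge v_1(8)=\tfrac{62}{9}$ and $q\ge 9$ for every $v_k(8)$, this is trivially satisfied, finishing the proof.
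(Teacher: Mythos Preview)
Your argument is correct and in fact more careful than the paper's printed proof at the borderline value $j=8$. The paper follows exactly your strategy but defines $t(j):=j\bigl(1-3/\sqrt{j^2-5j-5}\bigr)$ \emph{without} the square and then asserts $t(8)>1$; as you noticed, the inequality $E\ge j\bigl(1-d/(q\sqrt v)\bigr)^2$ only yields the squared version, for which $s(8)\approx 0.78<1$, so $j=8$ genuinely needs the extra step. Your discriminant argument, borrowed from the $j=2$ case of Lemma~\ref{le:ah}, does the job: carrying it through, non-negativity of $f$ reduces (after squaring and using $q^2(v^2-7v+1)=19$) to $q^2v\ge 144/427$, which is immediate since $q\ge 9$ and $v>\tau^4$.

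Two small expository slips, neither of which affects the conclusion. First, the sequence $v_k(8)$ \emph{decreases} towards $\tau^4$ as $k\to\infty$, so one has $v\le v_1(8)=62/9$, not $v\ge v_1(8)$; but $v>\tau^4$ and $q\ge 9$ already give $q^2v>500$, which is far more than needed. Second, the intermediate manipulation is ``add $3$ and multiply by $v$'' rather than ``add $3\sqrt v$''; the resulting inequality $\tfrac{12}{7q^2}+3v\le (v+1)\sqrt v$ is what one then squares.
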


\proof
Abbreviate $v := v_k(j)$ and $\frac pq := v$ in lowest terms.
Assume that $(d;\mm) \in \Ee$ is such that $\mu(d;\mm)(v) > \frac{v+1}3$ and $\ell(\mm)=\ell(v)$.
Again, by Proposition~\ref{prop:obs1}~(i) and Corollary~\ref{co:hh}~(ii),
$$
\frac{d}{q \sqrt v} \,<\, \frac{3}{q \sqrt{v^2-7v+1}} \,=\, \frac{3}{\sqrt{j^2-5j-5}},
$$
independent of~$k$.
Note that $\frac{3}{\sqrt{j^2-5j-5}} < 1$ for $j \ge 8$.
Since $\ell(v)=\ell(\mm)$, we have $m_i \ge 1$ for all~$i$.
Therefore, 
$$
E \,\ge\, j \left( 1-\frac d{q\sqrt v} \right)^2 \,>\, j \left( 1- \frac{3}{\sqrt{j^2-5j-5}} \right) \,=:\, t(j) . 
$$ 
The function $t(j)$ is increasing on $\{ j \ge 8 \}$, and $t(8) > 1$,
whence the lemma follows.
\proofend

For $k \ge 1$ and $i \ge 0$ we set 
\begin{equation} \label{def:bki}
b_k(i) \,:=\, v_k(1+3i) \,=\, 
\bigl[\, 6;\{1,5\}^{\times (k-1)}, 1, 1+3i \,\bigr] .
\end{equation}
Hence $b_k(2)=v_k(7),\, b_k(3)=v_k(10), \, \dots$. 
\begin{lemma} \label{le:ek}
$c(a) = \frac{a+1}3$ for all $a = b_k(i) $, $k \ge 1$, $i \ge 2$.
In particular, the points $v_k(7)$ are regular for all $k \ge 1$.
\end{lemma}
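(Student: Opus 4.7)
The plan is to exploit the nearly perfect exceptional classes $E(b_k(i)) \in \Ee$ produced by Proposition~\ref{prop:bkiE} together with positivity of intersections. The lower bound $c(b_k(i)) \ge \frac{b_k(i)+1}{3}$ is immediate from the element $(3;2,1^{\times 6}) \in \Ee$: as in the proof of Proposition~\ref{prop:7easy}, at any $a\in(6,7)$ this class supplies the constraint $\mu(3;2,1^{\times 6})(a) = \frac{a+1}{3}$, and every $b_k(i)$ lies in $(\tau^4,7)\subset(6,7)$.

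For the matching upper bound, write $b_k(i) = p/q$ in lowest terms and let $E(b_k(i)) = (D_k(i);\MM_k(i)) \in \Ee$, so that by the ``nearly perfect'' property of Proposition~\ref{prop:bkiE} we have $\MM_k(i) = q\,\ww(b_k(i)) + \Delta$ with a small correction $\Delta$ supported on the final weight block. If there were a class $(d;\mm) \in \Ee$ distinct from $E(b_k(i))$ with $\mu(d;\mm)(b_k(i)) > \frac{b_k(i)+1}{3}$, then positivity of intersections (Proposition~\ref{prop:eek}(ii)) would give
\[
dD_k(i) \;\ge\; \mm\cdot\MM_k(i) \;=\; qd\,\mu(d;\mm)(b_k(i)) + \mm\cdot\Delta,
\]
so $\mu(d;\mm)(b_k(i)) \le \frac{D_k(i)}{q} - \frac{\mm\cdot\Delta}{dq}$. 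The Diophantine identities of Proposition~\ref{prop:eek}(i) force $\frac{D_k(i)}{q}$ to be very close to $\sqrt{b_k(i)}$, and since $b_k(i) > \tau^4$ one has $\sqrt{b_k(i)} < \frac{b_k(i)+1}{3}$ with strictly positive gap $\frac{y(b_k(i))}{3}$, where $y(a) = a+1-3\sqrt a$.

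The residual $\frac{|\mm\cdot\Delta|}{dq}$ is controlled because $\Delta$ is confined to a single weight block of known size, $\mm$ is nearly constant on each block by Lemma~\ref{le:atmost1}, and the squared error $\sum\eps_i^2 < 1$ from Proposition~\ref{prop:obs}(iii) limits how far $\mm$ may deviate from a multiple of $\ww(b_k(i))$; a careful bookkeeping (refined by Proposition~\ref{prop:obs1}(ii)) shows that the residual is strictly less than the gap $y(b_k(i))/3$, producing the contradiction. The ``in particular'' claim about regularity of $v_k(7)=b_k(2)$ then follows at once: the equality $c(v_k(7)) = \frac{v_k(7)+1}{3}$ just established forces $\mu(d;\mm)(v_k(7)) \le \frac{v_k(7)+1}{3}$ for every $(d;\mm) \in \Ee$, and \emph{a fortiori} for those with $\ell(\mm) = \ell(v_k(7))$. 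The main obstacle is the final quantitative matching of the residual against the gap $y(b_k(i))/3$, which is tightest when $i=2$ and $b_k(i)$ is nearest the Fibonacci limit $\tau^4$ where $y$ vanishes; this step requires the explicit structure of $E(b_k(i))$ supplied by Proposition~\ref{prop:bkiE} together with the Fibonacci--Lucas identities of \S\ref{ss:fib.id}.
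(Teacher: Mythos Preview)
Your overall strategy is right---lower bound from $(3;2,1^{\times 6})$, upper bound from positivity of intersections against $E(b_k(i))$---but you have missed the two simple observations that make the upper bound immediate, and instead proposed a quantitative estimate that you do not carry out and which would be unnecessarily hard.

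First, by the very definition of $E(b_k(i))=(d_k(i);\mm_k(i))$ in the paper one has $d_k(i)=q\cdot\frac{1+b_k(i)}{3}$ \emph{exactly}. So $D_k(i)/q$ is not ``very close to $\sqrt{b_k(i)}$''; it equals $\frac{1+b_k(i)}{3}$ on the nose. Second, the correction $\Delta=\mm_k(i)-q\,\ww(b_k(i))$ is supported on the last block and has the form $(i-1,0,\dots,0,-1,\dots,-1)$ with $i-1$ trailing $-1$'s; since any $(d;\mm)\in\Ee$ has $\mm$ nonincreasing, $\mm\cdot\Delta=(i-1)m_{P+1}-\sum_{j}m_j\ge 0$ by monotonicity alone. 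Hence for $(d;\mm)\neq E(b_k(i))$,
\[
d\cdot q\,\tfrac{1+b_k(i)}{3}=d\,d_k(i)\;\ge\;\mm\cdot\mm_k(i)\;\ge\; q\,\mm\cdot\ww(b_k(i))=q\,d\,\mu(d;\mm)(b_k(i)),
\]
giving $\mu(d;\mm)(b_k(i))\le\frac{1+b_k(i)}{3}$ with no error analysis, no gap $y(b_k(i))/3$, and no Fibonacci--Lucas identities. (For the class $E(b_k(i))$ itself one computes $\mu=\frac{3b_k(i)}{1+b_k(i)}<\sqrt{b_k(i)}$ since $b_k(i)>\tau^4$, which you should also check.)

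Your proposed route---bounding $|\mm\cdot\Delta|/(dq)$ by the gap $y(b_k(i))/3$ via Proposition~\ref{prop:obs1}(ii)---is precisely the kind of estimate that becomes delicate as $b_k(i)\to\tau^4$ (where $y\to 0$), and you never actually perform it; labeling it ``careful bookkeeping'' is a gap, not a proof. The paper's argument (Corollary~\ref{c:42}) avoids this entirely by using the sign of $\mm\cdot\Delta$ rather than its size.
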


The proof is postponed to Corollary~\ref{c:42} in the next subsection.  
It uses the existence of special (nearly perfect) elements of~$\Ee$, 
rather than the estimates of Proposition~\ref{prop:obs1}.

\b
\NI
{\bf Proof of Theorem~\ref{thm:main} part (ii).}\,\,
By Proposition~\ref{p:fewpoints} it suffices to show that 
for all $k\ge 1$ the points $c_{2k-1}$, 
$u_k(j)$, $j \ge 2$, and $v_k(j)$, $j \ge 7$, are regular.  
Regularity holds for $u_k(j)$, $j \ge 2$, by Lemma~\ref{le:ah}
and for $v_k(j)$, $j \ge 8$, by Lemma~\ref{le:bh}. 
Moreover, when~$a$ belongs to the subsequence $b_k(i)$, $i \ge 2$, of the $v_k(j)$,
then Lemma~\ref{le:ek} makes the stronger statement that 
$c(a) = \frac{a+1}3$.  This holds in particular when $a = v_k(7) = b_k(2)$.  
Hence these points are regular.
Further, because the sequence $\bigl( b_k(i) \bigr)_{i \ge 2}$ converges to $c_{2k-1}$ 
as $i \to \infty$,
the continuity of~$c$ implies that $c(a) = \frac{a+1}3$ also at $a = c_{2k-1}$.  
Hence these points are also regular, which completes the proof.
\proofend

\subsection{The classes $E \bigl( b_k(i) \bigr)$}

Recall from Section~\ref{ss:JL} that for $n \ge 0$ the points $b_n = \frac{g_{n+2}}{g_n} < \tau^4$
are the break points of the Fibonacci stairs.
The next result shows their relation to the numbers $b_k(i)$, $k\ge 1$, $i\ge 0$,  
defined by~\eqref{def:bki}.

\begin{lemma} \label{le:b=b}
$b_{2k} = b_k(0)$ and $b_{2k+1} = b_k(1)$ for all $k \ge 1$.
\end{lemma}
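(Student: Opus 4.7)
The plan is to compute $b_k(0) = v_k(1)$ and $b_k(1) = v_k(4)$ using the closed form in Lemma~\ref{le:Fpq}~(iii), and compare them with $b_n = \frac{g_{n+2}}{g_n} = \frac{f_{2n+3}}{f_{2n-1}}$ from Section~\ref{ss:JL}. For $n=2k$ and $n=2k+1$ this gives
$$
b_{2k} \,=\, \frac{f_{4k+3}}{f_{4k-1}}, \qquad b_{2k+1} \,=\, \frac{f_{4k+5}}{f_{4k+1}}.
$$
On the other hand, Lemma~\ref{le:Fpq}~(iii) yields
$v_k(1) = \frac{L_k + F_{k+1}}{L_{k-1} + F_k}$ and $v_k(4) = \frac{L_k + 4F_{k+1}}{L_{k-1} + 4F_k}$.
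Hence the lemma reduces to establishing the four identities
$$
L_{k-1}+F_k = f_{4k-1}, \quad L_k+F_{k+1} = f_{4k+3}, \quad L_{k-1}+4F_k = f_{4k+1}, \quad L_k+4F_{k+1} = f_{4k+5}
$$
for $k \ge 1$.

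Next, I would observe that all four follow from the single identity
\begin{equation*}
L_k+F_{k+1} \,=\, f_{4k+3}, \qquad k\ge 0.
\end{equation*}
Indeed, the first follows by shifting $k \mapsto k-1$, and the third and fourth are obtained from the first two by adding $3F_k = f_{4k}$ and $3F_{k+1}=f_{4k+4}$ respectively and using the Fibonacci recurrence $f_{4k-1}+f_{4k}=f_{4k+1}$ and $f_{4k+3}+f_{4k+4}=f_{4k+5}$.

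Finally, the identity $L_k+F_{k+1} = f_{4k+3}$ is a direct computation from Definition~\ref{def:Lucas}: since $3L_k = \ell_{4k+2} = f_{4k+1}+f_{4k+3}$ and $3F_{k+1} = f_{4k+4}$,
$$
3(L_k+F_{k+1}) \,=\, f_{4k+1}+f_{4k+3}+f_{4k+4} \,=\, 2f_{4k+3}+(f_{4k+1}+f_{4k+2}) \,=\, 3f_{4k+3},
$$
where I used $f_{4k+4}=f_{4k+3}+f_{4k+2}$ and $f_{4k+2}+f_{4k+1}=f_{4k+3}$. (Alternatively, the identity is linear and homogeneous in the Fibonacci numbers, so Proposition~\ref{prop:proof} lets one check it for just two values of $k$.) There is no genuine obstacle here beyond this bit of Fibonacci bookkeeping; everything else is immediate from the formulas already established.
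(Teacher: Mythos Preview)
Your proof is correct and follows essentially the same approach as the paper: both compute $v_k(1)$ and $v_k(4)$ via Lemma~\ref{le:Fpq}~(iii) and reduce the claim to the identities $L_k+F_{k+1}=g_{2k+2}$ and $L_k+4F_{k+1}=g_{2k+3}$. The only difference is that the paper verifies these two identities by invoking Proposition~\ref{prop:proof} (check two small values of~$k$), whereas you reduce all four needed identities to the single one $L_k+F_{k+1}=f_{4k+3}$ and prove it directly from the definitions and the Fibonacci recurrence; this is a minor stylistic variation, not a different argument.
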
 

\proof
Using Proposition~\ref{prop:proof} 
and Definition~\ref{def:Lucas} we see that
$L_k+F_{k+1} = g_{2k+2}$ and $L_k+4F_{k+1} = g_{2k+3}$ for all $k \ge 0$.
Together with Lemma~\ref{le:Fpq}~(iii) we conclude that
\begin{eqnarray*}
b_k(0) &=& v_k(1) \;=\; \frac{L_k+ F_{k+1}\phantom{4}}{ L_{k-1}+ F_k } \;=\; \frac{g_{2k+2}}{g_{2k}} \;=\; b_{2k}, \\
b_k(1) &=& v_k(4) \;=\; \frac{L_k+4F_{k+1}}{ L_{k-1}+4F_k } \;=\; \frac{g_{2k+3}}{g_{2k+1}} \;=\; b_{2k+1},
\end{eqnarray*}
as required.
\proofend

The lemma says that the sequence $\bigl( b_n \bigr)$, $n \ge 2$,
extends to a double sequence $\bigl( b_k(i) \bigr)$, $k \ge 1$, $i \ge 0$,
where for each $k\ge 1$ the sequence $\bigl( b_k(i) \bigr)$, $i \ge 0$,
emanates from the pair $\bigl( b_{2k}, b_{2k+1} \bigr) = \bigl( b_k(0), b_k(1) \bigr)$.

\b
Recall from Section~\ref{ss:JL} that  $E(b_n) := \bigl( g_{n+1}; W(b_n) \bigr)$,
where $W(b_n) := g_n\, \ww(b_n)$.
In order to prove Lemma~\ref{le:ek}, 
we associate classes $E \bigl( b_k(i) \bigr)$
to all $b_k(i)$ as follows.
Let $b_k(i) =: \frac pq$.  
Let $\mm_k(i)$ be the tuple obtained from $q\, \ww \bigl(b_k(i) \bigr)$ 
by replacing its last block~$\left( 1^{\times (1+3i)} \right)$ 
by $\left( i,1^{\times (1+2i)} \right)$, 
and set $d_k(i) := q \bigl(1+b_k(i) \bigr)/3$.
Then define 
$E \bigl( b_k(i) \bigr) := \bigl( d_k(i); \mm_k(i) \bigr)$.

\s
Note that for $i=0,1$ we have (with $n=2k+i$) 
that $\mm_k(i) = g_n \,\ww \bigl( b_k(i) \bigr) = W(b_n)$ and,
by~\eqref{re:g1},
$d_k(i) = g_n \bigl( 1+ \frac{g_{n+2}}{g_n} \bigr)/3 = \left( g_n+g_{n+2} \right) = g_{n+1}$.
Therefore, 
\begin{equation}  \label{e:E=E}
E \bigl( b_k(0) \bigr) = E \bigl( b_{2k} \bigr),
\qquad
E \bigl( b_k(1) \bigr) = E \bigl( b_{2k+1} \bigr) \quad 
\mbox{ for all }\, k \ge 1.
\end{equation}
\begin{proposition} \label{prop:bkiE}
$E \bigl( b_k(i) \bigr) \in \Ee$ \;for all\, $k \ge 1$ and $i \ge 0$.
\end{proposition}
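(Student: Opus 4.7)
By Proposition~\ref{prop:eek}(iii), to show $E(b_k(i)) \in \Ee$ I would verify the Diophantine identities~\eqref{eq:ee} and then reduce $E(b_k(i))$ via standard Cremona moves to a class already known to lie in~$\Ee$. The Diophantine identities come out cleanly. Writing $b_k(i) = p/q$ in lowest terms, Lemma~\ref{le:ww} applied to $W := q\,\ww(b_k(i))$ gives $\sum_j W_j = q(b_k(i)+1)-1$ and $\sum_j W_j^2 = pq$. The modification from $W$ to $\mm_k(i)$ replaces the last block $1^{\times(1+3i)}$ by $(i,1^{\times(1+2i)})$, which preserves the linear sum and changes the quadratic sum by exactly $i^2-i$. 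Thus $\sum_j (\mm_k(i))_j = 3d_k(i)-1$ follows directly from $d_k(i) = q(b_k(i)+1)/3$, while $\sum_j (\mm_k(i))_j^2 = d_k(i)^2+1$ rearranges into $p^2-7pq+q^2 = 9(i^2-i-1)$, which is Corollary~\ref{co:hh}(ii) with $j=1+3i$.

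For the Cremona reduction I would induct on~$k$. For the inductive step ($k \ge 2$), the claim is that a fixed sequence of five standard Cremona moves reduces $E(b_k(i))$ to $E(b_{k-1}(i))$. Concretely, the first two moves act on the initial block $q^{\times 6}$ of $\mm_k(i)$ and produce new small entries whose values are certain Fibonacci-like combinations attached to~$b_k(i)$; the next three moves strip three successive head entries, leaving a vector whose head is precisely the initial block $(q')^{\times 6}$ of $\mm_{k-1}(i)$ (where $q'$ is the denominator of $b_{k-1}(i)$), followed by the unchanged tail, which coincides with the tail of $\mm_{k-1}(i)$ because the continued fraction expansions of $b_k(i)$ and $b_{k-1}(i)$ differ by exactly one initial $\{1,5\}$ block. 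The base case $k=1$ is $E(b_1(i)) = \bigl(5+8i;\,(2+3i)^{\times 6},\,1+3i,\,i,\,1^{\times(1+2i)}\bigr)$. When $i=0$ this is the known class $(5;2^{\times 6},1^{\times 2}) \in \Ee_8$ of Lemma~\ref{le:eekfin} (the extra zero entry is vacuous); and for $i \ge 1$ one verifies directly that four standard Cremona moves bring $E(b_1(i))$ to $\bigl(1+2i;\,1+i,\,i^{\times 3},\,1^{\times(1+2i)}\bigr)$, which for $i=1$ is already $(3;2,1^{\times 6})\in \Ee_7$, and for $i\ge 2$ reduces in one more move to the self-similar class $\bigl(1+i;\,i,\,1^{\times(2+2i)}\bigr)$; the one-step recursion $(k;\,k-1,\,1^{\times 2k}) \mapsto (k-1;\,k-2,\,1^{\times(2k-2)})$ then takes it in finitely many moves to $(3;2,1^{\times 6})\in \Ee_7$.

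The main obstacle lies in executing the inductive step uniformly in~$k$. The entries of $\mm_k(i)$ are built from the Fibonacci-like sequences $F_j, L_j$ of Definition~\ref{def:Lucas} via Lemma~\ref{le:Fpq}, and the entries produced at each Cremona transform are quadratic combinations of these quantities that must be re-identified as corresponding entries of $\mm_{k-1}(i)$. Fortunately, every identity arising in the five-move cycle is homogeneous and quadratic in the $F_j, L_j$, so by Proposition~\ref{prop:proof} its verification reduces to checking two or three small values of~$k$; the essential algebraic input is given by the recursions $F_{k+1} = L_k + F_k$ and $L_{k+1} = 5F_{k+1}+L_k$ together with the Cassini-type identity $F_{k+1}^2 - F_k F_{k+2} = 1$ from Lemma~\ref{le:FLH}. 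Uniformity in~$i$ is automatic because the adjustment distinguishing $\mm_k(i)$ from the perfect weight vector $q\,\ww(b_k(i))$ affects only the last block, which is untouched during the five moves of the inductive step.
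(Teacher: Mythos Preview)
Your proposal is correct and follows essentially the same route as the paper: verify the Diophantine identities via Lemma~\ref{le:ww} and Corollary~\ref{co:hh}(ii), then induct on~$k$, with the base case $k=1$ handled by explicit reduction and the inductive step accomplished by a fixed block of five Cremona moves whose validity is certified by Proposition~\ref{prop:proof} at small values of~$k$. One small correction: the identities arising in the five-move cycle are \emph{linear} (not quadratic) in the $F_j,L_j$, since the entries of $\mm_k(i)$ in~\eqref{e:mkj} depend linearly on these numbers; this is why the paper checks only $k=1,2$ in Lemma~\ref{le:red}.
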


Before proving Proposition~\ref{prop:bkiE}, 
we show that it is the key to completing the calculation of $c$ on the interval $[1,7]$.

\begin{cor} \labell{c:41}  
Part (i) of Theorem~\ref{thm:ladder} holds.  
\end{cor}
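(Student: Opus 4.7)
The plan is to deduce Theorem \ref{thm:ladder}(i) as an essentially immediate consequence of Proposition~\ref{prop:bkiE}, together with the identifications recorded in equation~\eqref{e:E=E}. Since Lemma~\ref{le:b=b} identifies $b_{2k}=b_k(0)$ and $b_{2k+1}=b_k(1)$, and since the discussion preceding~\eqref{e:E=E} checks that the definitions of $\mm_k(i)$ and $d_k(i)$ for $i=0,1$ reduce exactly to $W(b_n)$ and $g_{n+1}$ respectively (using the recursion $g_{n+2}+g_n=3g_{n+1}$ of~\eqref{re:g1}), every class $E(b_n)$ with $n\ge 2$ appears as some $E(b_k(i))$ with $i\in\{0,1\}$.

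First I would make this identification explicit: for $n=2k\ge 2$, equation~\eqref{e:E=E} gives $E(b_n)=E(b_k(0))$, while for $n=2k+1\ge 3$ it gives $E(b_n)=E(b_k(1))$. In both cases Proposition~\ref{prop:bkiE} applies and yields $E(b_n)\in\Ee$.

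It then remains only to handle the two base cases $n=0$ and $n=1$ not covered by the range $k\ge 1$. For $n=0$ we have $b_0=2$, $\ww(2)=(1,1)$, $g_0=1$ and $g_1=1$, so $E(b_0)=(1;1,1)$; for $n=1$ we have $b_1=5$, $\ww(5)=(1^{\times 5})$, $g_1=1$ and $g_2=2$, so $E(b_1)=(2;1^{\times 5})$. Both of these tuples appear in the explicit list of elements of $\Ee_M$ for $M\le 8$ given by Lemma~\ref{le:eekfin}, and so lie in $\Ee$.

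There is no real obstacle here; the content is entirely in Proposition~\ref{prop:bkiE}. The only point to watch is bookkeeping: one must verify that the ``replace last block'' prescription defining $\mm_k(i)$ does collapse to the plain normalized weight vector $W(b_n)$ when $i=0$ (the last block has length $1$, so the replacement is trivial) and when $i=1$ (the last block $(1^{\times 4})$ is replaced by $(1,1^{\times 3})$, which is the same tuple), and that $d_k(i)$ coincides with $g_{n+1}$ via the Fibonacci recursion~\eqref{re:g1}. These verifications are routine and are in fact already implicit in the paragraph introducing~\eqref{e:E=E}.
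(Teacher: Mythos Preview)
Your proposal is correct and follows the same route as the paper: both deduce the result from Proposition~\ref{prop:bkiE} via the identification~\eqref{e:E=E}. The paper's proof is the single sentence ``This is immediate from equation~\eqref{e:E=E},'' while you additionally spell out the base cases $n=0,1$ (which lie outside the range $k\ge 1$ of~\eqref{e:E=E}) and verify them against Lemma~\ref{le:eekfin}; this extra care is appropriate, since the paper leaves those two cases implicit.
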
  

\begin{proof}  
This is immediate from equation \eqref{e:E=E}.
\end{proof}

\begin{cor} \labell{c:42} 
Lemma~\ref{le:ek} holds.
\end{cor}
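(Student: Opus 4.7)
The plan is to use Proposition~\ref{prop:bkiE} together with positivity of intersections to show $c(b_k(i)) \le (b_k(i)+1)/3$; combined with the elementary lower bound from the class $(3;2,1^{\times 6}) \in \Ee$, this gives equality. Write $a := b_k(i) = p/q$ in lowest terms. Since $b_k(i) \in (\tau^4,7)$ for $k \ge 1$, $i \ge 2$, the weight expansion $\ww(a)$ starts with $(1^{\times 6},a-6,\dots)$, so $\mu(3;2,1^{\times 6})(a) = (a+1)/3$, giving the lower bound. On $[\tau^4,7]$ one has $\sqrt{a} \le (a+1)/3$ because $a^2 - 7a + 1 \ge 0$ there, so by Corollary~\ref{cor:wgt} the upper bound $c(a) \le (a+1)/3$ will follow once we show $\mu(d';\mm')(a) \le (a+1)/3$ for every $(d';\mm') \in \Ee$.

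The main device is the vector $V := \mm_k(i)/q$, padded by $i-1$ trailing zeros to length $M := \ell(a)$. Recalling that $\mm_k(i)$ is obtained from $q\,\ww(a)$ by replacing its last block $(1^{\times(1+3i)})$ by $(i,1^{\times(1+2i)})$, a direct inspection shows that $V$ and $\ww(a)$ agree outside the last block of $\ww(a)$, and inside that block $V - \ww(a)$ equals $(i-1)/q$ at position $M-3i$ and $-1/q$ at the final $i-1$ positions. Therefore, for any ordered tuple $\mm'$,
\[
q\bigl( \mm' \cdot V - \mm' \cdot \ww(a) \bigr) \,=\, (i-1)\,m'_{M-3i} \;-\; \sum_{j=M-i+2}^{M} m'_j \,\ge\, 0,
\]
the inequality holding because $\mm'$ is non-increasing and the sum on the right has $i-1$ terms, each at most $m'_{M-3i}$.

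Now fix $(d';\mm') \in \Ee$. If $(d';\mm') \ne E(b_k(i))$, positivity of intersections (Proposition~\ref{prop:eek}~(ii)) together with Proposition~\ref{prop:bkiE} gives $\mm_k(i)\cdot \mm' \le d_k(i)\,d'$, i.e.\ $V \cdot \mm' \le d_k(i)\,d'/q = d'(a+1)/3$; combined with the display above, this yields $\mu(d';\mm')(a) \le (a+1)/3$. For the remaining case $(d';\mm') = E(b_k(i))$ itself, the identity $\ww(a)\cdot\ww(a) = a$ forces $\mm_k(i) \cdot \ww(a) = q\,a = p$, so $\mu = 3p/(p+q) < (p+q)/(3q) = (a+1)/3$ since $a > \tau^4$ makes $a^2 - 7a + 1 > 0$. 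Granted Proposition~\ref{prop:bkiE}, the above is essentially a one-step calculation; the conceptual content is that the specific modification $(1^{\times(1+3i)}) \mapsto (i,1^{\times(1+2i)})$ defining $\mm_k(i)$ is calibrated precisely so that $E(b_k(i))$ itself is not obstructive at~$a$ (keeping $\mm_k(i) \cdot \ww(a) = q a$) while the difference vector $V - \ww(a)$ pairs non-negatively with every non-increasing~$\mm'$. The real work lies in Proposition~\ref{prop:bkiE}; there is no serious further obstacle in the deduction above.
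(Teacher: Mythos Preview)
Your proof is correct and follows essentially the same route as the paper's. The paper's argument is identical in structure: it treats the case $(d';\mm') = E(b_k(i))$ separately (getting $\mu = 3b'/(b'+1) < \sqrt{b'}$), and for all other classes uses positivity of intersections together with the inequality $\mm' \cdot \mm_k(i) \ge q\,\mm' \cdot \ww(a)$, which the paper justifies in one phrase (``since $\mm$ is ordered'') while you spell out the computation with the difference vector $V - \ww(a)$ explicitly.
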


\proof
We argue as in the proof of Corollary~\ref{cor:ladder}.
Let $(d;\mm) \in \Ee$. 
Write $b' := b_k(i)$, $d' := d_k(i)$, $\mm' := \mm_k(i)$,
so that $(d';\mm') = E(b_k(i))$.
If $(d;\mm) = \bigl( d';\mm' \bigr)$, then 
$$
\mu(d;\mm) \bigl(b'\bigr) \,=\, \frac{\mm' \cdot \ww (b')}{d'} \,=\,
\frac{3 b'}{b'+1} \,<\, \sqrt{b'}
$$
because $b' > \tau^4$.
If $(d;\mm) \neq (d';\mm')$, then $d \,d' \ge \mm \cdot \mm'$ by positivity of intersections.
By the definition of $d'$ and $\mm'$, and since $\mm$ is ordered,
this spells out to
$$
q\, d\, \tfrac{1+b'}{3} \,\ge\, \mm \cdot \mm' \,\ge\, q\, \mm \cdot \ww (b'), 
$$
i.e., $\tfrac{1+b'}{3} \ge \frac 1d \mm \cdot \ww (b') =: \mu(d;\mm) (b')$.
\proofend

\begin{remark}
\rm
(i)
Note that the classes $E \bigl( b_k(i) \bigr) = \bigl( d_k(i); \mm_k(i) \bigr)$
are perfect for $i=0,1$, but are not perfect for $i \ge 2$.
Since, nevertheless, at $b_k(i) =: \frac pq$ we have
\begin{equation} \label{e:mkw}
\mm_k(i) \cdot \ww \bigl( b_k(i) \bigr) \,=\, q\, \ww \bigl(b_k(i) \bigr) \cdot \ww \bigl( b_k(i) \bigr) \,=\, q\, b_k(i) \quad \mbox{ for all }\, i ,
\end{equation}
these classes are useful also for $i \ge 2$.

\m
(ii)
For $i \ge 3$ there are other choices for $\mm_k(i)$ that can be used
to prove Lemma~\ref{le:ek}.
All one needs is that~\eqref{e:mkw} holds.
For this, one just needs to alter the last block $1^{\times j}$ so that 
the sum of entries stays intact, while the sum of the squares goes up by~$i^2-i$.

If $i=2$, one has no choice: In order to make the sum of squares go up by~$2$ 
one must replace $1^{\times 7}$ by $2,1^{\times 5}$. 
If $i=3$, however, instead of replacing $1^{\times 10}$ by $3,1^{\times 7}$,
one can replace it by $2^3, 1^{\times 4}$.
The resulting class $\bigl( d_k(3); \mm_k'(3) \bigr)$ lies in $\Ee$ for all~$k$.
If $i=4$, instead of replacing $1^{\times 13}$ by $4,1^{\times 9}$,
one can make the sum of squares go up by~$12$ also by replacing 
$\mm_k(4) = \bigl( \dots, 1^{\times 13} \bigr)$ by one of
$$
\mm_k^{(1)}(4)   := \bigl( \dots, 3^{\times 2}, 1^{\times 7} \bigr), \quad
\mm_k^{(2)}(4)   := \bigl( \dots, 3, 2^{\times 3}, 1^{\times 4} \bigr), \quad
\mm_k^{(3)}(4)   := \bigl( \dots, 2^{\times 6}, 1 \bigr).
$$
However, while the classes 
$\bigl( d_k(4);\mm_k^{(2)}(4) \bigr)$ and $\bigl( d_k(4);\mm_k^{(3)}(4) \bigr)$
reduce to $(1;1,1)$, and thus lie in $\Ee$, 
the classes $\bigl( d_k(4);\mm_k^{(1)}(4) \bigr)$ do not reduce correctly.
\diam
\end{remark}

We now turn to the proof of Proposition~\ref{prop:bkiE}.  
It follows from Lemma~\ref{le:hh} and Proposition~\ref{p:Ekred} below.

\begin{lemma} \labell{le:hh} 
The class $\bigl( d_k(i); \mm_k(i) \bigr)$ satisfies the Diophantine conditions~\eqref{eq:ee} 
for the elements of~$\Ee$. 
\end{lemma}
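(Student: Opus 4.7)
My plan is to verify three things in order: that $d_k(i)$ is a positive integer, that $\sum_j (\mm_k(i))_j = 3\,d_k(i)-1$, and that $\mm_k(i) \cdot \mm_k(i) = d_k(i)^2+1$. The entries of $\mm_k(i)$ are automatically nonnegative integers since $q\,\ww(b_k(i))$ has integer entries (its last entry is $q\cdot\tfrac1q=1$ by Lemma~\ref{le:ww}) and the replacement in the last block produces an $i$ and a bunch of $1$'s.

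First, write $b_k(i)=\tfrac pq$ in lowest terms. By Lemma~\ref{le:Fpq}(iii) applied with $j=1+3i$,
$$
p \,=\, L_k+(1+3i)F_{k+1}, \qquad q \,=\, L_{k-1}+(1+3i)F_k,
$$
so $d_k(i)=\tfrac{p+q}{3}$. To see $3\mid p+q$, I would use the identities in Lemma~\ref{le:FLH} to compute $p+q\pmod 3$: since $F_{k+2}=L_{k+1}+F_{k+1}=5F_{k+1}+L_k+F_{k+1}=7F_{k+1}-F_k$, one checks $F_{k+2}+2F_{k+1}+F_k = 9F_{k+1}\equiv 0\pmod 3$, and then $p+q$ reduces mod $3$ to a multiple of this expression. (As a sanity check, for $i=0,1$ one has $b_k(i)=b_{2k+i}=g_{2k+i+2}/g_{2k+i}$, and $g_n+g_{n+2}=3g_{n+1}$ by~\eqref{re:g1}, so $3\mid p+q$ directly.)

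Second, by equation~\eqref{eq:rsum} of Lemma~\ref{le:ww}, the sum of the entries of $q\,\ww(b_k(i))$ equals $q(b_k(i)+1-\tfrac1q)=p+q-1$. The replacement of the last block $1^{\times(1+3i)}$ by $(i,1^{\times(1+2i)})$ changes this sum by $i+(1+2i)-(1+3i)=0$, so $\sum_j(\mm_k(i))_j=p+q-1=3\,d_k(i)-1$, as required.

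Third, by equation~\eqref{eq:rsq}, $\sum_j (q\,w_j)^2=q^2 b_k(i)=pq$. The replacement changes the sum of squares by $i^2+(1+2i)-(1+3i)=i^2-i$, so
$$
\mm_k(i)\cdot\mm_k(i) \,=\, pq+i^2-i.
$$
The identity $\mm_k(i)\cdot\mm_k(i)=d_k(i)^2+1$ becomes $9(pq+i^2-i)=(p+q)^2+9$, i.e.\ $p^2-7pq+q^2=9i^2-9i-9$. But $b_k(i)=v_k(1+3i)$, so Corollary~\ref{co:hh}(ii) with $j=1+3i$ gives $p^2-7pq+q^2=q^2(b_k(i)^2-7b_k(i)+1)=(1+3i)^2-5(1+3i)-5=9i^2-9i-9$, which is exactly what we need.

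The only step that requires care is the divisibility $3\mid p+q$ for all $i$; once that arithmetic fact is in hand, the rest is a bookkeeping calculation driven by Lemma~\ref{le:ww} and Corollary~\ref{co:hh}(ii). I expect this to be the only real obstacle, but it is easily dispatched by combining the linear identities of Lemma~\ref{le:FLH} (or, equivalently, by invoking Proposition~\ref{prop:proof} to reduce the claim to a check for $k=1,2,3$).
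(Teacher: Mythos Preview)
Your proof is correct and follows essentially the same route as the paper: both use Lemma~\ref{le:ww} to compute $\sum m_s$ and $\sum m_s^2$, note that the replacement on the last block preserves the sum and increases the sum of squares by $i^2-i$, and then invoke Corollary~\ref{co:hh}(ii) with $j=1+3i$ to match $d^2+1$. The only addition is your check that $d_k(i)\in\mathbb{Z}$, which the paper omits; your sketch for $3\mid p+q$ is a bit loose, but you can sidestep it entirely by observing that once the two Diophantine relations hold with integer right-hand sides, $3d\in\mathbb{Z}$ and $d^2\in\mathbb{Z}$ together force $d\in\mathbb{Z}$.
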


\begin{proof}  
Write $b := b_k(i)$ and $(d;\mm) = \bigl( d_k(i); \mm_k(i) \bigr)$.
By Lemma~\ref{le:ww}, $\sum m_s = \sum q\, w_s(b) = q(b+1)-1 = 3d-1$.  
Further, $\sum m_s^2 = q^2 b + i^2-i$, and by Corollary~\ref{co:hh}~(ii), 
$$
q^2 \left( b^2-7b+1 \right) \,=\, \left(1+3i\right)^2-5 \left(1+3i\right)-5
\,=\, 9i^2-9i-9 .
$$
Therefore,
\begin{eqnarray*}
d^2+1 \,=\, 
\tfrac 19 q^2 \left( 1+b \right)^2 +1 &=&
\tfrac 19 q^2 \left( b^2-7b+1 \right) +q^2b + 1 \\
&=&
i^2-i+q^2b \,=\, \sum m_s^2 ,
\end{eqnarray*}
as required. 
\end{proof}

\begin{proposition}  \label{p:Ekred}
The classes $E \bigl( b_k(i) \bigr)$ reduce to $(1;1,1)$ for all $k \ge 1$ and $i \ge 0$.
\end{proposition}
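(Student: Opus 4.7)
The plan is to prove by induction on~$k$ that $E(b_k(i))$ reduces to $(1;1,1)$ via Cremona moves, with the base case $k=1$ handled by an auxiliary induction on~$i$. The key structural observation driving the argument is that $W(b_k(i))$ begins with six copies of the leading weight $q = L_{k-1}+(1+3i)F_k$ followed by the block pattern $(x_1, x_2^{\times 5}, x_3, x_4^{\times 5}, \dots)$ dictated by the continued fraction $[6; \{1,5\}^{\times(k-1)}, 1, 1+3i]$, so that the modified suffix $(i,\, 1^{\times(1+2i)})$ of~$\mm_k(i)$ sits beyond the first three positions and is untouched by any Cremona move acting only on the leading entries.

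For the base case $k=1$, I would verify by direct arithmetic that three successive standard Cremona moves applied to $E(b_1(i)) = (5+8i;\, (2+3i)^{\times 6},\, 1+3i,\, i,\, 1^{\times(1+2i)})$ produce in sequence $(4+7i;\, (2+3i)^{\times 3},\, 1+3i,\, (1+2i)^{\times 3},\, i,\, 1^{\times(1+2i)})$, then $(2+5i;\, 1+3i,\, (1+2i)^{\times 3},\, i^{\times 4},\, 1^{\times(1+2i)})$, and finally $(1+3i;\, 1+2i,\, i^{\times 5},\, 1^{\times(1+2i)})$. Two further Cremona moves reduce this to $(1+i;\, i,\, 1^{\times(2+2i)})$, after which the identity that one Cremona move sends $(j+1;\, j,\, 1^{\times(2j+2)})$ to $(j;\, j-1,\, 1^{\times 2j})$ allows one to descend in~$j$ from $j=i$ down to $(1;1,1)$. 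The low cases $i=0,1$ are consistent with this scheme once one drops zero entries and merges adjacent equal blocks, which is exactly the convention built into the definition of~$\mm_k(i)$.

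For the inductive step, I would show that for each $k \ge 2$ and $i \ge 0$ a sequence of five Cremona moves patterned on those of Lemma~\ref{le:nk} transforms $E(b_k(i))$ into $E(b_{k-1}(i))$: the first two moves act on triples of the leading weight $q$, and the next three handle the transition, ultimately replacing the leading seven blocks of~$W(b_k(i))$ by six copies of the new leading weight, while the suffix $(i,\, 1^{\times(1+2i)})$ remains intact throughout. The hard part will be verifying the many arithmetic identities between intermediate entries and confirming that the ordered form at each stage has nonnegative entries; however, since by Lemma~\ref{le:Fpq}~(iii) the relevant weights are explicit in $L_k$ and $F_k$, every identity is a quadratic relation in Fibonacci-type numbers, so Proposition~\ref{prop:proof} reduces each verification to checking a small finite number of low values of~$k$, exactly as in the proofs of Propositions~\ref{prop:st1}--\ref{prop:st3}.
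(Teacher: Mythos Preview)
Your proposal is correct and follows essentially the same approach as the paper: induction on~$k$, with the base case $k=1$ handled by exactly the five explicit Cremona moves you list followed by the descent $(j+1;\,j,\,1^{\times(2j+2)}) \mapsto (j;\,j-1,\,1^{\times 2j})$, and the inductive step showing $E(b_{k+1}(i))$ reduces to $E(b_k(i))$ in five Cremona moves, verified via Proposition~\ref{prop:proof} after expressing the entries through Lemma~\ref{le:Fpq}~(iii). One small correction: the entries of $\mm_k(i)$ are \emph{linear} in the Fibonacci numbers (see the paper's formula~\eqref{e:mkj}), and since Cremona moves are linear operations, the identities to be checked are linear rather than quadratic, so by Proposition~\ref{prop:proof} it suffices to verify them for two values of~$k$ rather than three.
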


\proof  
We fix $i \ge 0$, and argue by induction on $k \ge 1$.

\m
\ni
{\bf Step 1.}
Assume that $k=1$.
Set $j = 1+3i$.
The weight expansion of $b_1(i) = \left[ \;\!6;1,j \,\right] =: \frac pq$ is
\begin{equation} \label{e:1qh}
\tfrac 1q \bigl( (j+1)^{\times 6}, j, 1^{\times j} \bigr) .
\end{equation}
Together with~\eqref{eq:rsum} we find
$d_1(i) = \frac q3 \bigl( 1+b_1(i) \bigr) = \frac 13 \bigl( 6(j+1)+j+j+1 \bigr) = 
\frac 13 \left( 8j+7 \right) = 8i+5$, and so
$$
\bigl( d_1(i); \mm_1(i) \bigr) \,=\,
\bigl( 8i+5; (3i+2)^{\times 6}, 3i+1, i, 1^{\times (2i+1)} \bigr) .
$$
Applying five standard Cremona moves yields, successively,
\begin{eqnarray*}
\bigl( 7i+4; (3i+2)^{\times 3}, 3i+1, (2i+1)^{\times 3}, i, 1^{\times (2i+1)} \bigr) ; \\
\bigl( 5i+2; 3i+1, (2i+1)^{\times 3}, i^{\times 4}, 1^{\times (2i+1)} \bigr) ; \\
\bigl( 3i+1; 2i+1, i^{\times 5}, 1^{\times (2i+1)} \bigr) ; \\
\bigl( 2i+1; i+1, i^{\times 3}, 1^{\times (2i+1)} \bigr) ; \\
\bigl(  i+1; i, 1^{\times (2i+2)} \bigr) .
\end{eqnarray*}
The standard Cremona move maps
$(s+1; s, 1^{\times t})$ to $(s; s-1, 1^{\times (t-2)})$ for any $s \ge 1$ and $t \ge 2$.
Applying $i$~more standard Cremona moves therefore moves $\bigl(  i+1; i, 1^{\times (2i+2)} \bigr)$
to $(1;1,1)$.

\b
\ni
{\bf Step 2.}
Assume by induction that $\bigl( d_k(i);\mm_k(i) \bigr)$ reduces to $(1;1,1)$.
We shall show that $\bigl( d_{k+1}(i);\mm_{k+1}(i) \bigr)$ reduces to 
$\bigl( d_k(i);\mm_k(i) \bigr)$
by five standard Cremona moves. 

The end of the weight expansion $q\:\! \ww \bigl( b_k(i) \bigr) = q\:\! \ww \bigl( v_k(j) \bigr)$ is
$$
\bigl( \dots,\; (48 j+41)^{\times 5},\; 41 j + 35,\; (7j+6)^{\times 5},\; 6j+5,\; (j+1)^{\times 5},\; j,\; 1^{\times j} \bigr) .
$$
Using $F_{k+1} =L_k+F_k$ and $L_{k+1} = 5 F_{k+1} +L_k$ from~\eqref{FL:repeat}, and 
$$
L_{k+1}-L_k \,=\, 5 L_k + (L_k-L_{k-1}) ,
$$ 
which follows from these two formulae, we see that in general,
\begin{eqnarray} \label{e:mkj} 
\mm_k(i)  &=&
\bigl(\,
(jF_k +L_{k-1})^{\times 6},\; (j+1)L_{k-1} - L_{k-2},\;
 \\  \notag&&\hspace{.1in}
  (jF_{k-1}+L_{k-2})^{\times 5},\; (j+1)L_{k-2}-L_{k-3},\; \dots,\;  
  \\  \notag&&\hspace{.1in}
  (jF_2+L_1)^{\times 5},\;
(j+1)L_1-L_0,\; (j+1)^{\times 5},\; j,\; i,\;1^{\times (j-i)} 
\,\bigr) .
\end{eqnarray}

It will be convenient to express the numbers $d_k(i)$ in terms of the
{\it even} Fibonacci numbers $h_k := f_{2k}$.    
Thus
\begin{gather}\labell{eq:evenF}
h_1=1,\; h_2=3,\; h_3=8,\; h_4=21,\; h_5 = 55,\; h_6 = 144, \,\dots
\end{gather}

\begin{lemma} \label{le:dkj}
$d_k(i) = h_{2k+2} + (i-2)h_{2k+1}$
\end{lemma}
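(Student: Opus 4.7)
The plan is a direct computation: express $b_k(i) = p/q$ explicitly using Lemma~\ref{le:Fpq}~(iii), compute $3\:\!d_k(i) = p+q$, and simplify using elementary Fibonacci identities. No induction or Cremona manipulation is needed; everything reduces to a couple of applications of the recurrence $f_{n+2}=f_{n+1}+f_n$ together with its consequence $3f_m = f_{m+2}+f_{m-2}$ (the $m=4k{+}1$ case of which is just~\eqref{re:g1}, and which extends to all~$m$ by the same recurrence).

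Applying Lemma~\ref{le:Fpq}~(iii) with $j := 1+3i$ gives $p = L_k + j\:\!F_{k+1}$ and $q = L_{k-1} + j\:\!F_k$. Expanding $L_k = (f_{4k+1}+f_{4k+3})/3$ and $F_k = f_{4k}/3$, the Fibonacci recursion quickly gives the base identities $L_k+F_{k+1} = f_{4k+3}$ and $L_{k-1}+F_k = f_{4k-1}$, whence
\begin{equation*}
p \,=\, f_{4k+3} + i\:\!f_{4k+4}, \qquad q \,=\, f_{4k-1} + i\:\!f_{4k}.
\end{equation*}
Before proceeding, I must check that this $p/q$ is in lowest terms, so that the $q$ appearing in $d_k(i) = q(1+b_k(i))/3$ really is the one above. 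Any common factor of $p$ and $q$ divides $p\:\!F_k - q\:\!F_{k+1} = L_k F_k - L_{k-1} F_{k+1}$, and this last quantity equals $-1$: the identity $L_k F_k - L_{k-1} F_{k+1} = -1$ is easily checked at $k=1,2,3$ and so is valid for all $k$ by Proposition~\ref{prop:proof}.

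Summing and applying $3f_m = f_{m+2}+f_{m-2}$ at $m = 4k+1$ and $m = 4k+2$ then gives
\begin{equation*}
3\:\!d_k(i) \,=\, p+q \,=\, (f_{4k+3}+f_{4k-1}) + i(f_{4k+4}+f_{4k}) \,=\, 3\:\!f_{4k+1} + 3i\:\!f_{4k+2},
\end{equation*}
so $d_k(i) = f_{4k+1} + i\:\!f_{4k+2}$. Finally, rewriting $f_{4k+1} = f_{4k+3}-f_{4k+2} = f_{4k+4}-2\:\!f_{4k+2}$ yields $d_k(i) = f_{4k+4} + (i-2)f_{4k+2} = h_{2k+2} + (i-2)\:\!h_{2k+1}$, as claimed. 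There is no real obstacle: the only mildly delicate step is the coprimality check for $p$ and $q$, which is handled cleanly by the determinant identity above.
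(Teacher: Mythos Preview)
Your proof is correct and follows essentially the same route as the paper's: express $p$ and $q$ via Lemma~\ref{le:Fpq}~(iii), compute $3d_k(i)=p+q$, and simplify with Fibonacci identities. The only differences are cosmetic---you carry out the Fibonacci algebra explicitly (and add a coprimality check for $p,q$ that the paper leaves implicit), whereas the paper verifies the resulting linear identity at $k=1,2$ and invokes Proposition~\ref{prop:proof}.
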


\proof
We fix $i$ and $j=1+3i$, and write $d_k = d_k(i)$, $\mm_k = \mm_k(i)$, $b_k = b_k(i)$, etc.
Set $b_k = \frac{p_k}{q_k}$.
By Lemma~\ref{le:Fpq}~(iii), 
$$
p_k = F_{k+2} + (j-6)F_{k+1}, \qquad 
q_k = F_{k+1} + (j-6)F_k .
$$
Therefore, 
$3 d_k = q_k (b_k+1) = p_k+q_k = F_{k+2} + (j-5)F_{k+1} + (j-6)F_k$.
We thus need to show that
\begin{equation*} 
F_{k+2} + (j-5) F_{k+1} + (j-6)F_k \,=\, 3 h_{2k+2} + 3(i-2) h_{2k+1} .
\end{equation*}
This holds true for $k=1$ and $k=2$, 
and so it holds true for all $k \ge 1$ by Proposition~\ref{prop:proof}.
\proofend

\begin{lemma} \label{le:red}
The class $\bigl( d_{k+1}(i); \mm_{k+1}(i) \bigr)$ reduces to 
$\bigl( d_{k}(i); \mm_{k}(i) \bigr)$ by five Cremona transforms.
\end{lemma}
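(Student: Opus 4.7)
The plan is to carry out the five standard Cremona transforms in succession and track the resulting vector at each stage, much as in the proof of Lemma~\ref{le:nk}. Abbreviating $u_\ell := jF_\ell + L_{\ell-1}$ and $v_\ell := (j+1)L_{\ell-1} - L_{\ell-2}$ with $j = 1+3i$, equation~\eqref{e:mkj} gives
$$
\mm_{k+1}(i) \,=\, \bigl( u_{k+1}^{\times 6},\, v_{k+1},\, u_k^{\times 5},\, v_k,\, u_{k-1}^{\times 5},\, \dots \bigr), \qquad
\mm_k(i) \,=\, \bigl( u_k^{\times 6},\, v_k,\, u_{k-1}^{\times 5},\, \dots \bigr),
$$
the two tails agreeing from the $v_k$ onward. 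So the task reduces to showing that the leading segment $(d_{k+1}(i); u_{k+1}^{\times 6}, v_{k+1}, u_k^{\times 5})$ transforms in five standard moves into $(d_k(i); u_k^{\times 6})$, with the common tail carried along untouched. By Remark~\ref{rmk:order}, any sequence of Cremona moves that arrives at the target suffices.

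To identify the moves, I would first compute the case $k=2$, $i=0$ directly. Here $E(b_2(0)) = (34;\, 13^{\times 6}, 11, 2^{\times 5}, 1, 0, 1)$ and $E(b_1(0)) = (5;\, 2^{\times 6}, 1^{\times 2})$, and a direct calculation shows that the following five moves do the job: Moves~1 and~2 act on three copies of $u_{k+1}$ each (thus eliminating the initial block of six), Move~3 acts on $v_{k+1}$ together with two of the newly introduced entries $d_{k+1}(i) - 2u_{k+1}$, and Moves~4 and~5 combine the remaining large entries with the appropriate copies of what will become~$u_k$. Having identified this pattern on a single low case, one writes down the corresponding intermediate vectors in general: each of their entries is a linear combination of $F_\ell$ and $L_\ell$ (for $\ell$ near~$k$) with coefficients in $\mathbb{Z}[j]$, computable using $F_{\ell+1} = L_\ell + F_\ell$ and $L_{\ell+1} = 5F_{\ell+1}+L_\ell$ together with Lemma~\ref{le:dkj}.

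Each step of the verification amounts to checking one identity of the form $2D - X_1 - X_2 - X_3 = D'$ (for the first coordinate) and three identities $D - X_i - X_j = Y_\ell$ (for the new entries), where $D, X_i, D', Y_\ell$ are linear combinations of $F_\ell, L_\ell$ with coefficients polynomial in~$j$. All such identities are polynomial in the Fibonacci numbers and thus fall under the scope of Proposition~\ref{prop:proof}: treating $j$ as a free parameter, each homogeneous linear identity requires checking two values of~$k$ and any quadratic one requires three. The main obstacle is the combinatorial bookkeeping — both pinning down the precise intermediate vectors and verifying that the entries stay in the correct order so that the prescribed triples are indeed the ones to transform at each step. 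Once the pattern is extracted from the small-$k$ computation, though, the identities themselves are routine, and the five moves give the required reduction for all $k \ge 1$ and $i \ge 0$.
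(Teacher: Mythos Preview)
Your approach is essentially the same as the paper's, and it is correct. The paper's own proof is even terser than your sketch: it fixes $i$, notes that by \eqref{e:mkj} and Lemma~\ref{le:dkj} every entry appearing in the reduction is a homogeneous linear expression in Fibonacci numbers (with coefficients depending on $j=1+3i$), checks the five standard moves explicitly for $k=1$ and $k=2$ with the \emph{same} reorderings at each stage, and then invokes Proposition~\ref{prop:proof} directly --- without ever writing down the general intermediate vectors. Your version makes the mechanism more visible by naming the intermediate entries and describing which triples get hit at each step; this is helpful for the reader but not logically necessary, since (per Remark~\ref{rmk:order}) once the same sequence of Cremona moves is seen to land on $\bigl(d_k(i);\mm_k(i)\bigr)$ for two values of~$k$, the linear case of Proposition~\ref{prop:proof} finishes the argument for all~$k$.
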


\begin{proof}
Fix $i$.
By~\eqref{e:mkj} and Lemma~\ref{le:dkj}, the entries of $\bigl( d_{k+1}(i); \mm_{k+1}(i) \bigr)$
are given by linear formulas in Fibonacci numbers, that depend only on~$k$.
Using \eqref{e:mkj} and Lemma~\ref{le:dkj}
one checks for $k=1$ and $k=2$ that $\bigl( d_{k+1}(i); \mm_{k+1}(i) \bigr)$ reduces to 
$\bigl( d_{k}(i); \mm_{k}(i) \bigr)$ by five Cremona transforms
with equal reordering at each stage. 
The lemma thus follows from Proposition~\ref{prop:proof}.  
\end{proof}

The proof of Proposition~\ref{p:Ekred} is complete.

\subsection{The ghost stairs} \label{ss:ghost}

In this section we compute the contribution of the classes 
$E \bigl( b_k(i) \bigr) = \bigl( d_k(i); \mm_k(i) \bigr)$, $i \ge 2$, to the graph of 
$c(a) = \frac{a+1}3$ on $\left[ \tau^4,7 \right]$.
The lemma and the proposition below are not needed for the results of this paper,
but they illuminate the role of these classes.

\begin{lemma} \label{le:notobs}
Assume that $i \ge 3$. 
Then $\mu \bigl( d_k(i);\mm_k(i) \bigr)(a) \le \sqrt a$ for all $a>1$ 
and all $k \ge 1$.
\end{lemma}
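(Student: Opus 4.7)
The plan is to argue by contradiction using Proposition~\ref{prop:obs}(iii). Suppose $\mu(a) := \mu(d_k(i);\mm_k(i))(a) > \sqrt a$ for some $a>1$. Abbreviating $d = d_k(i)$, $\mm = \mm_k(i)$, $M := \ell(\mm) = 6k+2i+3$, and writing $\eps_s = m_s - (d/\sqrt a)w_s(a)$, we have $E := \sum_s \eps_s^2 < 1$. First I would dispose of the case $\ell(a) < M$: then some $s \in \{\ell(a)+1,\dots,M\}$ has $\eps_s = m_s \ge 1$, forcing $E \ge 1$. So we may assume $\ell(a) \ge M$.

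The key structural input from \eqref{e:mkj} is that the last $2+2i$ entries of $\mm$ read $(i,1,1,\dots,1)$ (with $1+2i$ trailing ones), preceded by $j := 1+3i$ and then five copies of $j+1$. I analyze the size $\ell_N \ge 2$ of the last block of $\ww(a)$. If $\ell_N \ge 2+2i$, the entries of $\mm$ sitting on the last block of $\ww(a)$ include both an $i$ and some $1$'s; since $i-1 \ge 2$, Lemma~\ref{le:atmost1}(i) fails, so $E \ge 1$. If $\ell_N = 1+2i$ and $\ell_{N-1} \ge 2$, the second-to-last block carries both $i$ and $j = 1+3i$, which differ by $1+2i \ge 7$, again violating Lemma~\ref{le:atmost1}(i).

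The decisive case is $\ell_N = 1+2i$, $\ell_{N-1} = 1$. Setting $c := (d/\sqrt a)/q'$ where $q'$ is the denominator of $a$, the contribution to $E$ from the last two blocks is
$$
f(c) \,:=\, (1+2i)(1-c)^2 + \bigl(i - (1+2i)c\bigr)^2.
$$
This quadratic attains its minimum at $c = 1/2$, where $f(1/2) = (1+i)/2$. For $i \ge 3$ one has $(1+i)/2 \ge 2$, so $E \ge 2 > 1$, contradiction. (Note this is exactly where the hypothesis $i \ge 3$ enters: for $i = 2$ one obtains only $3/2$, and the obstruction actually materializes.)

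It remains to handle the range $2 \le \ell_N \le 2i$, in which the $1+2i$ trailing ones of $\mm$ straddle several blocks of $\ww(a)$ whose rescaled values $X_j := q' x_j$ form a strictly increasing integer sequence $1, \ell_N, \ell_{N-1}\ell_N + 1, \dots$ by the recursion $x_{j-1} = \ell_j x_j + x_{j+1}$. For each admissible configuration of $\ell_N, \ell_{N-1}, \dots$, either Lemma~\ref{le:atmost1}(i) is violated on a block that mixes a trailing $1$ with the larger entry $i$, $j$, or $j+1$ of $\mm$, or one incorporates the entry $m_{M-1-2i} = i$ into the error sum and computes $\min_c \sum_{s \in T}(m_s - cX_s)^2$ over a short list of positions $T$. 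The main obstacle is that a verification case-by-case is unavoidable here; however, because Lemma~\ref{le:atmost1}(i) cuts down the admissible block sizes (each $\ell_{N-r}$ is bounded by a small constant depending only on $i$), only finitely many configurations remain, and in each the minimum of the quadratic in $c$ exceeds $1$ by a direct computation of the type $16 - Bc + Cc^2$ with $B^2/(4C) < 15$. This yields $E \ge 1$ in all cases and completes the contradiction.
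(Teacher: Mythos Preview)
There are two gaps. First, your block analysis silently assumes $\ell(a)=M$: when $\ell(a)>M$ the last block of $\ww(a)$ may lie partly or entirely beyond position $M$ (where $m_s=0$), so the claim ``if $\ell_N\ge 2+2i$ then the last block carries both the $i$ and some $1$'s'' need not hold. The fix is Lemma~\ref{le:I}, which supplies a point $a_0$ in the obstructed interval with $\ell(a_0)=\ell(\mm)=M$; this is exactly the reduction the paper makes and you omit. Second, and more seriously, your final paragraph is not a proof: you assert that the range $2\le\ell_N\le 2i$ breaks into finitely many configurations each dispatched ``by a direct computation'', but you carry out none of these computations, and the number of admissible block patterns for the $2i+1$ trailing ones grows with~$i$.

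The paper's argument avoids any case split on $\ell_N$. After the reduction $\ell(a_0)=M$, Lemma~\ref{le:atmost1}(i) forces the entry $m_p=i$ at position $p=M-2i-1$ into a singleton block of $\ww(a_0)$: it cannot share a block with $m_{p-1}=j=1+3i$ (the values differ by $1+2i\ge 7$) nor with $m_{p+1}=1$ (difference $i-1\ge 2$; this is where $i\ge 3$ enters). Now a single application of Lemma~\ref{le:irrel}(i) finishes. The paper phrases this as ``the last block of $\ww(a_0)$ has length $2i+1$'' and applies the lemma there, obtaining $|i-(2i+1)|=i+1\ge\sqrt{2i+2}$; equivalently one may apply Lemma~\ref{le:irrel}(i) directly to the singleton block $\{p\}$, obtaining $|m_{p-1}-(m_p+m_{p+1})|=|j-i-1|=2i<\sqrt 3$, which is absurd. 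Either way the contradiction is immediate---no enumeration of block configurations is needed.
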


\proof
Write $d=d_k(i)$, $\mm = \mm_k(i)$.
Assume that $\mu (d;\mm)(a) > \sqrt a$ for some~$a \ge 1$.
Let~$I$ be the open interval such that $\mu (d;\mm)(z) > \sqrt z$ and $a \in I$.
Let $a_0$ be the unique point in~$I$ with $\ell (a_0) = \ell (\mm)$.
Recall that
$$
\mm \,=\, \bigl( \dots,\, i,\, 1^{\times (2i+1)} \bigr) .
$$
Since $i \ge 3$, the last block of $\ww (a_0)$ must have length $2i+1$
according to Lemma~\ref{le:atmost1}~(i).
But 
$$
\left| i-(2i+1) \right| \,=\, i+1 \,\ge\, \sqrt{2i+1} \,,
$$
in contradiction to Lemma~\ref{le:irrel}~(i).
\proofend

Thus, surprisingly, for $i \ge 3$ the classes $E \bigl( b_k(i) \bigr)$ give no embedding constraints,
but nevertheless are most useful to find $c(a)$ on $[\tau^4,7]$.

\m
We now look at the case $i=2$.
For $k \ge 1$ write $e_k := b_k(2) = v_k(7)$ and $E(e_k) = (d_k;\mm_k) := \bigl( d_k(2);\mm_k(2) \bigr)$.
Recall from Lemma~\ref{le:dkj} that $d_k = h_{2k+2}$,
where $h_{2k+2} = f_{4k+4}$ is an even Fibonacci number.  
We now show that the corresponding constraint functions $\mu(d_k;\mm_k)$   
form a staircase whose properties echo
that of the Fibonacci stairs on the other side of~$\tau^4$.  
However this staircase does not add anything new to the graph of~$c(a)$ because
it never rises above the line $y=\frac{a+1}3$.
Thus we call it the {\it ghost stairs}.
Note also that, although 
$E(e_k)$ is made from~$\ww(e_k)$ and so influences~$c(a)$ at~$a=e_k$,
it gives a constraint that is centered at the convergent $c_{2k+1} < e_k$.

\begin{prop}[The ghost stairs] \labell{prop:newmd} 
$$
\mu(d_k;\mm_k)(z) \,=\, 
\left\{
\begin{array}{ll}
\frac {z+1}3                &\mbox{ for }\, z \in \left[ c_{2k}, c_{2k+1} \right] , \\
\frac {h_{2k+3}}{h_{2k+2}}  &\mbox{ for }\, z \in \left[c_{2k+1},e_k \right] .
\end{array}
\right.
$$
\end{prop}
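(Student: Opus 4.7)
My plan is to show $\mu := \mu(d_k;\mm_k)$ is piecewise linear on $[c_{2k}, e_k]$ with a single break at $c_{2k+1}$, then determine the two linear pieces by a slope-and-value computation. Since $e_k = v_k(7)$ has continued fraction $[6;\{1,5\}^{\times(k-1)},1,7]$ and $c_{2k+1}$ has $[6;\{1,5\}^{\times(k-1)},1,6]$, both have total CF length~$6k+7$, matching $\ell(\mm_k)$, so $c_{2k+1}$ is the unique center point inside the interval of obstruction by Lemma~\ref{le:I}. For $z$ in either open sub-interval $(c_{2k},c_{2k+1})$ or $(c_{2k+1},e_k)$ the continued fraction of $z$ agrees with that of $c_{2k+1}$ in its first $2k$ entries, so by Lemma~\ref{le:w1} the first $6k+7$ entries of $\ww(z)$ are linear in $z$, making $\mm_k\cdot\ww(z)$, and hence $\mu(z)$, linear on each sub-interval.

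The key step is to exploit that $\mm_k = q_{e_k}\ww(e_k)$ with the last block $1^{\times 7}$ modified to $(2,1^{\times 5})$. Padding both sides to length $6k+8$, they differ by $(0^{\times(6k+1)},-1,0^{\times 5},1)$, so
\[
\mm_k \cdot \ww(z) \,=\, q_{e_k}\;\!\ww(e_k)\cdot\ww(z) + (\ww(z))_{6k+2} - (\ww(z))_{6k+8},
\]
with out-of-range entries read as zero. Block-by-block pairing in $\ww(e_k)\cdot\ww(z)$, the identity $q_{e_k}x_{2k}(e_k)=1$, and the recurrence $x_{2k+1}(z) = x_{2k-1}(z)-\ell_{2k}\;\!x_{2k}(z)$ (with $\ell_{2k}=5$ on the left, $6$ on the right) then reduce this to
\[
\mm_k\cdot\ww(z) \,=\, 6\;\!q_{e_k} + q_{e_k}\sum_{j=1}^{2k-1}\ell_j\;\!x_j(e_k)\;\!x_j(z) + C(z),
\]
where $C(z) = x_{2k-1}(z)+x_{2k}(z)$ on the left and $C(z) = 7\;\!x_{2k}(z)$ on the right.

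Writing $x_j(z) = \al_j+\be_j z$ and differentiating, Corollary~\ref{cor:mirror2} applied to $e_k$ (whose CF has $N=2k$ even) gives $\sum_{j=0}^{2k}\ell_j\;\!x_j(e_k)\;\!\be_j = 0$. Since $\be_0=0$ and $\ell_{2k}^{(e_k)}x_{2k}(e_k) = 7/q_{e_k}$, the inner sum contributes $-7\be_{2k}/q_{e_k}$, so on the right the total slope collapses to $-7\be_{2k}+7\be_{2k}=0$, and on the left it leaves $\be_{2k-1}-6\be_{2k}$. Using Lemma~\ref{le:mirror}(iii) to identify $\be_{2k-1}=L_{k-1}$ and $-\be_{2k}=F_k$, together with the Fibonacci identities $L_{k-1}=F_k-F_{k-1}$ and $F_{k+1}=7F_k-F_{k-1}$ (provable via Proposition~\ref{prop:proof}), the left slope equals $F_{k+1}=d_k/3$.

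For the values, equation~\eqref{e:mkw} and the identity $p_{e_k} = L_k+7F_{k+1} = h_{2k+3}$ (Proposition~\ref{prop:proof}) give $\mu(e_k) = h_{2k+3}/h_{2k+2}$, which combined with slope zero on the right proves $\mu \equiv h_{2k+3}/h_{2k+2}$ on $[c_{2k+1},e_k]$. At the center, the match $(c_{2k+1}+1)/3 = h_{2k+3}/h_{2k+2}$ (using $F_{k+1}+F_{k+2} = h_{2k+3}$ and $3F_{k+1}=h_{2k+2}$) combined with slope $1/3$ on the left yields $\mu(z) = (z+1)/3$ on $[c_{2k},c_{2k+1}]$. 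The main obstacle is the slope calculation on the left sub-interval: the bookkeeping needed to isolate the slope as $\be_{2k-1}-6\be_{2k}$ requires careful tracking of which entries of $\ww(z)$ fall at the boundary positions $6k+2$ and $6k+8$, and identifying this quantity as $F_{k+1}$ depends on the Fibonacci identities above.
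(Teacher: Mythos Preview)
Your approach is essentially the paper's: both exploit that $\mm_k$ equals $q_{e_k}\ww(e_k)$ up to a shift on the last block, invoke Corollary~\ref{cor:mirror2} (applied to $e_k$, with $N=2k$ even) to collapse the main sum, and finish with the Fibonacci identities of Lemma~\ref{le:FLH}. The only difference is organizational --- you compute slope-then-value whereas the paper computes $\mm_k\cdot\ww(z)$ directly as $q_{e_k}e_k - x_{2k}(z) + x_{2k+1}(z)=(1+z)F_{k+1}$ on the left and as the constant $q_{e_k}e_k$ on the right --- and one small slip: $\ell(e_k)=6k+8$, not $6k+7$; it is $\ell(c_{2k+1})=6k+7$ that matches $\ell(\mm_k)$ (and the sign of your difference vector is reversed, though your formula after it is correct).
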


Since for each $k$ we have $c_{2k} < \tau^4$, the proposition shows that 
$\mu(d_k;\mm_k)(z) = c(z) = \frac {z+1}3$ on $[\tau^4, c_{2k+1}]$.

\proof
Fix $k$, and 
recall from Lemma~\ref{le:Fpq} and Corollary~\ref{c:order} that
\begin{eqnarray} 
c_{2k}   &=&  \bigl[ 6; \{1,5\}^{(k-1)},1,5 \bigr], \label{e:c2k} \\
c_{2k+1} &=&  \bigl[ 6; \{1,5\}^{(k-1)},1,6 \bigr], \label{e:c2k1}\\
e_{k}    &=&  \bigl[ 6; \{1,5\}^{(k-1)},1,7 \bigr] \,=\, \tfrac{L_k+7F_{k+1} }{L_{k-1}+7F_k }, \label{e:ek}
\end{eqnarray}
and that $c_{2k} < c_{2k+1} < e_k$.
If  $z \in (c_{2k}, c_{2k+1})$ then $z = \bigl[ 6; \{1,5\}^{(k-1)},1,g,h, \dots \bigr]$ with $g=5$ and $h\ge 1$, 
while if $z \in (c_{2k+1}, e_{k})$ then $z = \bigl[ 6; \{1,5\}^{(k-1)},1,g,h, \dots \bigr]$ 
with $g=6$ and $h\ge 1$.
In both cases, the weight expansion has the form
\begin{eqnarray}\labell{eq:zzz}
\ww(z) &=& \bigl( 1^{\times 6}, z-6, (7-z)^{\times 5}, 6z-41, \dots,
x_{2k-1}(z) = z L_{k-1}-L_{k}, \\ \notag
&&\qquad  
\bigl( x_{2k}(z) = F_{k+1}-z F_k \bigr)^{\times g}, \bigl( x_{2k+1}(z) \bigr)^{\times h}, \dots \bigr) .
\end{eqnarray}
Further $\ww(e_k)$ begins the same way, but ends at  the block of terms
$$
\bigl( x_{2k}(z) \,=\, F_{k+1}-z F_k \bigr)^{\times 7} \,=\, 
\bigl(\alpha_{2k}^{e} +z \beta_{2k}^{e}\bigr)^{\times 7},
$$
where the last expression uses the elements $\alpha^{e}_j$ and $\beta^{e}_j$ of 
equation~\eqref{eq:itere} with $a = e := e_k$.   

Since $e_k$ has $N+1$ blocks where $N$ is even, 
Corollary~\ref{cor:mirror2} implies that
$$
\sum_j \ell_j \,x_j(e_k) \bigl(\alpha_j^{e} +z\beta_j^{e}\bigr) = e_k = 
\tfrac{L_k+7F_{k+1} }{L_{k-1}+7F_k }.
$$
Further, $\mm_k = q \,\ww(e_k)$  where $q := L_{k-1}+7F_k$,
except for the last block where we have $2,1^{\times 5}$ instead of  $1^{\times 7}$. 
When $z$ has $g=6$  it follows that 
$$
\mm_k\cdot \ww(z) = \mm_k\cdot \ww(e_k)  = q\,\ww(e_k) \cdot \ww(e_k) = q\,e_k = L_{k}+7F_{k+1} .
$$
Thus $\mu(d_k;\mm_k)(z)$ is constant on this interval.  On the other hand, if $g=5$ then $x_{2k+1}(z)= zL_{k} - L_{k+1}$ and we find 
\begin{eqnarray*}
\mm_k \cdot \ww(z) &=& 
\sum_j q\,\ell_j\, x_j(e_k) \bigl(\alpha_j^e +z\beta_j^e\bigr) - x_{2k}(z) + x_{2k+1}(z)\\
&=&  q\,e_k -F_{k+1}+z F_k + z L_{k}-L_{k+1}\\
&=& 6F_{k+1} + L_k-L_{k+1}+ z(L_{k}+F_k)\\
&=& (1+z)F_{k+1}.
\end{eqnarray*}
But 
$$
3d_k = q(1+e_k) = L_{k-1}+7F_k + L_k+7F_{k+1} = 9F_{k+1}.
$$
Therefore $\mu(d_k;\mm_k)(z)=(z+1)/3$  for
$z \in [c_{2k}, c_{2k+1}]$ and it remains to check that
$c_{2k+1} + 1 = 3 h_{2k+3}/h_{2k+2}$.
Since $c_{2k+1} = F_{k+2}/F_{k+1}$ and $F_{k+1} = 3h_{2k+2}$
this  reduces to the identity
$$
f_{4k+8} + f_{4k+4} = 3  f_{4k+6},
$$
which is readily checked using Proposition~\ref{prop:proof}.
\proofend

\begin{remark}\labell{rmk:evenF}
{\rm
(i) 
At the points $v_k(j)$ with $j \ge 7$, 
Theorem~\ref{thm:main}~(ii) implies that
$c\bigl( v_k(j)\bigr) = \frac{v_k(j)+1}3$, 
which by Lemma~\ref{le:Fpq}~(iii) can be written as
$$
c \left( \frac{\ell_{4k+2}+jf_{4k+4}}{\ell_{4k-2}+jf_{4k}} \right)
\,=\,
\frac{\ell_{4k}+jf_{4k+2}}{\ell_{4k-2}+jf_{4k}} ,
$$
where $\ell_{4k+2} = 3L_k$ as in Definition~\ref{def:Lucas}.
In particular, at $b_k(2)=v_k(7)$ and $b_k(3)=v_k(10)$,
$$
c \left( \frac{h_{2k+3}}{h_{2k+1}} \right)  \;=\; \frac{h_{2k+2}}{h_{2k+1}} 
\qquad \mbox{ and } \qquad
c \left( \frac{\ell_{4k+5}}{\ell_{4k+1}} \right)  \;=\; \frac{\ell_{4k+3}}{\ell_{4k+1}},
$$
where the $h_k$ are the even Fibonacci numbers of~\eqref{eq:evenF}.
On the other hand, on the left of $\tau^4$, where $\frac{a+1}3 < \sqrt{a}$, 
we have by Theorem~\ref{thm:main} that
$$
c(b_n) \,=\, c \left( \frac{g_{n+2}}{g_n} \right) \,=\, \frac{g_{n+2}}{g_{n+1}} \,=\,
\frac{b_{n+1}+1}3 .
$$
In other words, the function $c$ attains the value $\frac{b_n+1}3$ already at $b_{n-1}$.\MS

\NI
(ii)
Recall from Section~\ref{ss:JL} that $a_n = \bigl( \frac{g_{n+1}}{g_n} \bigr)^2$,
and that on the left of $\tau^4$, the classes $W'(a_n)$ obtained from 
$W(a_n) = g_n^2\;\! \ww(a_n)$
by adding one~$1$ were very useful to establish the Fibonacci stairs.
One may try to define similar classes at $a_n' := \bigl( \frac {h_{n+1}}{h_n} \bigr)^2$.
Denote by $W''(a_n')$ the sequence obtained from $h_n^2\;\! \ww(a_n')$
by removing three of the~$1$s at the end, and adding one~$2$.  
Thus when $n=3$ we get 
$$
a_3' = \bigl({\ts \frac{21}8\bigr)^2,} \qquad  
W''(a_3') = \left( 64^{\times 6}, 57, 7^{\times 8}, 2,1^{\times 4} \right) .
$$
It is easy to check that the tuple $\bigl( h_n h_{n+1}; W''(a_n') \bigr)$ 
satisfies the Diophantine equations~\eqref{eq:ee}.
However, when $n\ge 3$ this is {\it not}\/ an element of~$\Ee$ 
because it has negative intersection with the class $(3;2,1^{\times 6})\in \Ee$.
On the other hand, when $n=2$ this gives $(24;9^{\times 7},2,1^{\times 6}) \in \Ee$
which as we will see in Theorem~\ref{thm:78} does give an obstruction near $a=7\frac 17$,
and when $n=1$ we get $(3;2,1^{\times 6})$ itself.  
\diam
}
\end{remark}

\section{The interval $[7,9]$} \labell{s:78}

This section calculates $c$ on the interval~$[7,9]$.  
The main arguments are contained in~\S\ref{ss:78} 
and~\S\ref{ss:89}. 
We begin in~\S\ref{ss:greater7} by establishing some estimates
that are most useful on $[8,9]$  
but are also needed for some of the arguments concerning $[7,8]$ 
such as Lemma~\ref{le:7less}.

\subsection{Preliminaries} \labell{ss:greater7}

We begin with a simple result about continued fractions.
Let $q_n(a)$ be the denominator of the $n$th convergent 
$[\ell_0;\ell_1,\dots,\ell_n]$ to the continued fraction
$$
a := [\ell_0;\ell_1,\dots,\ell_N] = \ell_0+ \frac{1}{\ell_1 + \frac{1}{\ell_2+\dots}}.
$$
Thus $q_1(a)=\ell_1,q_2(a)=1+\ell_1\ell_2$ and, in general,
$q_n (a) = \ell_n q_{n-1}(a) + q_{n-2}(a)$. Then an easy induction argument shows that:

\begin{sublemma} \labell{sl:L}  
Let $L := \sum_{j=1}^N\ell_j$.  
Then $q_N(a) \ge L$. 
\end{sublemma}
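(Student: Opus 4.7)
The plan is to prove this by a straightforward induction on $N$, using the standard recursion
$q_n(a) = \ell_n\, q_{n-1}(a) + q_{n-2}(a)$ (with the convention $q_{-1} = 0$, $q_0 = 1$) together with the standing hypothesis that each $\ell_j \ge 1$. Write $L_n := \sum_{j=1}^{n} \ell_j$, so the claim is $q_n(a) \ge L_n$ for all $n \ge 1$.

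The base cases are immediate: for $N=1$, $q_1 = \ell_1 = L_1$; for $N=2$, $q_2 = 1+\ell_1\ell_2 \ge \ell_1 + \ell_2 = L_2$, since $(\ell_1-1)(\ell_2-1) \ge 0$. For the inductive step with $N \ge 3$, assume the bound holds for $N-1$ and $N-2$. Then
\[
q_N \,=\, \ell_N\, q_{N-1} + q_{N-2} \,\ge\, \ell_N\, L_{N-1} + L_{N-2},
\]
and the difference from $L_N$ is
\[
\ell_N L_{N-1} + L_{N-2} - L_N \,=\, \ell_N(L_{N-1} - 1) - \ell_{N-1}.
\]
Because $L_{N-1} - 1 = L_{N-2} + \ell_{N-1} - 1 \ge \ell_{N-1}$ (using $L_{N-2} \ge N-2 \ge 1$), and $\ell_N \ge 1$, this difference is $\ge 0$, completing the induction.

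There is no real obstacle here; the only subtlety is that the induction requires separate verification of the two small cases $N = 1, 2$, since the inductive estimate only has room to spare once $L_{N-2} \ge 1$, which forces $N \ge 3$.
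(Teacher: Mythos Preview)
Your proof is correct and is exactly the ``easy induction argument'' the paper alludes to without spelling out; the paper gives no further details beyond citing the recursion $q_n = \ell_n q_{n-1} + q_{n-2}$, so your write-up simply fills in what was left implicit.
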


In the sequel, we abbreviate $\si := \sum_{i>\ell_0}\eps_i^2 < 1$
and $\si' := \sum_{\ell_0 < i \le M-\ell_N} \eps_i^2 \le \si'$.

\begin{lemma}\labell{le:7eps} 
Assume that $(d;\mm)\in \Ee$ is such that 
$\mu (d;\mm)(a) > \sqrt{a}$ for some $a\in (\tau^4,9)$ with $\ell(a)=\ell(\mm)$.  
Assume further that $y(a)>\frac 1q$ where $q := q_N(a)$, 
and denote $v_M := \frac d{q\sqrt a}$.
Then

\vspace{0.2em}
\begin{itemize}
\item[(i)] 
$|\sum_{i\ge 1}\eps_i| \le  \sqrt {\si L}$

\vspace{0.2em}
\item[(ii)] 
If $v_M<1$ then 
$|\sum_{i\ge 1}\eps_i|\le \sqrt {\si' L}$.

\vspace{0.2em}
\item[(iii)] 
If $v_M \le \frac 12$, then $v_M > \frac 13$ and $\si' \le \frac 12$.
If $v_M \le \frac 34$ , then $\si' \le \frac 78$.

\vspace{0.2em}
\item[(iv)]  
Define $\delta := y(a)-\frac 1q >0$.
Then
$$
d \,\le\, 
\tfrac{\sqrt{a}}{\delta} \left( \sqrt{\si L}-1\right) 
\,\le\,
\tfrac{\sqrt{a}}{\delta} \left( \sqrt{\si q}-1\right) 
\,<\, \tfrac{\sqrt{a}}{\delta} \left( \tfrac{\si}
{\de v_M}-1\right) .
$$
Further, if $v_M <1$, then $\si$ can be replaced by $\gs'$.
In particular, always
$$
d \,<\, \tfrac{\sqrt{a}}{\delta}  \left( \tfrac 2\delta -1 \right)
  \,<\, \tfrac{2\sqrt{a}}{\delta^2}.
$$
\end{itemize}
\end{lemma}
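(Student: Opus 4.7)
The proof rests on three ingredients: the block structure of the weight expansion (so by Lemma~\ref{le:atmost1} the $m_i$, and hence the errors $\eps_i$, are essentially constant on each of the $N+1$ blocks), the identity $-\sum_i\eps_i = 1+\tfrac{d\delta}{\sqrt a}$ from Proposition~\ref{prop:obs}~(iv), and the squared-error bound $\sum_i\eps_i^2 < 1$ from Proposition~\ref{prop:obs}~(iii). Throughout, I would use that within each block the errors are either all equal or differ by one on the at most one exceptional block permitted by Lemma~\ref{le:atmost1}.

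For~(i) I would apply Cauchy--Schwarz over the $L=\sum_{j=1}^N\ell_j$ indices lying outside the first block to obtain $|\sum_{i\ge 1}\eps_i| \le \sqrt L\,\sqrt{\si}$. For~(ii), the hypothesis $v_M<1$ combined with $m_M\ge 1$ (which follows from $\ell(\mm)=\ell(a)$) and Lemma~\ref{le:atmost1} forces $m_i=1$ throughout the last block, so the last-block errors are all equal to the constant $1-v_M$; separating this known contribution from the Cauchy--Schwarz estimate applied only to the middle entries yields the refined bound $\sqrt{\si' L}$. For~(iii), the integrality $m_M\ge 1$ gives $\eps_i\ge 1-v_M$ on the last block, so when $v_M\le \tfrac12$ the last-block contribution to~$\si$ is at least $\ell_N/4\ge \tfrac12$, whence $\si'\le \si-\tfrac12\le \tfrac12$; the case $v_M\le \tfrac34$ runs analogously. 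The lower bound $v_M>\tfrac13$ is the subtle point: assuming $v_M\le \tfrac13$ would force the last block to contribute at least $\ell_N(2/3)^2 \ge 8/9$ to~$\si$, leaving less than $1/9$ for the first and middle blocks; combining this with $|\sum_i\eps_i| = 1+d\delta/\sqrt a>1$ via blockwise Cauchy--Schwarz and the bound $\ell_0\le 8$ coming from $a<9$ then produces a contradiction.

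For~(iv), the first inequality follows by combining~(i) with Proposition~\ref{prop:obs}~(iv): the identity rearranges to $1+d\delta/\sqrt a = |\sum_i\eps_i| \le \sqrt{\si L}$, hence $d\le\tfrac{\sqrt a}\delta(\sqrt{\si L}-1)$. The second inequality uses Sublemma~\ref{sl:L} to replace $L$ by $q$. For the third strict inequality, squaring $1+d\delta/\sqrt a \le \sqrt{\si L}$ and using $L\le q$ together with $q=d/(v_M\sqrt a)$ leads after rearrangement to $d+2\sqrt a/\delta \le \sqrt a\,\si/(v_M\delta^2)$, which is a strengthening of the stated bound $d<\tfrac{\sqrt a}\delta(\si/(\delta v_M)-1)$. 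The substitution of $\si'$ for $\si$ when $v_M<1$ is immediate from~(ii). The universal bound $d<\tfrac{\sqrt a}\delta(\tfrac 2\delta-1)$ follows by combining $\si<1$ with a case split: for $v_M\ge \tfrac12$ one has $\si/v_M<2$ directly, while for $v_M<\tfrac12$ one applies~(ii) and (iii) to bound $\si'/v_M\le (1/2)/(1/3)=3/2<2$. The main obstacle will be the verification of $v_M>\tfrac13$ in~(iii): it requires carefully combining Proposition~\ref{prop:obs}~(iii) and~(iv) with the hypothesis $a<9$, while also tracking the exceptional block from Lemma~\ref{le:atmost1} through the blockwise Cauchy--Schwarz estimates.
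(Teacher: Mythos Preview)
Your argument for~(i) has a genuine gap. Cauchy--Schwarz over the $L$ indices $i>\ell_0$ bounds $\sum_{i>\ell_0}|\eps_i|$ by $\sqrt{\si L}$, but the left side of~(i) is $\bigl|\sum_{i\ge 1}\eps_i\bigr|$, which includes the first-block contribution $\sum_{i\le\ell_0}\eps_i$. Nothing you have written controls this contribution; indeed $\ell_0|\eps_1|$ can be of order~$\sqrt{\ell_0}$. The paper handles this in a separate step, and it requires an ingredient you did not list: the positivity $\eps\cdot\ww>0$ from Proposition~\ref{prop:obs}~(ii). One first observes from Proposition~\ref{prop:obs}~(iv) and $\delta>0$ that $\sum_{i\ge 1}\eps_i<0$. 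If $\sum_{i\le\ell_0}\eps_i\ge 0$, removing it only makes a negative total more negative, so $\bigl|\sum_{i\ge 1}\eps_i\bigr|\le\bigl|\sum_{i>\ell_0}\eps_i\bigr|$. If $\sum_{i\le\ell_0}\eps_i<0$, one uses that $w_i=1$ on the first block and $w_i<1$ thereafter: with $P=\{i>\ell_0:\eps_i>0\}$ and $Q=\{i>\ell_0:\eps_i\le 0\}$, the inequality $0<\eps\cdot\ww$ forces $\sum_{i\le\ell_0}\eps_i+\sum_{i\in P}\eps_i>0$, hence $0>\sum_{i\ge 1}\eps_i>\sum_{i\in Q}\eps_i$, giving $\bigl|\sum_{i\ge 1}\eps_i\bigr|\le\sum_{i\in Q}|\eps_i|\le\sum_{i>\ell_0}|\eps_i|$. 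Only after this reduction does Cauchy--Schwarz give~(i); the same reduction is needed again for~(ii).

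Your route to $v_M>\tfrac13$ in~(iii) is also more fragile than necessary: the blockwise Cauchy--Schwarz estimate you propose leaves a dependence on $L-\ell_N$ that does not obviously close. The paper's argument is much shorter and uses the weight recursion at the last block boundary instead. If $\ell_N\ge 3$ then already $\ell_N(1-v_M)^2\ge 3\bigl(\tfrac23\bigr)^2>1$, contradicting $E<1$. If $\ell_N=2$ then $w_{M-2}=x_{N-1}=\ell_N x_N=2w_M$, so $v_{M-2}=2v_M\le\tfrac23$ and hence $\eps_{M-2}\ge\tfrac13$; thus $\eps_{M-2}^2+2\eps_M^2\ge\tfrac19+\tfrac89=1$, again a contradiction. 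The remainder of your outline for~(ii), the $\si'$-bounds in~(iii), and~(iv) matches the paper's approach.
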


\begin{proof} 
{\bf Step 1:}   $\sum_{i\ge 1}\eps_i<0$.

Proposition~\ref{prop:obs} (iv) states that
\begin{equation}\labell{eq:obsiv}
 {\ts -\sum\eps_i= 1+\frac d{\sqrt a}\bigl(y(a)-\frac 1q\bigr).}
\end{equation}
Since we assume that $y(a) > \frac 1q$, Step 1 is immediate.

\MS
\NI  
{\bf Step 2:}  
$\sum _{i>\ell_0}|\eps_i| \;\ge \;|\sum_{i\ge 1}\eps_i|$.
\SSS

If $\sum_{i\le \ell_0}\eps_i\ge 0$, then by Step 1  we have
$$
|\sum_{i\ge 1}\eps_i|\le |\sum_{i>\ell_0}\eps_i|\le \sum_{i>\ell_0}|\eps_i|,
$$
as required.    
Therefore, suppose that $\sum_{i\le \ell_0}\eps_i<0$.
Let  $P= \{ i>\ell_0 \mid \eps_i >0 \}$ and
$Q= \{ i>\ell_0 \mid \eps_i \le 0 \}$.  
Because $w_i=1$ for $i\le \ell_0$, we have
\begin{eqnarray*}
0 < \eps\cdot\ww &=& \sum_{i\le \ell_0}\eps_i + \sum_{i\in P}\eps_i w_i -\sum_{i\in Q}|\eps_i|w_i\\
&<& \sum_{i\le \ell_0}\eps_i +\sum_{i\in P}\eps_i.
\end{eqnarray*}
Therefore
$$
0 \,>\, \sum_{i\ge 1} \eps_i \ge \sum_{i\in Q} \eps_i
\,=\, -\sum_{i\in Q} |\eps_i| \,\ge\, -\sum_{i>\ell_0} |\eps_i| .
$$

\NI  {\bf Step 3:}  {\it Proof of (i)}.
Let $a=[\ell_0;\ell_1,\dots,\ell_N]$ as above, and 
write $\eps$ as $N+1$ blocks each of length $\ell_j$.   
Assume first that $\eps_i$ is constant on each block
with absolute value~$\de_j$.
Let $\nu_j = \ell_j\de_j^2$ so that $|\de_j| = \sqrt{\frac{\nu_j}{\ell_j}}$.
Then
$$
\sum_{i>\ell_0} \eps_i^2 = \sum_{j\ge 1} \ell_j \de_j^2 = \sum \nu_j = \si.
$$
Hence, by Step 2,
\begin{eqnarray*}
\Bigl| \sum_{i\ge 1} \eps_i \Bigr|
&\le & 
\sum_{i>\ell_0} |\eps_i| = \sum \ell_j\,|\de_j| \\
&=& \sum \ell_j \sqrt{\tfrac{\nu_j}{\ell_j}} \\
&=&  \sum \sqrt{{\nu_j}{\ell_j}}\\
&\le& \sqrt{\sum \ell_j}\sqrt{\sum\nu_j} \le  \sqrt {\si L}.
\end{eqnarray*}
This proves (i) in the case when the $\eps_i$ are constant 
on the $j$th block for all $j\ge 1$.
But by Lemma~\ref{le:atmost1} the only other possibility is that
there is precisely one block, say the $J$th, on which $\eps_i$ is not constant.  
In that case we subdivide this block into two subblocks of lengths $\ell_J -1$ and $1$.
Since the upper bound $\sqrt{\si L}$ depends only on the sum of the $\ell_j$, the argument goes through as before. 
\MS

\NI  {\bf Step 4:}  
{\it Proof of (ii)}.
We abbreviate $M'=M-\ell_N$, 
and write $v_i :=  \frac d{\sqrt a}w_i$.
If the $v_i$ are constant on the last block and if
$v_M := \frac{d}{q\sqrt a} < 1$,
then $m_{M'+1} = \dots = m_M =1$, and so
$\eps_{M'+1} = \dots = \eps_M = 1- v_M>0$. 
Since also $\sum_i\eps_i<0$, we have
$\left| \sum_i \eps_i \right| \le \left| \sum_{i=1}^{M'} \eps_i \right|$. 
Hence, the  argument in Step~3 adapts to show that
$$
\Bigl| \sum_{i \ge 1} \eps_i \Bigr| 
\,\le\, \left| \sum_{i=1}^{M'} \eps_i \right|
\,\le\,
\sum_{i>\ell_0}^{M'} |\eps_i|
\,\le\, \sqrt {\si'L} .
$$

\ni
{\it Proof of (iii)}.
Assume that $v_M \le \frac 13$.
If $\ell_N \ge 3$, then 
$$
1 > \eps_{M'+1}^2 +\dots + \eps_M^2\ge 3 \left(\tfrac 23\right)^2 >1,
$$ 
a contradiction.
If $\ell_N =2$, then $v_{M-2} = v_{M-1}+v_M = 2v_M \le \frac 23$, and so
$$
1 > \eps_{M-2}^2 +2\eps_M^2 \ge \left(\tfrac 13\right)^2+ 2 \left(\tfrac 23\right)^2 =1,
$$ 
a contradiction. Further,
$\eps_M \ge \frac 12$ implies that
$$
\si'\,\le\, \sum_{\ell_0<i \le M-2} \eps_i^2 \,\le\, \si-\tfrac12 \,\le\, \tfrac 12.
$$
The second claim in (iii) is proved similarly.

\medskip
\ni
{\it Proof of (iv)}.
We use Sublemma~\ref{sl:L} and equation~\eqref{eq:obsiv}
to estimate
\begin{equation}  \label{est:d}
\sqrt{\sigma q} \,\ge\, \sqrt{\sigma L} \,\ge\, 1+ \tfrac{d}{\sqrt a} \left( y(a)-\tfrac 1q \right)
\,=\, 1+\tfrac{d}{\sqrt a} \delta \,=\, 1+\delta q v_M \,>\,  \delta q v_M.
\end{equation}
Therefore, $\sqrt q < \frac{\sqrt \sigma}{\de v_M }$, and so, using again~\eqref{est:d},
$$
d 
\,\le\, \tfrac{\sqrt a}{\delta} \left( \sqrt{\sigma L}-1\right) 
\,\le\, \tfrac{\sqrt a}{\delta} \left( \sqrt{\sigma q}-1\right) 
\,<\,   \tfrac{\sqrt a}{\delta}  \left( \tfrac{\sigma}{\de v_M} -1 \right) .
$$
If $v_M < 1$, we repeat this argument with $\gs$ replaced by $\gs'$.
This completes the proof.
\end{proof}

\subsection{The interval $[7,8]$} \label{ss:78}

In this section we calculate $c(a)$ on the interval $[7,8]$.
At some places, we will use the computer. 
We will therefore first prove a weaker result that does not use the computer.
 
\begin{prop}\labell{prop:7finite} 
There are only finitely many $(d;\mm)\in \Ee$ for which
there is $ a\ge 7$ with $c(a) = \mu(d;\mm)(a)>\sqrt {a}$.
\end{prop}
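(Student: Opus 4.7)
The plan is to bound the coefficient $d$ of any contributing class uniformly, up to finitely many isolated exceptional centers; finiteness of the proposition then follows at once from the defining Diophantine identities $\sum m_i = 3d-1$ and $\sum m_i^2 = d^2+1$ of $\Ee$. By Corollary~\ref{cor:2}, any witness $a$ with $c(a) = \mu(d;\mm)(a) > \sqrt a$ must lie in the bounded interval $[7,9)$, on which $y(a) = a+1-3\sqrt a$ is bounded below by the positive constant $y_0 := y(7) = 8-3\sqrt 7$.

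For each qualifying class $(d;\mm)$, Lemma~\ref{le:I} supplies a unique central point $a_0 = p_0/q_0$ (in lowest terms) of its active interval, characterised by $\ell(a_0) = \ell(\mm) =: M_0$ and automatically satisfying $\mu(d;\mm)(a_0) > \sqrt{a_0}$. Setting $\delta_0 := y(a_0) - 1/q_0$, I would split into two regimes according to the size of $q_0$. When $q_0 \ge \lceil 2/y_0 \rceil$, one has $\delta_0 \ge y_0/2$, and Lemma~\ref{le:7eps}(iv) applied at $a_0$ yields the uniform bound
\[
d \,<\, \frac{2\sqrt{a_0}}{\delta_0^2} \,\le\, \frac{24}{y_0^2},
\]
depending only on $y_0$. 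When $q_0 < \lceil 2/y_0 \rceil$, the center $a_0$ ranges over a finite explicit list of rationals in $[7,9)$; at each such $a_0$ the length $M_0$ is fixed, and combining Proposition~\ref{prop:obs}~(iii) with Proposition~\ref{prop:obs}~(iv) via Cauchy--Schwarz gives
\[
\Bigl|1 + \tfrac{d\,\delta_0}{\sqrt{a_0}}\Bigr| \,=\, \Bigl|\sum \eps_i\Bigr| \,\le\, \sqrt{M_0}\,\|\eps\| \,<\, \sqrt{M_0},
\]
so that $d\,|\delta_0| < \sqrt{a_0}(\sqrt{M_0}+1)$, bounding $d$ whenever $\delta_0 \ne 0$.

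The main obstacle is the degenerate subcase $\delta_0 = 0$, for which the error estimate is mute. The identity $q_0\,y(a_0) = 1$ translates into the Diophantine equation $(p_0+q_0-1)^2 = 9\,p_0\,q_0$; in lowest terms this forces $a_0 = (s/t)^2$ with $\gcd(s,t)=1$ and $s^2 - 3st + t^2 = 1$ (a Fibonacci-type Pell equation whose solutions are squares of ratios of consecutive even-indexed Fibonacci numbers), and the only solution in $[7,9)$ is the isolated point $a_0 = 64/9$, where $\sqrt{a_0} = 8/3$ is rational and $\ell(a_0) = 16$. At this exceptional point I would argue directly: writing $S := \sum_{i\le 7} m_i$ and $T := \sum_{i>7} m_i$, the strict inequality $\mu(d;\mm)(64/9) > 8/3$ forces $8S > 21d+1$, while the constraint $T \ge 9$ (from $\ell(\mm) = 16$) together with the Cauchy--Schwarz bound $\sum m_i^2 \ge S^2/7 + T^2/9$ and $\sum m_i^2 = d^2+1$ pinches $d$ into a bounded range; a brief discriminant analysis in fact rules out all solutions, so no class is centered at $64/9$. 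Combining the two regimes completes the uniform bound on $d$, and the Diophantine identities then leave only finitely many tuples $\mm$, proving the proposition.
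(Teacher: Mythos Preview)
Your overall strategy matches the paper's: pass to the center $a_0$ via Lemma~\ref{le:I}, then invoke Lemma~\ref{le:7eps}(iv) to bound $d$ uniformly once $q_0$ is large.  The difference is in how you treat small $q_0$.  The paper simply appeals to Corollary~\ref{cor:fin} at each of the finitely many candidate centers; taken literally that corollary only bounds the classes \emph{achieving} $c(a_0)$, not all classes obstructive there, so your direct use of Proposition~\ref{prop:obs}(iii)--(iv) together with Cauchy--Schwarz is both different and more self-contained.  Your treatment of the degenerate case $\delta_0=0$ is also new: you correctly identify $a_0=64/9$ as the unique solution of $(p+q-1)^2=9pq$ in $(7,9)$ and dispose of it by a direct estimate.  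In fact your two constraints there bite harder than you say---from $T\ge 9$, $8S>21d+1$ and $S+T=3d-1$ one gets $d\ge 28$, while the Cauchy--Schwarz bound $d^2+1\ge S^2/7+T^2/9$ with $S\ge(21d+2)/8$ forces $d\le 9$---so no class is centered at $64/9$ at all.  (Alternatively, Proposition~\ref{prop:7easy} already gives $c(64/9)=8/3=\sqrt{64/9}$, which rules out $64/9$ as a center without any computation.)

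One small gap to close: you tacitly use $a_0\in(7,9)$, but the obstruction interval $I$ could contain $7$.  The paper handles this first: if $7\in I$ then $\ell(\mm)\le\ell(7)=7$, so $(d;\mm)$ lies in the finite set $\Ee_7$; otherwise $I\subset(7,9)$ by connectedness and Corollary~\ref{cor:2}, whence $a_0\in(7,9)$ and your estimates apply.
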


\begin{proof} 
Suppose that $\mu(d;\mm)(a)>\sqrt {a}$ for some $a\ge 7$.
Let~$I$ be the maximal open interval containing~$a$ on which $\mu(d;\mm)(z)>\sqrt z$, 
and let $a_0\in I$ be the unique element with $\ell(a_0) = \ell(\mm)$.   
(This exists by Lemma~\ref{le:I}.)  
If $7 \in I$, then clearly $a_0=7$ so that $(d;\mm)$ belongs to the finite set~$\Ee_7$.
Otherwise, $a_0>7$. 
In particular, $y(a_0) > y(7) = 8-3\sqrt 7 > \frac 1{20}$.
Moreover $a_0<9$ by Corollary~\ref{cor:2}.

Now write $a_0=p/q$.  
There are only finitely many $a=\frac pq \in [7,9]$ with $q \le 40$,
and for each of them Corollary~\ref{cor:fin} shows that there are only finitely many  
obstructive~$(d;\mm)$. 
We can therefore assume that $q := q(a_0) \ge 40$ so that $y(a_0)-\frac 1q \ge \frac 1{40} >0$.
Since $\ell(a_0)= \ell(\mm)$  we can apply the last statement of Lemma~\ref{le:7eps} to conclude that   
$$
d \,\le\, 2 (40)^2 \sqrt{a_0} \,<\, 6 (40)^2.
$$
Since for each $D$ there are only finitely many $(d;\mm) \in \Ee$ with $d\le D$, 
this completes the proof.
\end{proof}

\begin{rmk} \labell{rmk:7finite}
\rm  
The result in Proposition~\ref{prop:7finite} clearly extends to any interval of 
the form~$[a,b]$ provided that $a>\tau^4$.
\diam
\end{rmk}

We already know that $c(a) = \frac 83$ on $[7, 7 \frac 19]$ by Proposition~\ref{prop:7easy}.
We can therefore assume that $a \in [7\frac 19,8]$.

In order to explain our notation in Theorem~\ref{thm:78} below, 
we work out the constraint given by the class 
$$
 (d;\mm) \,=\, \left( 48; 18^{\times 7},3,
2^{\times 7}\right) \,\in\, \Ee.
$$
Note that $\ell(\mm) = 7+8 = \ell (7 \frac 18)$.
It gives the constraint 
$\mu (d;\mm) (7 \frac 18) = \frac{1025}{384} > \sqrt{7 \frac 18}$ at $7 \frac 18$.
For $a=7+x$ with $x \in [\frac 19, \frac 18]$ we have
$\ww (a) = \left( 1^{\times 7}, x^{\times 8}, \dots \right)$.
Therefore,
$$
\mm \cdot \ww (a) = 7 \cdot 18 + 3x +14x = 126+17x = 7 + 17a,
$$
and so $\mu(d;\mm)(a) = \frac 1{48}(7+17a)$.
Note that $\frac 1{48}(7+17a) = \sqrt a$ at $u_{\frac 18} := 7.12499$ 
(where the last decimal is rounded).
Similarly, for $a=7+x$ with $x \in [\frac 18, \frac 17]$ we have
$\ww (a) = \left( 1^{\times 7}, x^{\times 7},1-7x, \dots \right)$.
Therefore,
$$
\mm \cdot \ww (a) = 7 \cdot 18 + 3x +12x +2-14x = 128+x = 121 + a,
$$
and so $\mu(d;\mm)(a) = \frac 1{48}(121+1a)$.
Note that $\frac 1{48}(121+a) = \sqrt a$ at $v_{\frac 18} := 7.12501$ (where the last decimal is rounded).
The interval containing $a=7 \frac 18$ on which this class gives a constraint is therefore
$I_{\frac 18} := [u_{\frac 18}, v_{\frac 18}]$, and
\begin{equation*} 
\mu(d;\mm)(z) \,=\,
\left\{
\begin{array} {rl}
 \frac 1{48}(7+17z) &\text{ if } z \in \bigl[u_{\frac 18}, 7 \frac 18\bigr] \\ [0.6em]
 \frac 1{48}(121+z) &\text{ if } z \in \bigl[7 \frac 18, v_{\frac 18}\bigr].
 \end{array}\right.
\end{equation*}
All this is expressed in the first row of the table below.
In the same way we compute $(A,B)$, $(A',B')$, $u_x$, $v_x$ and $\mu(a) := \mu(d;\mm)(a)$ at $a=7+x$ 
for the other seven classes in the table below,
where we write 
$\mu(z) = \frac 1d(A+Bz)$ for $z$ just less than~$a$ and 
$\mu(z) = \frac 1d(A'+B'z)$ for $z$ just greater than~$a$.
Note that the eight intervals $[u_x,v_x]$ are all disjoint.

\begin{thm} \label{thm:78}
For $a \in [7 \frac 19,8]$ we have $c(a) = \sqrt a$ except for the eight intervals $[u_x,v_x]$
where $c(a)$ is as described in the following table.
\end{thm}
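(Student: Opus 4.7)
The plan is to combine three ingredients: (a) verify the eight tabulated classes really lie in~$\Ee$ and yield the stated piecewise-linear constraints; (b) reduce the search for any other obstruction to a finite list of ``center points'' $a_0 = p/q$; (c) eliminate all candidates except the eight listed ones (and the hidden classes of Remark~\ref{rmk:hid}(i)).

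For step~(a), each of the eight tuples is checked to satisfy the Diophantine conditions~\eqref{eq:ee} and, by Proposition~\ref{prop:eek}, to reduce to $(0;-1,0,\dots,0)$ under a sequence of standard Cremona moves. These reductions are routine but tedious; I would record them in a separate lemma. Having $(d;\mm)\in\Ee$, the formulas for $\mu(d;\mm)(z) = \tfrac1d\mm\cdot\ww(z)$ on both sides of the center $a = 7+x$ follow from the explicit shape of the weight expansion at rationals just below and above the break point $a$ (using Lemma~\ref{le:w1}); the endpoints $u_x,v_x$ are then just the solutions of $d\sqrt z = A+Bz$, resp.\ $A'+B'z$. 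A short calculation checks that the eight intervals $[u_x,v_x]$ are pairwise disjoint, so the constraints do not interfere with each other.

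For step~(b), suppose $(d;\mm)\in\Ee$ is an obstructive class with $\mu(d;\mm)(a) > \sqrt a$ for some $a\in[7\tfrac19,8]$. By Lemma~\ref{le:I} the maximal open interval $I$ on which $\mu(d;\mm) > \sqrt z$ contains a unique center $a_0 = p/q$ with $\ell(\mm)=\ell(a_0)$. The analysis now splits according to whether $y(a_0) > \tfrac1q$ or not. Writing $a_0 = 7+\tfrac rq$, one checks that $y(a_0)-\tfrac1q \le 0$ only for the arithmetic-special points $a_0 = 7\tfrac1k$ (and, in a similar range, $a_0 = 7\tfrac2{2k+1}$); these must be handled by purely arithmetic means, following the scheme of Lemma~\ref{le:7k}. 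Specifically, at these points Lemma~\ref{le:atmost1} forces $\mm$ to take one of very few shapes, and the equations~\eqref{eq:ee} can then be solved directly, yielding exactly the contributing classes centered at $7\tfrac18,7\tfrac17,7\tfrac15$ and at $7\tfrac2{13},7\tfrac2{15}$, plus the four hidden classes of Remark~\ref{rmk:hid}(i). At all other centers, $\delta:= y(a_0)-\tfrac1q > 0$, and Lemma~\ref{le:7eps}(iv) gives $d < \tfrac{2\sqrt{a_0}}{\delta^2}$, which is a concrete bound once one knows a lower bound for $\delta$ in terms of~$q$.

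For step~(c), this bound combined with Lemma~\ref{le:atmost1} restricts $\mm$ to have a very constrained block structure: the first block of length~$7$ forces most entries to lie in $\{m_1,m_1-1\}$ with at most one exception, and Lemma~\ref{le:irrel} further restricts the later blocks. This yields a finite, enumerable list of candidate tuples, which I would run through the two computer programs described in Appendix~\ref{app:comp} to verify that every such tuple either (i) fails to lie in~$\Ee$, (ii) has $\mu(d;\mm) \le \sqrt a$ on $[7\tfrac19,8]$, or (iii) coincides with one of the eight tabulated classes. The main obstacle is the bookkeeping in this last case analysis: the bound on $d$ worsens as $a_0$ approaches the boundary points $7$ and $8$ from inside (where $\delta$ is small) and where $q$ can still be moderately large, so sharper forms of Lemma~\ref{le:7eps}, particularly parts~(ii) and~(iii) which exploit $v_M < 1$, are needed to keep the search finite and tractable. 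Once all candidate centers with $d$ below the bound have been eliminated, together with step~(a) this establishes the full description of $c(a)$ given in the table.
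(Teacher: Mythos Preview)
Your outline is essentially the paper's own proof: reduce to centers $a_0$ via Lemma~\ref{le:I}, treat the boundary points $7\tfrac1k$ arithmetically (Lemma~\ref{le:7k}), bound~$d$ at all other centers via Lemma~\ref{le:7eps}, and finish with the two computer searches of Appendix~\ref{app:comp}. Two small corrections are in order. First, your list of contributing centers from the arithmetic analysis is incomplete: Lemma~\ref{le:7k} also produces the classes centered at $7\tfrac14$, $7\tfrac12$, and~$8$, which appear in the table. Second, the reason the midpoints $z_k = 7\tfrac{2}{2k+1}$ are treated separately is not that $\delta = y(a_0) - \tfrac1q \le 0$ there; in fact $\delta > 0$ at every~$z_k$ (this is checked in the proof of Lemma~\ref{le:7less}(i)). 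Rather, $z_k$ is singled out because it is the unique point in $\,]7\tfrac1{k+1},7\tfrac1k[\,$ with $L = k+2$ minimal, which yields the somewhat weaker bound $D(z_k)$ in Table~\eqref{t:2}, while all other interior points have $L \ge k+3$ and the sharper bound $D_k$ of Table~\eqref{t:3}. With these adjustments your sketch matches the paper's argument.
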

\begin{equation}\labell{t:1}
\begin{array}{|l||l|c|c|l|l|c|r|} 
 \hline
 a&(d;\mm)& (A,B)& (A',B')&u_x&v_x&\mu(a) & \mu(a)-\sqrt a \\ 
 \hline 
 7\frac 18&(48;18^{\times 7},3,2^{\times 7})&(7,17)&(121,1)& 7.12499 & 7.12501 
      &\tfrac {1025}{384} & 1.27 \,10^{-6} \\ 
 \hline
 7\frac 2{15}&(64;24^{\times 7},3^{\times 7},1^{\times 2})&(14,22)&(121,7)&7.1333&7.1334
      &\tfrac{641}{240} & 3.25 \,10^{-6}\\
 \hline
 7\frac 17&(24;9^{\times 7},2,1^{\times 6})&(7,8)&(57,1)&7.1428&7.1429
      &\tfrac{449}{168} & 6.63 \,10^{-6}\\
 \hline
 7\frac 2{13}&(40;15^{\times 7},2^{\times6},1^{\times 2})&(14,13)&(107,0)&7.151&7.156
      &\tfrac{107}{40} & 332.5 \,10^{-6}\\
 \hline
 7\frac 15&(16;6^{\times 7},1^{\times 5})&(7,5)&(43,0)&7.1665&7.22
      &\tfrac{43}{16} & 4218.4 \,10^{-6}\\
 \hline
 7\frac 14&(35;13^{\times 7},4,3^{\times 3})&(0,13)&(87,1)&7.2485&7.252
     &\tfrac{377}{140} & 274.7 \,10^{-6}\\
 \hline
 7\frac 12&(8;3^{\times 7},1^{\times 2})&(7,2)&(22,0)&7.328&7.56
       &\tfrac{11}4 & 11387.2 \,10^{-6}\\
 \hline
 8&(6;3,2^{\times 7})&(1,2)&(17,0)&7.97&8.03
      &\tfrac{17}6 & 4906.2 \,10^{-6}\\
 \hline
\end{array}
\end{equation}

\NI 
\begin{rmk}
\rm  
(i) 
The above table gives just enough decimal places of the (irrational) numbers $u_x, v_x$ to describe their important features. For example
$u_{\frac 12} = \frac 12 \left( 9+4 \sqrt 2 \right) \approx 7.328 < 7\frac 13$.

\MS
\NI (ii) 
In the above table there is one constraint centered at each point of the form 
$7\frac 1k$ for $2\le k\le 8$, except for $k=3$ and $k=6$. 
In fact, there are classes $(d;\mm)$ giving constraints centered at 
$7 \frac 16$ and $7 \frac 13$,
namely 
$$
\left( 96; 36^{\times 6}, 35, 6^{\times 6} \right) \; \text{ at }\, 7\tfrac 16
\qquad \text{ and } \qquad
\left( 24; 9^{\times 6}, 8, 3^{\times 3} \right) \; \text{ at }\, 7\tfrac 13 .
$$
But these $(d;\mm)$ have the property that $\mu(d;\mm)(a)= c(a)$
only at their center points.
(See the proof of Theorem~\ref{thm:78} at the end of this section for details).
\MS

\NI(iii)
The four steps
at the points $7 \frac 18$, $7 \frac 2{15}$, $7 \frac 17$, 
$7 \frac 14$ are the only ones in the graph of $c(a)$ that 
are not flat to the right.
\diam
\end{rmk}

To prove Theorem~\ref{thm:78} we will proceed as follows.
Assume that $(d;\mm) \in \Ee$ is a class with $\ell(a)=\ell(\mm)$ and $\mu(d;\mm)(a) > \sqrt{a}$  
for some $a \in [7 \frac 19,8]$.
We first assume that $a=7\frac 1k$ for some $k \in \{1,\dots,8\}$, 
and find all such classes $(d;\mm)$.
We then assume that $a \in \;]7\frac{1}{k+1}, 7\frac 1k[$, 
and prove an upper bound $D(z_k)$ for $d$ if $a = z_k := 7\frac 2{2k+1}$
and an upper bound $D_k$ if $a \neq z_k$.
In both cases, we also show that $m_1=\dots=m_7$.
We then use a simple computer program to find all classes $(d;\mm)$ as above 
at $z_k$ with $d \le D(z_k)$.
Finally, we use another computer program to find all classes $(d;\mm)$ as above at some $a \neq z_k$ with $d \le D_k$.

We start by looking at the boundary points $7 \frac 1k$ 
of our subintervals $[7\frac{1}{k+1}, 7\frac 1k]$.
\begin{lemma} \labell{le:7k} 
The classes $(d;\mm) \in \Ee$ such that 
$\ell(7 \frac 1k) = \ell(\mm)$ and 
$\mu(d;\mm)(7 \frac 1k) > \sqrt{7 \frac 1k}$ are
\begin{equation} \labell{t:le:7k}
\begin{array}{|c|l|c|l|}\hline
k & (d;\mm)&k &(d;\mm)\\ \hline
8&
 \left(  48;  18^{\times 7},   3,  2^{\times 7} \right)  
           &8 &\left( 384; 144^{\times 6}, 143, 18^{\times 8} \right)\\ \hline
 7&          
 \left(  24;   9^{\times 7},   2,  1^{\times 6} \right)  
  & 7& \left( 168;  63^{\times 6},  62,  9^{\times 7} \right) \\ \hline
  6& 
 \left(  96;  36^{\times 6},  35,  6^{\times 6} \right)&5&
\left(  16;   6^{\times 7},  1^{\times 5} \right) \\
\hline
4& \left(  35;  13^{\times 7},   4, 3^{\times 3} \right)
&3&\left(  24;   9^{\times 6},   8, 3^{\times 3} \right)\\ \hline
2&
\left(   8;   3^{\times 7},   1^{\times 2} \right)&1& \left(   6;     3,   2^{\times 7} \right) \\\hline
\end{array}
\end{equation}
\end{lemma}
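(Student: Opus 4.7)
My plan is to enumerate candidate classes $(d;\mm)$ directly from the Diophantine identities of Proposition~\ref{prop:eek}(i), filter by the inequality $\mu(d;\mm)(a)>\sqrt a$, and verify membership in $\Ee$ by Cremona reduction. Set $a := 7\tfrac1k$, so that $\ww(a) = (1^{\times 7},(1/k)^{\times k})$ (the two blocks fusing into $1^{\times 8}$ when $k=1$), $\ell(a) = 7+k$, and $\sum_i w_i = 8$.

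I will begin by restricting the shape of $\mm$. Since $\ell(\mm)=\ell(a)$ and $\mu(d;\mm)(a)>\sqrt a$, Proposition~\ref{prop:obs}(iii) gives $\sum\eps_i^2<1$, and Lemma~\ref{le:atmost1} then forces at most one block of $\mm$ to carry an anomaly of size $\pm 1$. Writing $m$ and $n$ for the constant first- and second-block values, this leaves five shapes for $\mm$ (three when $k=1$): $(m^{\times 7},n^{\times k})$, $(m{+}1,m^{\times 6},n^{\times k})$, $(m^{\times 6},m{-}1,n^{\times k})$, $(m^{\times 7},n{+}1,n^{\times(k-1)})$, and $(m^{\times 7},n^{\times(k-1)},n{-}1)$. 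From $\sum w_i = 8$ and $\sum m_i = 3d-1$ it follows that $d(3 - 8/\sqrt a) = 1 + \sum\eps_i$, so Cauchy--Schwarz with $\sum\eps_i^2<1$ yields the explicit bound $d < (1+\sqrt{7+k})/(3 - 8/\sqrt a)$. The denominator is positive since $a > 64/9$ for all $k\le 8$, and the near-parallel bounds $|m-d/\sqrt a|<1$ and $|n-d/(k\sqrt a)|<1$ pin each admissible $(m,n)$ to $O(1)$ integer choices per admissible~$d$.

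For each of the five shapes I will substitute into $\sum m_i^2 = d^2+1$ to obtain a quadratic Diophantine equation in $(m,n)$. Combined with the linear relation and the near-parallel bounds, this produces a short explicit list of triples $(d,m,n)$, which I then filter by the strict inequality $\mu(d;\mm)(a)>\sqrt a$. Each surviving candidate is tested for membership in $\Ee$ by applying standard Cremona moves (Proposition~\ref{prop:eek}(iii)) until either the class reduces to $(0;-1,0,\dots,0)$ or it fails to reduce; alternatively, for small~$d$ one may recognise the class directly in~$\Ee_8$ via Lemma~\ref{le:eekfin}. The survivors are exactly the classes in Table~\eqref{t:le:7k}.

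The main obstacle is that the upper bound $(1+\sqrt{7+k})/(3-8/\sqrt a)$ deteriorates rapidly as $a\to(64/9)^+$: at $k=8$ it is already of order $10^3$, so the enumeration must locate the large and non-obvious classes $(168;63^{\times 6},62,9^{\times 7})$ at $k=7$ and $(384;144^{\times 6},143,18^{\times 8})$ at $k=8$ while simultaneously ruling out all other candidates. What keeps this tractable by hand is the sparsity of integer solutions to the Pell-type quadratic equations produced in the previous step, together with the tight near-parallel constraints that restrict $(m,n)$ to a handful of pairs for each admissible~$d$; essentially all candidate $d$ are killed immediately by the congruence constraint forcing $3\mid 7m+kn+\alpha+1$ (where $\alpha\in\{-1,0,1\}$ records the anomaly) or by failure of the quadratic equation.
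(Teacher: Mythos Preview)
Your proposal is correct and uses the same essential ingredients as the paper: Lemma~\ref{le:atmost1} to restrict to five (resp.\ three) shapes, the Diophantine identities~\eqref{eq:ee} to cut down to a finite list, the obstruction inequality as a filter, and Cremona reduction to verify membership in~$\Ee$.

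The only substantive difference is organizational. You bound~$d$ a priori via $d < (1+\sqrt{7+k})/(3-8/\sqrt a)$ and then, for each admissible~$d$, use the near-parallel constraints to locate $(m,n)$. The paper instead introduces the parameter $s := M - km$ (where $M,m$ are the first- and second-block values), shows $s$ lies in a set of at most five integers depending on the shape and on~$k$, and then for each fixed~$s$ eliminates~$d$ from~\eqref{eq:ee} to obtain a single quadratic in~$m$ with at most two integer roots. This sidesteps the enumeration over~$d$ entirely, which matters precisely where your bound deteriorates: at $k=8$ your $d$-range has order~$10^3$, whereas the paper has only $\le 5$ values of~$s$ per shape and per~$k$, each yielding a quadratic solvable by inspection. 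Your claim that congruence and Pell-type sparsity make the large-$d$ enumeration tractable by hand is plausible but would need to be carried out; the paper's $s$-parametrization is what makes the computation genuinely short.
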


\begin{proof}  

We first look at the case $a = 7 \frac 11 = 8$.
Then $\ell(\mm) = \ell(8) = 8$.
By Lemma~\ref{le:atmost1} we need to consider 3~cases, namely
$\mm = (M^{\times 8})$,
$\mm = (M+1,M^{\times 7})$,
$\mm = (M^{\times 7},M-1)$.
Consider the case $\mm = (M^{\times 8})$.
From the Diophantine equations
\begin{equation*} 
\left\{
\begin{array} {rcl}
 3d  &=& 8M+1 \\
 d^2 &=& 8M^2-1 \\
 \end{array}\right.
\end{equation*}
we obtain $(8M+1)^2 = 9 \left( 8M^2-1\right)$, 
i.e.~$4M^2-8M-5 = 0$.
This equation has no solution in~$\N$.
In the case $\mm = (M+1,M^{\times 7})$, the Diophantine equations
give 
$$
(8M+1+1)^2 \,=\, 9 \left( 8M^2+2M+1-1\right)
$$
whose only solution in~$\N$ is $M=2$, giving the solution
$(d;\mm) = (6;3, 2^{\times 7})$.
In the case~$(M^{\times 7},M-1)$, 
the Diophantine equations
give $(8M-1+1)^2 = 9 \left( 8M^2-2M+1-1\right)$, which has no solution in~$\N$.

Assume now that $k \in \{2, \dots, 8\}$.
In view of Lemma~\ref{le:atmost1}, 
there are five possibilities for $\mm$, namely
\begin{gather*}
(M^{\times 7},m^{\times k}),\;\;  
(M+1,M^{\times 6},m^{\times k}), \;\;
 (M^{\times 6},M-1,m^{\times k}), \\
(M^{\times 7},m+1,m^{\times (k-1)}),\;\;
 (M^{\times 7},m^{\times (k-1)},m-1).
\end{gather*}
Since $\ell (\mm) = \ell (7 \frac 1k) =7+k$,
in the first four cases we can assume that $m \ge 1$ 
and in the last case we can assume that $m-1 \ge 1$.
We define $\eps_M$ and $\eps_m$ by
$$
M = \tfrac{d}{\sqrt{7 \frac 1k}}+\eps_M, 
\quad
m = \tfrac{d}{k \sqrt{7 \frac 1k}}+\eps_m .
$$  

\MS
\ni
{\bf Case 1. $\mm = (M^{\times 7},m^{\times k})$.}
Then $|M-km| = |\eps_M-k\eps_m| \le |\eps_M|+k|\eps_m|$.
Since $|\eps_M|^2 +k|\eps_m|^2<1$, we find $|\eps_M|+k|\eps_m| < \sqrt{k+1}$, and so
$|M-km| \le \lceil \sqrt{k+1}-1 \rceil \in \{0,1,2\}$.
Set
\begin{equation*} 
s \,=\, M-km \,\in\, 
\left\{
\begin{array} {ll}
 \{0, \pm 1 \}       &\mbox{ if }\;  k \in \{2,3\}, \\
 \{0, \pm 1, \pm 2\} &\mbox{ if }\;  k \in \{4, \dots, 8\}. 
 \end{array}\right.
\end{equation*}
From the Diophantine equations
\begin{equation*} 
\left\{
\begin{array} {rcl}
 3d  &=& 7M+km+1 \\
 d^2 &=& 7M^2+km^2-1 \\
 \end{array}\right.
\end{equation*}
we obtain $(7M+km+1)^2=9 \left( 7M^2+km^2-1\right)$.
Since $M=km+s$, this becomes
\begin{equation*} 
10+km \left( 16-9m+km\right) +14s \left( 1-km-s\right) \,=\, 0 . 
\end{equation*}
If $s=1$, this is
$$
10 + k m \left( 2- 9 m + k m\right) \,=\, 0,
$$
which has solutions in $\N$ only if $k=5$ or $2$, 
namely $m=1$, giving 
$$
\left( 16; 6^{\times 7}, 1^{\times 5}\right) \text{ at } 7 \tfrac 15, \qquad
\left( 8; 3^{\times 7}, 1^{\times 2}\right) \text{ at } 7 \tfrac 12.
$$
No other allowed values for $s$ and $k$ yield integer solutions~$m$.

\MS
\ni
{\bf Case 2. $\mm = (M+1,M^{\times 6},m^{\times k})$.}
Then $\si = k |\eps_m|^2 \le \frac 17$.
Therefore,
$|M-km| \le |\eps_M|+k|\eps_m| \le \frac 1{\sqrt 6} + \sqrt{\frac k7}$,
and so
\begin{equation} \label{e:sM} 
s \,:=\, M-km \,\in\,
\left\{
\begin{array} {ll}
  \{0\}              &\mbox{ if }\;  k = 2, \\
  \{0, \pm 1 \} &\mbox{ if }\;  k \in \{3, \dots, 8\}. 
 \end{array}\right.
\end{equation}
In this case, the Diophantine equations translate to
$$
(7M+km+2)^2 \,=\, 9 \left( 7M^2+2M+1+km^2-1\right) .
$$
With $M=km+s$ this becomes
\begin{equation} \label{e:kmsM+1}
-4 
-k m \left( 14 -9m +km\right)
+2s \left( -5 +7km +7 s\right)
\,=\, 0 . 
\end{equation}
If $s=0$, this becomes
$$
-4 
-k m \left( 14 -9m +km\right)
\,=\, 0 
$$
which has no solution in $\N$ for $k \in \{ 2, \dots, 8\}$.
For $s = \pm 1$ and $k \in \{3, \dots, 8\}$ equation~\eqref{e:kmsM+1} has no solution in~$\N$.

\MS
\ni
{\bf Case 3. $\mm = (M^{\times 6},M-1,m^{\times k})$.}
As in Case~2 we have~\eqref{e:sM}.
In this case, the Diophantine equations translate to
$$
(7M+km)^2 \,=\, 9 \left( 7M^2-2M+1+km^2-1\right).
$$
With $M=km+s$ this becomes
\begin{equation} \labell{e:kmsM-1}
-k m \left( 18 -9m +km\right)
+2s \left( -9 +7km +7 s\right)
\,=\, 0 . 
\end{equation}
If $s=0$, this becomes
\begin{equation*} 
18 -9m +km 
\,=\, 0 
\end{equation*}
which has a solution in $\N$ for four $k$, namely $k=8,7,6$, and $3$. 
This gives the first four of the five entries in the table with $m_1\ne m_7$.

If $s = 1$, equation~\eqref{e:kmsM-1} becomes
$$
-4 -km \left( 4 -9 m +km \right) \,=\, 0, 
$$
which has a solution in~$\N$ only for $k=4$.
We get the solution $(13; 5^{\times 6}, 4,1^{\times 4})$,
which is, however, not 
obstructive, since it gives
$\mu (d;\mm)(7 \frac 14) = \frac{35}{13} < \sqrt{7 \frac 14}$.

If $s = -1$, equation~\eqref{e:kmsM-1} becomes
$$
32 -km \left( 32 -9 m +km\right) 
\,=\, 0 . 
$$
It has a solution in~$\N$ only for $k=2$,
and gives $(19; 7^{\times 6}, 6,4^{\times 2})$. 
But again this class is not 
obstructive, since 
$\mu (d;\mm)(7 \frac 12) = \frac{52}{19} < \sqrt{7 \frac 12}$.

\MS
\ni
{\bf Case 4. $\mm = (M^{\times 7},m+1,m^{\times (k-1)})$.}
Note that for $\eps \in \RR$ and $k \in \N$ with $(k-1)\eps^2 +(\eps+1)^2 \le 1$
we have $\eps \in \left[ -\frac 2k, 0\right]$ and hence
$$
\left| (k-1)\eps + (\eps+1) \right| \,=\, |k\eps+1| \,\le\,  1 .
$$
Using this and $\si \ge \frac{k-1}{k}$ we estimate
\begin{eqnarray*}
\left| M-km-1\right| \,=\, \left|M-(m+1)-(k-1)m\right|
&=&   \left| \eps_M - (\eps_m+1) - (k-1)\eps_m \right| \\
&\le& |\eps_M| + \left|(k-1)\eps_m+\eps_m+1\right|     \\
&\le& \sqrt{\tfrac{1}{7k}} +1 \;<\;2.
\end{eqnarray*}
Therefore,
$$
M-1 \,=\, km+s \quad \text{ with }\, s \in \{0, \pm 1\} .
$$
In this case,
the Diophantine equations translate to
$$
(7M+km+1+1)^2 \,=\, 9 \left( 7M^2+km^2+2m+1-1\right) .
$$
With $M=km+1+s$ this becomes
\begin{equation} \label{e:kmsm+1}
-18 + 18 m 
- k m \left( 18 - 9 m +k m\right)
+ 14s  \left( k m + s \right)
\,=\, 0 . 
\end{equation}
If $s=1$, this is
$$
-4 + 18 m 
- k m \left(  4 - 9 m + k m \right)
\,=\, 0, 
$$
which has a solution in $\N$ only when $k=8, m=2$ and $k=7,m=1$, 
giving us two more entries in our table.
If $s=0$, equation~\eqref{e:kmsm+1} becomes
$$
-18 + 18 m - k m \left( 18 - 9 m + k m \right) \,=\, 0, 
$$
which has a solution in $\N$ only for $k=4, m=3$. This gives the entry in the table at $k=4$.
If $s=-1$, equation~\eqref{e:kmsm+1} has no solution in~$\N$ for $k \in \{2, \dots, 8\}$.

\MS
\ni
{\bf Case~5. $\mm = (M^{\times 7},m^{\times (k-1)},m-1)$.}
As in Case~4 we find
$$
M+1 \,=\, km+s \quad \text{ with }\, s \in \{0, \pm 1\} .
$$
In this case,
the Diophantine equations translate to
$$
(7M+km-1+1)^2 \,=\, 9 \left( 7M^2+km^2-2m+1-1\right) .
$$
With $M=km-1+s$ this becomes
\begin{equation} \label{e:kmsm-1}
-14 + 18 m 
+ k m \left( 14 - 9 m +k m \right)
+ 14 s  \left( 2 - k m - s \right)
\,=\, 0 .  
\end{equation}
If $s=1$, this becomes
$$
18 - 9 k m + k^2 m
\,=\, 0 . 
$$
It has a solution in~$\N$ only for $k=6$ and $k=3$, namely $m=1$.
Since we assumed that $m-1 \ge 1$, the corresponding classes
$(d;\mm)$ are not relevant.
If $s=0$ or if $s=-1$, equation~\eqref{e:kmsm-1} 
has no solution in~$\N$ for $k \in \{2, \dots, 8\}$.

\m
The above calculations show that the elements listed in Table~\ref{t:le:7k}  
are the only 
obstructive 
solutions to the Diophantine equations.   
One readily checks that these elements all reduce to~$(0;-1)$ under standard Cremona moves,
and therefore belong to~$\Ee$.
\end{proof}

\begin{rmk} \label{rmk:computer1}
\rm
Proceeding as in the proof of Lemma~\ref{le:7k},
one can find all classes $(d;\mm) \in \Ee$ with 
$\mu(d;\mm)(z_k) > \sqrt{z_k}$ and $\ell(z_k)=\ell(\mm)$
at the points $z_k := 7 \frac{2}{2k+1}$, $k \in \{1, \dots, 8\}$,
namely
$$
\left(  64;  24^{\times 7},  3^{\times 7},  1^{\times 2} \right)  \text{ at }\, 7 \tfrac 2{15} 
\quad \mbox{ and } \quad
\left(  40;  15^{\times 7},  2^{\times 6},  1^{\times 2} \right)  \text{ at }\, 7 \tfrac 2{13} .
$$
For convenience, we will find these classes by a different method,
that involves the first of the two computer programs of Appendix~\ref{app:comp}.
\diam
\end{rmk}

We next derive upper bounds for $d$ if 
$a \in \;]7 \frac{1}{k+1}, 7 \frac 1k[$.
There are various ways to do this.
We will give arguments that give rather low upper bounds,
so that our method of finding $c(a)$ depends as little as possible on computer computations
(compare Remark~\ref{rem:method} below). 
Note that $a \in \;]7 \frac{1}{k+1}, 7 \frac 1k[$ has $N+1$ blocks with $N \ge 2$, 
and that $L := \sum_{i\ge 1}\ell_i\ge 2+k$ with equality 
exactly if $a = [7;k,2] = 7\frac 2{2k+1} = z_k$.
  
\begin{lemma}\labell{le:7less} 
Suppose that $(d;\mm) \in \Ee$ is such that
$\mu(d;\mm)(a)=c(a) > \sqrt{a}$ for some $a$ with $\ell(a)=\ell(\mm)$.  
Suppose also that~$a$ has $N+1$ blocks for some $N \ge 2$
and that $a \in \;]7\frac 1{k+1}, 7\frac 1{k}[$ where $8\ge k \ge 1$.  
Then $m_1=m_7$.  
Further
\begin{itemize} \item[(i)]
When $L=2+k$, the following table gives the
maximum possible values~$D(z_k)$ of~$d$ for the different~$k$.  
\begin{equation}\labell{t:2}
\begin{array}{|c||c|c|c|c|c|c|c|c|c|}\hline
k      &   8&   7&   6&   5&   4&   3&   2&   1 \\ \hline
D(z_k) & 104 & 98 & 92 & 86 & 79 & 73 & 69 & 75 \\
\hline
\end{array}
\end{equation}

\item[(ii)] 
When $L>2+k$, the following table gives the
maximum possible values~$D_k$ of~$d$ for the different~$k$.  
\begin{equation}\labell{t:3}
\begin{array}{|c||c|c|c|c|c|c|c|c|c|} \hline
k &  8 &  7 &  6 &  5 &   4 &  3 &  2 &  1 \\ \hline
D_k & 88 & 81 & 74 & 67 &  61 & 56 & 64 & 66 \\
\hline
\end{array}
\end{equation}
\end{itemize}
\end{lemma}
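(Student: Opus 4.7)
The proof rests on the error estimates of Lemma~\ref{le:7eps}, especially part~(iv). For any $a \in (7\tfrac 1{k+1}, 7\tfrac 1k) \subset (\tau^4, 9)$, the integer part of~$a$ is~$7$, so the first block of $\ww(a)$ is $1^{\times 7}$ and $\ell_0 = 7$. Writing $a = [7;k,\ell_2,\ldots,\ell_N]$ with $\ell_N \ge 2$ and $N \ge 2$, we have $L = k + \sum_{j\ge 2}\ell_j \ge k+2$, with equality iff $N = 2$ and $\ell_2 = 2$, i.e.\/ $a = z_k$; in case~(ii) we have $L \ge k+3$. Sublemma~\ref{sl:L} gives $q \ge L$, and a direct check for each $k\in\{1,\ldots,8\}$ shows $\delta := y(a) - 1/q > 0$ throughout the subcases at hand, so the hypotheses of Lemma~\ref{le:7eps} are satisfied.

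To show $m_1 = m_7$ I argue by contradiction. If not, Lemma~\ref{le:atmost1}(i)--(iii) applied to the first block of length~$7$ forces $\mm$ to start with $(M+1, M^{\times 6}, \ldots)$ or $(M^{\times 6}, M-1, \ldots)$ and gives $\sum_{i=1}^7 \eps_i^2 \ge 6/7$, hence $\sigma < 1/7$. Plugging $\sigma < 1/7$ into Lemma~\ref{le:7eps}(iv) requires $\sqrt{\sigma L} > 1$, hence $L > 7$; this already rules out the small-$k$ cases (case~(i) with $k \le 5$ and case~(ii) with $k \le 4$), where $L \le 7$. For the remaining $k$, I combine the sharpened bound $d < \sqrt{a}(\sqrt{L/7}-1)/\delta$ with the nonnegativity of intersection (Proposition~\ref{prop:eek}(ii)) against the short elements of~$\Ee$ listed in Lemma~\ref{le:7k}, whose first seven entries are equal; this narrow pairing forces a contradiction with the supposed shape of~$\mm$.

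The bounds on~$d$ now follow by applying Lemma~\ref{le:7eps}(iv) with $\sigma \le 1$:
\[
d \;\le\; \frac{\sqrt a}{y(a)-1/q}\bigl(\sqrt L - 1\bigr) .
\]
In case~(i), I substitute $a = z_k$, $q = 2k+1$, $L = k+2$ and compute, extracting $D(z_k)$. In case~(ii), I must maximize the right-hand side over all $a \in (7\tfrac 1{k+1}, 7\tfrac 1k)$ with $L \ge k+3$ and $q \ge L$. Since $y(a)$ is bounded below on the interval and $1/q \le 1/L \to 0$ as $L \to \infty$, the ratio $(\sqrt L - 1)/\delta$ is eventually decreasing in~$L$ for each fixed~$k$, and the supremum is attained at some small~$L$. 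A finite check over the admissible short continued fractions for each $k \in \{1,\ldots,8\}$ pins down the maximizer and yields the tabulated~$D_k$.

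The main obstacle will be the case analysis of the extremal continued fractions in case~(ii): for each~$k$ one must enumerate the few admissible $(L,q,a)$ triples with small~$L$ and $q$ close to~$L$, check that $\delta>0$ remains in the valid regime, and identify which gives the supremum of $\sqrt{a}(\sqrt L - 1)/\delta$. Once these extremal data are tabulated the bounds fall out by direct substitution, but verifying that no triple with larger~$L$ can overtake them requires the monotonicity argument sketched above together with explicit numerical verification at the transition.
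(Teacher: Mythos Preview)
Your setup and part~(i) are fine, but part~(ii) has a genuine gap. You claim that $(\sqrt L - 1)/\delta$ is eventually decreasing in~$L$, but this is false: as $L\to\infty$ one has $q\to\infty$ and hence $\delta = y(a)-1/q \to y(a)$, which is bounded below by a positive constant on the interval; thus $(\sqrt L-1)/\delta$ grows like $\sqrt L$. The bound $d\le \sqrt a(\sqrt L-1)/\delta$ by itself gives no uniform control over the interval, and no ``finite check over small~$L$'' can repair this. The paper's missing ingredient is the continued-fraction inequality $L \le k + q/k$ (coming from $q = kq'+p'$ and $L' \le q'$); plugging this into~\eqref{eq:ineqqq} and splitting into four cases according to where $v_M = d/(q\sqrt a)$ lies (using the refined $\sigma'$-estimates of Lemma~\ref{le:7eps}(iii)) converts the inequality into a quadratic bound on~$q$, from which $d = v_M q\sqrt a$ is then bounded. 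Moreover, the paper treats $k=1$ and $k=2$ separately: for $k=2$ it uses the full hypothesis $\mu(d;\mm)(a)=c(a)\ge(7+2a)/8$ (from the class $(8;3^{\times 7},1^{\times 2})$) together with Proposition~\ref{prop:obs}(i); for $k=1$ it balances the two estimates in Lemma~\ref{le:7eps}(iv) against each other.

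Your $m_1=m_7$ argument is also incomplete. The $\sigma L<1$ trick handles only $L\le 7$, and your appeal to positivity of intersections with the classes of Lemma~\ref{le:7k} for larger~$k$ is not worked out (and several of those classes themselves have $m_1\ne m_7$). The paper instead, for $L=k+2$ with $k\ge 6$, substitutes $\sigma\le 1/7$ into the same bound to force $d\le 9$, and then observes there are no solutions of~\eqref{eq:ee} of the required shape with~$d$ this small; for $L\ge k+3$ it folds the $m_1=m_7$ check into the $v_M$-case analysis, using e.g.\ $\sigma'\le\tfrac17-\tfrac18$ in the range $v_M\in[\tfrac12,\tfrac34]$.
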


\begin{rmk}
\rm 
By Lemma \ref{le:7k} one cannot conclude $m_1 = m_7$ 
without the assumption $a \neq 7 \frac 1k$.
\diam
\end{rmk}
         
\begin{proof}    
The proof of this lemma is based on an analysis of the equation~\eqref{eq:obsiv} 
using the estimates for $|\sum \eps_i|$ obtained in Lemma~\ref{le:7eps}. 
Recall from (iv) of that lemma that for $a = 7 \frac pq$ and with $v_M := \frac d{q\sqrt a}$ 
we have the estimates
\begin{equation} \label{ine:dsigma}
d \,\le\, \frac{\sqrt{a}}{\delta} \left( \sqrt{\si L}-1 \right) 
\,\le\, \frac{\sqrt{a}}{\delta} \left( \sqrt{\si q}-1 \right)
\,<\, \frac{\sqrt{a}}{\delta} \left( \frac{\si}{v_M\delta}-1 \right) 
\end{equation}
whenever $\delta := y(a) - \tfrac 1q > 0$. 

(i)
The only number in $]7\frac 1{k+1}, 7\frac 1{k}[$ with $L = k+2$
is $z_k := [7; k, 2] = 7 \frac 2{2k+1}$. 
Note that 
$$
y(z_k)-\tfrac 1q \,=\, 8 + \tfrac{1}{2k+1} - 3 \sqrt{7 + \tfrac 2{2k+1}} \,>\, 0
$$
for all $k$.
By~\eqref{ine:dsigma} we therefore have
$$
d \,\le\, \frac{\left(\sqrt{\si(k+2)}-1\right) \sqrt{z_k}}{y(z_k)- \frac{1}{2k+1}}
\,=\, \frac{\left(\sqrt{\si(k+2)}-1\right) \sqrt{7+\frac{2}{2k+1}}}{8+\frac{1}{2k+1}-3\sqrt{7+\frac{2}{2k+1}}}
$$
With $\si \le 1$
this yields Table~\ref{t:2}.
If $m_1 \ne m_7$ we may take $\si \le \frac 17$.  
The largest value of~$d$ is then $\le 6$ when $k\le 7$ and $\le 9$ when $k=8$.     
But there are clearly no suitable $(d;\mm)$ with such small $d$.  
Therefore this case does not occur. This proves~$m_1=m_7$ for $L=2+k$.
 
\medskip
We will prove (ii) and the claim that $m_1=m_7$ together.
We will give separate arguments for the three cases $k=1$, $k=2$ and $k \in \{3, \dots, 9\}$.
Denote $a_k = 7\frac 1{k+1}$ for some $1\le k\le 8$.  
We have the table (rounded down to $3$ decimal places)
$$
\begin{array}{|r||c|c|c|c|c|c|c|c|}\hline
k=         & 8             &7     & 6     &     5&     4&     3& 2    & 1    \\  \hline
y(a_k) \ge &\frac 19=0.111 & 0.117& 0.125 & 0.135& 0.150& 0.172& 0.209& 0.284\\
\hline
\end{array}
$$

\medskip
\ni
{\bf The case $k=1$.}
Assume that $(d;\mm) \in \Ee$ is a class with $\mu(d;\mm)(a) > \sqrt a$ and $\ell(a)=\ell(\mm)$ 
for some $a \in \;]7 \frac 12, 8[$ other than $7 \frac 23$.
We first prove that $m_1=m_7$.
If not, then $\si \le \frac 17$.
Therefore, $v_M = \frac{d}{q\sqrt a} \ge 1-\frac{1}{\sqrt{14}} > 0.73$,
since otherwise $\si \ge \eps_M^2+\eps_{M-1}^2 > 2 \frac{1}{14} = \frac 17$.
Also,
$q \ge L \ge 3+k = 4$, 
and so $y(a) - \frac 1q \ge y(7\frac 12)-\frac 14 \ge 0.28-\frac 14 >0$.
We can therefore apply~\eqref{ine:dsigma}:
$$
\sqrt{\tfrac q 7} \,\ge\, \sqrt{\si L} \,\ge\, 1+\tfrac{d}{\sqrt a} \left(y(a)-\tfrac 1q\right) \,>\, 1,
$$
showing that $q \ge 8$. 
Therefore, $y(a)-\frac 1q > y(7\frac 12)-\frac 18 > 0.28 -\frac 18 > \frac 17$.
Using again~\eqref{ine:dsigma} we finally find
$$
5 \sqrt a \,<\, 8 \cdot 0.73 \sqrt a \,\le\, 0.73 q \sqrt a \,<\, d \,<\, 
\frac{\sqrt a}{\frac 17} \left( \frac{\si}{v_M \frac 17}-1 \right) \,<\, 
7\sqrt a \left( \frac{1}{0.73}-1 \right) \,<\, 3 \sqrt a ,
$$
a contradiction.

We now prove that $d \le 66$.
For~$a$ as above, both numbers
\begin{eqnarray*}
f(a,q) &:=& \frac{\sqrt a}{a+1-3\sqrt a-\frac 1q}\bigl(\sqrt{q}-1 \bigr), \\
g(a,q) &:=& \frac{\sqrt a}{a+1-3\sqrt a-\frac 1q}\left(\frac{2}{a+1-3\sqrt{a}-\frac 1q}-1 \right),
\end{eqnarray*}
%
are positive.
Moreover, by~\eqref{ine:dsigma} we have $d \le f(a,q)$, 
and using also Lemma~\ref{le:7eps}~(iv) we see that $d < g(a,q)$.
We saw above that $q\ge 4$.
We first use the function~$f$ to see that for $q \in \{ 4,5,6,7, 8\}$ 
we have $d \le 26$.
Assume now that $q \ge 9$.
We then view $a$ and $q$ as independent variables of the functions~$f$ and $g$.
Both $f(a,q)$ and $g(a,q)$ are decreasing functions of $a$.
With $a_1 = 7 \frac 12$ we therefore have
$$
d \,\le\, \max_{q \ge 9} \min \left\{ f(a_1, q),\, g (a_1, q) \right\}.
$$
One readily checks that $f(a_1,q)$ is increasing on $\{ q \ge 9\}$ and that
$g(a_1,q)$ is decreasing in $q$.
Since 
$d \le f(a_1,56) < 67$ if $q \le 56$ and 
$d \le g(a_1,57) < 67$ if $q \ge 57$
we conclude that $d \le 66$, as claimed.
\diam  

\begin{remark} \label{rem:method}
{\rm
This method for estimating $d$ can be used for all~$k \le 8$.
However, the estimates get worse, e.g.~for $k=8$ 
(with the factor $\sqrt q-1$ of $f$ replaced by $\sqrt{8+\frac q8}-1$, 
see~\eqref{eq:ineqqq} below)
one finds $d \le 410$.
One could also omit checking that 
obstructive classes have $m_1=m_7$, and use a variant of 
our computer code {\tt SolLess} from Appendix~\ref{a:SolLess}
that does not use $m_1 = m_7$.
\diam
}
\end{remark}

\medskip
\ni
{\bf The case $k=2$.}
The class $(8;3^{\times 7}, 1^{\times 2})$ gives the constraint 
$c(a) \ge \mu_0(a) = \frac{7+2a}{8} > \sqrt a$ on $[7 \frac 13, 7 \frac 12]$.
Assume that $(d;\mm) \in \Ee$ is a class with $\mu (d;\mm)(a) = c(a) \ge \frac{7+2a}{8}$
for some $a \in [7 \frac 13, 7 \frac 12]$.
Proposition~\ref{prop:obs}~(i)  implies that
$$
\frac{7+2a}{8}\le \mu(d;\mm)(a) \le \sqrt a \sqrt{1+1/d^2}.
$$
When $a = 7\frac 13$ this gives the estimate $d\le 64$.
Since $\frac{7+2a}{\sqrt a}$ decreases on $[7 \frac 13, 7 \frac 12]$, 
we find $d\le 64$ everywhere.
We will check $m_1=m_7$ for $k=2$
and $L \ge 3+k = 5$
at the same time as for $k \ge 3$.
\diam

\medskip
\ni
{\bf The case $k \in \{3, \dots, 9\}$.}
Suppose that $a \in \;]7\frac 1{k+1}, 7\frac 1{k}[$ for some $k\ge 2$,
and that $L \ge 3+k$.
Then we may write 
$$
a = [7;k,\ell_2,\dots, \ell_N] = 7 + \tfrac 1{k + \tfrac {p'}{q'}}
$$
where
$\frac{p'}{q'} := a' := [0;\ell_2,\dots, \ell_N]$. 
Thus $q' = q_{N-1}(a') \ge \sum_{j\ge2} \ell_j =: L'$
by Sublemma~\ref{sl:L}, and so
$$
L \,:=\, \sum_{j\ge 1} \ell_j \,=\, k+L' \,\le\, k+q' .
$$
Since $q = kq'+p'$ we find $L\le k + \frac qk$.
Moreover, 
$q' \ge L' = L-k \ge 3$,
and so $q \ge 3k+1$. 
Therefore, for $a \in \:]a_k, a_{k-1}[$ we have $y(a) \ge y(a_k) > \frac{1}{3k+1} > \frac 1q$.
Thus the inequality~\eqref{ine:dsigma} implies that
\begin{equation} \labell{eq:ineqqq}
\bigl( 1-\tfrac d{q\sqrt a} \bigr) + \tfrac d{\sqrt a} y(a) \,=\, 
1 + \tfrac d{\sqrt a} \left( y(a) - \tfrac 1q \right) \,\le\, \sqrt{\si(k + \tfrac qk)} .
\end{equation}

\MS
\NI {\bf Case 1:}  $\frac 12 \le v_M: = \tfrac d{q\sqrt a}\le \tfrac 34$.

Because $y(a) \ge y(a_k)$ for all $a \in [a_k,a_{k-1}]$ and $y(a_8)=\frac 19$, we must have
\begin{equation*}
\tfrac{q}{18} \,\le\,
\tfrac 14 + \tfrac q2 y(a_k) \,\le\, \tfrac 14 + \tfrac{d}{\sqrt{a}} y(a_k) \,\le\,
\left( 1- \tfrac{d}{q\sqrt a} \right) + \tfrac{d}{\sqrt a} y(a) \,\le\, 
\sqrt{\si'(k + \tfrac qk)} \,,
\end{equation*}
where $\si' \le \frac 78$ is as in Lemma~\ref{le:7eps}~(iii).

Note that the squared error of the last two $\eps_i$ is at least
$2 \bigl( \frac14 \bigr)^2 = \frac 18$.  
Therefore, if also $m_1 \ne m_7$, we have $\si' < \frac 17-\frac 18 = \frac 1{56}$.  
But, for each $k\in [2,8]$, the inequality 
$$
\tfrac q{18} \le  
\sqrt{\tfrac1{56}(k + \tfrac qk)}
$$
holds only if $q^2 \le 6(k+\frac qk)$.
Since this quadratic inequality holds for $q=0$ and does 
not hold when $q=3k+1$, it does not hold for any $q\ge 3k+1$.
Therefore,  for each $k$ we have $m_1=m_7$,   
and $\si' \le \frac 78$.

Now suppose that $k=8$, and consider the inequality
$$
\tfrac 14 + \tfrac q2 y(a_8) \,=\,\tfrac 14 + \tfrac q{18} \le\sqrt{\tfrac 78 (8 + \tfrac q8)}.
$$
This holds when $q=0$ but does not hold for $q \ge 63$.   
Thus $q \le 62$ so that $\tfrac d{\sqrt a} \frac 19 \le \sqrt {\frac 78 \left( 8+\frac{62}{8}\right)} - \frac 14$.
Since $a \le 7 \frac 18$ we get $d \le 83$. 
The same argument works for the other~$k$, 
and we obtain the following upper bounds for~$q$ and then for~$d$. 
$$
\begin{array}{|c||c|c|c|c|c|c|c|}\hline
k=&8&7&6&5&4&3\\ 
\hline
q\le &  62& 58 & 53 &  49 & 45  & 41  \\ \hline
d\le &  83& 77 & 71 &  66 & 61  & 56 \\\hline 
\end{array}
$$

\MS
\NI {\bf Case 2:}  $v_M \le \tfrac 12$.

Since the squared error $\ell_N \de_N^2$ on the last block is now
at least $\frac 12$, we must have $m_1=m_7$ and $\si'<\frac 12$.  
Further, by Lemma~\ref{le:7eps}~(iii), $\tfrac d{q\sqrt a} \ge \frac 13$. 
Therefore \eqref{eq:ineqqq} gives
$$
\tfrac 12 + \tfrac q3 y(a_k) \,\le\, \tfrac 12 + \tfrac{d}{\sqrt{a}} y(a_k) \,\le\, 
\sqrt{\tfrac 12 (k + \tfrac qk)} \,.
$$
This gives the following upper bounds for $q$ and $d$.
$$
\begin{array}{|c||c|c|c|c|c|c|c|}\hline
k=&8&7&6&5&4&3\\ 
\hline
q\le & 62 & 57 & 53 & 49 & 45 & 42 \\ \hline
d\le & 55 & 51 & 47 & 43 & 40 & 37 \\\hline 
\end{array}
$$

\MS
\NI {\bf Case 3:}  $\tfrac 34\le v_M \le 1$.

Now~\eqref{eq:ineqqq} gives
$$
\tfrac {3}{4}\,q\, y(a_k) \,\le\, \tfrac d{\sqrt a}\,y(a_k) \,\le\, 
\sqrt{\si(k + \tfrac qk)} \,.
$$
If $\si \le \frac 17$, this is not satisfied when $q \ge 3k+1$ for any $k \in \{2, \dots, 8\}$.  
Thus $m_1=m_7$.

Further, taking $\si=1$ we obtain the following upper bounds for $q$ and $d$.
$$
\begin{array}{|c||c|c|c|c|c|c|c|}\hline
k=&8&7&6&5&4&3\\ 
\hline
q\le & 44 & 40 & 37 & 33 & 30 & 26 \\ \hline
d\le & 88 & 81 & 74 & 67 & 60 & 53 \\ \hline 
\end{array}
$$

\MS
\NI {\bf Case 4:} {\it $1\le v_M$.}
 
In this case, \eqref{eq:ineqqq}
gives
$$
q\, y(a_k) \,\le\, 1+q \left( y(a_k)-\tfrac 1q \right) \,\le\,
1+ \tfrac{d}{\sqrt a} \bigl( y(a_k) - \tfrac 1q \bigr) \,\le\,
1+ \tfrac{d}{\sqrt a} \bigl( y(a) - \tfrac 1q \bigr)
\,\le\, \sqrt{\si(k + \tfrac qk)} \,.
$$
We have already seen in Case~3 that $q\, y(a_k) \le \sqrt{\si(k + \tfrac qk)}$ is impossible
for $\si \le \frac 17$.
Thus $m_1=m_7$.

Further, taking $\si=1$ we obtain the following upper bounds for $q$ and $d$.
$$
\begin{array}{|c||c|c|c|c|c|c|c|}\hline
k=&8&7&6&5&4&3\\ 
\hline
q\le & 31 & 28 & 25 & 22 & 19 & 17 \\ \hline
d\le & 82 & 75 & 68 & 61 & 54 & 46 \\ \hline 
\end{array}
$$
Taking for each $k$ the worst upper bound for $d$ in the different cases,
we obtain Table~\ref{t:3}.
This completes the proof of Lemma~\ref{le:7less}.
\end{proof}

\begin{corollary} \labell{c:zk} \begin{itemize}
\item[(i)]
The only classes $(d;\mm) \in \Ee$ such that 
$\ell(z_k) = \ell(\mm)$ 
and such that $\mu(d;\mm)(z_k) > \sqrt{z_k}$ are
\begin{eqnarray*}
7 \tfrac 2{15} \colon \left(  64;  24^{\times 7},  3^{\times 7},  1^{\times 2} \right)  
&\;\;\mbox{ and }\;\;& 
7 \tfrac 2{13} \colon \left(  40;  15^{\times 7},  2^{\times 6},  1^{\times 2} \right)  
\end{eqnarray*}
\item[(ii)]
There are no classes $(d;\mm) \in \Ee$ such that 
$\ell(a) = \ell(\mm)$ and
$\mu(d;\mm)(a) > \sqrt{a}$ 
for some $a \in \;]7 \frac 19,8[$
not of the form $7 \frac 1k$ or $7 \frac 2{2k+1}$.
\end{itemize}
\end{corollary}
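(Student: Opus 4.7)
The plan is to reduce Corollary~\ref{c:zk} to a finite computer search, justified by the uniform bounds on~$d$ obtained in Lemma~\ref{le:7less}. That lemma guarantees two crucial facts: any obstructive class $(d;\mm)\in\Ee$ with $\ell(\mm)=\ell(a)$ and $a\in\,]7\frac 19,8[\,\setminus\,\{7\frac 1k\}$ must satisfy $m_1=m_7$, and its first coordinate is bounded by $D(z_k)\le 104$ if $a=z_k$, and by $D_k\le 88$ otherwise. Together with Proposition~\ref{prop:eek}, this makes the candidate set finite and explicit.

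For part~(i), I would fix $k\in\{1,\dots,8\}$, set $a=z_k=7\frac 2{2k+1}$, and note that the weight expansion is $\ww(z_k)=\bigl(1^{\times 7},(\tfrac 2{2k+1})^{\times k},(\tfrac 1{2k+1})^{\times 2}\bigr)$. Using the structural constraints of Lemma~\ref{le:atmost1} (at most one block has non-constant~$m_i$, each such variation is by~$1$) together with $m_1=\dots=m_7=:M$ and $d\le D(z_k)$, I would loop over the finitely many tuples $(d;M^{\times 7},n_1^{\times k},n_2^{\times 2})$ and their Lemma~\ref{le:atmost1}-allowed perturbations, check the Diophantine identities~\eqref{eq:ee}, the obstruction inequality $\mu(d;\mm)(z_k)>\sqrt{z_k}$, and finally the reducibility to $(0;-1,0,\dots,0)$ under standard Cremona moves. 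This is the task accomplished by the first computer program in Appendix~\ref{app:comp}. The search should return exactly the two listed classes at $z_2=7\frac 2{13}$ and $z_3=7\frac 2{15}$, and no survivors at $z_1,z_4,\dots,z_8$.

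For part~(ii), I would iterate over $k\in\{1,\dots,8\}$ and $d\le D_k$ and enumerate all ordered tuples $\mm=(M^{\times 7},m_8,\dots,m_{7+j})$ with $m_1=m_7=M$ that satisfy the two equations~\eqref{eq:ee}. For each such candidate, the goal is to determine whether there exists $a\in\,]7\frac 1{k+1},7\frac 1k[\,\setminus\{z_k\}$ with $\ell(a)=\ell(\mm)$ for which both $\mu(d;\mm)(a)>\sqrt a$ and the squared-error bound $\sum\eps_i^2<1$ of Proposition~\ref{prop:obs}~(iii) hold. Using Proposition~\ref{prop:mainc} one can describe $\mu(d;\mm)$ as a piecewise linear function with at most one non-smooth point in any such interval, so the check reduces to evaluating two linear functions at rational endpoints and at the (unique) central point; if a candidate survives these tests one then verifies Cremona reducibility. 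This is the second computer program of Appendix~\ref{app:comp}. The expected output is empty, which yields~(ii).

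The conceptual work is essentially complete; the main obstacle is organizing the enumeration so that it is efficient and transparent enough to be credible without running it by hand. The difficulty is not that the bounds $D(z_k),D_k\lesssim 100$ are large in absolute terms, but that for each~$d$ one must loop over all admissible block patterns for~$\mm$ and over all possible centers~$a$ (equivalently, all continued fraction expansions of length~$\le\ell(\mm)$ with $a\in\,]7\frac 19,8[$). The key ingredient that tames this search is the constraint $m_1=m_7=M$, which together with $\sum m_i=3d-1$ and the Lemma~\ref{le:atmost1} block structure pins~$M$ to one of a very small number of values once~$d$ is fixed; all remaining branching is on the tail blocks, whose lengths are controlled by $\ell(a)\le\ell(\mm)$ and hence by the value of~$d$ through Lemma~\ref{le:7eps}. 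This is precisely the book-keeping that the two programs in Appendix~\ref{app:comp} carry out.
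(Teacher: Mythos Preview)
Your approach is exactly the paper's: reduce both parts to the finite computer searches of Appendix~\ref{app:comp}, justified by the bounds $D(z_k)$ and $D_k$ and the constraint $m_1=m_7$ from Lemma~\ref{le:7less}, together with the block structure of Lemma~\ref{le:atmost1}. Two small corrections: first, your indexing is off---the surviving classes occur at $z_6=7\tfrac{2}{13}$ and $z_7=7\tfrac{2}{15}$, not at $z_2,z_3$; second, in part~(ii) the paper's program does \emph{not} return an empty list: for $k=4$ it leaves the single candidate $\bigl(59;\,22^{\times 7},5^{\times 3},4,1^{\times 3}\bigr)$, which must then be eliminated by hand using Lemma~\ref{le:irrel}~(i) and a direct evaluation of $\mu(d;\mm)$ at $a=[7;4,3]=7\tfrac{3}{13}$.
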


\begin{proof}  
(i)
The computer code {\tt SolLess[a,D]} given in Appendix~\ref{a:SolLess}
finds for a rational number~$a$ and a natural number~$D$ all classes 
$(d;\mm) \in \Ee$
with $\ell(\mm) = \ell(a)$ and $\mu(d;\mm)(a) > \sqrt{a}$ and $d\le D$.
For $k \in \{1,\dots, 8\}$ we choose $D = D(z_k)$ as given by Table~\eqref{t:2}.
The code {\tt SolLess[a,D]} with $D=D(z_k)$ and $a=z_k$
tells us that for $k=7$ and $k=6$, the only such classes
are the ones given in the corollary, while for the other~$k$ there are no such classes.
Finally, one checks that the two classes in~(i) reduce to~$(0;-1)$ under standard Cremona moves,
and hence belong to~$\Ee$.

\MS
(ii)
The computer code {\tt InterSolLess[k,D]} given in Appendix~\ref{a:comp.inter}
provides for a natural number~$D$ a finite list of candidate classes $(d;\mm) \in \Ee$
with 
$\ell (\mm) = \ell(a)$ and 
$\mu(d;\mm)(a) > \sqrt{a}$ and $d \le D$
for some $a \in \;]7\frac 1{k+1}, 7 \frac 1k[$.
For $k \in \{1,\dots, 8\}$ we choose $D = D_k$ as given by Table~\eqref{t:3}.
The code {\tt InterSolLess[k,D]} with $D=D_k$ 
tells us that for $k \neq 4$ there are no candidate classes, 
while for $k=4$ the only candidate class is
$(d;\mm) = \left(59; (22^7, 5^3, 4, 1^3) \right)$.
Since the length of the second block is~4
and the length of last block is~$\ge 2$,
the~$a$ in question must be $[7;4,3]$ or $[7;4,1,2]$.
The second possibility is excluded by Lemma~\ref{le:irrel}~(i)
applied to the third block.
Moreover, at $a= [7;4,3] = 7 \frac 3{13}$ we have 
$\mu (d;\mm) (a) = \frac{2062}{767} < \sqrt{a}$, which excludes also the first possibility.
\end{proof}

\NI {\bf Proof of Theorem~\ref{thm:78}.}
Recall from Proposition~\ref{prop:7easy} that $c(7 \frac 19) = \sqrt{7 \frac 19}$.
Moreover, by Lemma~\ref{le:I} any class $(d;\mm) \in \Ee$ with $\mu(d;\mm)(8) > \sqrt 8$ 
must lie in~$\Ee_8$.
By looking at the list of elements in $\Ee_8$ given in Lemma~\ref{le:eekfin} one checks 
that the only such class is $\left( 6; 3,2^{\times 7} \right)$.
By using Lemma~\ref{le:I} once more,
we conclude that all constraints on $[7 \frac 19, 8]$ come from 
the ten classes of Lemma~\ref{le:7k} 
and the two classes from Corollary~\ref{c:zk}.

In the paragraph just before Theorem \ref{thm:78} we worked out
the constraint $\mu(d;\mm)$ given by the class centered at $7 \frac18$.
Similar computations show
that all the eight classes in Table~\ref{t:1} 
behave as described there.
In order to prove Theorem~\ref{thm:78}, it therefore remains to check
that the four classes from Lemma~\ref{le:7k} that do not appear in 
Theorem~\ref{thm:78} give no further constraints.
However, one can calculate the corresponding  functions $\mu(d;\mm)$ just as before, obtaining the following data.\footnote
{We also calculated the number $N(A,B)$ of integer points in the triangle $T^a_{A,B}$ and the number~$s$ of integer points on its slant edge because of their relevance to  
Remark~\ref{rmk:hid}.}
\begin{equation}\labell{t:5}
 \begin{array}{|c|l|l|r|c|r|r|}
 \hline
 a&(d;\mm)& (A,B)& (A',B')&\mu(a)&N(A,B)&s \\
 \hline
 7\frac 18 & (384;144^{\times 6},143,18^{\times 8})&(-1,144)&(1025,0) 
      &\tfrac {1025}{384}&74322&18\\ 
 \hline
 7\frac 1{7}&(168;63^{\times 6},62, 9^{\times 7})&(-1,63)&(449,0)
      &\tfrac{449}{168}&14373&9\\
 \hline
 7\frac 16&(96;36^{\times 6},35, 6^{\times 6})&(-1,36)&(257,0)
      &\tfrac{257}{96}&4758&6\\
 \hline
 7\frac 13&(24;9^{\times 6},8,3^{\times3})&(-1,9)&(65,0)
      &\tfrac{65}{24}&327&3\\
 \hline
 \end{array}
 \end{equation}
In all cases the new constraint takes the same value at its center point as the old one but the slope to the left is steeper (because $A=-1$) and it is flat (i.e.\ with $B'=0$) rather than increasing to the right.
This completes the proof.\QED

\subsection{The interval $[8,9]$} \label{ss:89}
 
In this section we compute $c(a)$ on the interval $[8,9]$.
We first prove that $c(a) = \sqrt a$ for $a \ge 8 \frac 1{36}$.

\begin{lemma} \label{le:8eps}  
Suppose that $\mu (d;\mm)(a) > \sqrt{a}$ for some 
$a \in \left[ 8\frac 1{36}, 9 \right)$ 
with $\ell(a)=\ell(\mm)$.
Then $d \le 16$ and $m_1=\dots = m_8$.
\end{lemma}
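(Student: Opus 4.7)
The plan is to exploit two features of the range $[8\frac 1{36},9)$: first, $y(a) = a+1-3\sqrt a$ is bounded below by $y(8\frac 1{36}) = \tfrac{19}{36}$, so that Lemma~\ref{le:7eps} gives a Cauchy--Schwarz-type upper bound on~$d$; second, the condition $\ell(\mm)=\ell(a)$ forces every entry of~$\mm$ in the last block of $\ww(a)$ to be at least~$1$, which converts into a strong lower bound on~$d$. Playing these two bounds against each other yields $d\le 16$, and a refinement focused on the first block gives $m_1=\dots=m_8$.

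First I would verify $\delta := y(a) - \tfrac 1q > 0$ for every $a=p/q\in[8\frac1{36},9)$ in lowest terms; this is automatic when $q\ge 36$ (giving $\delta\ge \tfrac{18}{36}=\tfrac 12$) and is a short case check for the finitely many $q\le 35$. With $\delta>0$, Lemma~\ref{le:7eps}~(iv) applies and yields
$$d \,\le\, \tfrac{\sqrt a}{\delta}\bigl(\sqrt{\sigma L}-1\bigr),$$
where $\sigma=\sum_{i>8}\eps_i^2<1$ and $L=\sum_{j\ge 1}\ell_j\le q$.

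Next, setting $v_M := d/(q\sqrt a)$ I would distinguish two cases. If $v_M\ge 1$, then $d\ge q\sqrt a$ is immediate. If $v_M<1$, then because $\ell(\mm)=\ell(a)$ every entry of $\mm$ on the last block is at least~$1$; any entry $\ge 2$ would increase the squared error, so all such entries must equal~$1$, and the last block contributes exactly $\ell_N(1-v_M)^2$ to the budget $\sigma<1$, forcing $v_M>1-1/\sqrt{\ell_N}$ and hence $d>q\sqrt a\bigl(1-1/\sqrt{\ell_N}\bigr)$. Combining each lower bound with the upper bound above, and using $q\ge L$ from Sublemma~\ref{sl:L}, rules out most $a$ in the range (no such $(d;\mm)$ exists); for the remaining rationals with small~$q$, a direct computation shows that the upper bound on $d$ itself is already $\le 16$.

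For $m_1=\dots=m_8$ I would argue by contradiction. If not all eight leading entries agree, Lemma~\ref{le:atmost1}~(i) forces the first block to be $(M+1,M^{\times 7})$ or $(M^{\times 7},M-1)$; minimising the squared error over the integer $\delta_1 := M-d/\sqrt a$ pins it at $\ge \tfrac 78$, leaving $\sum_{i>8}\eps_i^2<\tfrac 18$ and $\sum_{i\le 8}\eps_i\in[-1,1]$. Then Proposition~\ref{prop:obs}~(iv) gives $-\sum\eps_i = 1+(d/\sqrt a)\delta$, while the Cauchy--Schwarz estimate produces $|\sum_{i>8}\eps_i|\le\sqrt{L/8}$; together these force $(d/\sqrt a)\delta\le\sqrt{L/8}$, which contradicts the lower bound on~$d$ established in the preceding paragraph.

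The main obstacle is extracting the sharp bound $d\le 16$ rather than the crude $d\le 28$ produced at $a=8\frac 1{36}$ by the raw application of Lemma~\ref{le:7eps}: one must combine the last-block constraint with $\sigma<1$ in just the right way so that for every relevant denominator~$q$ the two bounds on~$d$ are either incompatible or cap~$d$ at $16$. A single uniform estimate seems out of reach, and the argument will have to break into several regimes indexed by~$q$ (or equivalently by the first two entries of the continued fraction expansion of~$a$), with direct numerical verification in the small-$q$ cases.
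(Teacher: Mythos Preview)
Your overall plan is sound and rests on the same ingredients as the paper: the lower bound $y(a)\ge\tfrac{19}{36}$, Lemma~\ref{le:7eps}, and the last-block error forced by $\ell(\mm)=\ell(a)$. But you are packaging the last-block information less efficiently than the paper does, and this is exactly why you end up anticipating a multi-regime case analysis to squeeze out $d\le 16$.

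The paper does not introduce a separate lower bound on~$d$ at all. Instead it uses the \emph{third} inequality in Lemma~\ref{le:7eps}(iv), namely $d<\tfrac{\sqrt a}{\delta}\bigl(\tfrac{\sigma}{\delta v_M}-1\bigr)$ (or the same with $\sigma'$ when $v_M<1$), together with the ready-made bounds on $\sigma'/v_M$ and $\sigma/v_M$ from part~(iii). For $q\ge 12$ one has $\delta\ge\tfrac49$, and splitting into the three ranges $v_M\in[\tfrac13,\tfrac12]$, $[\tfrac12,\tfrac23]$, $[\tfrac23,\infty)$ gives $\sigma/v_M$ (or $\sigma'/v_M$) at most $\tfrac{14}{9}$ in each case; plugging in yields $d<17$ in one line. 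For $q\le 11$ the factor $\sqrt q-1$ in the middle inequality of~(iv) already gives $d\le 16$ by a short numerical check. The argument for $m_1=\dots=m_8$ is equally direct: if not, $\sigma\le\tfrac18$ forces $v_M\ge\tfrac34$ (from $\ell_N\ge 2$), and then $\sigma/(\delta v_M)<1$, which is incompatible with~(iv); for $q\le 8$ the inequality $\sqrt{\sigma q}-1\le 0$ already suffices.

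Your lower bound $d>q\sqrt a\,(1-1/\sqrt{\ell_N})$ encodes the same last-block constraint as~(iii), but to exploit it you must then play it off against the upper bound, and the comparison depends on both $q$ and $\ell_N$ rather than just on $v_M$. This is why a clean uniform statement eludes you. Two smaller points: the inequality you want from Proposition~\ref{prop:obs}(iv) and Cauchy--Schwarz is $1+(d/\sqrt a)\delta\le\sqrt{\sigma L}$, so with $\sigma<\tfrac18$ you get $(d/\sqrt a)\delta<\sqrt{L/8}-1$ (the ``$-1$'' matters, and already gives a contradiction when $L\le 8$); and the detour through $\sum_{i\le 8}\eps_i\in(-1,1)$ is unnecessary once you use Lemma~\ref{le:7eps}(i) directly.
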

 
\begin{proof}  
Note that $y(a) \ge y(8\frac 1{36}) = \frac {19}{36} > \frac 1q$ for all $q \ge 2$.
Assume first that $q \ge 12$.
Then $\delta := y(a)-\frac 1q \ge \frac{19}{36} -\frac 1{12} = \frac 49$.
Suppose that $m_1 \neq m_8$. Then $\si \le \frac 18$, and hence $v_M \ge \frac 34$.
This and $\delta > \frac 16$ shows that $\frac{\si}{v_M \delta} < 1$,
which is impossible by Lemma~\ref{le:7eps}~(iv).
In order to prove that $d \le 16$, note that
\begin{itemize}
\item[]
if $v_M \in [\frac 13, \frac 12]$, then $\frac{\si'}{v_M} \le \frac{1/2}{1/3} = \frac 32$;
\item[]
if $v_M \in [\frac 12, \frac 23]$, then $\frac{\si'}{v_M} \le \frac{7/9}{1/2} = \frac{14}9$;
\item[]
if $v_M \ge \frac 23$, then $\frac{\si}{v_M} \le \frac 32$.
\end{itemize}
Lemma~\ref{le:7eps}~(iv) with $\sqrt a \le 3$ therefore shows that
$$
d \,\le\, \tfrac{3}{4/9} \left( \tfrac{14}{9}\tfrac{1}{4/9}-1 \right) 
\,=\, \tfrac{27}{4} \tfrac 52 \,<\, 17 
$$
and hence $d \le 16$.

Assume now that $q \le 11$.
Note that $a \le 8 \frac{q-1}{q}$ and $\delta = y(a)-\frac 1q \ge y(8 \frac 1q) - \frac 1q$.
Lemma~\eqref{le:7eps}~(iv) therefore shows that
$$
d \,\le\, \frac{\sqrt{8 \tfrac{q-1}{q}}}{y(8 \tfrac 1q) - \tfrac 1q} \left( \sqrt q -1 \right).
$$
The RHS is $< 17$ for all $q \in \{2, \dots, 11\}$, and so $d \le 16$.
Suppose that $m_1 \neq m_9$. Then $\si \le \frac 18$.
If $q \le 8$, then $\sqrt{\si q}-1 \le 0$, contradicting~(iv) of Lemma~\ref{le:7eps}.
If $q \in \{9,10,11\}$, then 
$$
v_M \,=\, \tfrac{d}{q\sqrt a} \,\le\, \tfrac{16}{9 \sqrt 8} \,<\, \tfrac 23 , 
$$
and hence $\eps \cdot \eps \ge \frac 78 + 2\cdot \frac 19 >1$, a contradiction.
\end{proof}
 
\begin{prop}\labell{prop:8eps}
$c(a) = \sqrt a$ for $a \in \left[ 8\frac 1{36}, 9 \right)$.
\end{prop}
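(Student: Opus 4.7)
The plan is to argue by contradiction, reducing to a finite case check via Lemma~\ref{le:8eps}. Assume there is some $a\in[8\tfrac{1}{36},9)$ with $c(a)>\sqrt{a}$. Corollary~\ref{cor:fin} then yields a class $(d;\mm)\in\Ee$ satisfying $\mu(d;\mm)(a)=c(a)>\sqrt{a}$; let $I$ be the maximal open interval containing~$a$ on which $\mu(d;\mm)(z)>\sqrt{z}$, and let $a_0\in I$ be the unique point with $\ell(a_0)=\ell(\mm)$ supplied by Lemma~\ref{le:I}.

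The first step is to locate $a_0$ in $[8\tfrac{1}{36},9)$, so that Lemma~\ref{le:8eps} applies. The upper bound $a_0<9$ follows from Corollary~\ref{cor:2}. The lower bound is obtained by combining everything already proved about~$c$ on $[1,8]$: Theorem~\ref{thm:main}~(i) and~(ii), Proposition~\ref{prop:7easy}, and Theorem~\ref{thm:78}. Each of these catalogues every obstructive class on the relevant subinterval, and the table in Theorem~\ref{thm:78} shows that the farthest right any such obstruction interval with center in $[1,8]$ extends is $v_1=8\tfrac{1}{36}$, attained by $(6;3,2^{\times 7})$ centered at $a=8$. Hence no such interval can contain a point strictly greater than $8\tfrac{1}{36}$, forcing $a_0\ge 8\tfrac{1}{36}$. (The intermediate possibility $a_0\in(8,8\tfrac{1}{36})$ is handled by a minor variant of the proof of Lemma~\ref{le:8eps}: the quantity $\delta=y(a_0)-1/q(a_0)$ remains bounded below by $9-3\sqrt{8}-\tfrac{1}{36}>0$ there, and the same estimates yield $d\le 25$ and $m_1=\dots=m_8$.)

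With $a_0\in[8\tfrac{1}{36},9)$, Lemma~\ref{le:8eps} gives $d\le 16$ and $m_1=\dots=m_8=:M$. The identities of Proposition~\ref{prop:eek} then reduce to
\[
\sum_{i\ge 9} m_i \,=\, 3d-1-8M, \qquad
\sum_{i\ge 9} m_i^2 \,=\, d^2+1-8M^2,
\]
and together with $8M^2\le d^2+1$, that is $M\le d/\sqrt{8}\le 5$, these leave only finitely many candidate tuples. The remaining task is to verify that none of them simultaneously lies in~$\Ee$ (i.e.\ reduces to $(0;-1)$ under standard Cremona moves) and satisfies $\mu(d;\mm)(z)>\sqrt{z}$ for some $z\in[8\tfrac{1}{36},9)$. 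The main obstacle is this enumeration: while most candidates are eliminated immediately---either by Cauchy--Schwarz via Proposition~\ref{prop:obs}~(i), by a short Cremona reduction, or because the obstruction interval of the class is disjoint from $[8\tfrac{1}{36},9)$---a systematic sweep is required, and in practice is most efficiently carried out by a minor adaptation of the computer code {\tt InterSolLess} from Appendix~\ref{app:comp}, restricted to classes with $m_1=\dots=m_8$.
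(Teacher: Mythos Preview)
Your overall strategy matches the paper's, but your localization of the center $a_0$ has a genuine gap. You claim that Theorem~\ref{thm:main}~(i),(ii) and Theorem~\ref{thm:78} ``catalogue every obstructive class on the relevant subinterval,'' and hence that no obstruction interval with center in $[1,8]$ reaches past $8\tfrac{1}{36}$. This is false: on $[1,\tau^4]$ there are infinitely many obstructive classes (Remark~\ref{rmk:EM}~(ii)), and those results compute $c(a)$ without listing them. Your class $(d;\mm)$ realizes $c$ only at the point $a\ge 8\tfrac{1}{36}$; nothing ties it to any table for $[1,8]$, so you cannot bound the right endpoint of $I$ that way. The paper's argument sidesteps all classification: if $a_0\le 8$ then the connected interval $I\ni a$ contains $8$, so Lemma~\ref{le:I} forces $\ell(\mm)\le\ell(8)=8$, i.e.\ $(d;\mm)\in\Ee_8$; one then checks directly from the finite list in Lemma~\ref{le:eekfin} that no class in $\Ee_8$ is obstructive past $8\tfrac{1}{36}$. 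Likewise, for $a_0\in(8,8\tfrac{1}{36})$ the paper simply notes that $8\tfrac{1}{36}\in I$ while every $z\in(8,8\tfrac{1}{36})$ has $\ell(z)>44=\ell(8\tfrac{1}{36})$, contradicting Lemma~\ref{le:I}; there is no need to rerun the $\delta$-estimates.

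Your final enumeration is also heavier than necessary. With $d\le 16$ and $m_1=\dots=m_8=M$, the bound $\sum m_i=3d-1\le 47$ gives $M\le 5$, and the paper then checks by hand that the residual system $\sum_{i>8}m_i=3d-1-8M$, $\sum_{i>8}m_i^2=d^2+1-8M^2$ (with each $m_i\le M$) has \emph{no} integer solutions whatsoever for any $(d,M)$ in this range. No Cremona reductions, obstructiveness checks, or computer sweep are needed.
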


\begin{proof}  
Suppose to the contrary that 
$\mu(d;\mm)(a)>\sqrt{a}$ for some $a\ge 8\frac 1{36}$.
By Lemma~\ref{le:I} we may choose $a_0$ with $\ell(a_0) =\ell(\mm)$ in the interval $I$ containing $a$
on which this inequality holds.    
 
We first claim that $a_0> 8$. 
By Lemma~\ref{le:I} it suffices to see that $\ell(\mm) > 8$.  
One can prove this by explicit calculation since $\Ee_8$ is finite.   
In fact, the last obstruction given by the elements of $\Ee_8$ is that centered on $a=8$ which is discussed in Remark~\ref{rmk:8}. 
As we saw there, this is not effective when $a>8\frac 1{36}$.
  
It then follows that $a_0\ge 8\frac 1{36}$.  
For if not, because $I$ contains $a\ge 8\frac 1{36}$, it must also contain $8\frac 1{36}$.  But clearly
$\ell(z)> \ell(8\frac 1{36})=8+36 = 44$ for $z\in (8,8\frac 1{36})$.  
Therefore the minimum of $\ell(z)$ on $I$ cannot occur in this interval.

We may therefore apply Lemma~\ref{le:8eps} to $a_0$.  
Hence $d\le 16$ and $m := m_1=m_8$.
Since $\sum m_i = 3d-1 \le 47$, we must have $m \le 5$.  
It remains to check that there are no solutions to the Diophantine equations~\eqref{eq:ee} for any choice of $m \le 5$. 

Suppose first that $m=5$. We then look for solutions of 
\begin{equation} \label{e:m=5}
3d-1 = 40 + \sum_{i>8}m_i,
\quad
d^2+1 = 200 + \sum_{i>8}m_i^2 .
\end{equation}
The second equation shows that $d \in \{15,16\}$. 
For $d=15$, \eqref{e:m=5} becomes $4=\sum_{i>8}m_i$, $26=\sum_{i>8}m_i^2$,
which has no solution.
For $d=26$, \eqref{e:m=5} becomes $7=\sum_{i>8}m_i$, $57=\sum_{i>8}m_i^2$,
which has no solution either.
For $m \le 5$ there are no solution either.
\end{proof}
 
\begin{cor} $c(a) = \frac {17}{6}$ for 
$a \in [8,8\frac 1{36}]$.
\end{cor}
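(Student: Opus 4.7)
The plan is to sandwich $c$ between two equal values on the interval $[8, 8\frac 1{36}]$, using the obstruction from the class $(6;3,2^{\times 7})$ for the lower bound and monotonicity together with the preceding proposition for the upper bound.

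First, I would establish the lower bound $c(a) \ge \frac{17}{6}$ on $[8, 8\frac 1{36}]$. For any $a \ge 8$, the weight expansion begins $\ww(a) = (1^{\times 8}, a-8, \dots)$, so for the class $(d;\mm) = (6; 3, 2^{\times 7}) \in \Ee$ from Lemma~\ref{le:eekfin} we compute
\[
\mu(d;\mm)(a) \,=\, \tfrac{1}{6}\bigl(3 \cdot 1 + 2 \cdot 1 + \cdots + 2 \cdot 1\bigr) \,=\, \tfrac{3 + 2\cdot 7}{6} \,=\, \tfrac{17}{6}.
\]
Since $a \le 8\frac 1{36}$ in the range of interest, Corollary~\ref{cor:wgt} (or equivalently the discussion in Remark~\ref{rmk:8}) yields $c(a) \ge \mu(d;\mm)(a) = \frac{17}{6}$.

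Second, I would establish the matching upper bound via monotonicity. The key observation is the arithmetic identity
\[
\sqrt{8\tfrac{1}{36}} \,=\, \sqrt{\tfrac{289}{36}} \,=\, \tfrac{17}{6}.
\]
By Proposition~\ref{prop:8eps} we have $c(8\frac 1{36}) = \sqrt{8\frac 1{36}} = \frac{17}{6}$. Since $c$ is nondecreasing by Lemma~\ref{le:1}, this forces $c(a) \le c(8\frac 1{36}) = \frac{17}{6}$ for every $a \in [8, 8\frac 1{36}]$.

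Combining the two bounds gives $c(a) = \frac{17}{6}$ on the whole interval. There is no real obstacle here: the lower bound is a direct application of the already enumerated constraint in $\Ee_8$, and the upper bound is a one-line monotonicity argument that exploits the numerical coincidence $\sqrt{8\tfrac{1}{36}} = \tfrac{17}{6}$, which is precisely the value at which the step given by $(6;3,2^{\times 7})$ meets the square-root curve on the right.
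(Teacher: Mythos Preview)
Your proof is correct and follows essentially the same approach as the paper: the lower bound comes from the constraint given by $(6;3,2^{\times 7})$, and the upper bound comes from monotonicity together with $c(8\tfrac{1}{36})=\sqrt{8\tfrac{1}{36}}=\tfrac{17}{6}$ from Proposition~\ref{prop:8eps}. Your write-up is simply a more explicit version of the paper's two-line argument.
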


\begin{proof}
The class $(d;\mm)=(6;3,2^{\times 7})$ gives 
$c(a) \ge \mu(d;\mm)(a) = \frac {17}{6} = \sqrt{8 \frac 1{36}}$ for 
$a\ge 8$.  Therefore $c$ must be constant on this interval because   
it cannot decrease.  
\end{proof}

\appendix

\section{Weight expansions and Farey diagrams} \label{app:wt}
\numberwithin{theorem}{section}

In this section, we show that the weight expansion~$\ww(a)$ described above agrees with the expansion considered in~\cite{M}. For clarity, we call the latter the Farey weight expansion; 
see Definition~\ref{def:Fw}.
It arose from a procedure of constructing an outer approximation to an ellipsoid by repeated blowing up. After explaining this, we establish the equivalence of the two definitions in Corollary~\ref{cor:x}.
No doubt, versions of this result are already known. However, since it is not hard, 
we  give a direct proof in our context.

\begin{defn}  
Let $(\rho_i= p_i/q_i)$, $i=0,\dots,N$, be a sequence of rational numbers in lowest terms, 
with $\rho_0=0/1, \rho_1=1/1$ and $\rho_i>0$ for $i>0$.
We say that the two elements $\rho_j, \rho_k$ are {\bf adjacent} in 
$(\rho_i)$ if they are neighbors when the numbers $\rho_0,\dots,\rho_N$ are arranged in increasing order.
Further $(\rho_i)$ is called
a {\bf Farey expansion} of the rational number~$a$ if the following conditions hold:
\begin{itemize}
\item[(i)] 
$\rho_N=a$;
\item[(ii)]  $q_i< q_{i+1}$ for all $i\ge 1$;
\item[(iii)] adjacent pairs $p/q, p'/q'$ of elements of $(\rho_i)$
have the property that
\begin{equation}\labell{eq:1}
|pq'-p'q|=1;
\end{equation}
\item[(iv)] condition {\rm (iii)} does not hold if any term is removed from this expansion.
\end{itemize}
\end{defn}

\begin{example}
\rm 
The Farey expansion of $\frac 47$ is $1, \frac 12, \frac 23, \frac 35, \frac 47$
which may be arranged as 
$$
\tfrac 12 < \tfrac 47 < \tfrac 35 < \tfrac 23 < \tfrac 11 .
$$ 
\end{example}

\begin{lemma} 
\NI {\rm(i)} 
Every positive rational number~$a$ has a Farey expansion.

\s
\NI {\rm(ii)} 
This expansion is unique.  
Moreover, if $\rho_1,\dots,\rho_N$ is the Farey expansion of $a=\rho_N$, 
then for all $n<N$, $\rho_1,\dots,\rho_n$ is the Farey expansion of~$\rho_n$.
\end{lemma}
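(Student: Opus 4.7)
The plan is to prove (i) and (ii) simultaneously by strong induction on~$N$. The base cases $N \le 1$ are immediate, since $\rho_0 = 0/1$ and $\rho_1 = 1/1$ are forced by the definition. The central ingredient is a classical fact about the Stern--Brocot tree: every positive rational $a = p/q$ in lowest terms with $q \ge 2$ admits a unique pair of \emph{Farey parents}, namely positive rationals $b = r/s < a < u/v = c$ in lowest terms satisfying $p = r+u$, $q = s+v$, and $us - rv = 1$; in particular, both $s$ and $v$ are strictly less than~$q$.

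An auxiliary claim I would establish first is the following: whenever $v > s$, the smaller-denominator parent $b$ is itself one of the Farey parents of $c$. To see this, I would build the Stern--Brocot tree level by level: $c$'s sorted neighbors start out as its two Farey parents, and any \emph{later} neighbor of $c$ must have been inserted as a mediant involving $c$, giving that neighbor strictly larger denominator than~$v$. Since $b$ is a sorted neighbor of $c$ at the moment just before $a = \text{mediant}(b,c)$ is inserted, and since $s < v$, the rational $b$ must already have been one of $c$'s two original Farey parents.

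For existence, given $a$ with $q \ge 2$, let $\beta$ denote whichever of $b$, $c$ has the larger denominator, and let $\gamma$ be the other parent. By the induction hypothesis, $\beta$ has a unique Farey expansion $(\rho_0,\ldots,\rho_M)$; by the initial-segment part of the hypothesis, the sorted neighbors of $\beta = \rho_M$ in this expansion are forced to be the Farey parents of~$\beta$ (they are the only rationals of smaller denominator satisfying~(iii) with~$\beta$). The auxiliary claim then guarantees that $\gamma$ is one of these, and so appears in the expansion. Setting $\rho_{M+1} := a$ gives a candidate Farey expansion of~$a$: conditions (i) and (ii) are immediate, and (iii) need only be checked for the new adjacencies, which involve $a$ paired with its Farey parents $\beta$ and $\gamma$ and hold by the defining identity $us - rv = 1$. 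For (iv), any removable interior term would also be removable from the expansion of~$\beta$, contradicting its minimality; the term $a$ cannot be removed because of~(i); and removing~$\beta$ would force $a$ to become sorted-adjacent to the other Farey parent of~$\beta$, which is not a Farey neighbor of~$a$, so~(iii) would fail.

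For uniqueness and the initial-segment property, take any Farey expansion $(\rho_0,\ldots,\rho_N)$ of~$a$. Condition~(ii) makes $q_N$ strictly the largest denominator, so the two sorted neighbors of $\rho_N = a$ must satisfy~(iii) with~$a$, forcing them to be exactly the Farey parents $b$ and $c$; the one with the larger denominator must occupy position $\rho_{N-1}$, so $\rho_{N-1} = \beta$ is uniquely determined. The truncation $(\rho_0,\ldots,\rho_{N-1})$ inherits (i)--(iii) for~$\beta$, and also (iv), since any removable term would yield a shorter valid expansion of~$a$ upon re-appending~$a$, contradicting~(iv) for the full expansion. The induction hypothesis then gives uniqueness of $(\rho_0,\ldots,\rho_{N-1})$ as the Farey expansion of~$\beta$, which yields both uniqueness of the expansion of~$a$ and the initial-segment property. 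The main technical obstacle is the auxiliary claim: without the fact that $\gamma$ must appear in the expansion of~$\beta$, the inductive step in the existence argument cannot be closed.
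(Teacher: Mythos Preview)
Your approach is correct and rests on the same Stern--Brocot mediant structure as the paper's sketch; the difference is one of framing. The paper builds the expansion forward---each new term is the mediant of the current term with the most recent earlier term lying on the other side of $a$---and simply refers to Hardy--Wright for details and for part~(ii). You instead peel backward, reducing the expansion of $a$ to that of its larger-denominator Farey parent $\beta$ with $a$ appended. Your inductive framing has the genuine advantage of delivering uniqueness and the initial-segment property simultaneously with existence, which the paper's sketch does not attempt.

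Two small points to tidy up. First, your inductive step assumes $q \ge 2$, so integers $a \ge 2$ fall through the cracks (they have $N \ge 2$ but $q = 1$); you should handle them by a separate trivial argument, and when both Farey parents have denominator~$1$ (i.e.\ $a = (2k+1)/2$ for some $k \ge 1$) you must specify that $\beta$ is the larger integer, since your tie-breaking rule ``larger denominator'' is ambiguous there. Second, your verification of~(iv) for the term $\gamma$ is not quite as written: in the expansion of $a$, the sorted neighbors of $\gamma$ are $a$ and some $\rho_\ell$, so removing $\gamma$ creates the adjacency $(\rho_\ell, a)$, not $(\rho_\ell, \beta)$, and the reduction to minimality of $\beta$'s expansion does not apply directly. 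The fix is immediate, however: $\rho_\ell$ lies in $\beta$'s expansion and hence has denominator strictly less than $q$, so if $|\rho_\ell, a| = 1$ then $\rho_\ell$ would be a Farey parent of $a$, which it is not.
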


\begin{proof}[Sketch of proof]  
Given positive fractions $\rho_i := \frac{p_i}{q_i}$ and $\rho_j := \frac{p_j}{q_j}$, 
we define their Farey  sum  to be
$$
\rho_i \oplus \rho_j := \frac{p_{i}+p_j}{q_{i}+q_j}.
$$
If $0<a<1$, the expansion is constructed inductively starting with $\rho_0 = 0$ and $\rho_1=1$, in such a way that 
$\rho_{i+1} := \rho_i\oplus \rho_j$ 
where~$j$ is the largest number~$<i$ such that
$a$ lies between $\rho_i$ and $\rho_j$.   
Thus $\rho_2=\frac 12$, and $\rho_3$ is 
either $\frac 13$ (if $a<\rho_2$) 
or $\frac 23$ (if $a>\rho_2$).  
The construction stops when $a=\rho_N$.
 
If $a$ lies between $k$ and $k+1$, then the expansion begins with the terms 
$\rho_i := \frac i1$, $i=1,\dots, k+1$.  
Then $\rho_{k+2} = \rho_k \oplus \rho_{k+1} = \frac{2k+1}2$ and the expansion proceeds as in the previous case.
Further details may be found in Hardy and Wright, \cite[Ch.~III]{HW}.
\end{proof}

\begin{figure}[ht]
 \begin{center}
  \psfrag{v-1}{$(0,1)=v_{-1}$}
  \psfrag{v0}{$(1,0)=v_0$}
  \psfrag{v1}{$(1,1)$}
  \psfrag{v2}{$(1,2)$}
  \psfrag{v3}{$(2,3)$}
  \psfrag{v4}{$(3,5)$}
  \psfrag{e-1}{$\vareps_{-1}$}
  \psfrag{e0}{$\vareps_0$}
  \psfrag{e1}{$\vareps_1$}
  \psfrag{e2}{$\vareps_2$}
  \psfrag{e3}{$\vareps_3$}
  \psfrag{e4}{$\vareps_4$}
  \leavevmode\epsfbox{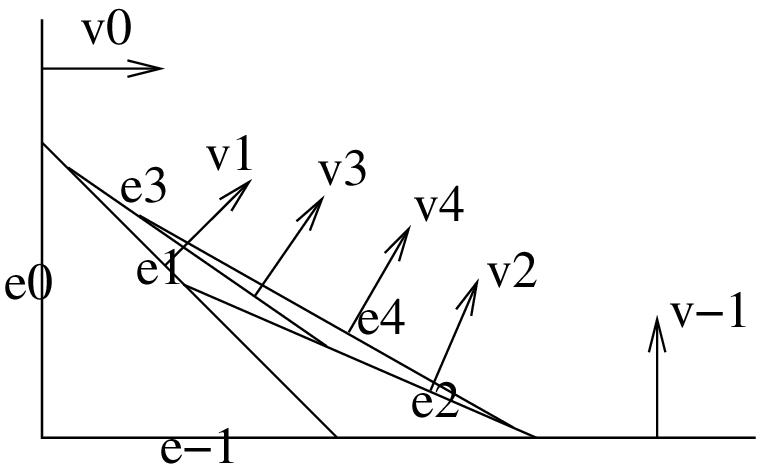}
 \end{center}
\caption{The Farey diagram for $p/q = 5/3$. 
Here $v_1=(1,1),\; v_2=(1,2),\; v_3=(2,3),\; v_4=(3,5)$.
The edge $\vareps_2$ meets $\vareps_1$ and $\vareps_{0}$,
while $\vareps_3$ meets $\vareps_1$ and $\vareps_{2}$,
and $\vareps_4$ meets $\vareps_2$ and $\vareps_{3}$.}
\label{figure.farey}
\end{figure}

One can build a diagram in $\R^2$ corresponding to a given Farey expansion by associating to each
fraction $p_i/q_i$ a line segment $\vareps_i$ (called an {\it edge}) with normal vector $v_i := (q_i,p_i)$ of slope $p_i/q_i$. 
See Figure~\ref{figure.farey}.
One starts with the first quadrant whose edges $\vareps_{-1}, \vareps_0$ are the positive coordinate axes with  (inward) normals $(q_{-1},p_{-1}) = (1,0)$
and $(q_0,p_0)=(0,1)$, and builds up a sequence of edges by cutting along certain directions.  
The first cut is along an edge $\vareps_1$ 
going from $\vareps_{-1}$ to $\vareps_{0}$ with normal $v_1=(1,1)$.  
In general,  
if $\rho_i$ is the Farey sum of $\rho_j$ with $\rho_{i-1}$ for some $j<i-1$ then the 
$i$th cut is along an edge $\vareps_i$ with normal $v_i$ that meets the edges $\vareps_j$ and $\vareps_{i-1}$ (but none of the others). 
The collection of edges $\vareps_1,\dots,\vareps_N$ is called the {\bf Farey diagram}; the {\bf extended Farey diagram} also includes the edges $\vareps_{-1},\vareps_0$.

As described in \cite[\S3]{M}, adding a new edge whose normal is the sum of the two adjacent normals corresponds to a (smooth) blow up, since in the toric model, each blow up corresponds to cutting off a corner of the moment polytope. Therefore we can think of the process of constructing the Farey expansion 
for~$a$ as the process of blowing up the first quadrant repeatedly and in as efficient a way as possible, 
in order to obtain a (smooth) polytope with one edge whose normal has slope~$a$.
In the language of~\cite{M}, this is an outer approximation; 
see Figure~3.1 and Lemma~3.8~ff.\ in~\cite{M}. 
For further discussion of the relation between weight sequences and 
the resolution of singularities by blow up, see the end of~\cite{Mcf}.
This contains a description of the Riemenschneider staircase that links the 
weight expansion for~$a$ to the Hirzebruch--Jung continued fraction expansions  
for the two singular points at the vertices of the toric model of the ellipsoid~$E(1,a)$.

Given such a sequence of edges $\vareps_1,\dots,\vareps_N$ one can define an associated sequence of 
{\bf Farey labels} $\la_1,\dots,\la_N$ as follows,
starting with the last $\vareps_N$ that is labeled by $\la_N := 1$.
 
\MS
\begin{itemize}
\item[($\ast$)]
{\it   
If  $\vareps_j, j>n,$ is labeled by $\la_j$, label $\vareps_n$ with the sum of the labels of the edges $\vareps_j, j>n,$ that intersect $\vareps_n$.}
\end{itemize}

\begin{defn}\labell{def:Fw} 
If $a=p/q$ has Farey diagram with labels $\la_i, 1\le i\le N$, 
the {\bf Farey weights} of $a$ are the numbers $u_i := \la_i/\la_1$, for $i=1,\dots,N$. 
\end{defn}

Note that reflection in the line $p=q$ converts the Farey diagram for $p/q$ into that for $q/p$. Therefore the Farey weights for $p/q$ and $q/p$ are equal.

These Farey weights are the weights considered in~\cite{M}. 
Our aim in this section is to show
that these agree with the weights $\ww(a)$ of Definition~\ref{def:wa}.

\begin{example}\rm  
One can see from Figure~\ref{figure.farey} that 
when $p/q = 5/3$ the labels $\la_i$ (in decreasing order) are 
$\la_4 = 1$, $\la_3=1$, $\la_2=2$, $\la_1=3$ 
which gives the Farey weights $1, \frac 23, \frac 13, \frac 13$. 
These agree with the weight expansion constructed in Definition~\ref{def:wa}.
\end{example}

In the following we will denote the distinct 
Farey labels for $a=p/q$ by $h_1>h_2>\dots> h_S>0$ 
and will suppose that they occur with multiplicities $n_1,\dots,n_S$.
Thus we write
$$
(\la_1,\la_2,\dots,\la_N) \,=\, \bigl(h_1^{\times n_1},\dots, h_S^{\times n_S}\bigr).
$$
 
\begin{prop}\labell{prop:x} 
Let $h_1>\dots> h_S>0$ be the distinct Farey labels for $a=p/q$ and suppose that they occur with multiplicities $n_1,\dots,n_S$.  
\begin{itemize}
\item[(i)]
If $a\in(k,k+1]$ then $n_1=k$ and $h_1=q, h_2= p-kq$;

\item[(ii)] If $a\in [1/(k+1),1/k)$ then $n_1=k$ and $h_1=p, h_{2} = q-kp$;
  
\item[(iii)] In both cases the $h_i$ for $1\le i<S$ satisfy the recursion relation
$$
h_{i} = n_{i+1}h_{i+1} + h_{i+2},
$$
where $h_{S+1} := 0$.
\end{itemize}
\end{prop}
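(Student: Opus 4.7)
My plan is to reduce~(ii) to~(i) by the reflection $(p,q) \mapsto (q,p)$ in the line $p = q$, which carries the Farey diagram of~$a$ to that of~$1/a$: it swaps the two initial axes~$\varepsilon_{-1}, \varepsilon_0$, preserves all Farey sums, and hence preserves adjacencies and labels, so the distinct labels~$h_i$ and multiplicities~$n_i$ agree. For case~(i) I focus on non-integer $a = p/q \in (k, k+1)$ with $k \ge 1$; the integer boundary $a = k+1$ is a degenerate case in which $h_1 = 1$ has multiplicity $k+1$, consistent with the stated formulas in the obvious limiting sense.

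For case~(i), I first identify the initial structure: by induction on~$i$, the first $k+1$ edges have normals $v_i = (1,i)$ for $1 \le i \le k+1$, because for $i \le k$ the target $v_N = (q,p)$ lies strictly on the $v_{-1}$-side of $v_i$ so the next cut joins $v_i$ to $v_{-1}$, while for $i = k+1$ the target $v_N$ enters the cone spanned by~$v_k$ and~$v_{k+1}$ and all subsequent cuts remain inside this sub-cone. The label computation rests on the claim that for each edge~$\varepsilon_n$, the label~$\lambda_n$ equals the coefficient of~$v_n$ in the unique expansion $v_N = \alpha v_n + \beta v_m$, where~$v_m$ is the neighbor of~$v_n$ towards~$v_N$ at the moment $\varepsilon_n$ is created. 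Granting this, for $1 \le i \le k$ we have $v_m = v_{-1}$ and $v_N = q v_i + (p - iq) v_{-1}$, so $\lambda_i = q$; and for $i = k+1$ we have $v_m = v_k$ and $v_N = (q-r) v_k + r v_{k+1}$ with $r := p - kq$, so $\lambda_{k+1} = r < q$. This gives $n_1 = k$, $h_1 = q$ and $h_2 = p - kq$, proving~(i).

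The coefficient claim is proved by downward induction on~$n$: the base $n = N$ is trivial, and for the inductive step the edges~$\varepsilon_j$ with $j > n$ meeting~$\varepsilon_n$ are exactly those created in the sequence of cuts between~$\varepsilon_n$ and its changing $v_N$-side neighbor, a sequence which replicates the subtractive Euclidean algorithm on the pair $(\alpha, \beta)$; by the inductive hypothesis, the new edges carry labels equal to the successive ``subtracted'' amounts, whose sum telescopes to~$\alpha$. For the recursion~(iii), the sub-diagram inside the cone of~$v_k, v_{k+1}$ is $SL_2(\Z)$-equivalent, via the change of basis $v_k, v_{k+1} \mapsto (1,0), (0,1)$, to a standard Farey diagram with target $(q-r, r)$, i.e.\ the Farey diagram of the rational $r/(q-r)$; labels, adjacencies, and multiplicities transfer verbatim. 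By induction on~$q$ the distinct labels of the sub-diagram satisfy the analogous recursion, which pulls back to the full Euclidean recursion $h_i = n_{i+1} h_{i+1} + h_{i+2}$ (with $h_{S+1} = 0$) on the original diagram --- precisely the Euclidean algorithm applied to $(p,q)$. The main obstacle is the coefficient claim: the bookkeeping identifying ``chains of cuts on the $v_N$-side of~$\varepsilon_n$'' with ``Euclidean subtraction steps on $(\alpha, \beta)$'' is conceptually standard Stern--Brocot material but must be written out carefully to verify that each new edge's label contributes exactly once to~$\lambda_n$.
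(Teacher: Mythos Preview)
Your approach is correct and shares with the paper the two structural moves: reducing (ii) to (i) by the reflection $(p,q)\leftrightarrow(q,p)$, and passing to the sub-cone spanned by $v_k,v_{k+1}$ via an $SL_2(\Z)$ change of basis (this is precisely the paper's matrix $A_k$). Where you diverge is in how you control the labels. The paper extends the labeling to the boundary edges $\varepsilon_{-1},\varepsilon_0$, reformulates (i) and (ii) as statements about $\lambda_0,\lambda_{-1}$ (namely $\lambda_0=p$, $\lambda_{-1}=p+q$ when $a>1$), and inducts on the length~$N$ of the expansion, splitting into cases according to whether the reduced target $p'/q'=r/(q-r)$ exceeds~$1$. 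Your coefficient formula --- that $\lambda_n$ equals the $v_n$-coordinate of $v_N$ in the basis $\{v_n,v_m\}$ --- is a cleaner substitute: it yields $h_1=q$ and $h_2=r$ directly and makes the link to the subtractive Euclidean algorithm transparent. The downward-induction proof you sketch for this formula is valid (the edges $\varepsilon_j$, $j>n$, meeting $\varepsilon_n$ are exactly $u_1,\ldots,u_s$ with $u_j=jv_n+v_m$; their labels are $\beta,\ldots,\beta,\alpha-(s-1)\beta$, which telescope to~$\alpha$).

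Two points to tighten. First, your $v_{-1}=(0,1)$ is the paper's $v_0$ and vice versa; your computations are internally consistent, but the swap should be flagged. Second, your treatment of (iii) glosses over the gluing step. The sub-diagram inside the cone $(v_k,v_{k+1})$ carries the labels $\lambda_{k+2},\ldots,\lambda_N$ but not $\lambda_{k+1}$, and whether $h_2=\lambda_{k+1}$ has multiplicity $n_2=1$ or $n_2>1$ depends on whether $r/(q-r)>1$ or $<1$; correspondingly the sub-diagram's distinct labels $h_1',h_2',\ldots$ align with $h_3,h_4,\ldots$ in the first case but with $h_2,h_3,\ldots$ (with $n_1'=n_2-1$) in the second. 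This is exactly the case split the paper writes out. Your phrase ``pulls back to the full Euclidean recursion'' is morally right but hides this bookkeeping; you should verify explicitly that the recursion at $i=1$ and $i=2$ holds in each case.
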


\begin{cor}\label{cor:x}  
For all $a>1$ the weights $\ww(a)$ of Definition~\ref{def:wa} are the Farey weights of~$a$.
\end{cor}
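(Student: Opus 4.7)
The plan is to deduce the corollary directly from Proposition~\ref{prop:x} by matching its recursive description of the Farey labels with the recursive description of $\ww(a)$ produced by the rectangle/continued fraction construction of Definition~\ref{def:wa}.

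First I would set up notation. Write $a = p/q > 1$ in lowest terms with continued fraction $[\ell_0;\ell_1,\dots,\ell_N]$, so that by Definition~\ref{def:wa} and equation~\eqref{eq:xs} the weight expansion takes the block form
$$
\ww(a) \;=\; \bigl(x_0^{\times \ell_0},\, x_1^{\times \ell_1},\, \dots ,\, x_N^{\times \ell_N}\bigr),
$$
with distinct strictly decreasing values $1 = x_0 > x_1 > \dots > x_N = 1/q > 0$ satisfying the recursion $x_{i-1} = \ell_i\, x_i + x_{i+1}$ (with the convention $x_{N+1} := 0$). Multiplying by $q$, the integers $X_i := q x_i$ satisfy $X_0 = q$, $X_1 = p-\ell_0 q$, $X_N = 1$, $X_{N+1}=0$, and the same linear three-term recursion $X_{i-1} = \ell_i X_i + X_{i+1}$.

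Next I would apply Proposition~\ref{prop:x}. Since $a>1$ falls into case~(i), the distinct Farey labels $h_1 > h_2 > \dots > h_S > 0$ with multiplicities $n_1,\dots,n_S$ satisfy $h_1 = q$, $n_1 = \lfloor a\rfloor = \ell_0$, $h_2 = p-\ell_0 q$, and the recursion $h_i = n_{i+1} h_{i+1} + h_{i+2}$ for $1 \le i < S$ with $h_{S+1}:=0$. A routine induction then identifies these two linear recursions: the base cases match ($h_1 = X_0$, $h_2 = X_1$, $n_1 = \ell_0$), and uniqueness of the recurrence forces $h_i = X_{i-1}$, $n_i = \ell_{i-1}$, and $S = N+1$. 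Dividing all labels by $h_1 = q$, the distinct Farey weights are $h_i/h_1 = x_{i-1}$ with multiplicities $\ell_{i-1}$, which is precisely the block decomposition of $\ww(a)$.

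The only real obstacle is bookkeeping around two conventions. One must check the edge case where $a$ is an integer (where the continued fraction expansion and the statement of Proposition~\ref{prop:x}~(i) have a slight ambiguity in $\ell_0$ versus $\ell_0 - 1$), but either choice gives the same multiset of weights since the affected block collapses correctly. One must also observe that the Farey weights $u_i = \lambda_i/\lambda_1$ of Definition~\ref{def:Fw} are indexed by the construction order of the Farey diagram, whereas $\ww(a)$ is weakly decreasing by construction; however Proposition~\ref{prop:x} characterizes the Farey labels purely by their distinct values $h_i$ and multiplicities $n_i$, so the tuples agree once both are arranged in decreasing order. This completes the identification and establishes the corollary.
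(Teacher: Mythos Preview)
Your proposal is correct and follows essentially the same approach as the paper: both argue that the recursion $h_i = n_{i+1}h_{i+1} + h_{i+2}$ from Proposition~\ref{prop:x}(iii), together with the initial data from part~(i), coincides with the recursion $x_{i-1} = \ell_i x_i + x_{i+1}$ governing $\ww(a)$, so the two sequences of weights agree. Your treatment is simply more explicit about the induction, the integer edge case, and the ordering, where the paper compresses all of this into the phrase ``precisely the same characterization.''
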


\begin{proof} 
If we write $\ww(a)$ as
$
\ww(a) = \Bigl(1^{ \ell_1}, x_2^{\times \ell_2},\dots,x_K^{\times \ell_K}\Bigr)
$
then Definition~\ref{def:wa} implies that the~$x_i$ are characterized by the properties that  
$x_1=1$, $x_i>x_{i+1}\ge 0$ and the recursive relation
$$
x_i = \ell_{i+1}x_{i+1}+ x_{i+2}.
$$
Since the $\la_i$ are positive and nonincreasing, Proposition~\ref{prop:x} shows that Farey weights $\la_i/\la_1$ have precisely the same characterization.
\end{proof}

\NI 
{\bf Proof of Proposition \ref{prop:x}.}\;\,
Reflection in the line $p=q$ converts the Farey diagram for~$p/q$ into that for~$q/p$.  
Therefore statements~(i) and (ii) are equivalent.  
We will prove all three statements together by an inductive argument.

We use the extended diagram obtained by adding to the edges $\vareps_1,\dots,\vareps_N$ 
the edge $\vareps_0$ with normal $v_0 = (0,1)$ and the edge $\vareps_{-1}$
with normal $v_{-1} = (1,0)$. When $a>1$ we order them as 
$\vareps_{-1}, \vareps_0, \vareps_1,\dots$, and then
label these as in~($\ast$) above; 
when $a<1$ we order them as 
$\vareps_0, \vareps_{-1}, \vareps_1,\dots$, and then label them using~($\ast$).

If $k<a<k+1$, the Farey expansion starts with $1,2,\dots,k, k+1$ 
and then contains  
further elements between $k$ and $k+1$.  
It follows that $n_1=k$.  Further, because the only edges meeting 
$\vareps_0$ are $\vareps_1, \dots,\vareps_{k+1}$, we have 
\begin{equation}\labell{eq:n1}
\la_{0} = k\la_1+\la_{k+1} = n_1h_1+h_2.
\end{equation}
Similarly, because the only edges meeting $\vareps_{-1}$ are $\vareps_{0}$ and $\vareps_1$, 
we have $\la_{-1}= \la_{0}+\la_1$.  
Therefore~(i) is equivalent to
\SSS

\NI
(iv) {\it $n_1=k$ and 
$\la_0 = p,\;\; \la_{-1}= p+q$ when $p/q>1$}.\SSS

\NI
Similarly, if $a\in (1/(k+1),1/k)$ we find that  
$$
n_1=k,\quad \la_{-1} = k\la_1+\la_{k+1} = n_1h_1+h_2,\quad \la_{0}=\la_{-1} +\la_1.
$$
Hence (ii) is equivalent to 
\SSS

\NI
(v) {\it $n_1=k$ and
$\la_{-1} = q,\;\; \la_0= p+q$   when} $p/q<1$.\MS

We  argue by induction on~$N$, the length of the Farey expansion of~$a$.  
By symmetry, it suffices to consider the case when $a\in (k,k+1]$.  
The result is clear when $a=k+1$ 
(and also for the trivial case $a=1$ which has a single label $\la_1=1$). 
Point~(i) is easily checked when $N=k+2$
since then $p/q = (2k+1)/2$. Similarly, one can check it for the two numbers 
$(3k+1)/3, (3k+2)/3$ with $N=k+3$.
(Note that $v_{k+2} = (2,2k+1)$ and $v_{k+3}$ is either 
$(3,3k+1)=(1,k) \oplus (2,2k+1)$ or $(3,3k+2)= (2,2k+1) \oplus (1,k+1)$.)
Now, consider the matrix 
$$
A_k \,=\,
\begin{pmatrix}
k+1 & -1 \\
-k & 1
\end{pmatrix}
$$
that takes the vectors $(1,k), (1,k+1)$ to 
$(1,0) = v_{-1}, (0,1)=v_0$.
Then
$$
A_k 
\begin{pmatrix}
\,q\, \\ \,p\,
\end{pmatrix}
\,=\,
\begin{pmatrix}
(k+1)q-p \\ -kq+p
\end{pmatrix}
\,=:\,
\begin{pmatrix}
\,q'\, \\ \,p'\,
\end{pmatrix} .
$$
Therefore if  $p/q =: k + x$, we find $p'/q' = x/(1-x)$.
In particular,  $p'/q'>1$ if and only if $x>1-x$, that is, exactly if $n_2=1$.  

Because $\det A_k=1$, $A_k$ preserves the Farey addition relation between adjacent 	normals.  
Hence if $v_1=(1,1), v_2,\dots, v_N$ are the normals in the diagram for $p/q$, the normals for the diagram for
$p'/q'$ are
$$
A_k v_{k+2}=(1,1),\, A_kv_{k+3},\, \dots,\, A_kv_N.
$$  
In fact one could construct the diagrams for $p/q$ and $p'/q'$ so that there is an  affine transformation obtained by following $A_k$ by a suitable translation that takes
the standard diagram for $p/q$ to the extended diagram for $p'/q'$.  
Therefore, if  ${p'}/{q'}> 1$, the labels $\la_k,\la_{k+1}, \dots, \la_N$ for $p/q$ equal 
the labels $\la_{-1}', \la_0', \la_1',\dots,\la_{N-k-1}'$ of the extended diagram for $p'/q'$.  
Hence:

\begin{itemize}
\item[]
{\it if  ${p'}/{q'}> 1$, then $n_2=1$ and the multiplicities for 
$p'/q'$ are $n_3,\dots,n_S$ 
with corresponding labels $h_3,\dots,h_S$.}
\end{itemize}

\NI
Therefore because the recursive relation (iii) holds for $p'/q'$ it holds for $p/q$ and $i\ge 3$. 
Further, by equation~\eqref{eq:n1}) and (iv) applied to $p'/q'$,  
$\la_0' = n_3h_3 + h_4 = p'$ and 
$\la_{-1}' = \la_0' + \la_1' = p' + h_3$.
Therefore, since $n_2=1$,
$$
h_2 = \la_{k+1} = \la_0' = n_3h_3 + h_4 = p',\quad
h_1 = \la_1 = \la_{-1}' = n_2 h_2 + h_3.
$$
This shows that (iii) holds for $p/q$.
Moreover, $h_2=p' = p-kq$ and, by (i) for $p'/q'$,
we find $h_1 = p'+h_1' = p'+ q'= q$.

This completes the proof when $p'/q'>1$.  
When $p'/q'<1$, the proof is similar. By (v),
the labels $\la_0',\la_{-1}',\la_1',\dots$ (note the reordering) for the extended diagram 
for $p'/q'$ are $\la_k+\la_{k+1},\la_{k},\la_{k+2},\dots$, with first multiplicity
$n_1'=n_2-1$. Further details will be left to the reader.
\proofend

\section{Computer programs} \labell{app:comp}

\subsection{Computing $c(a)$ at a point~$a$}  \label{a:SolLess}
In this section we describe a {\tt Mathematica} program
{\tt SolLess[a,D]}
that finds for a rational number $a$ and a natural number~$D$ 
all classes $(d;\mm) \in \Ee$
with $\ell(\mm) = \ell(a)$ and $\mu(d;\mm)(a) > \sqrt{a}$ and $d\le D$.
We have applied this program in the proof of Theorem~\ref{thm:78}
to eight numbers $z_k = 7 \frac 2{2k+1}$ in $[7 \frac 19,8]$.
The present program can be used for all $a$.
By removing one line, one obtains a program finding {\it all}\/ 
obstructive solutions at~$a$ with $d \le D$ 
(not just those with $\ell (\mm) = \ell(a)$).

Recall from Remark~\ref{rmk:computer1} that instead of using the code {\tt SolLess},
one can use the algebraic method from the proof of Lemma~\ref{le:7k}
to find all obstructive classes $(d;\mm)$ at $z_k$ with $\ell(\mm)=\ell(z_k)$.
We have chosen to use this code for convenience, 
and because it might be helpful for understanding the more involved code 
of~\S~\ref{a:comp.inter}.

\MS
We start with computing the weight expansion $\ww (a)$ of a rational number~$a$.
For convenience, we use that the multiplicities of $\ww (a)$ are given by the
continued fraction expansion of~$a$.

\begin{verbatim}
W[a_] := Module[{aa=a,M,i=2,L,u,v},
                 M = ContinuedFraction[aa];
                 L = Table[1, {j,M[[1]]}];
                 {u,v} = {1,aa-Floor[aa]};
                 While[i <= Length[M],
                       L = Join[L, Table[ v, {j,M[[i]]}] ];   
                       {u,v} = {v,u - M[[i]] v};
                       i++];
                 Return[L] ]
\end{verbatim}

\ni
For instance, {\tt W[3+2/3]} yields {\tt \{1,1,1,2/3,1/3,1/3\}}.

\b \ni
We next give for each natural number~$k$ a list of $4$~vectors,
from which we will construct candidates for the vectors $\mm$.

\begin{verbatim}
P[k_] := Module[{kk=k,PP,T0,i},
                 T0  = Table[0,{u,1,k}];
                 T0p = ReplacePart[T0,1,1];
                 T1  = Table[1,{u,1,k}];
                 T1m = ReplacePart[T1,0,-1];
                 PP = {T0,T0p,T1,T1m};
                 Return[PP] ]       
\end{verbatim}

\ni
For instance, {\tt P[3]} yields 
{\tt \{0, 0, 0\}, \{1, 0, 0\}, \{1, 1, 1\}, \{1, 1, 0\} }.

\b
\ni
Our next task is to construct for given $a$ all candidate vectors $\mm$.
To this end we first take a given multiplicity vector {\tt M}, 
say $(k_1, k_2, k_3)$,
and associate to it all vectors of length $k_1+k_2+k_3$ such that
that the $j$\,th block is a vector from {\tt P[$k_j$]}.
In the example we thereby obtain $4^3$ vectors .
We use the sets {\tt P[k]} and a recursion:

\begin{verbatim}
Difference[M_] := Module[{V=M,vN,V1,l,L={},D,PP,i,j,N},
                          l = Length[V];
                          If[ l == 1, L = P[ V[[1]] ]];
                          If[ l >  1,
                              vN = V[[-1]];
                              V1 = Delete[V,-1];
                              D  = Difference[V1]; 
                              PP = P[vN];
                              i  = 1;
                              While[ i <= Length[D],
                                     j=1;
                                     While[j <= Length[PP],
                                           N = Join[ D[[i]], PP[[j]] ];
                                           L = Append[L,N];
                                           j++];
                                     i++]
                            ];
                          Return[L] ]
\end{verbatim}

\ni
We now take a positive rational number $a$ and $d \in \NN$ 
and compute all solutions of the Diophantine equation with~$d$ given 
that are obstructive at~$a$:
We first take the multiplicity vector $\tt M$ of $\ww(a)$,
and then round down each of its entries, getting~$F$.
In view of Lemma~\ref{le:atmost1}, an 
obstructive 
multiplicity vector~$\mm$
must be of the form {\tt F+D[[i]]}, 
where {\tt D[[i]]} is the i\,th vector from the list {\tt Difference[W[a]]}.
We therefore run through this list, and each time
check whether {\tt V=F+D[[i]]}
is a solution of the Diophantine system, has last entry positive,
and is obstructive: $\mu(d;V)(a) > \sqrt{a}$.
If all three conditions are fulfilled, we add {\tt V} to our list, 
and also retain~$d$.

\begin{verbatim}
Sol[a_,d_] := Module[{aa=a,dd=d,M,F,D,i,V,L={}},
                 M = ContinuedFraction[aa];
                 F = Floor[ dd/Sqrt[aa] W[aa] ];
                 D = Difference[M];
                 i=1;
                 While[i <= Length[D],
                       V = Sort[F+D[[i]], Greater];
                       SV = Sum[ V[[j]], {j,1,Length[V]} ];
                       If[ {SV, V.V} == {3dd-1, dd^2+1} 
                            && V[[-1]] > 0
                            && W[aa].V / dd >= Sqrt[aa],
                            L = Append[L, V]
                         ];
                       i++];
                 Return[{dd,Union[L]}] ]
\end{verbatim}
For instance, 
{\tt Sol[7 + 1/8, 48]} yields 
$$
{\tt \{48, \{\{18, 18, 18, 18, 18, 18, 18, 3, 2, 2, 2, 2, 2, 2, 2\}\}\}}.
$$

\ni
\begin{remark}
{\rm
(i)
We were not at all economical when constructing the list {\tt Difference[M]}:
In view of Lemma~\ref{le:atmost1}, for an 
obstructive vector {\tt F+D[[i]]} 
there is at most one~$k_j$ such that the vector {\tt P[$k_j$]}
appearing in {\tt D[[i]]} can have both $0$ and~$1$ as entries.
We have chosen this form of the program to make it more readable.

\s
(ii)
In the main body of the paper, we applied this program only to the eight numbers
$z_k=7 \frac 2{2k+1}$,
and for these numbers we know that $m_1=m_7$ by Lemma~\ref{le:7less}.
We did not use this information so as to make the program applicable also at other points, 
e.g.~to $7 \frac 1k$ in order to check Lemma~\ref{le:7k} (at least for all $d \le 2000$ or so).

\s
(iii)
Recall from Lemma~\ref{le:I} that for every $(d;\mm)$ 
that gives an obstruction at~$a$ we have
$\ell(a) \ge \ell(\mm)$.
The condition {\tt V[[-1]] > 0} asked in {\tt Sol[a,d]} is therefore equivalent to
$\ell(a) = \ell(\mm)$.
By removing this condition, we obtain a program finding {\it all}\/ obstructive 
solutions~$(d;\mm)$ at~$a$.
\diam
}
\end{remark}

\ni
We finally collect, for given $a$ and $D \in \NN$, all
solutions that are obstructive at~$a$
and have $d \le D$:

\begin{verbatim}
SolLess[a_,D_] := Module[{aa=a,DD=D,d=1,Ld,L={}},
                          While[d <= D,
                                Ld = Sol[aa,d];
                                If[ Length[ Ld[[2]] ] > 0,
                                    L = Append[L,Sol[aa,d]]
                                  ];
                                d++];
                          Return[L] ]           
\end{verbatim}

\subsection{Computing $c(a)$ on an interval}  \label{a:comp.inter}

In this section we describe a {\tt Mathematica} program
{\tt InterSolLess[k,D]}
that provides for a natural number~$D$ a finite list of candidate classes $(d;\mm) \in \Ee$
with 
$\ell (\mm) = \ell(a)$ and 
$\mu(d;\mm)(a) > \sqrt{a}$ and $d \le D$
for some $a \in \;]7\frac 1{k+1}, 7 \frac 1k[$, $a \neq z_k$, where $z_k = [7;k,2]$.
We have applied this program in the proof of Theorem~\ref{thm:78} to the eight intervals
$]7 \frac 1{k+1}, 7 \frac 1k [$, $k \in \{1, \dots, 8\}$.
Throughout we assume that $a,b$ and the~$m_i$ are positive integers.

\MS
Our first goal is to list for a given pair $a,b$
all solutions of the Diophantine system
\begin{equation} \label{e:dioab}
\left\{
\begin{array} {rcl}
 a &=& \sum_i m_i \\
 b &=& \sum_i m_i^2 
 \end{array}\right.
\end{equation}
To illustrate our method, 
let us find in an algorithmic way the solutions $\mm$ of~\eqref{e:dioab}
for $(a,b)=(4,6)$.
It suffices to list solutions $\mm = (m_1,m_2,\dots,m_M)$ with $m_1 \ge m_2 \ge \dots \ge m_M$.
We must have $m_1 \le \lfloor \sqrt 6 \rfloor =2$.
We therefore try with $m_1=1$ and $m_1=2$.
For a solution~$\mm$, the next numbers $(m_2,m_3,\dots)$ must fulfill~\eqref{e:dioab}
with $(a,b)=(4-1,6-1)=(3,5)$ and $(a,b)=(4-2,6-4)=(2,2)$.
In the first case (when $m_1=1)$, we only need to try with $m_2=1$.
The next numbers $(m_3,\dots)$ of a solution must then fulfill~\eqref{e:dioab}
with $(a,b)= (3-1,5-1)=(2,4)$.
Then $a^2=b$, so that the only solution is $m_3=2$.
But $m_3=2 > 1=m_2$, whence we discard the solution $(1,1,2)$.
In the second case (when $m_1=2$), we try to find numbers $(m_2,m_3,\dots)$ 
solving~\eqref{e:dioab} with $(a,b)=(4-2,6-4)=(2,2)$. 
We  only need to try with $m_2=1$, and then want to solve~\eqref{e:dioab} with
$(a,b)=(2-1,2-1)=(1,1)$ for $m_3$.
Since $a^2=b$, the only solution is $m_3=1$.
We therefore find the solution $\mm = (2,1,1)$.

The code {\tt Solutions[a,b]} below does the same thing by a recursion.
Note that if $a^2 < b$, then~\eqref{e:dioab} has no solution.
%
\begin{verbatim}
Solutions[a_,b_] := Solutions[a,b,Min[a,Floor[Sqrt[b]]]] 

Solutions[a_,b_,c_] := Module[{A=a,B=b,C=c,i,m,K,j,V,L={}}, 
                               If[ A^2 < B, L={}];
                               If[ A^2== B, 
                                   If[ A > C, L={}, L={{A}} ] ];
                               If[ A^2 > B,
                                   i=1;
                                   m = Min[Floor[Sqrt[B]],C];
                                   While[i <= m,
                                         K = Solutions[A-i,B-i^2,i];
                                         j=1;
                                         While[j <= Length[K],
                                               V = Prepend[ K[[j]], i];
                                               L = Append[L,V];
                                               j++
                                              ];
                                         i++]
                                 ];
                               Return[Union[L]] ]
\end{verbatim}
Notice that applied to $(a,b) = (3d-1,d^2+1)$, the above algorithm lists all solutions
of our principal Diophantine equation. 
For large~$d$, however, there are many solutions.
We shall therefore directly choose the first $7+k+1$ numbers $m_i$,
using that for 
obstructive solutions the vectors $\mm$ and $\ww(a)$ must be essentially parallel,
and shall then use the code {\tt Solutions} only to choose the remaining $m_{7+k+2}, \dots$.

It will be useful to have a short expression for the sum of the entries of a vector {\tt L}:
\begin{verbatim}
sum[L_] := Sum[ L[[j]], {j,1,Length[L]} ]
\end{verbatim}

For given $k \ge 1$ the following code gives three vectors of length~$7+k$
that have all entries equal to~$0$ except that the last entry of the first vector is~$-1$ 
and the eighth entry of the third vector is~$1$.
\begin{verbatim}
P[k_] := Module[{kk=k,PP,T0,i},
                 T0 = Table[0,{i,7+kk}];
                 Tm = ReplacePart[T0,-1,-1];
                 Tp = ReplacePart[T0,1,8];
                 PP = {Tm,T0,Tp};
                 Return[PP] ]       
\end{verbatim}
For $k=4$, this gives 
${\tt \{0^7, 0, 0, 0, -1\}}, \; {\tt \{0^7, 0, 0, 0, 0\}, \; {\tt \{0^7, 1, 0, 0, 0\}}}$.
We shall use these vectors to take into account that the~$m_i$ may not
be constant on the second block.

\m
Fix $d \in \NN$ and $k \in \{1, \dots, 8\}$.
Assume that $\mm$ is such that $(d;\mm) \in \Ee$ 
and such that 
$\ell (\mm) = \ell(a)$ and 
$\mu(d;\mm)(a) > \sqrt{a}$ 
for some $a \in \;]7\frac 1{k+1}, 7 \frac 1k[$.

In view of Lemma~\ref{le:atmost1} we have $m_1=m_7$.
Moreover, since $|\eps_1| \le \frac 1{\sqrt 7} < \frac 12$ and $a < 7 \frac 1k$, 
$$
m_1 \,=\, \tfrac d{\sqrt a} + \eps_1 \,>\,  \tfrac d{\sqrt{7 \tfrac 1k} }-\tfrac 12
$$
and hence $m_1 \ge {\tt m1 := Round} \left( \frac d{\sqrt{7+\frac 1k}} \right)$.
In the same way we see that 
$m_1 \le {\tt M1 := Round} \left( \frac d{\sqrt{7+\frac 1{k+1}}} \right)$.
The number $m_1=m_7$ must therefore be in the interval $[ {\tt m1,M1} ]$.

Next, consider $m_j$ for $j \in \{ 8, \dots, 7+k \}$.
For $a=7+x$ we have $\frac{1}{k+1} < x < \frac 1k$.
Therefore, 
$$
m_j \,=\, \tfrac{d}{\sqrt a} x + \eps_j \,>\, \tfrac{d}{\sqrt{7 \tfrac 1k}} \tfrac{1}{k+1} -1
$$
and hence $m_j \ge {\tt mx} := 
\Bigg\lceil \frac{d}{\sqrt{7 \frac 1k}} \frac{1}{k+1} \Bigg\rceil -1 =
{\tt Ceiling} \left( \frac{d}{\sqrt{7 \frac 1k}} \frac{1}{k+1} \right) -1$.
In the same way we see that 
$m_j \le {\tt Mx} := 
\Bigg\lfloor \frac{d}{\sqrt{7 \frac 1{k+1}}} \frac 1k \Bigg\rfloor +1 =
{\tt Floor} \left( \frac{d}{\sqrt{7 \frac 1{k+1}}} \frac 1k \right) +1$.
The numbers $m_8, \dots, m_{7+k}$ must therefore be in the interval $[ {\tt mx,Mx} ]$.

Since $a \in\,]7 \frac 1{k+1}, 7 \frac 1k[$, we have $\ell(\mm)=\ell(a) \ge 7+k+1$.
Lemma~\ref{le:irrel}~(i) applied to the second block shows that
$|m_7-(m_8+ \dots +m_{7+k}) - m_{7+k+1})| \le {\tt Ceiling} \left( \sqrt{k+2}\right)-1$.
Note that we can assume that $m_1 \ge m_2 \ge \dots m_{7+k+1}$ and that $m_{7+k+1} \ge 1$.

Using this information about $m_1, \dots, m_{7+k+1}$, 
we build a preliminary list of vectors $\mm$ as follows:
We first take all possibilities for $m_1, \dots, m_{7+k+1}$ into account.
Since the full vector $\mm$ solves the Diophantine equation for~$d$,
the remaining numbers $m_{7+k+2}, \dots, m_M$ must solve the Diophantine equation~\eqref{e:dioab}
with 
$$
a = 3d-1-\sum_{i=1}^{7+k+1}m_i,
\qquad 
b = d^2+1-\sum_{i=1}^{7+k+1}m_i^2 .
$$
Note that we can assume that $a \ge 0$ and $b \ge 0$. 
We then take the list {\tt Solutions[a,b,M[[-1]]} of all solutions
to~\eqref{e:dioab} for which all $m_j$ are at most $m_{7+k+1}$,
and append each such solution to~$(m_1, \dots, m_{7+k+1})$.
It could be that the only solution is $0$ (namely if $a=b=0$);
in this case we remove the entry~$0$. 
\begin{verbatim}
Prelist[k_,d_] := Module[{kk=k,dd=d,u,v,m1,M1,mx,Mx,f,t,
                          PP,M,MM,i=0,j=0,s=1,S,T,K,l,L={}},
                  u  = 1/(kk+1);
                  v  = 1/kk;
                  m1 = Round[dd/Sqrt[7+v]];
                  M1 = Round[dd/Sqrt[7+u]];
                  mx = Floor[dd/Sqrt[7+v] u]-1;
                  Mx = Ceiling[dd/Sqrt[7+u] v]+1;
                  f  = Ceiling[Sqrt[kk+2]-1];
                  t  = -f;
                  PP = P[kk];
                  While[i <= M1-m1,
                    While[j <= Mx-mx,
                      While[s <= 3,
                        While[t <= f,
                              M = Join[ Table[m1+i, {u,7}], Table[mx+j, {u,kk}] ];   
                              M = M + PP[[s]];
                              S = Sum[ M[[u]], {u,8,8+kk-1}];
                              M = Append[M, M[[7]]-S+t];  
                              T=1;
                              If[ M == Sort[M,Greater] && M[[-1]] > 0, T=1, T=0];  
                              S = sum[M];
                              A = 3dd-1-S;
                              B = dd^2+1-M.M;
                              If[ Min[A,B] < 0, T=0];  
                              If[ T==1,
                                  K = Solutions[A,B,M[[-1]]]; 
                                  l=1;
                                  While[l <= Length[K],
                                        MM = Join[ M,K[[l]] ]; 
                                        While[ MM[[-1]] == 0, MM=Drop[MM,-1] ];     
                                        L = Append[L,MM];
                                        l++
                                       ]
                                ]; 
                        t++];
                        t=-f;
                      s++];
                      s=1;
                   j++];
                   j=0;
                  i++];
                  Return[{dd,Union[L]}] ]
\end{verbatim}

Many of the solutions $(d;\mm)$ in {\tt Prelist[k,d]} are not obstructive.
The next code removes most of these solutions:

\begin{verbatim}
InterSol[k_,d_] := Module[{kk=k,dd=d,L,M,T,K={},i=1,l,rest},
                   L = Prelist[kk,dd][[2]];
                   While[i <= Length[L],
                         M = L[[i]];
                         l = Length[M]; 
                         T = 1;
                         If[ l <= 7 + kk + 2, T=0];
                         If[ M[[-2]]-M[[-1]] > 1, T=0 ];
                         If[ M[[-3]] > M[[-2]] + 1 
                             && Abs[ M[[-3]]-M[[-2]]-M[[-1]] ] > 1, T=0 ];
                         If[ kk==1 && l >= 10,
                             If[ M[[9]] - M[[10]] > 1 &&
                                 Abs[ M[[8]] - (M[[9]] + M[[10]]) ] > 1,
                                 T=0 ]];
                         rest = Sum[ M[[j]], {j,8+kk,l} ];
                         If[ M[[7+kk]] - rest >= Sqrt[l-kk-6], T=0 ];   
                         If[ T==1, K = Append[K, M] ]; 
                         i++];
                   Return[{dd,K}] ]
\end{verbatim}
Recall that 
$a \in \;]7\frac 1{k+1}, 7 \frac 1k[$, $a \neq z_k$, where $z_k = [7;k,2]$.
In particular, $\ell(\mm) = \ell(a) > 7+k+2$.
We then test the very end of $\mm$: Since the last block has length at least~2,
we must have $m_{M-1} = m_M+1$ or $m_{M-1} = m_M$
in view of Lemma~\ref{le:atmost1}.

We next exploit Lemma~\ref{le:irrel}:
If $m_{M-2} > m_{M-1}+1$, then we know that the length of the last block is~2,
and $m_{M-2}$ belongs to the before the last block.
Lemma~\ref{le:irrel}~(ii) then shows that $|m_{M-2}-(m_{M-1}+m_M)| \le 1$.
In the next test we apply the same lemma to the special situation where
$k=1$ and where we know that the third block has length~$1$. 

In the last test we apply Lemma~\ref{le:irrel}~(ii) with $j=7+k$. 

\m
\ni
We finally take the union over $d \le D$ of the solutions in {\tt InterSol[k,D]}:
\begin{verbatim}
InterSolLess[k_,D_] := Module[{kk=k,DD=D,LL={},Q,d=1},
                               While[d <= DD,
                                     Q = InterSol[kk,d];
                                     If[Length[Q[[2]]] > 0,
                                        LL = Append[LL,Q]];
                                     d++];
                               Return[LL] ]      
\end{verbatim}



\begin{thebibliography}{cccccc}


\bibitem{BR}
M.~Beck and S.~Robins,
Computing the continuous discretely.
Integer-point enumeration in polyhedra. 
{\it Undergraduate Texts in Mathematics.} 
Springer, New York, 2007. 


\bibitem{B}
P.~Biran,
Symplectic packing in dimension~$4$,
{\it Geom.\ Funct.\ Anal.}~{\bf 7}
(1997), 420--437.

\bibitem{B-99}
P.~Biran,
Constructing new ample divisors out of old ones, 
{\it Duke Math.~J.}~{\bf 98} (1999), 113--135.


\bibitem{B-01}
P.~Biran,
From Symplectic Packing to Algebraic Geometry and back, 
European Congress of Mathematics, Vol.~II (Barcelona, 2000), 507--524, 
{\it Progr.~Math.}~{\bf 202}, Birkh\"auser, Basel, 2001.


\bibitem{CHLS}
K.~Cieliebak, H.~Hofer, J.~Latschev and F.~Schlenk,
Quantitative symplectic geometry,
Dynamics, ergodic theory, and geometry,  1--44,
{\it Math.\ Sci.\ Res.\ Inst.\ Publ.}~{\bf 54},
Cambridge Univ.\ Press, Cambridge, 2007.



\bibitem{EH}
I.~Ekeland and H.~Hofer, 
Symplectic topology and Hamiltonian dynamics. II
{\it Math.\ Z.}~{\bf 203} (1990), 553--567.


\bibitem{Gu} 
L.~Guth,
Symplectic embeddings of polydisks,
{\it Invent. Math.}~{\bf 172} (2008), 477--489.

\bibitem{HW}  G.~H.~Hardy and E.~M.~Wright, 
{\it An Introduction to the Theory of Numbers}, 
OUP, Oxford (1938).

\bibitem{HK}  
R.~Hind and E.~Kerman, 
New obstructions to symplectic embeddings, 
arxiv:0906.4296.

\bibitem{HT}
M.~Hutchings and C.~H.~Taubes,
Gluing pseudoholomorphic curves along branched covered cylinders,~I,
arXiv:math/0701300, to appear in {\it Journ.\ Symp.\ Geom.}

\bibitem{HT2}
M.~Hutchings and C.~H.~Taubes, in preparation.


\bibitem{LL2}
Bang-He Li and T.-J.~Li,
Symplectic genus, minimal genus and diffeomorphisms,
{\it Asian J.~Math.}~{\bf 6} (2002), 123-144.

 
\bibitem{LL}
T.-J.~Li and A.~K.~Liu,
Uniqueness of symplectic canonical class, surface cone and symplectic cone
of $4$-manifolds with $b^+=1$,
{\it J.~Differential.\ Geom.}~{\bf 58} (2001), 331--370.


\bibitem{Mdef}
D.~McDuff, 
From symplectic deformation to isotopy,
Topics in symplectic $4$-manifolds (Irvine, CA, 1996), 85--99,
{\it First Int.\ Press Lect.\ Ser.,~I},
Int.\ Press, Cambridge, MA, 1998.


\bibitem{M}
D.~McDuff,
Symplectic embeddings of $4$-dimensional ellipsoids,
{\it J.~Topol.}~{\bf 2} (2009), 1--22.


\bibitem{Mcf}
D.~McDuff, 
Symplectic embeddings and continued fractions: a survey,
arXiv:0908.4387,
to appear in {\it  Journ.\ Jap.\ Math.\ Soc.}


\bibitem{MP}
D.~McDuff and L.~Polterovich, 
Symplectic packings and algebraic geometry,
{\it Invent.\ Math.}, {\bf 115} (1994), 405--29.


\bibitem{DoM} 
Dorothee M\"uller,
Symplectic embeddings of ellipsoids into polydiscs,
PhD thesis, Universit\'e de Neuch\^atel, 
in preparation.


\bibitem{S}
P.~Seidel,
Lectures on four-dimensional Dehn twists,
Symplectic 4-manifolds and algebraic surfaces, 231--267,
{\it Lecture Notes in Math.}~{\bf 1938}, Springer, Berlin, 2008.


\bibitem{Op} 
E.~Opshtein,
Maximal symplectic packings in $\PP^2$,
{\it Compos.\ Math.}~{\bf 143} (2007), 1558--1575.


\bibitem{T} 
C.~H.~Taubes,
Embedded contact homology and Seiberg--Witten cohomology,~I,
arXiv:0811.3985.


\end{thebibliography}
\end{document}